\documentclass[11pt, reqno]{amsart}

\usepackage{amssymb,latexsym}
\usepackage[active]{srcltx}
\usepackage{amsmath}

\usepackage{amsthm}
\usepackage{amscd}
\usepackage{mathrsfs}
\usepackage{mathtools}
\usepackage{fancyhdr}
\usepackage{graphicx}
\usepackage{calc}
\usepackage{nccmath}
\usepackage{enumitem}
\setlist[enumerate]{label=(\arabic*), font=\normalfont}
\usepackage{fullpage}
\usepackage[cmtip,arrow,matrix,curve,tips,frame]{xy}
\usepackage{cite}     
\usepackage{hyperref}
\xyoption{matrix}

\usepackage{ifthen}
\usepackage{epsfig}
\usepackage{color}
\usepackage{amsxtra}
\usepackage{amstext}
\usepackage{amssymb}
\usepackage{stmaryrd}
\usepackage{mathrsfs}
\usepackage{pifont}
\usepackage{lmodern}
\usepackage{textcomp}
\usepackage{bbm}
\usepackage{comment}

\usepackage{tikz} 
\usetikzlibrary{matrix,arrows,calc,decorations.pathreplacing,fit}
\usepackage{tikz-cd}

\usepackage{xcolor}
\hypersetup{%
  colorlinks=false,
}

\numberwithin{equation}{section}

\theoremstyle{plain}
\newtheorem{thm}{Theorem}[section]
\newtheorem{lem}[thm]{Lemma}
\newtheorem{prop}[thm]{Proposition}

\theoremstyle{definition}
\newtheorem{defn}[thm]{Definition}

\newtheorem{rmk}[thm]{Remark}

\numberwithin{figure}{section}
\numberwithin{table}{section}

\def \AA {\mathbb{A}}

\def \CC {\mathbb{C}}

\def \NN {\mathbb{N}}

\def \RR {\mathbb{R}}
\def \SS {\mathbb{S}}
\def \TT {\mathbb{T}}

\def \ZZ {\mathbb{Z}}

\def \Bcal {\mathcal{B}}

\def \Dcal {\mathcal{D}}
\def \Ecal {\mathcal{E}}

\def \Ical {\mathcal{I}}

\def \Mcal {\mathcal{M}}

\def \Ocal {\mathcal{O}}
\def \Pcal {\mathcal{P}}

\def \Rcal {\mathcal{R}}
\def \Scal {\mathcal{S}}
\def \Tcal {\mathcal{T}}

\def \Vcal {\mathcal{V}}

\def \Dfr {\mathfrak{D}}

\def \Sfr {\mathfrak{S}}

\def \bfr {\mathfrak{b}}
\def \cfr {\mathfrak{c}}

\def \lfr {\mathfrak{l}}

\def \sfr {\mathfrak{s}}

\def \fhat {\widehat{f}}
\def \ghat {\widehat{g}}

\def \khat {\widehat{k}}

\def \Abar {\bar{A}}

\def \Ebar {\bar{E}}

\def \Kbar {\bar{K}}

\def \Mbar {\bar{M}}

\def \Sbar {\bar{S}}

\def \cbar {\bar{c}}

\def \hbar {\bar{h}}

\def \jbar {\bar{j}}

\def \Qtil {\widetilde{Q}}

\def \Xbf {\mathbf{X}}

\def \qbf {\mathbf{q}}

\newcommand{\abs}[1]{\left|#1\right|}

\newcommand{\norm}[1]{\left\|#1\right\|}

\newcommand{\jap}[1]{\left<#1\right>}

\newcommand{\wh}[1]{\widehat{#1}}

\DeclareMathOperator{\Vol}{Vol}

\DeclareMathOperator{\supp}{supp}

\DeclareMathOperator{\sgn}{sgn}

\DeclareMathOperator{\tr}{tr}
\DeclareMathOperator{\Ric}{Ric}

\let\ringaccent\r

\renewcommand{\a}{\alpha}
\renewcommand{\b}{\beta}
\newcommand{\g}{\gamma}
\newcommand{\G}{\Gamma}
\renewcommand{\d}{\delta}

\newcommand{\e}{\varepsilon}

\newcommand{\et}{\eta}
\renewcommand{\th}{\theta}

\renewcommand{\l}{\lambda}

\newcommand{\m}{\mu}
\newcommand{\n}{\nu}
\newcommand{\x}{\xi}

\renewcommand{\r}{\rho}

\renewcommand{\t}{\tau}

\renewcommand{\o}{\omega}
\renewcommand{\O}{\Omega}

\DeclareTextCommand{\aa}{OT1}{\ringaccent a}
\DeclareTextCommand{\AA}{OT1}{\ringaccent A}

\newcommand{\nb}{\nabla}
\newcommand{\rd}{\partial}

\newcommand{\rhotil}{\tilde{\rho}}

\newcommand{\header}[1]{\noindent\textbf{#1}}

\definecolor{green1}{rgb}{0,0.5,0.2}

\newcommand{\step}[1]{\noindent{\textit{Step#1}) }}

\renewcommand{\max}[1]{\mathrm{max}(#1)}

\author[Eo]{Saehoon Eo}
\address[Saehoon Eo]
{Department of Mathematics, Stanford University, 450 Jane Stanford Way, Stanford, CA 94305, USA
}
\email{eosehoon@stanford.edu}

\date{\today}
\newcommand{\brm}{\mathrm{b}}
\newcommand{\lin}{\mathrm{(lin)}}
\newcommand{\jt}{\jap{t}}
\newcommand{\Sch}{\mathrm{Sch}}
\newcommand{\init}{\mathrm{in}}
\title{Nonlinear Phase Mixing in Einstein--Vlasov System near Schwarzschild Spacetime}
\begin{document}

\begin{abstract}
We study the Einstein–-Vlasov system near the Schwarzschild spacetime in spherical symmetry.
In particular, we consider the case when the matter is supported on bounded geodesics. 
In this regime, the decay of the matter is driven by phase mixing rather than dispersion. 
For the linearized problem, we obtain quantitative phase mixing; the key new ingredient proved here is a monotonicity property of the period function.
For the nonlinear coupled problem, we prove phase mixing up to the timescale expected from the nonlinear echo mechanism; this is achieved by analysis on dynamical action angle variables and the integral estimates with the vector field method.  
This construction is made possible by our gauge choice, an isotropic coordinate system together with a foliation of prescribed mean curvature, in which the Einstein equations gain derivatives over the matter due to its elliptic nature.
\end{abstract}

\maketitle

\tableofcontents

\section{Introduction}\label{sec:Introduction}
\subsection{Einstein--Vlasov system}
The Einstein--Vlasov system describes the mesoscopic dynamics of a self-gravitating collisionless gas of which particles interact with each other through gravity described by the Einstein's general relativity.
The metric solves the Einstein equation with matter defined by the integral of the distribution function, and the distribution function solves the Vlasov equation on the curved spacetime defined by the metric.

We are considering a \((1+3)\)-dimensional Lorentzian manifold \((\Mcal, g)\) coupled with a distribution function \(f:\Pcal \to \RR_+\) defined on the mass shell 
\[
\Pcal = \{(x, p) \mid g_x(p, p) = -1, p \text{ future-directed}\} \subset \Tcal\Mcal
\] 
Those are coupled with the Einstein--Vlasov equations: 
The metric \(g\) satisfies the Einstein equations 
\begin{equation}\label{eq:g}
\Ric(g)_{\mu\nu} - \frac{1}{2} \Sbar(g) g_{\mu\nu} = 8\pi T_{\mu\nu},
\end{equation}
where \(\Ric(g)\) and \(\Sbar(g)\) denote the Ricci and scalar curvature of \(g\), and the energy-momentum tensor \(T_{\mu\nu}\) is given from the distribution function \(f\) as
\begin{equation}\label{eq:T}
T^{\mu\nu}(x) = \int_{\Pcal_x} f(x, p)p^\m p^\n \frac{\sqrt{\abs{\det g}}}{p^0} dp^1 dp^2 dp^3.
\end{equation}
Here, \(\Pcal_x\) denotes the mass shell at a fixed spacetime point \(x\), i.e.,
\[
\Pcal_x = \{p \in \Tcal_x\Mcal \mid g_x(p, p) = -1, p \text{ future-directed}\}.
\]
Further, the distribution function \(f\) satisfies the Vlasov equation on the curved spacetime \((\Mcal, g)\), which is given by 
\begin{equation}\label{eq:f}
\Xbf (f) = 0,
\end{equation}
for the vector 
\[
\Xbf = p^\m \rd_{x^\m} - p^\a p^\b\G^i_{\a\b}\rd_p^i.
\]
This is equivalent to say that \(f\) flows by the geodesic flow on the mass shell \(\Pcal\).
Here, we give a coordinate \((s = x^0, x^1, x^2, x^3, p^1, p^2, p^3)\) on \(\Pcal\), and view \(f\) as a function of these variables.
We use the usual summation convention, namely, the greek indices run from \(0\) to \(3\) and Latin indices run from \(1\) to \(3\).
The Cauchy problem for this system is given by prescribing the initial data on a spacelike hypersurface, which consists of the induced metric \(\ghat\), the second fundamental form \(\khat\) of the hypersurface, and the restriction of \(f\) on the mass shell over this hypersurface.

There are some results concerning this system, especially on the local and global existence, and long-time dynamics. 
In case when the particle is \textit{massless}, i.e. when particles follow null geodesics, the stability of Minkowski space, which can be understood as a zero solution, under spherical symmetry was proved by Dafermos \cite{Daf06}, whereas the \textit{massive} case was studied by Rein--Rendall \cite{RR92}.
Without symmetry assumption, there was a result on the global existence of small data by Taylor \cite{Tay17} for the massless case, and the stability of Minkowski space in massive case was proved in \cite{LT20,FJS21,Wan22}.

Compared to this stability of Minkowski problem for Einstein--Vlasov, or the stability of Schwarzschild spacetime for the vacuum Einstein equation, the stability of the Schwarzschild problem for the Einstein--Vlasov system presents  distinctive challenges.
Because the set of geodesics which stay in a compact subset of the Schwarzschild spacetime is stable, we cannot hope the matter to disperse naturally. 
Even worse, it is known that the Einstein--Vlasov space admits static solutions which are arbitrarily close to the Schwarzschild spacetime, and hence we cannot expect the solution to converge back to the Schwarzschild spacetime in general. (See \cite{Rei94, Jab21} and \cite{GRS25}.)
Therefore, the best that we can expect is that the solution converges to a static solution, or that the time derivative of the solution decays in time.

This is in stark contrast with the Schwarzschild stability problem for other matter models, where one can exploit dispersive behavior: for Einstein--Maxwell, see for instance \cite{Pas19b, Blu08, ST15}; for Einstein--scalar field, see \cite{DR05}.
For the Einstein--Vlasov system, Velozo Ruiz \cite{Vel22} treats the regimes where the matter is massless, or massive and dispersive, so that particles either escape to infinity or collapse into the black hole.

\subsection{Spherical symmetry and isotropic coordinate system}
In this paper, we are considering spherically symmetric solutions to the Einstein--Vlasov system. 
In particular, we are considering the isotropic coordinate system, where the metric takes the following form: 
\[
g(s, R, \th, \phi) = -\a^2(s, R) ds^2 + A^2(s, R)[(dR+\b(s, R) ds)^2 + R^2 d\O^2],
\]
where \(d\O^2\) is the standard round metric on \(\SS^2\).
Schwarzschild metric in this coordinate system has the following coefficients:
\begin{equation}\label{sch-metric}
\a_0 = \frac{2R-m}{2R+m}, \qquad
A_0 = \Big(1+\frac{m}{2R}\Big)^2, \qquad
\b_0 = 0.
\end{equation}
Notice that, given a spherically symmetric metric on a \((1+3)\)-dimensional Lorentzian manifold, there are many coordinate systems in which the metric takes the above form. 
For a given foliation consisting of spacelike hypersurfaces, we can choose the time function \(s\) to be constant on each leaf of the foliation, and then we can choose the radial coordinate \(R\) which makes the induced metric on each leaf conformal to the standard flat metric.
Note that, the coordinate \(R\) is different from the areal radius, which in this case given as \(r = AR\). 
Especially, \(R\) depends on the choice of the foliation, and hence is not a geometric quantity. 

The choice of the isotropic coordinate system is motivated by the nonlinear analysis. 
In particular, this coordinate system is chosen so that the Einstein equations can be written as a system of ODEs \eqref{absODE}, which allows us to gain two derivatives from the matter to the metric.
This is a distinctive feature of the isotropic coordinate system.
For example, in a double-null coordinate system, it does not seem that this kind of gain is possible, mainly because the second derivatives \(\rd_u\rd_u\) or \(\rd_v\rd_v\) do not appear in the equation. 
By contrast, in the isotropic coordinate system, the second derivative \(\rd_R\rd_R\) of \(A\) appears in the constraint equation \eqref{eq:Gauss}, and that of \(\a\) appears in \eqref{eq:SumEE}, so that we gain two derivatives.

Under symmetry, the distribution function \(f\) evaluated on the mass shell depends on its argument only through \((s, R, w, \ell)\), which are defined as follows: 
\[
R = \sqrt{(x^1)^2 + (x^2)^2 + (x^3)^2}
\]
as usual, and we use \(v\) defined by 
\[
v^0 = \a p^0, \qquad v^i = A\big[p^i+\frac{\b x^i}{R}p^0\big].
\]
Notice that we have 
\[
\abs{v}^2 \coloneqq (v^1)^2 + (v^2)^2 + (v^3)^2 = -1 + (v^0)^2.
\]
Then, we set \(w, \ell\) as 
\[
w(s, x, p) = \frac{v^i x^i}{R}, \qquad \ell(s, x, p) = A^2 (R^2\abs{v}^2 - (v^ix^i)^2).
\]
In this setting, we can also rewrite the integral that defines the energy-momentum tensor. 
\(\r, \tr T, j\), \(S_R\) are all coming from the energy-momentum tensor, which is an integral of \(f\) on each fiber. 
The energy-momentum tensor is given by 
\[
T(w_1, w_2) = \int f g(p, w_1)g(p, w_2) d\Vol_p,
\]
where \(d\Vol_p\) is the volume form on the tangent space of the spacetime at a given point endowed with the induced metric from \(g\). 
Since the coordinate given by \((v^1, v^2, v^3)\) has orthogonal coordinate vectors, we can write the volume form explicitly with Euclidean geometry, which is 
\[
d\Vol_p = \frac{1}{\sqrt{1 + \abs{v}^2}}dv^1 dv^2 dv^3.
\]
Since our coordinate for the tangent space \((w, \ell)\) is defined in terms of \(v\), this allows us to write the volume form in terms of \(w\) and \(\ell\). 
This is given by 
\[
d\Vol_p = \frac{\pi}{R^2A^2E}d\ell dw,
\]
after trivial integration along the spherical symmetry. 
Here, \(E\) is defined by \(E^2 \coloneqq 1 + w^2 + \frac{\ell}{A^2R^2}\). 
Therefore, we have the following equations for the density variables.
\begin{align}
\r \coloneqq T(\vec{n}, \vec{n}) &= \frac{\pi}{R^2A^2}\int_{-\infty}^\infty\int_0^\infty f(s, R, w, \ell)E \,d\ell dw, \label{rhodef}\\
\tr T &= -\frac{\pi}{R^2A^2}\int_{-\infty}^\infty\int_0^\infty f(s, R, w, \ell)\frac{1}{E} \,d\ell dw, \label{trTdef}\\
j \coloneqq -\frac{1}{A}T(\vec{n}, \rd_R) &= \frac{\pi}{R^2A^2}\int_{-\infty}^\infty\int_0^\infty f(s, R, w, \ell)w \,d\ell dw, \label{jdef}\\
S_R \coloneqq \frac{1}{A^2}T(\rd_R, \rd_R) &= \frac{\pi}{R^2A^2}\int_{-\infty}^\infty\int_0^\infty f(s, R, w, \ell)\frac{w^2}{E} \,d\ell dw.
\label{SRdef}
\end{align}

The Einstein--Vlasov system in spherical symmetry with isotropic coordinate system was first studied by Rendall \cite{R96}. 
There, Rendall proved that the `first' singularity (if exists) should occur at the center, and the global existence of small data is true in this regime. 
(See also the survey paper \cite{And11}.)

\subsection{Gauge choice}\label{subsec:Gauge}
The gauge that we will use for Einstein equation is given as follows: 
because we require our coordinate to make the outer region of the solution to be exactly Schwarzschild in the standard foliation (the reason will be explained later), this fixes the \(s\)-coordinate there.
Therefore, by declaring that we use the isotropic coordinate, the \(R\)-coordinate will be fixed as soon as we choose the \(s\)-foliation of the spacetime.
We set \(s = 0\) on the initial hypersurface, and choose the leaves to have the same mean curvature as the initial hypersurface.
To be precise, let \(\Psi(R)\) be the mean curvature of the initial hypersurface (note that this is determined by the initial data \(\ghat\) and \(\khat\)), then we require the mean curvature of the \(s\)-constant hypersurface to be \(\Psi(R)\) as well.
With this choice of gauge, we can finally write Einstein equation in the following form (See \cite{R96}).

The Einstein equations reduce to
\begin{align}
(R^2\sqrt{A}')' &= -\frac{A^{5/2}R^2}{8}\big(16\pi\r + \frac{3}{2}K^2 - K\Psi - \frac{1}{2}\Psi^2\big),
\label{eq:Gauss}\\
\a'' + \frac{2}{R}\a' + \frac{A'}{A}\a' &= \a A^2\big(4\pi\tr T + 8\pi\r + \frac{3}{2}K^2-K\Psi+\frac{1}{2}\Psi^2\big) + A^2\b\Psi', \label{eq:SumEE}\\
K' +  3\big(\frac{A'}{A} + \frac{1}{R}\big)K&= 8\pi A j + \frac{(AR\Psi)'}{AR}, \label{eq:Codazzi}\\
\b'-\frac{\b}{R} &= \frac{\a}{2}(3K-\Psi), \label{eq:MeanCurv}\\
\rd_s A &= -\a AK + (\b A)', \label{eq:Keq}\\
\rd_s K &= -\frac{\a''}{A^2} + \frac{\a'A'}{A^3}
+ \a\left[\frac{2A'^2}{A^4} -\frac{2A''}{A^3} - \frac{2A'}{RA^3} + 4\pi\tr T - 8\pi S_R + K\Psi\right] + \b K'. \label{eq:EE}
\end{align}
Here and for the following, \('\) denotes the derivative with respect to \(R\).
We use \(K\) to denote the component of the second fundamental form of the \(s\)-constant hypersurface in this coordinate system, given by 
\begin{equation}\label{Kdef}
K \coloneqq \frac{1}{A^2}g(\nb_{\rd_R}\rd_R, \vec{n}),
\end{equation}
where \(\vec{n}\) is the normal vector field to the \(s\)-constant hypersurface, and \(\nb\) is the Levi-Civita connection of \(g\).

The Vlasov equation is given by \(\Dfr f = 0\), where the transport operator \(\Dfr\) in \((s, R, w, \ell)\) coordinate can be written as
\begin{align}\label{eq:vlasov-rwl}
\Dfr \coloneqq \rd_s + \big(\frac{\a w}{AE}-\b\big)\rd_R
+ \big(\frac{\a}{A^3}\frac{\ell}{R^3E} + \a\frac{A'}{A^4}\frac{\ell}{R^2E}-\frac{\a'}{A}E+ \a Kw\big)\rd_w.
\end{align}
The quantity that we need to use for the definition of dynamical action angle variable is \((H, M)\), which are given by 
\begin{equation}\label{Hdef}
H \coloneqq \a E-\b A w, \qquad M = \ell,
\end{equation}
which is given by the inner product of the velocity of the geodesic with \(\rd_s\) and \(\rd_\phi\), respectively.

By choosing the isotropic coordinate, we can make the spatial part of Einstein equation on \(A\) \eqref{eq:Gauss}, as well as the equation on lapse \(\a\) \eqref{eq:SumEE}, to be elliptic.
We gain two derivatives whenever we try to bound the metric coefficients in terms of the energy-momentum tensor components, which is crucial for the nonlinear estimates.
In this context, it is more convenient to understand metric coefficients to be a solution of a system of ODEs \eqref{eq:Gauss}--\eqref{eq:MeanCurv}, but of course then we need to understand the boundary condition that we have. 

Because of the finite speed of propagation, the mass is supported in a compact region for any finite time.
Hence, the metric outside this region should be exactly Schwarzschild by Birkhoff's theorem, and the Hawking mass should be constant in this region.
We choose our foliation in the way that the metric coefficients \(\a, A, \b\) are exactly the same as those of Schwarzschild in the standard foliation:
\begin{equation}\label{bdrycond}
A(s, R) = \Big(1+\frac{m}{2R}\Big)^2, \qquad
\a(s, R) = \frac{2R-m}{2R+m}, \qquad
\b(s, R) = 0,    
\end{equation}
for \(R\) outside the mass support. 
This serves us a boundary condition. 
We will treat the equations \eqref{eq:Gauss}--\eqref{eq:MeanCurv} in an abstract manner as follows: 
\[
\vec{X} = (A, A', \a, \a', K, \b) \in \RR^6, \qquad 
\vec{S} = (\r, \tr T, j, S_R, \Psi-\Psi_0) \in \RR^5,
\]
\begin{align}
\vec{X}'(R) &= \vec{F}(\vec{X}(R), \vec{S}(R), R), \label{absODE} \\
\text{ where } 
\vec{F}(\vec{X}(R), \vec{S}(R), R) &= \begin{pmatrix}
A'(R) \\
F_2(\vec{X}(R), \vec{S}(R), R) \\
\a'(R) \\
F_4(\vec{X}(R), \vec{S}(R), R) \\
F_5(\vec{X}(R), \vec{S}(R), R) \\
F_6(\vec{X}(R), \vec{S}(R), R)
\end{pmatrix}
\end{align}
Note that this system is written in a fixed time \(s\), and \(\vec{F}\) is a smooth function in our domain, as long as \(A\) is away from \(0\). 
This should be understood as an ODE system with a source \(\vec{S}\), and we will analyze this system around the solution \(\vec{X}_0 = (A_0, A_0', \a_0, \a_0', K_0, \b_0)\) with \(\vec{S} = 0\), which is the metric coefficients of the hypersurface we chose in Schwarzschild spacetime.
The function \(\Psi_0\) in the definition of \(\vec{S}\) above is the mean curvature of this hypersurface, following this perspective. 
We need a stability of this ODE system to i) obtain the local existence result and ii) improve the bootstrap assumption for the nonlinear system; this will be discussed at each place. 

\subsection{The object we perturb}\label{subsec:inithypsurf}
The object that we perturb is a hypersurface of Schwarzschild.
The metric of Schwarzschild black hole with mass \(m\) is given as follows: 
\[
g_{\Sch} = -\big(1-\frac{2m}{r}\big)dt^2 + \big(1-\frac{2m}{r}\big)^{-1}dr^2 + r^2d\O^2.
\]
To extend this beyond the event horizon, we use the \((v, r)\) coordinate, which is defined by 
\[
v \coloneqq t + r^* = t + r + 2m\log(r-2m),
\]
in which the metric is written as 
\[
g_{\Sch} = -\big(1-\frac{2m}{r}\big)dv^2 + 2dv dr + r^2d\O^2.
\]
This metric is defined for \(v \in \RR\) and \(r > 0\), which includes the region inside the event horizon. 
Within this coordinate we can describe the hypersurface of Schwarzschild that we perturb, which is the zero level set of the following function: 
\[
s = v - r - 2m\chi(r)C - 2m(1-\chi(r))\log(r-2m),
\]
where \(\chi\) is a smooth cutoff function satisfying that 
\[
\chi(r) = \begin{cases}
    1 & \text{for } r \le \frac{5}{2}m, \\
    0 & \text{for } r \ge 3m,
\end{cases}
\]
and \(C = -\log(m/4)\).
With properly chosen \(\chi\), the hypersurface defined by \(s = 0\) is spacelike, matches with the \(t = 0\) hypersurface for \(r \ge 3m\), and touches the event horizon at \(r = 2m\). 
We will perturb this hypersurface and use it as the initial hypersurface for the Einstein--Vlasov system. 

We know that the Cauchy problem for the Einstein--Vlasov system is well-posed for the initial data given as a Riemannian manifold as in this case, in the following sense.
The celebrated result of Choquet-Bruhat \cite{CB71} states that given a Riemannian manifold \((\Sigma, \ghat)\) with a symmetric 2-tensor \(\khat\) and a distribution function \(f\) on the mass shell over \(\Sigma\), there exists a unique (up to diffeomorphism) maximal globally hyperbolic development (MGHD) of this initial data, which is a Lorentzian manifold \((\Mcal, g)\) with a distribution function \(f\) on the mass shell over \(\Mcal\), such that the Einstein--Vlasov system is satisfied and the initial data is recovered on \(\Sigma\).
This was based on the previous result on the vacuum Einstein equation \cite{CB52, CBG69}.
See also the textbook \cite{Rin13} for a detailed discussion on this topic.

Because we are using the isotropic coordinate system with \(s = 0\) on the initial hypersurface, we need to write the Schwarzschild metric in this coordinate system as well.
The metric coefficients are given as a solution of ODE (involving the cutoff function \(\chi\)), but for the exterior region \(r \ge 3m\) we can write them explicitly as in \eqref{sch-metric}.
Note further that there for the second fundamental form we have 
\[
K_0 = 0, \qquad \Psi_0 = 0.
\]

\subsection{Linearized Einstein--Vlasov system on Schwarzschild background}
In this paper, we examine the stability of the Schwarzschild spacetime in the Einstein--Vlasov system.
We can first consider the linearized problem, namely proving decay of the solutions to the linearized Einstein--Vlasov system on Schwarzschild background.
Here, the linearization is done in the following way: we consider a one-parameter family \(\{\Mcal_\n, g_\n, f_\n\}_{\n \in \RR}\) of solutions to the (nonlinear) Einstein--Vlasov system, where \((\Mcal_0, g_0)\) is the subset of Schwarzschild spacetime (which is the future of the hypersurface described in subsection \ref{subsec:inithypsurf}), and \(f_0 \equiv 0\).
Each of them has a coordinate system that is described in subsection \ref{subsec:Gauge}, on which the metric and the energy-momentum tensor component satisfies \eqref{eq:Gauss}--\eqref{eq:EE} and \eqref{eq:vlasov-rwl}.
In this setting, the linearized system is obtained by taking the \(\n\) derivative of each equation. 

The linearized system is upper triangular due to the structure of the Vlasov equation. 
Because the distribution function \(f\) is perturbed around zero, the linearized Vlasov equation collapses to the linear transport equation of which coefficients are written in the metric component of Schwarzschild spacetime. 
This means, the distribution function \(f\) is transported by the geodesic flow on the Schwarzschild spacetime.
Therefore, we can solve the linearized Vlasov equation first, calculate the energy-momentum tensor component \(\r, \tr T, j, S_R\), and then use it as the source term of the linearized Einstein equation. 

Under the Schwarzschild background \eqref{sch-metric}, the transport operator \(\Dfr_0^{\lin}\) can be written in \((s, R, w, \ell)\) coordinate as follows:
\begin{equation}\label{eq:vlasov-lin}
\Dfr^\lin_0 \coloneqq \rd_s + \frac{\a_0 w}{A_0E}\rd_R
+ \big(\frac{\a_0}{A_0^3}\frac{\ell}{R^3E} + \a_0\frac{A_0'}{A_0^4}\frac{\ell}{R^2E}-\frac{\a_0'}{A_0}E\big)\rd_w,
\end{equation}
where \(E\) here is given by \(E^2 = 1 + w^2 + \frac{\ell}{A_0^2R^2}\).
The density variables \(\r, \tr T, j, S_R\) are defined by the same formulas as above, but with the metric coefficients replaced by those of Schwarzschild.

For the linearized Einstein equation, it is much simpler to consider the abstract form \eqref{absODE}. 
The linearized solution \((\a_\lin, A_\lin, \b_\lin)\) of the Einstein equation is given as the solution of the following linear ODE.
\begin{equation}\label{eq:linODE}
\begin{aligned}
\vec{X}_\lin'(R) = \vec{F}_{\vec{X}}(\vec{X}_0(R), 0, R)\vec{X}_\lin(R)
+ \vec{F}_{\vec{S}}(\vec{X}_0(R), 0, R)\vec{S}(R), \\
\text{ where } 
\vec{X}_\lin = (A_\lin, A'_\lin, \a_\lin, \a_\lin', K_\lin, \b_\lin), \\
\text{ with the boundary condition }
\vec{X}(R) \equiv \vec{X}_0(R) \text{ for large } R.
\end{aligned}
\end{equation}

We will show that the density variables \(\r, \tr T, j, S_R\) show phase mixing phenomenon by solving the Vlasov equation explicitly and applying the non-stationary phase argument, and then show that the metric components also have decaying time derivatives by analyzing \eqref{eq:linODE}.

We will use the coordinate system defined by the action angle variables on the mass shell. 
(See \cite{RS18, RS20, RS23} for the relevant result.)
This uses two conserved quantities for the transport, which are given by 
\begin{equation}\label{H0def}
H_0 \coloneqq \a_0 E, \quad M \coloneqq \ell. 
\end{equation} 
Notice that, contrary to \(H\) defined in \eqref{Hdef}, \(H_0\) is indeed a conserved quantity on geodesics. 
To keep the orbits to be bounded in \(R\), we need to make a support condition on the initial data written in terms of \(H_0\) and \(M\). 
We want the initial data to be supported in the set \(\Scal_{\cfr}\), for a given small constant \(\cfr > 0\), which is defined by 
\begin{multline*}
\Scal_{\cfr} \coloneqq
\bigg\{(R, w, \ell) \bigg|\, R \ge \big(\frac{3}{2}+\sqrt{2}\big)m, \ell > 16m^2, \\
V_{\min}(\ell)+\cfr < H_0^2(R, w, \ell) < 1-\cfr, 
(H_0(R, w, \ell), M=\ell)\in \Sfr_{\cfr}\bigg\},
\end{multline*}
where \(V_{\min}(\ell)\) is the minimum of the potential curve defined by \(V_0(R, \ell) = \a_0(R)^2\left(1+\frac{\ell}{A_0(R)^2R^2}\right)\), for fixed \(\ell\). 

The qualitative property of \(V_0\) becomes much clearer when we use the areal radius \(r\) instead of \(R\). 
Note that for the Schwarzschild metric we have \(r = R(1+m/2R)^2\).
Then, \(V_0\) can be written as 
\[
V_0(R, \ell) = 1-\frac{2m}{r} + \frac{\ell}{r^2} - \frac{2m\ell}{r^3},
\]
hence at \(R\) which gives the minimum of this we have 
\[
r = \frac{\ell + \sqrt{\ell^2-12m^2\ell}}{2m}.
\]
From the explicit computation, we can deduce that \(V_{\min}(\ell)\) increases to \(1\) on \([16m^2, \infty)\), and \(R\) which gives the minimum of \(V_0\) increases to \(\infty\) as \(\ell\to \infty\).
In particular, if we restrict \(V_0(R, \ell) < 1-\cfr\) for some \(\cfr > 0\), then \(\ell\) is bounded above, hence \(\Scal_\cfr\) has to be a compact set. 

The set \(\Sfr_{\cfr}\) deserves further explanation. 
We first introduce the set of \((H_0, M)\) for which the corresponding characteristic is bounded.
This is given as follows: 
\[
\Sfr_{\brm} = \left\{(H_0, M) \middle| M > 16m^2, \, V_{\min}(M) \le H_0^2 < 1\right\}.
\]
This is a sufficient condition that there is a characteristic bounded in \(R\) with this \((H_0, M)\) values.
Note that when \(H_0^2 = V_{\min}(M)\), the characteristic is constant in \(R\), but throughout this paper we choose strictly positive \(\cfr\), hence such characteristics will not be considered. 
On this set, we will define \(\O_\Sch\) by \eqref{OSchdef}, which is inversely proportional to the period of the characteristic described in the phase space \((R, w)\). 
Then, \(\Sfr_{\cfr}\) is the subset of \(\Sfr_{\brm}\) given as follows: 
\begin{multline*}
\Sfr_{\cfr} \coloneqq \big\{(H_0, M)\in \Sfr_{\brm} \big| (\rd_{H_0}\O_\Sch)(H_0, M) < -\cfr, \rd_\ell (\O_\Sch(H_0(R, w, \ell), M = \ell)) < -\cfr, \\
\forall (R, w, \ell) \text{ with } H_0 = H_0(R, w, \ell), M = \ell, \text{ and } R \ge (3/2+\sqrt{2})m\big\}.
\end{multline*}
These are the conditions that ensure the phase mixes. 
Note that the second condition refers to the \(\rd_\ell\) derivative; here we regard \(\O_\Sch\) as a function of \((R, w, \ell)\) through \((H_0, M)\).
There are inclusions \(\Sfr_{\cfr} \subset \Sfr_{\brm}\) for all \(\cfr > 0\), and \(\Sfr_{\cfr_1} \subset \Sfr_{\cfr_2}\) if \(\cfr_1 > \cfr_2 > 0\), and \(\Sfr_0 = \bigcup_{\cfr > 0} \Sfr_{\cfr}\).

In Proposition \ref{prop:monotonic}, we prove that \(\Sfr_0\) contains a large subset.
In particular, as long as \(M\) is large enough, or \(H_0^2\) is close enough to \(V_{\min}(M)\), we have \((H_0, M)\in \Sfr_0\). 
This proves that the set \(\Sfr_{\brm}\setminus\Sfr_0\) is compact for small \(\cfr\).
As the set \(\Scal_{\cfr}\) has compact range in \((H_0, M)\) for any \(\cfr > 0\) (of course it is also compact in \((R, w, \ell)\)), and as \(\Sfr_0\) is an increasing union of \(\Sfr_{\cfr}\) with \(\cfr\to 0\), we can choose \(\cfr > 0\) to be small to include the compactly supported initial data that satisfies the condition of Proposition \ref{prop:monotonic}.
Still, it remains open whether the set \(\Sfr_{\brm}-\Sfr_0\) is empty or not.
This distinction between \(\Sfr_{\brm}\) and \(\Sfr_\cfr\) is a new feature of this problem compared to \cite{CL24}, where the explicit formula for the period yields monotonicity for all bounded characteristics.

\begin{thm}[]\label{thm:lin}
    Let \(m, \cfr\) be given positive constants. 
    Let \(f\colon [0, \infty)_s\times [m/4, \infty)_R \times \RR_w \times \RR_\ell^{>0}\to \RR^{\ge 0}\) and \(\a_\lin, A_\lin, \b_\lin, K_\lin\colon [0, \infty)_s\times [m/4, \infty)_R\to \RR\) be a solution to the linearized Einstein--Vlasov system on Schwarzschild background with initial data \(f_\init\) satisfying \(\supp(f_\init)\subset \Scal_{\cfr}\).
    Then, the corresponding \(\r, \tr T, j, S_R\) defined by \eqref{rhodef}--\eqref{SRdef} with metric coefficients from the Schwarzschild metric have decaying time derivative, of which decay rate is inverse polynomial.
    More precisely, say initial data has the size
    \begin{equation}\label{linthm-init}
    \sum_{i_1+i_2+i_3 \le N} \sup_{R, w, \ell} \abs{\rd_R^{i_1} \rd_w^{i_2} \rd_\ell^{i_3} f_\init(R, w, \ell)} \le B_\init,
    \end{equation}
    for some \(N\in \NN\) and \(B_\init > 0\), then the solution exists globally in time, with support always in \(\Scal_{\cfr}\), and satisfies the following decay estimates:
    \begin{enumerate}
        \item (Controlled growth of \(f\) derivatives)
        \begin{equation}\label{linthm-f1}
        \sup_{R, w, \ell} \abs{\rd_R^{i_1} \rd_w^{i_2} \rd_\ell^{i_3} f(s, R, w, \ell)} \le C B_\init \jap{s}^{i_1 + i_2 + i_3}, \qquad i_1+i_2+i_3 \le N.
        \end{equation}
        \item (Improved bound of \(f\) derivatives in action angle variables) There is a coordinate system \((t = s, Q, H_0, M = \ell)\) defined by the action angle variables together with a fixed vector field \(Z\) given as \eqref{Zdef} such that  
        \begin{equation}\label{linthm-f2}
        \sup_{Q, H_0, M} \abs{\rd_Q^{i_1} \rd_{H_0}^{i_2} Z^{i_3} f(t, Q, H_0, M)} \le C B_\init\jt^{i_2}, \qquad i_1+i_2+i_3 \le N.
        \end{equation}
        \item (Decay of \(\rd_s\r, \rd_s\tr T, \rd_s j, \rd_s S_R\)) The time derivatives of the energy-momentum tensor components decay in time polynomially, more precisely,
        \begin{equation}\label{linthm-density}
        \sup_{R} \abs{\rd_R^i \rd_s (\r, \tr T, j, S_R)(s, R)} \le C B_\init \jap{s}^{-N + i}, \qquad i \le N-2.
        \end{equation}
        \item (Decay of \(\rd_s\a_\lin, \rd_s A_\lin, \rd_s \b_\lin\)) The time derivatives of the metric components decay in time polynomially, more precisely,
        \begin{equation}\label{linthm-metric}
        \sup_{R} \abs{\rd_R^i \rd_s (\a_\lin, A_\lin, \b_\lin)(s, R)} \le C B_\init \jap{s}^{-N + \max{i-2, 0}}, \qquad i \le N.
        \end{equation}
        \item (Convergence of the linearized solution) The energy-momentum tensor and metric components converge to a limit as \(s\to \infty\) with inverse polynomial rate.
        More precisely, there exist \((\widetilde{\r}, \widetilde{\tr T}, \widetilde j, \widetilde{S_R})\) and \((\widetilde{\a}_\lin, \widetilde{A}_\lin, \widetilde{\b}_\lin)\) such that
        \begin{equation}\label{linthm-conv}
        \begin{aligned}
        \sup_{R} \abs{\rd_R^i [(\r, \tr T, j, S_R)(s, R) - (\widetilde{\r}, \widetilde{\tr T}, \widetilde j, \widetilde{S_R})(R)]} &\le C B_\init \jap{s}^{-N+i}, && i \le N-1, \\
        \sup_{R} \abs{\rd_R^i [(\a_\lin, A_\lin, \b_\lin)(s, R) - (\widetilde{\a}_\lin, \widetilde{A}_\lin, \widetilde{\b}_\lin)(R)]} &\le C B_\init \jap{s}^{-N+\max{i-2, 0}}, && i \le N+1.
        \end{aligned}
        \end{equation}
    \end{enumerate}
    Here, the constant \(C\) depends on \(m, \cfr, N\), but not on \(B_\init\) and \(t\) or \(s\).
\end{thm}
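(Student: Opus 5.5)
The plan is to solve the linearized Vlasov equation explicitly in action--angle variables for the Schwarzschild geodesic flow, and then read off decay by non-stationary phase.

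\textbf{Action--angle coordinates and the explicit solution.} Since $H_0=\a_0E$ and $M=\ell$ are conserved by $\Dfr^\lin_0$, every orbit in $\Scal_\cfr$ is a closed curve in the $(R,w)$ plane at fixed $(H_0,M)$. On it I define the angle $Q\in\RR/\ZZ$ as normalized time along the orbit, i.e.\ $Q$ increases at rate $\O_\Sch(H_0,M)$ with $\O_\Sch=1/T(H_0,M)$, the inverse period from \eqref{OSchdef}. In the coordinates $(t,Q,H_0,M)$ the transport operator becomes $\rd_t+\O_\Sch\rd_Q$, so
\[
f(t,Q,H_0,M)=f_\init(Q-t\O_\Sch(H_0,M),H_0,M).
\]
Because $\Scal_\cfr$ is a union of whole orbits, the support stays in $\Scal_\cfr$, and global existence is immediate. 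The change of variables $(R,w,\ell)\leftrightarrow(Q,H_0,M)$ is smooth with bounded derivatives on the compact set $\Scal_\cfr$. This uses $H_0^2>V_{\min}+\cfr$ (no degenerate circular orbits) and $H_0^2<1-\cfr$ (bounded orbits with turning points away from the horizon region).

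\textbf{Derivative bounds \eqref{linthm-f1}--\eqref{linthm-f2}.} Since $\rd_Q$ commutes with the flow, $\rd_Q$ derivatives do not grow. Each $\rd_{H_0}$ produces a factor $t\,\rd_{H_0}\O_\Sch$, hence grows like $\jt$. For $Z$ (see \eqref{Zdef}) I would take a combination of $\rd_M$ and $\rd_{H_0}$ that annihilates $\O_\Sch$, for instance $Z=\rd_M-(\rd_M\O_\Sch/\rd_{H_0}\O_\Sch)\rd_{H_0}$; this is legitimate because $\rd_{H_0}\O_\Sch<-\cfr$ on $\Sfr_\cfr$. Then $Z$ commutes with the solution map up to bounded terms, which gives \eqref{linthm-f2}. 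Converting back through the chain rule yields \eqref{linthm-f1}.

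\textbf{Phase mixing of densities.} At fixed $R$, each density in \eqref{rhodef}--\eqref{SRdef} is an integral over the fiber $(w,\ell)$. I would parametrize the fiber by $\ell$ and by $H_0$ (two-to-one via the sign of $w$), so that $Q=Q(R,H_0,\ell)$ on each branch. Expanding $f_\init$ in Fourier series in $Q$, the $k$-th mode of $\rd_s\r$ carries
\[
k\O_\Sch\,\hat f_k(H_0,\ell)\,e^{2\pi i k(Q(R,H_0,\ell)-t\O_\Sch(H_0,\ell))}.
\]
The $k=0$ mode is time independent. This is why only $\rd_s$ decays, and why the $k=0$ part gives the limits $\widetilde\r$, etc.

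The key step is non-stationary phase in a fiber variable. The condition $\rd_\ell(\O_\Sch(H_0(R,w,\ell),\ell))<-\cfr$ in $\Sfr_\cfr$ is exactly the statement that, at fixed $(R,w)$, the phase $t\O_\Sch$ has nonvanishing $\ell$-derivative. I therefore integrate by parts $N$ times in $\ell$ at fixed $(R,w)$, each time gaining $(kt)^{-1}$, with summability in $k$ coming from the smoothness of $f_\init$ in $Q$. Each integration by parts costs one derivative of $f_\init$, which gives the rate $\jap{s}^{-N}$. The boundary terms vanish because $f_\init$ is compactly supported in $\Scal_\cfr$.

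For $\rd_R^i$, each $R$ derivative lands on $e^{-2\pi i kt\O_\Sch(H_0(R,w,\ell),\ell)}$ and produces a factor $t$, so it loses one power; this gives $\jap{s}^{-N+i}$. The range $i\le N-2$ leaves room for the $\rd_s$ and the $R$-weights. I expect this step to be the main obstacle. The smoothness of the integration-by-parts weights near turning points ($w=0$) has to be handled; there I would work in $(R,w,\ell)$ variables directly, using the chain rule so that the Jacobian singularity is avoided. The uniform lower bound $\cfr$ must also hold over the whole $R$ range in $\Scal_\cfr$, which is precisely what the definition of $\Sfr_\cfr$ quantifies over.

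\textbf{Metric and convergence.} Apply $\rd_s$ to \eqref{eq:linODE}. Then $\rd_s\vec X_\lin$ solves the same linear ODE with source $\rd_s\vec S$, and zero data at large $R$ (Schwarzschild outside the compact support). I integrate inward from the outer boundary, using a Gr\"onwall argument on the compact $R$ interval, with coefficients smooth and bounded since $A_0$ is bounded below. This gives $\sup\abs{\rd_s\vec X_\lin}\lesssim\sup\abs{\rd_s\vec S}$. Higher $R$ derivatives follow by differentiating the ODE: $\a_\lin$ and $A_\lin$ appear with second derivatives in \eqref{eq:Gauss} and \eqref{eq:SumEE}, so $\rd_R^i$ of the metric costs only $\rd_R^{i-2}$ of the sources, giving the loss $\max{i-2,0}$ in \eqref{linthm-metric}.

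Finally, integrating $\rd_s$ from $s$ to $\infty$ gives the limits in \eqref{linthm-conv}, identified with the $k=0$ contributions. This integration costs one power of $\jap{s}$, so I would run the decay argument with $N+1$ phase integrations where the regularity allows, or accept the stated index ranges.
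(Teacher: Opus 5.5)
Your treatment of (1)--(4) is essentially the paper's proof. The common ingredients are:
\begin{itemize}
\item the explicit solution $f_\init(Q-t\Omega_{\Sch},H_0,M)$ in action--angle variables;
\item a vector field $Z$ annihilating $\Omega_{\Sch}$ (yours is the paper's \eqref{Zdef} divided by $-\partial_{H_0}\Omega_{\Sch}$);
\item the Fourier expansion in $Q$, followed by integration by parts in $\ell$ at fixed $(R,w)$, using the lower bound on $\partial_\ell(\Omega_{\Sch}(H_0,M))$ built into $\Sfr_\cfr$;
\item Cauchy--Schwarz in $k$;
\item Gr\"onwall plus induction on the linear ODE for $\partial_s\vec X_\lin$.
\end{itemize}
One bookkeeping correction in (3): an $R$-derivative landing on the phase produces $ks$, not just $s$. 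The right count is therefore $N-i$ integrations by parts, plus one more for each such derivative. This is what keeps the number of derivatives on $f_\init$ at most $N$.

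The gap is in (5). Integrating \eqref{linthm-density} from $s$ to $\infty$ gives only $\int_s^\infty\jap{\sigma}^{-N+i}\,d\sigma\sim\jap{s}^{-N+i+1}$, and only for $i\le N-2$. That is one power of decay and one derivative short of \eqref{linthm-conv}. Your remedy of $N+1$ phase integrations is not available, because \eqref{linthm-init} controls only $N$ derivatives of $f_\init$.

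The fix is to avoid $\partial_s$ altogether. Write $\rho-\widetilde\rho=\frac{\pi}{R^2A_0^2}\sum_{k\neq0}\Gamma^{(0)}_k$, where $\Gamma^{(0)}_k$ is the $k$-th mode integral \emph{without} the factor $-ik\Omega_{\Sch}$ that the time derivative produced. Then run the same $\ell$-integration by parts on it. Since that factor of $k$ is absent, spending one power of $k$ on the Cauchy--Schwarz summation leaves at most $1+i$ derivatives on $f_\init$ (instead of $2+i$). So the estimate closes for $i\le N-1$ with rate $\jap{s}^{-N+i}$.

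For the metric, let $\widetilde{\vec X}_\lin$ solve \eqref{eq:linODE} with source $\widetilde{\vec S}=(\widetilde\rho,\widetilde{\tr T},\widetilde j,\widetilde{S}_R,\Psi-\Psi_0)$. Then $\vec X_\lin-\widetilde{\vec X}_\lin$ satisfies the same linear ODE as $\partial_s\vec X_\lin$ in (4), with source $\vec S-\widetilde{\vec S}$. The same Gr\"onwall/induction argument, now fed source bounds up to order $N-1$, gives the full range $i\le N+1$.
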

\begin{rmk}[The vector field \(Z\)]
    We define \(Z\) to be a linear combination of \(\rd_{H_0}\) and \(\rd_M\) as follows: 
    \[
    Z \coloneqq (\rd_M \O_{\Sch})(H_0, M)\rd_{H_0} - (\rd_{H_0} \O_{\Sch})(H_0, M)\rd_M,
    \]
    where \(\O_{\Sch}(H_0, M)\) is a function that will be introduced later. 
    In particular, \(Z\) commutes with \(\rd_{Q_T}\).
    Using this vector field is necessary to obtain the improved bound of \(f\) as this commutes with the linearized transport operator \(\Dfr^\lin_0\). 
    With the coordinate vectors, the best estimate on \(f\) we have is the following: 
    \[
    \sup_{Q, H_0, M} \abs{\rd_Q^{i_1} \rd_{H_0}^{i_2} \rd_M^{i_3} f(t, Q, H_0, M)} \le C B_\init\jt^{i_2+i_3}, \qquad i_1+i_2+i_3 \le N.
    \]
    Both \(\rd_{H_0}\) and \(\rd_M\) lose one \(\jap{t}\), but \(Z\) does not cause additional growth. 
\end{rmk}

The main point of the proof of this theorem is checking the condition for the non-stationary phase argument, which gives the decay of macroscopic quantities.
We first define the action angle variables and write the Vlasov equation in this coordinate system, where the solution can be written explicitly.
In particular, through the Fourier expansion we see that the frequency mode \(e^{ikQ}\) of \(f\) has a phase rotation \(e^{-ikt\O_{\Sch}(H_0, M)}\).
Therefore, for the non-stationary phase argument on integrals defining \(\r, \tr T, j, S_R\), it is crucial to show that the \(\rd_\ell\) derivative (which we integrate by parts) of \(\O_{\Sch}(H_0, M)\) has a sign. 
This condition, on the large class of initial data, will be verified in Proposition~\ref{prop:monotonic}, which is a key new ingredient of the linear problem.

\subsection{Nonlinear stability statement}
For the nonlinear problem, Einstein equation is coupled with the Vlasov equation; Vlasov equation has metric-dependent coefficient, and this metric is given as a solution of Einstein Equation. 
The energy-momentum tensor in Einstein equation is given by integrating the distribution function \(f\) along each fiber. 
Therefore, the metric and the distribution function are coupled together in a nonlinear way.

We first describe the class of the initial data that we are considering for the nonlinear problem. 
The data is given on the manifold \(\Sigma_\init = [m/4, \infty)_R\times \SS^2\), and the subset of mass shell, which is the union of fibers at the points in \(\Sigma_\init\).
We consider the metric \(\ghat = A_\init^2(R)[dR^2 + R^2d\O^2]\) on \(\Sigma_\init\) together with the second fundamental form \(\khat\), of which representation in the coordinate \((R, \th, \phi)\) is given by 
\[
\khat_{RR} = A_\init^2K_\init, \qquad
\khat_{\th\th} = -\frac{1}{2}A_\init^2R^2(K_\init-\Psi), \qquad
\khat_{\phi\phi} = -\frac{1}{2}A_\init^2R^2\sin^2\th(K_\init-\Psi). 
\]
Note that all of these quantities are functions of \(R\) only. 
We want our perturbation to be compactly supported, in particular \(A_\init\), \(K_\init\), and \(\Psi\) match with those for the hypersurface in Schwarzschild that we are considering on the complement of a compact set. 
To be explicit, we want 
\[
A_\init(R) = (1+\frac{m}{2R})^2, \qquad
K_\init(R) = 0, \qquad
\Psi(R) = 0
\]
for arbitrarily large \(R\).
For \(f_\init\), we want it to be supported on \(\Scal_{\frac{1}{2}\cfr}\) (allowing a room), just as in the linear case; this guarantees that (at least within the timescale that we consider) the characteristic curve stays in a compact region. 
In particular, \(f_\init\) is nonzero only for \(R\) in a bounded interval, with \(R > \big(\frac{3}{2}+\sqrt{2}\big)m\), hence the energy-momentum tensor is supported in this region as well.
Still, our initial data should satisfy the constraint equations, which is given as follows:
\begin{align}
(R^2\sqrt{A_\init}')' &= -\frac{A_\init^{5/2}R^2}{8}\big(16\pi\r + \frac{3}{2}K_\init^2 - K_\init\Psi - \frac{1}{2}\Psi^2\big), \label{eq:GaussInit}\\
(A_\init^3R^3K_\init)' &= 8\pi A_\init^4R^3 j + A_\init^2R^2(A_\init R\Psi)'. \label{eq:CodazziInit}
\end{align}
In particular, the initial data that we consider should satisfy certain smallness, 
i.e. \(A_\init(R)-A_0(R)\), \(K_\init(R)-K_0(R)\), and \(\Psi(R)-\Psi_0(R)\) are small, and \(f_\init\) in \(L^\infty\) norm is small. 

The local existence result for this system is given as follows: 
\begin{prop}[Local Existence]\label{prop:LWP}
    Let \(m, \lfr\) and \(m/4 < R_a < R_b < R_+\) be arbitrary positive numbers. 
    There are positive constants \(\e_0 > 0\) and \(C_0 > 0\) such that the following statement is true: Say the initial data \(A_\init, K_\init, \Psi\) and \(f_\init\) with the support condition 
    \[
    \supp(A_\init - A_0, K_\init - K_0, \Psi - \Psi_0) \subset [m/4, R_+], \qquad \supp(f_\init)\subset [R_a, R_b]\times [-\lfr, \lfr]\times (0, \lfr]
    \]
    and the smallness condition 
    \[
    \norm{A_\init-A_0}_{C^2} + \norm{K_\init-K_0}_{C^1} + \norm{\Psi-\Psi_0}_{C^1} + \norm{f_\init}_{C^0} \le \e < \e_0,
    \]
    satisfies the constraint equation \eqref{eq:GaussInit}--\eqref{eq:CodazziInit}.
    Then, there is a time \(T>0\) (that depends on the initial data only through \(m, \lfr, R_a, R_b, R_+\) and \(\e\)) such that the solution exists up to time \(T\), i.e. there is a function 
    \[
    \a, A, \b, K\colon [0, T)_s\times [m/4, \infty)_R\to \RR
    \]
    and 
    \[
    f \colon [0, T)_s\times [m/4, \infty)_R\times \RR_w\times \RR^{>0}_\ell\to \RR^{\ge 0} 
    \]
    which satisfy the equations \eqref{eq:Gauss}--\eqref{eq:EE} and \eqref{eq:vlasov-rwl}, and achieve the boundary condition \eqref{bdrycond}.
    This solution satisfies the following estimates:
    \begin{equation}\label{LWP-est}
        \sup_{s\in [0, T]}\big(\norm{\a-\a_0}_{C^2} + \norm{A-A_0}_{C^2} + \norm{\b-\b_0}_{C^2} + \norm{K-K_0}_{C^1} + \norm{f}_{C^0}\big) \le C_0 \e,
    \end{equation}
    and it also satisfies the following support condition: 
    \begin{equation}\label{LWP-support}
        \begin{cases}
        &\supp(\a(s)-\a_0, A(s)-A_0, \b(s)-\b_0, K(s)-K_0) \subset [m/4, R_+], \\
        &\supp(f(s)) \subset [(R_a + m/4)/2, (R_b + R_+)/2]\times [-\lfr-1, \lfr+1]\times (0, \lfr],
        \end{cases}
    \qquad s\in [0, T].
    \end{equation}
    Moreover, regularity is propagated on the same time interval: for every integer \(k\ge 1\) there is \(C_k > 0\) (depending in addition on \(k\)) such that if
    \[
    \norm{A_\init-A_0}_{C^{k+2}} + \norm{K_\init-K_0}_{C^{k+1}} + \norm{\Psi-\Psi_0}_{C^{k+1}} + \sum_{\abs{I}\le k}\norm{\rd^I f_\init}_{C^0} \le \e,
    \]
    then
    \begin{equation}\label{rLWP-est-high}
        \sup_{s\in[0,T]}\Big( \norm{\a-\a_0}_{C^{k+2}} + \norm{A-A_0}_{C^{k+2}} + \norm{\b-\b_0}_{C^{k+2}} + \norm{K-K_0}_{C^{k+1}} + \sum_{\abs{I}\le k}\norm{\rd^I f}_{C^0}\Big) \le C_k\,\e.
    \end{equation}
\end{prop}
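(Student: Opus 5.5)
I would prove Proposition~\ref{prop:LWP} by a Picard iteration (equivalently, a contraction argument) that alternates between solving the Vlasov equation on a given metric and solving the Einstein ODE system \eqref{absODE} at each fixed time with the given matter source.

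\textbf{Iteration scheme.} Set \((\a^{(0)},A^{(0)},\b^{(0)},K^{(0)})=(\a_0,A_0,\b_0,K_0)\). Given the \(n\)-th metric, in \(C^2\) for \(\a,A,\b\) and \(C^1\) for \(K\), I define \(f^{(n+1)}\) by transporting \(f_\init\) along the characteristics of \eqref{eq:vlasov-rwl} built from that metric. Since the coefficients are \(C^1\) in \((R,w)\) for \(\ell\) in a compact set and \(R\) bounded away from \(0\), the characteristics are well defined. Because the coefficient perturbation is \(O(\e)\), the characteristics move at speed bounded by a constant \(C(m,\lfr)\). Choosing \(T\) small, depending on \(m,\lfr,R_a,R_b,R_+\), gives the support condition \eqref{LWP-support} for \(f^{(n+1)}\); \(\ell\) is conserved, and \(w\) changes by at most \(1\).

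Next I compute \(\vec S^{(n+1)}\) from \eqref{rhodef}--\eqref{SRdef}. For each fixed \(s\) I solve \eqref{absODE} by integrating inward from \(R_+\) with the Schwarzschild data \eqref{bdrycond}. This imposes the boundary condition automatically, since \(\vec S\) vanishes for \(R\ge R_+\) provided \((R_b+R_+)/2<R_+\) and \(\Psi=\Psi_0\) there. The subtle point is that \eqref{absODE} determines \(A\), \(\a\), \(K\), \(\b\) at time \(s\), but \(A\) must also agree with the evolution equation \eqref{eq:Keq}. I would rely on the standard fact (Rendall \cite{R96}) that in this gauge the constraints \eqref{eq:Gauss}, \eqref{eq:Codazzi} are propagated. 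Concretely, I would solve \eqref{eq:Gauss}, \eqref{eq:Codazzi} with \eqref{eq:SumEE}, \eqref{eq:MeanCurv} slice by slice, and verify a posteriori that \eqref{eq:Keq}--\eqref{eq:EE} hold once \(f\) solves Vlasov, using the contracted Bianchi identity and \(\nabla_\mu T^{\mu\nu}=0\).

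\textbf{Uniform bounds.} Linearizing the ODE around \(\vec X_0\) on the compact interval \([m/4,R_+]\) and applying Gr\"onwall gives
\[
\|\vec X^{(n+1)}-\vec X_0\|_{C^0}\le C\|\vec S^{(n+1)}\|_{C^0}.
\]
The equations for \(A''\) and \(\a''\) then upgrade this to \(C^2\) bounds for \(A,\a\), and \eqref{eq:MeanCurv} gives a \(C^2\) bound for \(\b\) via \(K\in C^1\). The source obeys \(\|\vec S\|_{C^0}\lesssim \|f\|_{C^0}+\|\Psi-\Psi_0\|_{C^1}\). The \(C^1\) control of \(K\) needs only \(j\in C^0\) and \(\Psi\in C^1\). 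This closes \eqref{LWP-est} with \(C_0\) independent of \(n\), provided \(\e_0\) is small enough that \(A\) stays away from \(0\) and \(\vec F\) stays smooth.

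\textbf{Main obstacle: contraction.} Differences \(f^{(n+1)}-f^{(n)}\) in \(C^0\) are controlled by differences of characteristics, which costs one derivative of \(f_\init\). So I would contract in a weaker norm: \(C^0\) for metric differences and \(W^{-1}\)-type (or \(L^1\)-in-\((w,\ell)\)) bounds for \(f\) differences, exploiting that the density integrals average out one derivative. Alternatively, I would first assume \(f_\init\in C^1\), obtain the solution, and pass to \(C^0\) data by approximation using the uniform bounds. Here the elliptic gain of two derivatives is what makes the argument close. Higher regularity \eqref{rLWP-est-high} then follows by differentiating the transport equation, where characteristic derivatives are bounded by Gr\"onwall using metric \(C^{k+2}\) norms, and by differentiating the ODE, where metric \(C^{k+2}\) is bounded by \(\vec S\in C^k\), itself bounded by \(\partial^I f\). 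This is a linear Gr\"onwall closure on the same interval \([0,T]\), so \(T\) does not depend on \(k\).
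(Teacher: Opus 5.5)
This is essentially the paper's proof: it runs the same transport/ODE iteration for the reduced system \eqref{eq:rGauss}--\eqref{eq:rVlasov}. It takes your second option for the contraction, assuming $f_\init$ Lipschitz and measuring differences with $C^1$ on $\a, A$ and $C^0$ on $\b, K, f$, and it recovers \eqref{eq:Keq}--\eqref{eq:EE} by showing via the Bianchi identity that their defects satisfy a homogeneous linear ODE in $R$ with zero data at $R_+$, not by propagating anything from $s=0$.

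Two remarks. The $L^1$-in-$(w,\ell)$ variant of your first option would not remove the derivative loss, because the densities are pointwise in $R$; your approximation step from Lipschitz to $C^0$ data, by contrast, is exactly what delivers the $T$-dependence claimed in the statement, since the paper's contraction constant involves $\norm{f_\init}_{C^{0,1}}$.
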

The proof of this proposition is given in Subsection \ref{subsec:LWP}.

Now we can state the main result of this paper. 
\begin{thm}[]\label{thm:nonlin}
    Consider the Cauchy problem of Einstein--Vlasov system \eqref{eq:g}--\eqref{eq:f}. Then, for any given \(m, \cfr > 0\), and \(N\ge 6\) there are \(\e_0 > 0\) and \(\d > 0\), (which depend only on \(m\), \(\cfr\), and \(N\)) such that the following stability and phase mixing statement is true:
    Let a perturbation of Schwarzschild \((\Sigma_\init, \ghat, \khat, f_\init)\) be given which,
    \begin{enumerate}
        \item satisfies the constraint equation \eqref{eq:GaussInit}--\eqref{eq:CodazziInit},
        \item matches with Schwarzschild with mass \(m\) in standard foliation for large enough \(R\), which means 
        \[
        A_\init(R) = \big(1+\frac{m}{2R}\big)^2, \quad
        K_\init(R) = 0, \quad
        \Psi(R) = 0, \quad
        f_\init(R, w, \ell) = 0.
        \]
        \item satisfies the support condition \(\supp(f_\init)\subset \Scal_{\cfr}\),
        \item has a smallness condition 
    \begin{equation}\label{nonlinthm-initAK}
    \sum_{I \le N+3} \sup_{R} \abs{\rd_R^I (A_\init-A_0)(R)} 
    + \sum_{I \le N+2} \left[\sup_{R} \abs{\rd_R^I (K_\init-K_0)(R)} 
    + \sup_{R} \abs{\rd_R^I (\Psi-\Psi_0)(R)}\right]
    \le \d\e,
    \end{equation}
    \begin{equation}\label{nonlinthm-initf}
    \sum_{i_1+i_2+i_3 \le N+1} \sup_{R, w, \ell} \abs{\rd_R^{i_1} \rd_w^{i_2} \rd_\ell^{i_3} f_\init(R, w, \ell)} \le \d\e,
    \end{equation}
    for \(0 < \e < \e_0\).
    \end{enumerate}
    Then, there is a spacetime with distribution function on its mass shell, \((\Mcal, g, f)\), with \(\Mcal \cong \cup_{s\in [0, T_f]}\Sigma_s\) where \(T_f \coloneqq \e^{-1}(\log 1/\e)^{-2}\) and \(\Sigma_s\) is a diffeomorphic copy of \(\Sigma_\init\), which can be embedded in the MGHD of the initial data, and the following statements on this spacetime are true.
    \begin{enumerate}
        \item (Controlled growth of \(f\) derivatives) The solution \(f\) satisfies the following growth estimates for its derivatives:
        \begin{equation}\label{nonlinthm-f1}
        \sup_{R, w, \ell} \abs{\rd_R^{i_1} \rd_w^{i_2} \rd_\ell^{i_3} f(s, R, w, \ell)} \le C\d\e \jap{s}^{i_1 + i_2 + i_3}, \qquad i_1+i_2+i_3 \le N, \quad s\le T_f.
        \end{equation}
        \item (Improved bound of \(f\) derivatives in dynamical action angle variables) There is a coordinate system \((t = s, Q_{T_f}, H, M = \ell)\) defined \textbf{dynamically} with \(H\) given by \eqref{Hdef} together with a vector field \(Z\) given by Definition \ref{def:Zdef} such that
        \begin{equation}\label{nonlinthm-f2}
        \sup_{Q_{T_f}, H, M} \abs{\rd_{Q_{T_f}}^{i_1} \rd_H^{i_2} Z^{i_3} f(t, Q_{T_f}, H, M)} \le C\d\e\jt^{i_2}, \qquad i_1+i_2+i_3 \le N, \quad t\le T_f.
        \end{equation}
        \item (Decay of \(\rd_s\r, \rd_s\tr T, \rd_s j, \rd_s S_R\)) The time derivatives of the energy-momentum tensor components decay in time polynomially, more precisely,
        \begin{equation}\label{nonlinthm-density}
        \sup_{R} \abs{\rd_R^i \rd_s (\r, \tr T, j, S_R)(s, R)} \le C\d\e \jap{s}^{-N + i}, \qquad i \le N-2, \quad s\le T_f.
        \end{equation}
        \item (Decay of \(\rd_s \a, \rd_s A, \rd_s \b\)) The time derivatives of the metric components decay in time polynomially, more precisely,
        \begin{equation}\label{nonlinthm-metric}
        \sup_{R} \abs{\rd_R^i \rd_s (\a, A, \b)(s, R)} \le C\d\e \jap{s}^{-N + \max{i-2, 0}}, \qquad i \le N, \quad s\le T_f.
        \end{equation}
        \item (Metric stays near Schwarzschild) The metric components \(\a, A, \b\) are exactly equal to those of Schwarzschild for \(R \ge R_{\textrm{out}}\), where \(R_{\textrm{out}}\) is a constant that depends only on \(m, \cfr, N\), and remain close to their Schwarzschild values for all \(s \le T_f\), more precisely,
        \begin{equation}\label{nonlinthm-metric2}
        \sup_{R} \abs{\rd_R^i (\a-\a_0, A-A_0, \b-\b_0)(s, R)} \le C\d\e, \qquad i \le N+2, \quad s\le T_f.
        \end{equation}
    \end{enumerate}
    Here, the constant \(C\) depends on \(m, \cfr, N\), but not on \(\e\), \(\d\), \(t\) or \(s\).
\end{thm}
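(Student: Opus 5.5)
The proof will be a bootstrap argument on \([0,T]\subset[0,T_f]\), started from the local solution of Proposition \ref{prop:LWP} and continued by its propagation-of-regularity clause. As bootstrap assumptions I take the conclusions \eqref{nonlinthm-f1}--\eqref{nonlinthm-metric2} with \(C\) replaced by a large constant \(C_b\). I also assume that \(\supp f(s)\subset \Scal_{\cfr/2}\) together with a slightly enlarged compact support in \((R,w,\ell)\). The goal is to recover all of these with \(C_b/2\) and with support in a set that stays strictly inside \(\Scal_{\cfr/4}\).

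The metric part comes first, since it is the cheapest. On each leaf \(s\), the quantities \((\a-\a_0, A-A_0, \b-\b_0, K-K_0)\) solve the ODE system \eqref{absODE}, with data prescribed at large \(R\) by the Schwarzschild boundary condition \eqref{bdrycond}. Integrating inward from \(R_{\textrm{out}}\) and using the stability of this ODE near \(\vec X_0\), I get \(C^{N+2}\) bounds in terms of \(N\) derivatives of \(\vec S\). These in turn are controlled by the \(f\) bounds, which gives \eqref{nonlinthm-metric2}. The gain of two derivatives comes from the second-order structure of \eqref{eq:Gauss} and \eqref{eq:SumEE}. Differentiating the ODE in \(s\), the same argument gives \eqref{nonlinthm-metric} from \eqref{nonlinthm-density}, with the shift \(\max{i-2,0}\) reflecting this gain. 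The evolution equations \eqref{eq:Keq}--\eqref{eq:EE} are only needed for consistency, and they express \(\rd_s K\) through the same sources.

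The core of the proof concerns \(f\). Here I introduce dynamical action--angle variables \((t, Q_{T_f}, H, M)\), built from \(H=\a E-\b Aw\) of \eqref{Hdef} and the frozen-time frequency \(\O(s;H,M)\) of the instantaneous metric. The angle is normalised at the final time \(T_f\), so that the transport operator becomes \(\rd_t+\O\,\rd_Q+\mathcal{E}\). The error \(\mathcal{E}\) carries coefficients involving \(\rd_s\a,\rd_s A,\rd_s\b\) and the deviation of \(\O\) from its value at \(T_f\). By \eqref{nonlinthm-metric}, each of these coefficients is \(O(\d\e\jap{s}^{-N+2})\) or \(O(\d\e)\). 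Commuting with \(\rd_Q\), \(\rd_H\) and the vector field \(Z\) of Definition \ref{def:Zdef} (which commutes with \(\O\rd_Q\) to leading order), I integrate along characteristics. \(\rd_H\) costs one factor \(\jt\) through \([\rd_H,\O\rd_Q]=(\rd_H\O)\rd_Q\), while \(Z\) costs nothing. The error commutators are bounded by \(\d\e\,\jt\times\)(a lower-order derivative), and Gronwall over \([0,T_f]\) then gives a multiplicative loss \(\exp(C\d\e T_f\log T_f)\) or similar. The choice \(T_f=\e^{-1}(\log 1/\e)^{-2}\) keeps this factor bounded, which is precisely the echo timescale; this yields \eqref{nonlinthm-f2}. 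Converting back to \((R,w,\ell)\) via the Jacobian, whose derivatives grow at most like \(\jt\) per derivative, gives \eqref{nonlinthm-f1}. Support is handled in the same way: \(H\) and \(M\) change by at most \(O(\d\e)\) over the whole interval, so the support stays inside \(\Scal_{\cfr/2}\).

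The decay \eqref{nonlinthm-density} then follows by non-stationary phase, as in Theorem \ref{thm:lin}. I write \(\rd_s\r\) as an integral over \((w,\ell)\) of \(\O\rd_Q f+\mathcal{E}f\), expand in Fourier modes in \(Q\), and integrate by parts \(N\) times in the direction along which the phase \(e^{-ikt\O}\) varies. Proposition \ref{prop:monotonic}, together with the condition \(\rd_\ell\O<-\cfr/2\) (which persists under the small perturbation), guarantees a nonvanishing derivative of the phase. Each integration by parts gains \(\jt^{-1}\), at the cost of one \(Z\) or \(\rd_Q\) derivative, which costs nothing by \eqref{nonlinthm-f2}; each \(\rd_R\) loses one power.

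I expect the main obstacle to be the nonlinear commutator estimate in the \(f\) part. The coefficients of \(\mathcal{E}\) depend on \(\rd_H\) of the metric, and when the commutator hits them, top-order terms \(\rd_R^N\) of the metric appear. These must be closed through the elliptic gain, not through the density decay alone. My plan for this is to use \eqref{nonlinthm-metric2} at order \(N+2\) and to accept the resulting loss of \(\jt^{N}\) at top order. Whether this loss remains compatible with \(T_f\) is exactly where the constraint \(N\ge 6\) and the \((\log 1/\e)^{-2}\) margin must be used.
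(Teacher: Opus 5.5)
\textbf{The gap is in the step you treat as routine: the decay \eqref{nonlinthm-density} by non-stationary phase.}

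In the linear problem, the phase $e^{-ikt\O}$ multiplies an amplitude $\wh{(f_\init)}_k(H,M)$ whose derivatives do not grow. That is what the $N$ integrations by parts in $\ell$ rely on. The nonlinear $f$ has no such factorization. Since $\O$ is (nearly) annihilated by $\rd_{Q_T}$ and $Z$, any direction along which the phase varies must have a $\rd_H$-component. For $\rd_\ell$, that component is $\rd_\ell(\O_\Sch)/\rd_X\O_\Sch$ by \eqref{dHintoell}, and it is bounded away from zero precisely because of the nondegeneracy you invoke. Each integration by parts therefore lands on the profile of $f$ as a $Y_H$-derivative (see \eqref{YHdef}), not as a $Z$ or $\rd_{Q_T}$ derivative. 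With only $\abs{\rd_H f}\lesssim \d\e\jt$ available, the gain $\jt^{-1}$ from the phase is exactly cancelled, and the paper controls only a single $Y_H$ derivative of $f$ (Theorem \ref{thm:Vlasov}).

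The paper instead uses Duhamel: $f=f_\init(Q_T-t\O,H,M)+\int_0^t\Rcal(\t,Q_T-(t-\t)\O,H,M)\,d\t$.
\begin{itemize}
\item The first term is handled by your linear argument (Proposition \ref{prop:qL}).
\item In the second term, the phase $e^{-ik(s-\t)\O}$ gives nothing for $\t$ near $s$. The decay must then come from the coefficients $p_b=-\Pcal,-\Dfr H$, i.e.\ from time derivatives and two-time differences of the metric; this is where the two-derivative elliptic gain enters.
\item One then analyses the bilinear interaction $\wh{(p_b)}_l\,\wh{(\rd_b f)}_{k-l}$ using $Y_{k,-l\t}$ and $Y_{k,-k\t}$ from \eqref{Yketdef}, with the split $\t\in[0,s/2]\cup[s/2,s]$ (Lemmas \ref{lem:T5comp}, \ref{lem:T5b}).
\end{itemize}
The $(\log\jap{s})^2$ loss there comes from $\int\jap{l}\jap{ks-l\t}^{-1}d\t$ and from summing over $l$. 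It forces $\e\jap{s}(\log\jap{s})^2\lesssim 1$, which is exactly $T_f$.

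By contrast, the Vlasov bounds close without any exponential Gronwall factor, using only $\e t\lesssim 1$, because $\Pcal$ and $\Dfr H$ decay like $\jt^{-N+O(1)}$ (Lemmas \ref{lem:vlasovT2}, \ref{lem:vlasovT3}). So your attribution of $T_f$ to a Gronwall factor is misplaced. Moreover, a Gronwall coefficient of size $\d\e\jt$, as you write, would give $\exp(C\d\e t^2)$, which is unbounded at $t\sim T_f$.

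\textbf{Your bootstrap also lacks two ingredients this argument needs.}

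\emph{Two-time differences of the metric.} $Q_T$ refers to the metric at a reference time $T$. Inside a continuity argument, $T$ must be the running time $T<T_B$, not $T_f$. (Using the instantaneous metric, as your ``frozen-time frequency'' suggests, is not an option: by Remark \ref{rmk:coordchoice} the resulting angle is not regular in $s$ at turning points.) The transport error then involves $(\a-\bar\a,A-\Abar,\b-\bar\b)$ and their $R$-derivatives. These must decay like $\jap{s}^{-N}$ at low order, uniformly in $T$, as in \eqref{BA2} and \eqref{BAK2}. Integrating your $\rd_s$ bounds from $s$ to $T$ gives only $\jap{s}^{-N+1}$. That is exactly the power the echo estimate cannot spare, since it closes only because $\e\jap{s}(\log\jap{s})^2\lesssim1$. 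The paper recovers \eqref{BA2} from a direct phase-mixing bound on $\qbf(s)-\qbf(T)$, namely \eqref{qqbarlow}, which decays at the same rate as $\rd_s\qbf$.

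\emph{Top-order control.} Accepting a $\jt^{N}$ loss at top order is fatal: at $t\sim T_f$ it is of size $\e^{-N}$ up to logarithms, and neither $N\ge 6$ nor a logarithmic margin absorbs it. Two derivatives are lost beyond \eqref{nonlinthm-metric2}: defining $Q_T$ through a singular integral costs one metric derivative, and summing Fourier modes costs another. The paper therefore carries separate top-order bootstrap bounds that lose exactly one $\jap{s}$: \eqref{BAH0}--\eqref{BAHK1} for the metric, \eqref{ftop} for $f$, and \eqref{qtop}, \eqref{dsqtop} for the densities.
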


\begin{rmk}[The `dynamical' action angle variable]
    The coordinate \(Q_T(s, R, w, \ell)\) refers to the solution at time \(T\).
    This is the reason why we call this coordinate system `dynamical', because the definition of the coordinate depends on the solution itself.
    Here, time \(T\) refers to a later time that we use for the bootstrap argument; we bootstrap the fact that the difference of solution at time \(s\) and \(T\) with \(s < T\) decays in \(s\), and for that we use the coordinate system that depends on the solution at time \(T\). 
    Nevertheless, we use \(H(s, R, w, \ell)\) which depends only on the solution at time \(s\).
    This choice is to make the Vlasov equation \label{vlasov-DAAV} to have decaying coefficients. 
    This will be explained more in Remark \ref{rmk:coordchoice}.
\end{rmk}
\begin{rmk}[The vector field \(Z\)]
    For the nonlinear analysis we keep using the same formula for \(Z\) as in the linear case, but the definition of coordinates (and hence the coordinate vectors) were changed. 
    Still, \(Z\) commutes with \(\rd_{Q_T}\), and it depends on the solution only through the gauge choice (in the sense that \(\rd_H\) depends on the solution), meaning that the coefficients of \(\rd_H\) and \(\rd_M\) in the definition of \(Z\) are independent of the solution. 
    This will be explained in Definition \ref{def:Zdef} and the following discussion.
\end{rmk}
\begin{rmk}[The solution reaches the trapping region]
The solution described above touches the trapping region within our domain, i.e. in \(\S_s\) for each \(s\in [0, T_f]\). 
To be even more precise, for these \(s\) the point \((s, m/4)\) is trapping.
This is essentially because of (5) of the theorem, together with the fact that being trapping is a stable condition.
However, the theorem is insufficient to spot where the event horizon is, mainly because we do not have a control of the solution for \(s > T_f\).
\end{rmk}

\subsection{Ideas of proof}
\subsubsection{Action angle variables and linear phase mixing}
We restricted in the regime that the geodesics in the support of \(f\) are all bounded: in the linear system the conserved quantity \(H\) and \(M\) determine whether the geodesic is bounded or unbounded, and we restrict the support of the initial data to the bounded regime.
For fixed \(H\) and \(M\), we can analyze the geodesic as a solution for Hamiltonian ODE system. 
The quantity \(w\) is the radial velocity, and by fixing \(H\) and \(M\) the relation 
\[
H^2 = \a_0(R)^2\left(1 + w^2 + \frac{\ell}{A_0(R)^2R^2}\right)
\]
serves a conservation law in \((R, w)\) space.
In this context, the allowed range of \((H, M)\) forces this conservation law to have a bounded orbit in \((R, w)\) space.

Nevertheless, in contrast to the Kepler case \cite{CL24}, the trajectories are not closed; there is a precession of the orbit, which is a well-known fact in physics.
Therefore, it is impossible to define a global action angle variables on a subset of \(\RR_x^3\times \RR_v^3\). 
This is the reason why we need spherical symmetry even for the linear system; because the precession is only in the angular direction, in spherical symmetry the periodicity in \((R, w)\) space allows us to define global action angle variables on a subset of \(\RR_R\times \RR_w\times \RR_\ell\).
This can also be understood that we are considering thin mass shells at a fixed radius which shrink and dilate, of which motion is periodic in \((R, w)\) space.

In the action angle variable coordinates \((t, Q, H_0, M)\), the Vlasov equation is given as 
\[
\rd_t f + \O_\Sch(H_0, M)\rd_Q f = 0,
\]
where \(\O_\Sch(H_0, M)\) is given as a certain integral over the trajectory in \((R, w)\) space, with given parameter \(H_0, M\).
This is a transport equation with constant coefficient, and we can write the solution explicitly in terms of the initial data \(f_\init\) as 
\[
f(t, Q, H_0, M) = f_\init(Q - t\O_{\Sch}(H_0, M), H_0, M).
\]
This shows the controlled growth of the derivatives of \(f\) in \((Q, H_0, M)\) coordinates, meaning that \(\rd_Q\) derivative does not grow in time, and \(\rd_{H_0}\) and \(\rd_M\) derivatives grow at most linearly in time. 
Furthermore, the integrated quantity that we want to show the decay estimate can be written as 
\[
\rd_s I(s, R) = 
\frac{\pi}{R^2A_0(R)^2}\sum_{k\neq 0} \int_{-\infty}^{\infty}\int_0^\infty (-ik\O_{\Sch})e^{ikQ}e^{-iks\O_{\Sch}} \wh{(f_\init)}_k(H_0, M) \bar{\CC}_0(R, w, \ell) d\ell dw,
\]
where \(\wh{(f_\init)}_k\) is the Fourier coefficient of \(f_\init\) with respect to \(e^{ikQ}\), and \(\bar{\CC}_0(R, w, \ell)\) is a fixed function.
Therefore, the non-stationary phase argument on \(e^{-iks\O_{\Sch}}\) can be applied to obtain the desired decay estimate.

\subsubsection{Period function and its monotonicity}
To make the non-stationary phase argument work, we need to understand the quantity \(\O_\Sch(H_0, M)\), which is the frequency of the periodic motion in \((R, w)\) space.
This is given as a singular integral as follows: 
\begin{equation}\label{OSchdef}
\frac{\pi}{\O_{\Sch}(H_0, M)} = \int_{R_-^0(H_0, M)}^{R_+^0(H_0, M)} \frac{A_0(R)H_0}{\a_0(R)^2w(R, H_0, M)} dR.
\end{equation}
This is in stark contrast to the Kepler case \cite{CL24}, where the period function takes the simple explicit form that is proportional to \((-H_0)^{-3/2}\), depending only on the energy \(H_0\) and not on the angular momentum \(M\).
In particular, it is not at all trivial that the \(\ell\) derivative of \(\O_\Sch(H_0, M)\) is nonzero, which is crucial for the non-stationary phase argument.
Note that the derivative we take is in \(\rd_\ell\), not in \(\rd_M\), so we need to understand the derivative of \(\O_\Sch\) in both of its arguments \(H_0\) and \(M\).

We prove in Proposition~\ref{prop:monotonic} that \(\rd_\ell \O_\Sch(H_0, M)\) indeed has a definite sign throughout the admissible region. 
This contains the region that when the (normalized) angular momentum \(\ell/m^2\) is large enough, or eccentricity \(e\), which is the measure of how far the orbit is from being circular, is small enough.

\subsubsection{The gauge choice}
We need to choose a gauge to derive a PDE from the geometric formulation of Einstein's equation. 
In \cite{R96}, Rendall write the metric in isotropic coordinates and chose the maximal gauge, which makes each leaf of the foliation to have zero mean curvature. 
\[
g(s, R) = -\a^2(s, R)ds^2 + A^2(s, R)[(dR+\b(s, R)ds)^2 + R^2d\O^2]
\]
Then, the Einstein equation can be represented with a system of ODEs \eqref{eq:Gauss}--\eqref{eq:MeanCurv} and two PDEs \eqref{eq:Keq} and \eqref{eq:EE}.
We should understand \eqref{eq:EE} to have evolution information of the metric, \eqref{eq:MeanCurv} and \eqref{eq:Keq} to have the information of the foliation and the definition of \(K\), and \eqref{eq:Gauss}--\eqref{eq:Codazzi} to have the information of the constraint equations.

Because of the finite speed of propagation, as the matter is supported in a compact region for any finite time, the metric outside this region should be exactly Schwarzschild by Birkhoff's theorem, and the Hawking mass should be constant in this region.
Therefore, we choose our foliation in the way that the metric coefficient \(\a, A, \b\) are exactly the same as those of Schwarzschild in the standard foliation for \(R\) outside the matter support.
This allows us to understand the ODEs \eqref{eq:Gauss}--\eqref{eq:MeanCurv} as an ODE system with a source term given by the energy-momentum tensor.
In this way, instead considering the evolution of metric coefficient given by \eqref{eq:Keq} and \eqref{eq:EE}, we can understand all the evolution by Vlasov equation, and then calculate the metric coefficients as a solution with the ODE system.
Abstractly, we can write the ODE in the following form: 
\[
\vec{X}'(R) = \vec{F}(\vec{X}(R), \vec{S}(R), R)
\]
where \(\vec{X} = (A, A', \a, \a', K, \b) \in \RR^6\) and \(\vec{S}\) serves source term, contains the energy-momentum tensor coefficients.
With this ODE, we gain two derivatives on the metric coefficients \(A\) and \(\a\), one on \(\b\) and \(K\), relative to \(\vec{S}\), and one extra derivative of \(\b\) from the equation \eqref{eq:MeanCurv}.
This is the main mechanism of gaining two derivatives from the Einstein's equation. 

Since we are perturbing the Schwarzschild metric, we need to ensure that the matter support is contained in the future of the initial hypersurface.
The initial hypersurface should be chosen to ensure this.
We choose the initial hypersurface \(\Sigma_0\) to extend inside the black hole region, so that its causal future contains the entire matter support.
We then choose the foliation so that each leaf has constant mean curvature.
This ensures that we do not lose additional derivatives in the elliptic estimates, while leaving the Vlasov equation unchanged.

\subsubsection{Dynamical action angle variables}
We use the dynamical action angle variable coordinates \((t, Q_T, H, M)\) to write the Vlasov equation, following \cite{CL24}.
In this way, we can use the vector field method: from the equation 
\[
    \rd_t f + \O(H, M)\rd_{Q_T}f + \Pcal(t, T, Q_T, H, M)\rd_{Q_T} f + \Dfr H \rd_H f = 0
\]
(See \eqref{vlasov-DAAV}), we can predict that \(\rd_{Q_T}\) does not cause an additional \(\jap{t}\) loss, but \(\rd_H\) or \(\rd_M\) causes an additional \(\jap{t}\) loss.

The nonlinear terms in this equation should be controlled when we estimate the 
energy-momentum tensor.
There, even though there is a growth for the derivative of \(f\), the decay of multiplied factors \(\Pcal\) and \(\Dfr H\) allows us to control the nonlinear terms.
This allows us to prove the phase mixing phenomenon and the existence of the solution up to the desired timescale. 

The specific choice of \(Q_T\) (not the linear action angle \(Q\)) and \(H\) (not \(H_0\) or \(H_T\)) is important.
This is discussed in Remark \ref{rmk:coordchoice}.

\subsubsection{Vector field \(Z\)}
Comparing to \cite{CL24}, the nontrivial dependence of the function \(\O(H_T, M)\) in \(M\) makes \(\rd_M\) into a bad derivative, which is an undesirable situation. 
When we estimate the energy-momentum tensor, which is given by the integral of \(f\) over \(dwd\ell\), we need to use integration by parts to the bad derivatives on \(f\), to avoid picking up an additional \(\jap{t}\) growth.

To overcome this situation, we introduce a new vector field \(Z\) defined by \eqref{Zdef}, which is a linear combination of \(\rd_H\) and \(\rd_M\), commutes with the other good derivative \(\rd_{Q_T}\), and satisfying 
\[
Z(\O_{\Sch}(H, M)) = 0.
\]
Then, \(Z\) does not cause an additional \(\jap{t}\) growth when it hits \(f\), and this together with the vector identities \eqref{dHintoell} and \eqref{drintodell} we can efficiently estimate the integral of \(f\). 

\subsubsection{Main integral estimates}
The main part of the estimate is proving that the energy-momentum tensor components become stationary in time. 
Our strategy is given as follows: 
From Duhamel's principle, we can write the solution \(f\) of the Vlasov equation in the dynamical action angle variable coordinates as 
\[
f(t, Q_T, H, M) = f_\init(Q_T-t\O(H, M), H, M) + \int_0^t \Rcal (\t, T, Q_T-(t-\t)\O(H, M), H, M)d\t.
\]
Here, we write equation \eqref{vlasov-DAAV} in the following form:
\[
\rd_t f + \O(H, M)\rd_{Q_T} f = \Rcal(t, T, Q_T, H, M),
\]
where \(\Rcal\) represents nonlinear terms, which has a decaying factor in front of the derivative of \(f\); here it suffices to consider the \(\rd_s A \cdot \rd_H f\) term. 
Because we know that the non-stationary phase argument works for the linear system, it suffices to consider the nonlinear contribution.
This will be given as follows: 
\[
\qbf_N(s, R) = \frac{\pi}{R^2A^2}\int_{-\infty}^\infty\int_0^\infty\int_0^s \Rcal(\t, T, Q_T-(s-\t)\O(H, M), H, M) \bar{\CC} \, d\t d\ell dw,
\]
so the main term of \(\rd_s \qbf_N(s, R)\) is given as 
\[
-\int_{-\infty}^\infty\int_0^\infty\int_0^s 
\O(H, M)\rd_{Q_T}\Rcal(\t, T, Q_T-(s-\t)\O(H, M), H, M) \bar{\CC} \, d\t d\ell dw.
\]

At this point we can observe why we need to gain two derivatives from the ODE formulation of the Einstein equation: 
we need to estimate terms of the form 
\[
-\int_{-\infty}^\infty\int_0^\infty\int_0^s \rd_R\rd_s A \cdot \rd_H f d\ell dw,
\]
which is a component of \(\rd_s \rho\). 
Because \(\rd_H f\) has additional growth, we integrate by parts in \(\ell\) to avoid this growth, and we need to estimate the term
\[
-\int_{-\infty}^\infty\int_0^\infty\int_0^s \rd^2_R\rd_s A \cdot f d\ell dw.
\]
Therefore, to close the estimate we need to gain two derivatives from \(\rd_s A\) to \(\rd_s \rho\), which is exactly the point where we need to use the ODE. 

However, for higher order derivatives this naive approach does not help: when we differentiate in \(R\), we will pick up additional \(s\) from \((s-\t)\O\). 
To avoid this, we decompose the frequency components of \(\qbf_N\) in \(Q_T\), and this results in the following expansion of the main term: 
\[
\Tcal_{5, 3}(s, R) = -\sum_{k, l\in \ZZ} \int_{-\infty}^\infty\int_0^\infty\int_0^s 
ik\O e^{ikQ_T}\wh{(p_3)}_l \cdot e^{-ik(s-\t)\O}\wh{(\rd_H f)}_{k-l} \bar{\CC} \, d\t d\ell dw,
\]
where \(p_3\) is a time derivative of metric coefficients.
We use the fact that \(\rd_{Q_T}\) and \(Z\) commutes (or have a controllable commutator) with \(e^{-ik(s-\t)\O}\), and \(\rd_\ell\) can be integrated by parts. 
Because one more derivative is needed to sum up the frequency components, we are short in one derivative; for this reason we need a separate weak estimate for the top order.
This is given in \eqref{qtop} and \eqref{dsqtop}, which is tied to the bootstrap assumptions \eqref{BAH0} and \eqref{BAH1}. 

In the same reason, we can explain why we need the separate top-order estimates \eqref{ftop} on \(f\).
To get the control on \(N-2\)-th derivative of \(\qbf_N\) (which we still want decay), the \(N\)-th derivative of \(f\) appears when every derivative hits \(f\). 
We need one more derivative to sum up the frequency components, and this is the reason why we need to control the \(N+1\)-th derivative of \(f\) in \eqref{ftop}.
In the meantime, the \(N+1\)-th derivative of \(\rd_s A\) is needed from the Vlasov equation. 
To control this, we again need \(\rd_R^{I-1}\rd_s\) of the energy-momentum tensor, which explains why we need the separate top order estimates. 

\subsubsection{Vector field \(Y_H\)}
In this framework, we can also explain why we need the vector field \(Y_H\) defined in \eqref{YHdef}.
Controlling the derivative of \(\Tcal_{5, 3}\) above, we will see that the contribution of each frequency component is given by 
\[
\Ical_{k, l}^{i_1, i_2, i_3}(\t, s, R)
= \int_{-\infty}^\infty\int_0^\infty e^{ikQ_T}\wh{(\rd_R^{i_1}p_b)}_l \cdot \big(e^{-ik(s-\t)\O}\wh{(\rd_b \rd_{Q_T}^{i_2}Z^{i_3}f)}_{k-l}\big)\CC(s) \, d\ell dw.
\]
Clearly, the worst situation is \(i_2 = i_3 = 0\), \(i_1 = I+1\), where we need to show that the \(\t\)-integral of \(\Ical_{k, l}^{i_1, i_2, i_3}\) summed up in \(k, l\) has decay \(\jap{t}^{-N+I}\). 
We originally pay one more derivative of \(p\) to sum the Fourier modes, where we lose one \(\jap{t}\) and get one \(\jap{l}\).
Instead, we can use \(Y_{k, -l\t} = i(ks-l\t)(\rd_X \O)(H, M) + \rd_H\), which satisfies 
\[
\frac{Y_{k, -l\t} - \rd_H}{i(ks-l\t)(\rd_X \O)(H, M)} = 1, \qquad
Y_{k, -l\t}(e^{-ik(s-\t)\O}\ghat_{k-l}) = e^{-ik(s-\t)\O}\wh{(Y_H g)}_{k-l}.
\]
Hence, instead of obtaining summability in this way, we can use \(Y_{k, -l\t}\), which gives one \(\jap{ks-l\t}\) with one derivative on \(p\), or \(Y_H\) on \(f\).
Taking integral in \(\t\), \(\jap{ks-l\t}\) gives \(\log \jap{s}\) and in the meantime we get one \(\jap{l}\),  which helps us to sum up the Fourier modes.
The details will be explained in Lemma \ref{lem:T5comp} and \ref{lem:T5b}.

\subsubsection{Timescale \(\e^{-1}(\log 1/\e)^{-2}\)}
It is known that the solution evolving from the initial data with finite regularity has uncontrollable nonlinear terms \cite{Bed21}, which is called the \textit{nonlinear echo}.
There, it was proved that the nonlinear echo term cannot be controlled at the timescale \(\e^{-1}\), where \(\e\) is the size of the initial data.
Hence, it is natural to expect that the solution can be controlled only up to a finite timescale.

From the above argument, we can predict the following bound: 
\[
\abs{\Ical_{k, l}^{I+1, 0, 0}(\t)} \lesssim \d^{\frac{7}{4}}\e^2\jap{s}^{-N+I+1} \frac{a_l(\t)}{\jap{ks-l\t}}\frac{1}{\jap{k-l}^2}.
\]
Here, \(a_l(\t)\) is a sequence with \(\sum_l a_l^2(\t) \le 1\), but because we will integrate in \(\t\) first, we cannot utilize the summability, and are forced to use the crude estimate \(a_l\le 1\).
Therefore, we are still short in \(\jap{l}\) (we only have one from the time integral in \(\t\)) to obtain the summability in \(k\) and \(l\). 

To overcome this situation, we need to allow another log loss in time. 
By paying one more derivative in \(p\), we also have the following bound: 
\[
\abs{\Ical_{k, l}^{I+1, 0, 0}(\t)} \lesssim \d^{\frac{7}{4}}\e^2\jap{s}^{-N+I+2} \frac{1}{\jap{ks-l\t}}\frac{1}{\jap{l}}\frac{1}{\jap{k-l}^2}.
\]
Hence, we can sum them in the following way:
\[
\abs{\Ical_{k, l}^{I+1, 0, 0}(\t)} \lesssim \d^{7/4}\e^2\jap{s}^{-N+I + 1} \frac{\jap{l}}{\jap{ks-l\t}} \min\left(\frac{1}{\jap{l}}, \frac{\jap{s}}{\jap{l}^2}\right)\frac{1}{\jap{k-l}^2}.
\]
Integral of the factor \(\frac{\jap{l}}{\jap{ks-l\t}}\) in \(\t\) gives one \(\log \jap{s}\), and the summation of \(\min(1/\jap{l}, \jap{s}/\jap{l}^2)\) gives another \(\log \jap{s}\).
This is the reason why we need to restrict the timescale to \(s\le T_f = \e^{-1}(\log 1/\e)^{-2}\).

\subsection{Related works}
\subsubsection{Global results for the Einstein--Vlasov system}
The global property of the solution to the Einstein--Vlasov system was first extensively studied by Rein--Rendall \cite{RR92}, where they proved the global existence of the solution in spherical symmetry for small data in the \textit{massive case}.
Using isotropic coordinate system, Rendall \cite{R96} proved in spherical symmetry that the `first' singularity occurs at the center (if one exists), and the small data gives a globally well-posed solution. 
The stability result for small data in the \textit{massless case} was proved by Dafermos \cite{Daf06}. 
We refer the reader to the survey of Andréasson \cite{And11} for a comprehensive account of the system and of its known global results.

Beyond the small data regime, the global structure of the system is also understood in spherical symmetry: 
Dafermos--Rendall \cite{DaRe05} proved an extension principle asserting that the first singularity must emanate from the center, which places the system in the class of matter models covered by \cite{Daf05},
while in the cosmological setting strong cosmic censorship for surface-symmetric spacetimes with collisionless matter was proved in \cite{DR16}.
The black hole formation in the Einstein--Vlasov system was studied in \cite{AKR11, And12, AR25}.

Nonlinear stability of Minkowski spacetime for the Einstein--Vlasov system without symmetry was proved in various results: Lindblad--Taylor \cite{LT20} and Fajman--Joudioux--Smulevici \cite{FJS21} proved the stability result for the massive case, and the restriction on the initial data was removed in the recent result of Wang \cite{Wan22}.
Taylor \cite{Tay17} proved the stability result for small data in the massless case outside symmetry. 
This result was further generalized in \cite{BFJST21} to the class containing non-compactly supported initial data.

\subsubsection{Stability of Schwarzschild for vacuum Einstein equation}
The stability of black hole spacetimes in various settings has been extensively studied.
The monumental work by Christodoulou--Klainerman \cite{CK93} and Lindblad--Rodnianski \cite{LR10} proved the nonlinear stability of Minkowski spacetime, using the vector field method.

The stability of Schwarzschild spacetime in various settings was studied in recent years: For the vacuum case, there is a rigidity result called Birkhoff's theorem, which states that any spherically symmetric solution to the vacuum Einstein equation is isometric to a Schwarzschild spacetime.
Therefore, any result on Schwarzschild must be either coupled to matter, or without spherical symmetry.
For vacuum, hence without spherical symmetry case, Klainerman--Szeftel \cite{KS20} proved the stability under axially symmetric polarized perturbations.
Dafermos--Holzegel--Rodnianski--Taylor \cite{DHRT21} proved the stability of Schwarzschild family without any symmetry assumption. 
(See also \cite{DHR24, GKS24, She23, Hin26} for Kerr.)

\subsubsection{Matter fields on black hole spacetime}
Matter fields with their own governing equations on a black hole spacetime were studied in various works, mostly on a fixed background.
For the linear scalar wave equation, see \cite{DR09, DR13, Mos16, AAG18, OS20, Hin22, LO24, DSS11}, for the charged scalar field equation, see \cite{VdM22}, and for the Maxwell field, see \cite{Blu08, ST15, Pas19b, MTT17, ABB16}.
For \emph{massive} fields the mechanism is different, since timelike geodesics can be trapped outside the event horizon and the field does not disperse; see \cite{Kei16, PSRVdM23, SRVdM26}.
Nonlinear matter models on a fixed background were also studied: semilinear wave equations in \cite{BS06}, quasilinear wave equations in \cite{LTo18}, and the Maxwell--Born--Infeld system in \cite{Pas19a}.

For the Vlasov field, the decay estimate of the massless Vlasov equation was proved in \cite{ABJ18} on the Kerr background, and in \cite{Big23} on the Schwarzschild background.
In particular, Velozo Ruiz \cite{Vel24} proves the decay estimate of the Vlasov field, in the regime that the matter is massless, or massive and dispersive, which leads to the proof of asymptotic stability of Schwarzschild for the coupled system. 

\subsubsection{Spherically symmetric Einstein--matter models near exterior of Schwarzschild spacetime}
In this paper we consider the fully coupled Einstein--Vlasov system.
There have been several results on the Einstein equation coupled with various matter models in spherical symmetry, concerning data near the exterior of a black hole.
For the decay of the matter field and the stability of the exterior, see \cite{DR05, LO19b, Gau24} for the Einstein--Maxwell--scalar field system, and \cite{HS13} for the Einstein--Klein--Gordon system.
The formation of a black hole in this class was also studied: for the Einstein--scalar field system in \cite{Daf09}, and for the Einstein--Maxwell--charged scalar field system in \cite{KU22}.
The global structure of the maximal development was classified in \cite{Kom13}, and the extremal case, in which the event horizon is degenerate, was studied in \cite{AKU26a, AKU26b}.
We refer the reader to the closely related papers \cite{DH06, Hol10a}.

In the current paper, we prove the stability of Schwarzschild spacetime coupled with Vlasov equation, in the regime that the matter is massive and stays in a compact region, where we cannot expect dispersion.
In particular, the existence of static spacetime in this regime which can be an arbitrarily small perturbation of Schwarzschild is proved in \cite{Rei94, Jab21}, and such a static shell surrounding a Schwarzschild black hole was shown to be linearly stable in \cite{GRS25} when the mass of the shell is small compared to the mass of the black hole.
Therefore, the best that we can expect is that the perturbation converges toward a steady state, or the time derivative decays. 
This is the point where the nonlinear phase mixing comes in. 

\subsubsection{Nonlinear phase mixing}
The nonlinear phase mixing phenomenon was first studied in the context of plasma physics, where the Vlasov--Poisson system was considered.
In this case the phenomenon is called Landau damping, which was first observed in \cite{Lan46}.
The nonlinear stability of Vlasov--Poisson system on \(\TT_x^n\times \RR_v^n\), which is tightly connected to Landau damping, was first proved in the celebrated work by Mouhot--Villani \cite{MV11}.

Later, more general results were proved in this setting, namely on the space domain \(\TT_x^n\).
The result \cite{MV11} was proved for analytic initial data, and the result was extended to Gevrey data with \(\g < 3\) in \cite{BMM16}, of which proof was greatly simplified in \cite{GNR21}, by using the generator function method to control the weighted norm with infinite regularity. 
Recently, the result was extended to the case of Gevrey data with \(\g = 3\) in \cite{IPWW24}, where the existence of nonlinear scattering operator was also proved.
For the case with relativistic Vlasov--Poisson system, a similar Landau damping result was proved in \cite{You16}.

There were several other approaches to establish nonlinear phase mixing in the Vlasov-Poisson system. 
It is known that the nonlinear echoes cannot be controlled in finite regularity, and the nonlinear phase mixing phenomenon is known to breakdown in finite timescales \cite{Bed21}.
In the collisional setting, Bedrossian \cite{Bed17} studied how the nonlinear echoes can be suppressed by collisional relaxation in the Vlasov-Fokker-Planck equation, and Chaturvedi-Luk-Nguyen~\cite{CLN23} proved nonlinear phase mixing for the Vlasov-Poisson-Landau system in the weakly collisional regime.
Later, Bedrossian-Zhao-Zi \cite{BZZ25} proved the Landau damping result uniformly in the small collision frequency \(\n > 0\) for the Vlasov-Poisson-Landau equation, which is sufficient to establish the collisionless limit \(\n \to 0\) to the Vlasov-Poisson equation. 

Without collision, we cannot expect the nonlinear phase mixing phenomenon to hold in finite regularity for all time. 
Chaturvedi-Luk \cite{CL24} studied the nonlinear phase mixing phenomenon for the Vlasov-Poisson system in the Kepler potential, where particles are confined to bounded orbits.
This result proves the same type of estimates as in the current paper, namely that the nonlinear phase mixing phenomenon holds only on a finite timescale.

This paper and \cite{CL24} study the case where the particles are confined to bounded orbits, rather than the case where the given space domain is \(\TT_x^n\).
In contrast to the above setting on the torus, where the mixing occurs in the spatial variable, the mixing here occurs in the angle variable of the bounded orbits, which is not equal to the spatial variable; this adds a novel difficulty.

\subsubsection{Gravitational phase mixing}
There are several works on the phase mixing phenomenon in the system with external potential.
For linear systems, Chaturvedi-Luk~\cite{CL22} proved the decay estimate on 1D system with confining potential, which was generalized in \cite{MRV22} to the case with general confining potential.
Recently, Had\v{z}i\'c-Rein-Schrecker-Straub~\cite{HRSS24} and Had\v{z}i\'c-Schrecker~\cite{HS25} proved the linear stability of steady states for a general class of trapping Hamiltonians, overcoming the degeneracy near the elliptic stagnation point.

For the nonlinear system, Chaturvedi-Luk~\cite{CL24} proved the nonlinear phase mixing phenomenon for the Vlasov--Poisson system in the Kepler potential, which acts as a central force confining the particles to bounded orbits.
This can be understood as a direct Newtonian version of the current paper. 
The case when the potential is repulsive, hence the particles disperse, was studied in \cite{VV24} and \cite{PWY22}.

The stability of nonzero equilibrium was studied in this context.
Lynden-Bell \cite{Lyn62, Lyn67} suggested that phase mixing might be a mechanism for stability. 
Recently, this was studied rigorously in \cite{HRSS24} and \cite{HS25}.

Rioseco--Sarbach employed action angle variables to study the dynamics in general relativity: the relaxation of a collisionless gas accreting into a Schwarzschild black hole was proved in \cite{RS17}, where particles move on unbounded orbits and hence disperse. 
Linear phase mixing for bounded orbits was subsequently proved in \cite{RS18, RS20, RS23}, on a fixed black hole spacetime background or similar settings.

\subsection{Outline of paper}

In \textbf{Section~\ref{sec:Prelim}}, we establish the local well-posedness of the system and study the period function of orbits in the action angle coordinate and its monotonicity. 
In \textbf{Section~\ref{sec:LinEst}}, we prove the linear estimates stated in Theorem~\ref{thm:lin}. 
In Sections~\ref{sec:BA}--\ref{sec:PutEverything}, we prove our main nonlinear theorem (Theorem~\ref{thm:nonlin}). 
We begin by introducing the main bootstrap assumptions in \textbf{Section~\ref{sec:BA}}. 
Then in \textbf{Section~\ref{sec:DAAV}}, we introduce the dynamical action angle variables and control the change of variables map. 
The next three sections are devoted to the main estimates: 
in \textbf{Section~\ref{sec:vlasov}}, we prove the estimates for the distribution function \(f\) solving the Vlasov equation; 
in \textbf{Section~\ref{sec:Integral}}, we prove the estimates for the energy-momentum tensor with integral estimates; 
in \textbf{Section~\ref{sec:Elliptic}}, we prove the estimates for the metric components using the ODE formulation of the Einstein equation. 
Finally, we put together the estimates and conclude the proof of Theorem~\ref{thm:nonlin} in \textbf{Section~\ref{sec:PutEverything}}. 

\bigskip
\noindent\textbf{Acknowledgements.}
I would like to thank my advisor Jonathan Luk for introducing me to this problem and for his invaluable guidance and support.
I gratefully acknowledge the support of the National Science Foundation under the grant DMS-2304445.

\section{Setup and Preliminaries}\label{sec:Prelim}
\subsection{Local existence}\label{subsec:LWP}
We prove that any small perturbation \((A_\init, K_\init, \Psi, f_\init)\) of the Schwarzschild initial data gives rise to a local-in-time solution to the Einstein--Vlasov system.
In particular, we prove that if the initial data is small in a weak norm, then the solution exists for a small time, and in addition if the initial data has some regularity then it propagates in this small time. 

To prove this, we first present the following reduced system: 
\begin{align}
(R^2\sqrt{A}')' &= -\frac{A^{5/2}R^2}{8}\big(16\pi\r + \frac{3}{2}K^2 - K\Psi - \frac{1}{2}\Psi^2\big), \label{eq:rGauss}\\
(R^2A\a')' &= \a R^2A^3\big(4\pi\tr T + 8\pi\r + \frac{3}{2}K^2-K\Psi+\frac{1}{2}\Psi^2\big) + R^2A^3\b\Psi', \label{eq:rSumEE}\\
(A^3R^3K)' &= 8\pi A^4R^3 j + A^2R^2(AR\Psi)', \label{eq:rCodazzi}\\
\left(\frac{\b}{R}\right)' &= \frac{\a}{2R}(3K-\Psi), \label{eq:rMeanCurv}\\
\Dfr f &= 0, \label{eq:rVlasov}
\end{align}
where the energy-momentum tensor is defined in the same way as in \eqref{rhodef}--\eqref{SRdef}, and \(\Dfr\) is the Vlasov operator defined in \eqref{eq:vlasov-rwl}.
We first prove the local existence of this system. 
Then, to complete the proof of Proposition \ref{prop:LWP}, we show that any solution to the reduced system gives rise to a solution of the full system.

\begin{prop}[Local Existence for Reduced System]\label{prop:rLWP}
    Let \(m, \lfr\) and \(m/4 < R_a < R_b < R_+\) be arbitrary positive numbers. 
    There are positive constants \(\e_0 > 0\) and \(C_0, C_k > 0\) such that the following statement is true: Say the initial data \(A_\init, K_\init, \Psi\) and \(f_\init\) with the support condition 
    \[
    \supp(A_\init - A_0, K_\init - K_0, \Psi - \Psi_0) \subset [m/4, R_+], \qquad \supp(f_\init)\subset [R_a, R_b]\times [-\lfr, \lfr]\times (0, \lfr],
    \]
    and the smallness condition 
    \[
    \norm{A_\init-A_0}_{C^2} + \norm{K_\init-K_0}_{C^1} + \norm{\Psi-\Psi_0}_{C^1} + \norm{f_\init}_{C^0} \le \e < \e_0.
    \]
    Furthermore let \(f_{\init}\) be Lipschitz. 
    Then, there is a time \(T>0\) (that depends on the initial data only through \(m\) and \(\norm{f_\init}_{C^{0, 1}}\)) such that the solution exists up to time \(T\), i.e. there is a function 
    \[
    \a, A, \b, K\colon [0, T)_s\times [m/4, \infty)_R\to \RR
    \]
    and 
    \[
    f \colon [0, T)_s\times [m/4, \infty)_R\times \RR_w\times \RR^{>0}_\ell\to \RR^{\ge 0} 
    \]
    which satisfy the equations \eqref{eq:rGauss}--\eqref{eq:rMeanCurv} and \eqref{eq:rVlasov}, and achieve the boundary condition \eqref{bdrycond}.
    Moreover, the analogous statements as in Proposition \ref{prop:LWP} hold for this solution, i.e. the estimates \eqref{LWP-est} with constant \(C_0\), \eqref{LWP-support} and \eqref{rLWP-est-high} with constant \(C_k\) hold for this solution.
\end{prop}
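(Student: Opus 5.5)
We construct the solution by a Picard-type iteration in the spirit of Rein--Rendall for spherically symmetric Vlasov systems, decoupling the Vlasov equation from the ODE system \eqref{eq:rGauss}--\eqref{eq:rMeanCurv}.

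\emph{Iterate.} Start from the Schwarzschild coefficients $(\a_0, A_0, \b_0, K_0)$. Given the $n$-th metric iterate $(\a_n, A_n, \b_n, K_n)$, which is $C^1$ in $(s,R)$ and close to Schwarzschild in the norms of \eqref{LWP-est}, do three things.
\begin{enumerate}
\item Solve the linear transport equation $\Dfr_n f_{n+1} = 0$ by characteristics. Since $A_n$ is bounded away from $0$ and $E\ge 1$, the coefficients of $\Dfr_n$ in \eqref{eq:vlasov-rwl} are $C^1$ and bounded on the relevant compact set. Hence the characteristics exist, $f_{n+1}$ is transported with $\norm{f_{n+1}}_{C^0} = \norm{f_\init}_{C^0}$, and its Lipschitz constant grows at most by a factor $e^{Cs}$.
\item Check the support. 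The $\rd_R$-speed is at most $1+\abs{\b}$ and the $\rd_w$-forcing is bounded, so for $T$ small (depending on $m$, $\lfr$, and the radii) the support of $f_{n+1}$ stays in the enlarged box of \eqref{LWP-support}. Since $\ell$ is exactly conserved, the $\ell$-range does not change.
\item Compute the moments $\r, \tr T, j, S_R$ of $f_{n+1}$ from \eqref{rhodef}--\eqref{SRdef}. On the compact support these are $O(\e)$, supported in $R\le (R_b+R_+)/2$, and Lipschitz in $R$. Then solve the ODE system \eqref{eq:rGauss}--\eqref{eq:rMeanCurv} for $(\a_{n+1}, A_{n+1}, \b_{n+1}, K_{n+1})$ at each fixed $s$.
\end{enumerate}

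\emph{The ODE step.} Integrate inward from $R=R_+$. For $R\ge R_+$ the data $\Psi-\Psi_0$ and all sources vanish, so the boundary condition \eqref{bdrycond} is imposed as initial data for a backward Cauchy problem. This follows the abstract formulation \eqref{absODE}, in which $\vec F$ is smooth while $A$ stays away from $0$. On the compact interval $[m/4,R_+]$, continuous dependence for ODEs (Gronwall applied to the difference from $\vec X_0$, which solves the system with $\vec S=0$) gives
\[
\abs{\vec X - \vec X_0} \lesssim \norm{\vec S}_{C^0} + \norm{\Psi-\Psi_0}_{C^1} \lesssim \e .
\]
The structure of the equations then gains regularity. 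Equations \eqref{eq:rGauss} and \eqref{eq:rSumEE} give $A$ and $\a$ in $C^2$ from $\r, \tr T \in C^0$. Equation \eqref{eq:rCodazzi} gives $K\in C^1$ only if $j$ is $C^1$ in $R$; otherwise $K$ is merely Lipschitz. Equation \eqref{eq:rMeanCurv} gives $\b$ one derivative better than $K$.

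\emph{Main obstacle: consistency of $A$ with its data.} The reduced system contains no equation tying $A(s)$ at $s=0$ to $A_\init$. At $s=0$ the constraint equations \eqref{eq:GaussInit}--\eqref{eq:CodazziInit}, with the same boundary values, force $A=A_\init$ and $K=K_\init$ by ODE uniqueness. This is where the constraint hypothesis is used.

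\emph{Contraction.} We need $\vec S$ Lipschitz in time, and this is where the Lipschitz assumption on $f_\init$ enters. Differences of characteristics are controlled by differences of the coefficients, so
\[
\norm{f_{n+1}-f_n}_{C^0} \le C T\, e^{CT}\, \norm{f_\init}_{C^{0,1}}\, \sup_s \norm{\vec X_n - \vec X_{n-1}}_{C^1}.
\]
The ODE estimate gives
\[
\norm{\vec X_{n+1}-\vec X_n}_{C^1} \le C \norm{f_{n+1}-f_n}_{C^0}.
\]
Choosing $T$ small in terms of $m$ and $\norm{f_\init}_{C^{0,1}}$ makes the map a contraction in $C^0$ for $f$ and $C^1$ for $(\a, A, \b, K)$, uniformly in $n$. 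The closeness bounds \eqref{LWP-est} close with a uniform constant $C_0$ because all bounds above are linear in $\e$.

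\emph{Main analytic difficulty: the regularity bookkeeping.} The limit must satisfy $\Dfr f=0$ with $f$ only Lipschitz, so the equation holds along characteristics, which is enough for the reduced system. The derivative count must also close without loss. $\rd_w$-coefficients involve $\a'$ and $A'$, so we need $\a, A\in C^2$ to have $C^1$ characteristics; the two-derivative elliptic gain supplies exactly this. For \eqref{rLWP-est-high} we differentiate the characteristic ODE $k$ times. Then $\rd^I f$ depends on $\rd^{k+1}$ of the coefficients, i.e. on $\a, A$ in $C^{k+2}$. These are in turn bounded by $k$ derivatives of the moments, i.e. by $\norm{\rd^{\le k} f}_{C^0}$. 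Gronwall on the coupled quantity $\sum_{\abs I\le k}\norm{\rd^I f}_{C^0} + \norm{\vec X-\vec X_0}_{C^{k+2}}$ over $[0,T]$ gives $C_k\e$ on the same interval, with no further shrinking of $T$, because the system is linear in the top-order terms.
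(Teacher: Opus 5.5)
Your proposal takes essentially the same route as the paper: a Picard iteration (transport $f_{n+1}$ along the characteristics of $\Dfr_n$, take moments, then integrate \eqref{absODE} inward from $R_+$ using Lemma \ref{lem:ODEstab}), followed by a contraction in a weak norm from Gronwall on differences of characteristics plus the Lipschitz bound on $f_\init$, and higher regularity by commuting derivatives through the Vlasov equation while regaining two derivatives from the ODEs. There is one regularity miscount to fix: \eqref{eq:rCodazzi} already gives $K\in C^1$ when $j\in C^0$, since $(A^3R^3K)'$ is then continuous, and \eqref{eq:rMeanCurv} then gives $\b\in C^2$; this is exactly what \eqref{LWP-est} requires when $f$ is only Lipschitz, whereas your ``$j\in C^1$'' version would not deliver it. Relatedly, the coupled quantity in your last step should be $\norm{\vec X-\vec X_0}_{C^{k+1}}$ (i.e.\ $\a,A\in C^{k+2}$), because $\vec X$ already contains $A',\a'$ and $k$ derivatives of the moments only control $\vec X$ in $C^{k+1}$.
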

\begin{proof}
The proof is a standard iteration argument. 
Throughout, \(C\) is a constant which depends only on \(m, \lfr, R_a, R_b, R_+\).
Set 
\[
\check\a \coloneqq \a-\a_0, \quad \check A \coloneqq A-A_0, \quad \check\b \coloneqq \b-\b_0, \quad \check K \coloneqq K-K_0,
\]
which are all functions of \(R\) only. 

\step{1} (Description of the norm) Let \(X_T\) be the Banach space of tuples \(u = (\check\a, \check A, \check\b, \check K, f)\) of deviations with the regularity and support stated in the proposition, equipped with the norm
\[
\norm{u}_{X_T} \coloneqq \sup_{s\in[0,T]}\Big(\norm{\check\a}_{C^2} + \norm{\check A}_{C^2} + \norm{\check\b}_{C^1} + \norm{\check K}_{C^1} + \norm{f}_{C^0}\Big),
\]
and let \(\Bcal_M = \{\norm{u}_{X_T}\le M\e\}\), where \(M\) is a large constant to be determined later, which should be independent of \(\e\).
Note that we will choose \(\e_0\) later so that \(M\e \le 1/2\). 
Furthermore, we use a weaker norm \(Y_T\) in which we prove the convergence of iteration sequence:
\[
\norm{u}_{Y_T} \coloneqq \sup_{[0,T]}\big(\norm{\check\a}_{C^1} + \norm{\check A}_{C^1} + \norm{\check\b}_{C^0} + \norm{\check K}_{C^0} + \norm{f}_{C^0}\big).
\]
Assuming that the solution satisfies the size condition \(u\in \Bcal_M\) with sufficiently small \(\e\), we know that \(A\) is away from \(0\), and each metric coefficient has bounded range. 

\step{2} (Iteration scheme) We define the iteration scheme as follows: Let 
\begin{align*}
u_0 &= (\check\a_\init, \check A_\init, \check \b_\init, \check K_\init, f_\init) \in \Bcal_M \\
\intertext{and for given \(u_n\in \Bcal_M\),}
u_{n+1} &= (\check \a_{n+1}, \check A_{n+1}, \check \b_{n+1}, \check K_{n+1}, f_{n+1}).
\end{align*}
Here, we define \(f_{n+1}\) as a solution of the transport equation \(\Dfr_n f_{n+1} = 0\) with initial data \(f_\init\), where \(\Dfr_n\) is the transport operator given in \eqref{eq:vlasov-rwl} with \(\a, A, \b, K\) replaced by \(\a_n, A_n, \b_n, K_n\).
This updates the energy-momentum tensor, which we denote by \(\r_{n+1}, \tr T_{n+1}, j_{n+1}, S_{R, n+1}\), which is calculated with \(f_{n+1}\) and \(A_n\). 
Then, we define the updated metric coefficients \((\a_{n+1}, A_{n+1}, \b_{n+1}, K_{n+1})\) as a solution of the Einstein equation with new energy-momentum tensor, i.e. by solving the ODE system \eqref{absODE}, with 
\[
\vec{S} = (\r_{n+1}, \tr T_{n+1}, j_{n+1}, S_{R, n+1}, \Psi-\Psi_0).
\] 
Note that \(\Psi\) is fixed in each iteration here. 

\step{3}(\(u_{n+1}\in \Bcal_M\)) 
We need to show three things: i) The solutions \(f_{n+1}\) and \((\check\a_{n+1}, \check A_{n+1},\allowbreak \check\b_{n+1}, \check K_{n+1})\) are well-defined, ii) They satisfies the support condition, iii) They satisfy the size condition \(\norm{u_{n+1}}_{X_T} \le M\e\). These tasks should be done in certain order. 
Note that \(C\) here is independent of \(M\) in particular.  

First, we prove that \(f_{n+1}\) is well-defined and has a desired support and size. 
The existence of solution for the transport equation \(\Dfr_n f_{n+1} = 0\) is guaranteed from the bound on the derivative of the coefficients \(\a_n, A_n, \b_n, K_n\): 
Because we have 
\[
\norm{\a_n}_{C^2} + \norm{A_n}_{C^2} + \norm{\b_n}_{C^1} + \norm{K_n}_{C^1} \le C,
\]
we know that the \(C^0\) solution \(f_{n+1}\) exists in \(\Ocal(1)\) time interval, with size \(\norm{f_{n+1}}_{C^0}\le \e\).
The support condition can be easily verified, as the coefficients themselves are bounded: 
\[
\norm{\a_n}_{C^1} + \norm{A_n}_{C^1} + \norm{\b_n}_{C^0} + \norm{K_n}_{C^0} \le C,
\]
which shows that up to time \(T\) which depends only on the constants \(m, \lfr, R_a, R_b, R_+\), the characteristic curve stays in a desired region. 
Note also that \(\ell\) is a conserved quantity, hence the \(\ell\)-support stays the same. 

Then, we can estimate the energy-momentum tensor components, which are given by integrals of \(f_{n+1}\) and \(A_n\).
Because \(A_n\) and \(R\) are globally away from \(0\) and the \(\ell\)- and \(w\)-support of \(f_{n+1}\) are bounded, the energy-momentum tensor components are also bounded by \(C\e\).
The assumed smallness on \(\Psi-\Psi_0\) is sufficient.

Now we estimate \(\check\a_{n+1}, \check A_{n+1}, \check\b_{n+1}, \check K_{n+1}\) from the ODE stability, and get that they are bounded by \(C\e\) as well.
We state the ODE stability result that we need here; this is a standard result, and will be used in Section \ref{sec:Elliptic} for nonlinear bootstrap argument as well.

\begin{lem}[\cite{T12}, Theorem 2.11, 2.12]\label{lem:ODEstab}
    Let \(\vec{F}\colon \RR^{N+M+1}\to \RR^N\) be a smooth function, and consider the ODE
\[
\vec{X}'(R) = \vec{F}(\vec{X}(R),\vec{S}(R), R), \quad \vec{X}(R_0) = \vec{X}_0.
\]
Let \(\vec{X}_1\) be a solution on \([R_1, R_0]\) for \(\vec{S} = \vec{S}_1\). 
Then, there is \(\d_1 > 0\) and \(C > 0\) such that if \(\norm{\vec{S}-\vec{S}_1}_{C^0} \le \d_1\), then there is a solution \(\vec{X}\) on \([R_1, R_0]\) for \(\vec{S}\) with the same initial data, and it satisfies
\begin{equation}\label{ODEest}
\norm{\vec{X}-\vec{X}_1}_{C^1} \le C\norm{\vec{S}-\vec{S}_1}_{C^0}.
\end{equation}
Moreover, if \(\vec{S}_1\in C^k\) for \(k\ge 0\), then we have \(C^{k+1}\) stability: there are \(C_k\) which in addition depend on \(k\) such that 
\begin{equation}\label{ODEestH}
\norm{\vec{X}-\vec{X}_1}_{C^{k+1}} \le C\norm{\vec{S}-\vec{S}_1}_{C^k}.
\end{equation}
\end{lem}

Because the energy-momentum tensor and \(\Psi\) is zero outside, the support condition on \((\check\a_{n+1}, \check A_{n+1}, \check\b_{n+1}, \check K_{n+1})\) is clearly true.
The existence and the size condition is a direct consequence of Lemma \ref{lem:ODEstab}, near Schwarzschild solution \(\vec{S}_1 = 0\), \(\vec{X}_1 = (A_0, A_0', \a_0, \a'_0, K_0, \b_0)\).
Note that in the meantime we need to choose \(\e_0\) small enough so that \(\norm{\vec{S}}_{C^0} \le \d_1\), where \(\d_1\) is the constant in Lemma \ref{lem:ODEstab} at this \(\vec{X}_1\). 
At this point we can choose \(M_0\) such that our iteration scheme is well-defined for \(M \ge M_0\). 

\step{4}(Contraction) We now show that the iteration scheme is a contraction in the \(Y_T\) norm.
Throughout this step we work with two consecutive differences and abbreviate
\[
\Delta_n \coloneqq \norm{u_n - u_{n-1}}_{Y_T}.
\]
Our goal is to prove that, after shrinking \(\e_0\) and \(T\) if necessary,
\begin{equation}\label{rLWP-contraction}
\norm{u_{n+1}-u_n}_{Y_T} \le \tfrac12\,\Delta_n,
\end{equation}
so that \(\{u_n\}\) is Cauchy in \(Y_T\). We estimate the Vlasov part, the density variables, and the metric coefficients in this order.

\emph{(i) Transport difference.} Let \(Z_n(\t; s, R, w, \ell)\) be the characteristic flow of \(\Dfr_n\), i.e. the solution of
\[
\frac{d}{d\t}Z_n = \Vcal_n(Z_n), \qquad Z_n(s; s, R, w, \ell) = (R, w, \ell),
\]
where \(\Vcal_n = \big(\frac{\a_n w}{A_n E_n}-\b_n,\ \frac{\a_n}{A_n^3}\frac{\ell}{R^3E_n}+\a_n\frac{A_n'}{A_n^4}\frac{\ell}{R^2E_n}-\frac{\a_n'}{A_n}E_n+\a_n K_n w, 0\big)\) is the coefficient vector field of \eqref{eq:vlasov-rwl} with \(u_n\) inserted, and \(\ell\) is conserved.
Since \(f_{n+1}\) and \(f_n\) solve \(\Dfr_n f_{n+1} = 0\) and \(\Dfr_{n-1} f_n = 0\) with the same initial data \(f_\init\),
\[
f_{n+1}(s, R, w, \ell) = f_\init\big(Z_n(0; s, R, w, \ell)\big), \qquad f_n(s, R, w, \ell) = f_\init\big(Z_{n-1}(0; s, R, w, \ell)\big),
\]
so that
\begin{equation}\label{rLWP-fdiff}
\abs{f_{n+1}-f_n}(s, R, w, \ell) \le \norm{f_\init}_{C^{0,1}}\,\abs{Z_n(0; s, R, w, \ell)-Z_{n-1}(0; s, R, w, \ell)}.
\end{equation}
The two flows obey the same ODE with vector fields \(\Vcal_n, \Vcal_{n-1}\). On the compact region where the characteristics live (Step 3), \(A_n\) is away from zero, and \(\Vcal_n\) depends on \((\a_n, A_n, \b_n, K_n)\) together with the first derivatives \(A'_n, \a'_n\). Hence
\begin{align*}
\norm{\Vcal_n - \Vcal_{n-1}}_{C^0}
&\le C\Big(\norm{\check\a_n-\check\a_{n-1}}_{C^1}+\norm{\check A_n-\check A_{n-1}}_{C^1}+\norm{\check\b_n-\check\b_{n-1}}_{C^0}+\norm{\check K_n-\check K_{n-1}}_{C^0}\Big) \\
&\le C\,\Delta_n.
\end{align*}
It is precisely the first derivatives \(A', \a'\) that force the \(C^1\) norm of \(\check\a, \check A\) here, which is why the contraction is measured in \(Y_T\) rather than \(X_T\). 
Moreover \(\Vcal_n\) is Lipschitz in \((R, w)\) on this region, uniformly in \(n\), with constant \(\le C\) by the \(\Bcal_M\) bounds of Step 3. 
Gronwall's inequality applied to \(Z_n - Z_{n-1}\) on \([0, s]\subset [0, T]\) therefore gives, for \(T\le 1\), 
\[
\abs{\frac{d}{d\t}(Z_n(\t; \cdot)-Z_{n-1}(\t; \cdot))} \le C\abs{Z_n(\t; \cdot)-Z_{n-1}(\t; \cdot)} + C\norm{\Vcal_n-\Vcal_{n-1}}_{C^0},
\]
\[
\norm{Z_n(0; \cdot)-Z_{n-1}(0; \cdot)}_{C^0} \le CTe^{CT}\norm{\Vcal_n-\Vcal_{n-1}}_{C^0} \le CT\,\Delta_n.
\]
Combined with \eqref{rLWP-fdiff} and the Lipschitz regularity of \(f_\init\), this yields
\begin{equation}\label{rLWP-fest}
\sup_{[0,T]}\norm{f_{n+1}-f_n}_{C^0} \le CT\,\Delta_n.
\end{equation}

\emph{(ii) Density difference.} The density variables of \(\vec{S}_{n+1}\) are the velocity moments \eqref{rhodef}--\eqref{SRdef} of \(f_{n+1}\) with weights depending on \(A_n\) and \(R\). 
The difference \(\vec{S}_{n+1}-\vec{S}_n\) follows from two parts: the difference in \(A\) and the integral of \(f\).
Since the \(w\)- and \(\ell\)-supports of \(f_{n+1}, f_n\) lie in a fixed compact set and \(A_n, A_{n-1}, R\) are bounded away from \(0\), we can control the difference of integral using only the \(C^0\) norm of the distribution function \eqref{rLWP-fest}.
The difference caused by \(A\) is multiplied by the integral, which is of size \(\Ocal(\e)\). 
This explains that
\begin{equation}\label{rLWP-Sest}
\norm{\vec{S}_{n+1}-\vec{S}_n}_{C^0} \le C(T+\e)\Delta_n.
\end{equation}
Note that only the \(C^0\) norm of \(\vec{S}_{n+1}-\vec{S}_n\) is needed, so no derivative falls on \(f\); this is the step where the weaker norm avoids the loss of derivatives.

\emph{(iii) Metric difference.} The tuples \(\vec{X}_{n+1}, \vec{X}_n\) solve the ODE \eqref{absODE} with sources \(\vec{S}_{n+1}, \vec{S}_n\) and the same boundary data at \(R = R_+\). 
Both stay within \(\Ocal(\e)\) of \(\vec{X}_1\), hence we know the existence from Lemma \ref{lem:ODEstab}.
To estimate the difference, we have
\[
\abs{\frac{d}{dR}(\vec{X}_{n+1}-\vec{X}_n)} \le C\abs{\vec{X}_{n+1}-\vec{X}_n} + C\abs{\vec{S}_{n+1}-\vec{S}_n}
\]
pointwise, hence from Gronwall's inequality we have 
\[
\norm{\vec{X}_{n+1}-\vec{X}_n}_{C^0} \le C\norm{\vec{S}_{n+1}-\vec{S}_n}_{C^0} \le CT\,\Delta_n.
\]
These are exactly the metric components of the \(Y_T\) norm.

\emph{Conclusion.} Adding \eqref{rLWP-fest} and the bound of (iii),
\[
\norm{u_{n+1}-u_n}_{Y_T} \le C(T+\e)\,\Delta_n.
\]
Shrinking \(\e_0\) and \(T\) so that \(C(T+\e_0)\le 1/2\) yields \eqref{rLWP-contraction}. Thus \(\{u_n\}\) is Cauchy in \(Y_T\) and converges to a limit \(u\) in \(Y_T\). 
Because we know that those are solutions of ODEs, this automatically upgrades to the convergence in \(X_T\) as well, and the limit \(u\) is a solution of the reduced system.
The estimates \eqref{LWP-est} and \eqref{LWP-support} follow from the construction of the iteration scheme, except the control on the second derivative of \(\b\): this is a direct consequence of \eqref{eq:rMeanCurv}.

\step{5}(Higher regularity) We now show that if the initial data has higher regularity, then the solution also has higher regularity. 
Let \(u = (\check \a, \check A, \check\b, \check K, f)\) be the solution of the reduced system that we have, with the initial size condition 
\[
\norm{A_\init-A_0}_{C^{k+2}} + \norm{K_\init-K_0}_{C^{k+1}} + \norm{\Psi-\Psi_0}_{C^{k+1}} 
+\sum_{\abs{I}\le k} \norm{\rd^I f_\init}_{C^0} \le \e < \e_0.
\]
Then, we induct to obtain the desired estimate. 
From the previous step, we know that the solution has the following estimate:
\[
\sup_{s\in[0,T]}\Big( \norm{\a-\a_0}_{C^{2}} + \norm{A-A_0}_{C^{2}} + \norm{\b-\b_0}_{C^{2}} + \norm{K-K_0}_{C^{1}} + \norm{f}_{C^0}\Big) \le C_0\,\e,
\]
which is exactly \eqref{LWP-est}, with the second derivative of \(\b\) coming from \eqref{eq:rMeanCurv} as noted above.
This is the base case \(l = 0\) of the induction: we will show that for \(0\le l \le k\) we have a constant \(C_l\) such that
\begin{equation}\label{rLWP-high-induction}
\sup_{s\in[0,T]}\Big( \norm{\a-\a_0}_{C^{l+2}} + \norm{A-A_0}_{C^{l+2}} + \norm{\b-\b_0}_{C^{l+2}} + \norm{K-K_0}_{C^{l+1}} + \sum_{\abs{I}\le l}\norm{\rd^I f}_{C^0}\Big) \le C_l\,\e
\end{equation}
is true.
Assume that \eqref{rLWP-high-induction} holds at level \(l-1\) for some \(1\le l\le k\); in particular
\begin{equation}\label{rLWP-high-IH}
\sup_{s\in[0,T]}\Big( \norm{\a-\a_0}_{C^{l+1}} + \norm{A-A_0}_{C^{l+1}} + \norm{\b-\b_0}_{C^{l+1}} + \norm{K-K_0}_{C^{l}} + \sum_{\abs{I}\le l-1}\norm{\rd^I f}_{C^0}\Big) \le C_{l-1}\,\e.
\end{equation}
For the induction step, we improve the estimates on \(f\), on the source term, and on the metric coefficients in this order.
Throughout, \(\rd^I = \rd_R^{i_1}\rd_w^{i_2}\rd_\ell^{i_3}\) denotes a derivative of order \(\abs{I} = i_1+i_2+i_3\), and \(C\) is a constant that may depend on \(l\) and \(C_{l-1}\) but not on \(\e\).

\emph{(i) Distribution function.}
On the compact region in which the characteristics live (Step 3), the quantities \(E, R, A\) are bounded away from their degenerate values, so the coefficients are smooth functions of \((R, w, \ell)\) and of the metric coefficients \(\a, A, \b, K\) together with the first derivatives \(\a', A'\).
Since \(\rd\) commutes with \(\rd_s\), this proves that all the coefficients of \([\Dfr, \rd^l]\) is of size \(\Ocal(1)\), from the induction hypothesis.
Hence, for a multi-index \(I\) with \(\abs{I} = l\), commuting \(\rd^I\) through \(\Dfr f = 0\) we get the estimate 
\[
\abs{\Dfr(\rd^I f)} \le \sum_{\substack{i_1 + i_2\le \abs{I}+1 \\ i_2 \le \abs{I}}} \abs{\rd^{i_1} (\a, \a', A, A', \b, K)}\abs{\rd^{i_2} f} 
\le C\sum_{\abs{I'}=l}\abs{\rd^{I'} f} + C\,\e.
\]
Therefore, using Gronwall on each characteristic line, we obtain that 
\[
\sum_{\abs{I}\le l}\norm{\rd^I f}_{C^0}
\le C\e (T+1)e^{CT}
\] 
on \([0,T]\).
Here we used the fact that the initial data has smallness \(\sum_{\abs{I}\le l}\norm{\rd^I f_\init} \le \e\).

\emph{(ii) Source term.} The density variables of \(\vec{S}=(\r,\tr T, j, S_R, \Psi-\Psi_0)\) are the velocity moments \eqref{rhodef}--\eqref{SRdef} of \(f\), whose weights are smooth functions of \(A, R, w, \ell\) on the support of \(f\), integrated over the fixed compact \((w,\ell)\)-support.
The derivative falling on \(A\) is clearly controllable, and for the derivative falling on the integral we need the above estimates on the \(l\)-th derivative of \(f\). 
The assumed smallness on \(\Psi-\Psi_0\) is sufficient.
\begin{equation}\label{rLWP-high-Sest}
\sup_{s\in[0,T]}\norm{\vec{S}}_{C^l}\le C\,\e.
\end{equation}

\emph{(iii) Metric coefficients.} The tuple \(\vec{X} = (A, A', \a, \a', K, \b)\) solves the ODE \eqref{absODE} with source \(\vec{S}\) and the boundary data of \(\vec{X}_0\) at \(R=R_+\).
The Schwarzschild profile \(\vec{X}_0\) solves the same equation with source \(\vec{S}_1 = 0\in C^l\), and \(\norm{\vec{S}}_{C^l}\le C\e \le \d_1\) by \eqref{rLWP-high-Sest}, so the \(C^{l+1}\)-stability \eqref{ODEestH} of Lemma \ref{lem:ODEstab} gives
\[
\sup_{s\in[0,T]}\norm{\vec{X}-\vec{X}_0}_{C^{l+1}} \le C\,\norm{\vec{S}}_{C^l}\le C\,\e.
\]
Because \(\vec{X}\) contains both \(A\) and \(A'\) (resp. \(\a\) and \(\a'\)), this controls \(\norm{A-A_0}_{C^{l+2}}\) and \(\norm{\a-\a_0}_{C^{l+2}}\), together with \(\norm{K-K_0}_{C^{l+1}}\) and \(\norm{\b-\b_0}_{C^{l+1}}\), each by \(C\e\).
This is the gain of one derivative that we need.
It remains to upgrade \(\b\) by one more derivative, exactly as in the base case: we use the equation \eqref{eq:rMeanCurv}.

Collecting (i) and (iii) establishes \eqref{rLWP-high-induction} at level \(l\). This closes the induction, and the case \(l = k\) is precisely the higher-regularity estimate \eqref{rLWP-est-high}, completing the proof of Proposition \ref{prop:rLWP}.

\end{proof}

Now we prove Proposition \ref{prop:LWP}.
Because we claimed the same estimates and support properties for solutions, all that we need to show is that the solution solves the full system, i.e. it satisfies the equations \eqref{eq:Keq} and \eqref{eq:EE}.

\begin{proof}[Proof of Proposition \ref{prop:LWP}]
Given an initial data \((A_\init, K_\init, \Psi, f_\init)\) which satisfies the condition of Proposition \ref{prop:LWP}, we know the existence of the solution of the reduced system for some time \(T > 0\) by Proposition \ref{prop:rLWP}.
Now, we prove that this is the solution of the full system, i.e. it satisfies the equations \eqref{eq:Keq} and \eqref{eq:EE}.
For this, set \(F(s, R), G(s, R)\) as follows:
\begin{align*}
F(s, R) &= \rd_s A - \b A' - \frac{\b}{R}A - \frac{1}{2}\a AK + \frac{1}{2}\a A\Psi, \\
G(s, R) &= \rd_s K - \b K' - \frac{2\a'}{RA^2} - \frac{2\a'A'}{A^3} - \frac{\a(A')^2}{A^4} - \frac{2\a A'}{RA^3} \\
&\qquad + \a\left(\frac{3}{4}K^2 - \frac{3}{2}K\Psi + \frac{3}{4}\Psi^2 + 8\pi S_R\right) + \b\Psi'.
\end{align*}
so that our goal becomes showing \(F \equiv G \equiv 0\). 

By differentiating \eqref{eq:rGauss} and \eqref{eq:rCodazzi} in \(s\), we get a relation between \(\rd_s\) of \(K\) and \(A\). Then, by substituting the expressions of \(\rd_s A\) and \(\rd_s K\) in terms of \(F\) and \(G\), we get a system of ODEs for \(F\) and \(G\).
In the meantime, we need an information on the \(s\)-derivative of density variables. 
From the definition of the energy-momentum tensor, it should satisfy the Bianchi identity, which is written as follows in our setup: 
\begin{align*}
\frac{1}{\a}(\rd_s - \b \rd_R)\r &= -\frac{j'}{A} - j\left(\frac{2\a'}{A\a} + \frac{2}{AR} + \frac{2A'}{A^2}\right) + \frac{K}{2}(3S_R - \r - \tr T) + \frac{\Psi}{2}(3\r - S_R + \tr T),\\
\frac{1}{\a}(\rd_s - \b \rd_R)j &= -\frac{S_R'}{A} - \frac{\a'\r}{A\a} - S_R \left(\frac{\a'}{A\a} + \frac{3}{AR} + \frac{3A'}{A^2}\right) + (\r + \tr T) \left(\frac{1}{AR} + \frac{A'}{A^2}\right) + (K+\Psi) j.
\end{align*}
This gives the following differential identity in \(F\) and \(G\). 
\begin{align*}
0 &= F'' + \Big(\frac2R - \frac{A'}{A}\Big)F' + \Big[\frac{(A')^2}{2A^2} + \frac{3A^2}{4}\Big(16\pi\r + \frac{3}{2}K^2-K\Psi -\frac{1}{2}\Psi^2\Big)\Big]F + \frac{A^3(3K-\Psi)}{4}G, \\
0 &= \frac{3K-\Psi}{A}F' - \Big(\frac{(3K-\Psi)A'}{A^2} + 8\pi j\Big)F + G' + \Big(\frac{3A'}{A} + \frac3R\Big)G.
\end{align*}
From the boundary condition, we know that \(F = G = 0\) at \(R = R_+\). 
Therefore, from the uniqueness of the ODE solution, we have \(F = G = 0\) for all \(R\in [m/4, \infty]\).
This completes the proof of Proposition \ref{prop:LWP}. 
\end{proof}

\subsection{Period function and its monotonicity}
To define the action angle \(Q\) (or \(Q_T\) for the dynamical case), we need to understand the following quantity.
From the following discussion, we consider fixed \((H_0, M)\) which is in the support of our data, i.e. \((H_0, M)\) such that the potential curve 
\begin{equation}\label{Vdef}
V_0(R, M) = \a_0(R)^2\left(1+\frac{M}{A_0(R)^2R^2}\right)
\end{equation}
as a function of \(R\) has a local maximum and local minimum, and \(H_0^2\) is greater than this minimum and less than \(1\).
In particular, under the condition \(M \ge 16m^2\) the local maximum is at least \(1\), and \(V_0(R, M) = 1\) for some \(R \ge (\frac{3}{2}+\sqrt{2})m\).
Hence, the potential curve intersects \(V_0(R, M) = H_0^2\) twice between the local maximum and infinity. 
\begin{defn}[Normalization constant \(\O_{\Sch}(H_0, M)\) of the action angle]\label{Omegadef1}
Given \(H_0\) and \(M\) with above condition, we first define \((\frac{3}{2}+\sqrt{2}) m < R_-^0(H_0, M) < R_+^0(H_0, M)\) as the two roots of the equation \(H_0^2 = V_0(R, M)\) so that the local minimum of the potential curve lies in \((R_-^0(H_0, M), R_+^0(H_0, M))\), which are well-defined under the above condition.
We then define \(\O_{\Sch}(H_0, M)\) to satisfy
\begin{equation}\label{Omegadef}
\frac{\pi}{\O_{\Sch}(H_0, M)} = \int_{R_-^0(H_0, M)}^{R_+^0(H_0, M)} \frac{A_0(R)H_0}{\a_0(R)^2w(R, H_0, M)} dR,
\end{equation}
where \(w(R, H_0, M)\) is given by
\[
w(R, H_0, M) \coloneqq \sqrt{\frac{H_0^2}{\a_0(R)^2}-1-\frac{M}{A_0(R)^2R^2}}.
\]
\end{defn}
Note that \(w\) is zero at \(R = R_-^0(H_0, M)\) and \(R = R_+^0(H_0, M)\), hence the integral is singular.
Since the potential curve has nonzero derivative at those points, \(w^2\) converges to \(0\) linearly, hence the integral is convergent. 
Also, because the second \(R\)-derivative of \(V_0(R, M)\) is bounded, the above integral is away from zero, and hence \(\O_{\Sch}(H_0, M)\) is well-defined and finite.

For the linear estimate, we need to use the non-stationary phase argument with 
\[
\left(\frac{1}{ikt\rd_\ell (\O_{\Sch}(H_0, M))}\rd_\ell \right)e^{ikt\O_{\Sch}(H_0, M)}
= e^{ikt\O_{\Sch}(H_0, M)},
\]
hence we need to show that \(\rd_\ell \O_{\Sch}(H_0, M)\) is away from zero.
Note that both \(H_0\) and \(M\) depend on \(\ell\), so this is actually equal to
\[
\rd_\ell (\O_{\Sch}(H_0, M))
= (\rd_\ell H_0)(\rd_{H_0}\O_{\Sch})(H_0, M) + (\rd_M\O_{\Sch})(H_0, M).
\]
We want to work on the set of \((H_0, M)\), at which this value has sign, for any \((r, w, \ell)\) which gives this \((H_0, M)\). 
In addition we require the derivative \(\rd_{H_0}\O_\Sch(H_0, M)\) to be negative as well. 
This is the definition of the set \(\Sfr_{\cfr}\).
\begin{defn}[The admissible set \(\Sfr_{\cfr}\)]\label{Sfrdef}
    We define \(\Sfr_{\cfr}\) to be the set of \((H_0, M)\) such that the following is true: For \((H_0, M)\) which satisfies the condition above, we set \(R_-^0(H_0, M)\) and \(R_+^0(H_0, M)\). 
    If for all \(R\in [R_-^0(H_0, M), R_+^0(H_0, M)]\), at \((R, w(R, H_0, M), \ell = M)\) 
    \[\rd_\ell (\O_\Sch(H_0, M)) < -\cfr,\]
    and in addition the derivative satisfies
    \[
    \rd_{H_0}\O_\Sch(H_0, M) < -\cfr,
    \] 
    (which is not a condition depending on \(R\)) then we set \((H_0, M)\in \Sfr_{\cfr}\).
\end{defn}

As soon as we choose \(\cfr > 0\), the support \(\Scal_\cfr\) of our initial data is compact in \((H_0, M)\) coordinate. 
Therefore, it suffices to understand the set \(\Sfr_0\), which is given as an increasing limit of the open sets \(\Sfr_{\cfr}\) for \(\cfr > 0\). 
The next proposition describes the condition that we know \((H_0, M)\in \Sfr_0\).
First, we know that if \(M = \ell\) is sufficiently large then \((H_0, M)\in \Sfr_0\), and we also know that if \(H_0^2\) is sufficiently close to \(V_{\min}(M)\) then \((H_0, M)\in \Sfr_0\) as well, where \(V_{\min}(M)\) is the minimum value of the potential curve \(V_0(R, M)\).
This is the content of the following proposition. 

\begin{prop}\label{prop:monotonic}
    Given \(m\), the following is true.
    \begin{enumerate}
        \item There is \(M_0(m) > 0\) such that for any \(M > M_0(m)\) and \(V_{\min}(M) < H_0^2 < 1\), we have \((H_0, M) \in \Sfr_0\).
        \item For any \(\Mbar > 16m^2\), there is \(\d(m, \Mbar) > 0\) such that for any \(16m^2 < M < \Mbar\) and \(V_{\min}(M) < H_0^2 < V_{\min}(M) + \d(m, \Mbar)\), we have \((H_0, M) \in \Sfr_0\).
    \end{enumerate}
\end{prop}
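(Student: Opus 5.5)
We will work in areal radius \(r\), where \(V_0 = (1-2m/r)(1+M/r^2)\) and \(H_0 = \a_0 E\). The first step is to rewrite the period integral \eqref{Omegadef} in \(r\). Since \(dr = A_0(1-m^2/4R^2)\,dR\) and \(\a_0 = 1 - \tfrac{m}{2R}\) times a bounded factor, the integrand becomes \(H_0\,dr/\bigl((1-2m/r)\sqrt{H_0^2 - V_0(r,M)}\bigr)\) up to the Jacobian. This gives the standard Schwarzschild radial period
\[
\frac{\pi}{\O_\Sch} = \int_{r_-}^{r_+}\frac{H_0\,dr}{(1-2m/r)\sqrt{H_0^2-V_0(r,M)}} .
\]
We then use the usual substitution \(u = 1/r\) and parametrize the orbit by eccentricity \(e\) and semi-latus rectum \(p\), via \(u = (1+e\cos\chi)/p\). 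This turns the integral into a regular integral over \(\chi\in[0,\pi]\) whose integrand is an explicit smooth function of \((p,e,\chi)\) and \(m\). The singular endpoints disappear, so derivatives in \((H_0,M)\) can be taken under the integral sign through the smooth map \((p,e)\mapsto(H_0,M)\).

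Next we compute \(\rd_\ell(\O_\Sch(H_0,M))\). Since \(\rd_\ell H_0 = \a_0^2/(2H_0A_0^2R^2) = (1-2m/r)/(2H_0 r^2)\), this derivative equals
\[
\frac{1-2m/r}{2H_0r^2}\,\rd_{H_0}\O_\Sch + \rd_M\O_\Sch ,
\]
evaluated for \(r\in[r_-,r_+]\). Because the coefficient of \(\rd_{H_0}\O_\Sch\) is positive, it suffices to prove two things. First, \(\rd_{H_0}\O_\Sch<0\). Second, \(\rd_M\O_\Sch + c\,\rd_{H_0}\O_\Sch<0\) for every \(c\) in the range of \((1-2m/r)/(2H_0r^2)\) over the orbit. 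The function is affine in \(c\), so it is enough to check the two endpoint values \(c_\pm\) at \(r_\pm\). Strict negativity on an open set then gives membership in \(\Sfr_0\).

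For part (1) we expand in \(m/p\), which tends to \(0\) as \(M\to\infty\) because \(M\approx mp\). The leading order is Newtonian Kepler: the frequency is proportional to \((1-H_0^2)^{3/2}/m\), independent of \(M\), with \(\rd_{H_0}\O<0\). In the relativistic correction \(\rd_M\O_\Sch\) is of lower order, and we must show it is dominated by \(c\,\rd_{H_0}\O_\Sch\). Near the Newtonian limit the quantity \(c\,\rd_{H_0}\) is of order \(M^{-1}\cdot\) (frequency)/(binding energy). Meanwhile \(\rd_M\) of the precession correction is of order \(m^2/M^2\) relative to the frequency. The Newtonian term therefore dominates uniformly in \(e\in(0,1)\), and this has to be verified for eccentricities up to \(1\). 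That uniformity as \(H_0\to1\) (\(e\to1\)) is the main obstacle. There the frequency itself degenerates, so we will normalize by \((1-H_0^2)^{3/2}\) and check the ratios directly in the \((p,e,\chi)\) integral.

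For part (2), the limit \(H_0^2\to V_{\min}(M)\) is the circular orbit limit \(e\to0\). There the integral reduces to the epicyclic frequency \(\O_\Sch = \kappa(M)\) plus \(O(e^2)\), with an explicit formula at the circular radius \(r_c\). We will compute \(\rd_{H_0}\O_\Sch\) as the coefficient of \(e^2\) in \(\kappa\), divided by \(\rd(e^2)/\rd H_0\), and compute \(\rd_M\) along circular orbits. We check that the resulting combination is strictly negative for every \(M\in(16m^2,\Mbar]\). This is a one-variable sign check in \(x = m/r_c\in(0,1/8)\), followed by continuity and compactness in \(M\) to extract \(\d(m,\Mbar)\).
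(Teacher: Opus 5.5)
Your reduction (prove \(\rd_{H_0}\O_\Sch<0\), then check \(\rd_M\O_\Sch+c\,\rd_{H_0}\O_\Sch<0\) at the endpoint values \(c_\pm\), with \(c_+\) at \(r_+\) the binding one) matches the paper's, and part (2) is essentially the paper's argument. The genuine gap is in part (1), exactly where you flag the main obstacle, and your order-of-magnitude comparison there is wrong in the way that matters. In the Newtonian regime \(1-H_0^2\approx m/a\), \(M\approx ma(1-e^2)\), \(r_+\approx a(1+e)\), one has \(c_+\approx 1/(2r_+^2)\) and \(\rd_{H_0}\O_\Sch\approx -3\O_\Sch/(1-H_0^2)\). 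Hence
\[
c_+\,\rd_{H_0}\O_\Sch\approx -\frac{3\O_\Sch}{2M}\cdot\frac{1-e}{1+e},
\]
which is smaller than your \(M^{-1}\O_\Sch/(1-H_0^2)\) by a factor of order \(m(1-e)/a\). So the favorable term degenerates as \(H_0\to 1\). Against it, your bound \(\rd_M\O_\Sch\lesssim (m^2/M^2)\O_\Sch\) leaves a ratio of order \((m^2/M)(1+e)/(1-e)\). This is unbounded as \(e\to 1\) at every fixed \(M\), so uniform domination does not follow.

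What you need is a bound with an extra factor that vanishes at the parabolic limit, e.g.\ \(\rd_M\O_\Sch\lesssim (1-H_0^2)(m^2/M^2)\O_\Sch\) uniformly up to \(H_0=1\). Heuristically the leading term is \(\tfrac{15}{4}m(1-H_0^2)^{3/2}M^{-3/2}\O_\Sch\), a second post-Newtonian effect. It is not the first-order precession term: that term is independent of \(H_0\) in the radial action \(I\), so it drops out of \(\pi/\O_\Sch=\rd_{H_0}I\).

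In your \((p,e,\chi)\) representation this smallness is invisible term by term. The \(O(m/p)\) relativistic factors of the integrand, pulled back through \((p,e)\mapsto(H_0,M)\), contribute to \(\rd_M\O_\Sch/\O_\Sch\) at size \(m/(pM)\). This only improves once you exhibit the cancellation expressing that the first post-Newtonian correction to the radial period depends on \(H_0\) alone. The remainders must then be controlled uniformly as \(e\to 1\), where the weight \((1+e\cos\chi)^{-2}\) becomes singular. Normalizing by \((1-H_0^2)^{3/2}\) removes the degeneration of \(\O_\Sch\) itself, not the vanishing of \(c_+\). The proposal has no mechanism for this step, which is the entire content of part (1).

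The paper sidesteps this perturbative bookkeeping. Its Step 1 proves \(\rd_M(\pi/\O_\Sch)<0\) for every bounded orbit by a convexity argument, so \(\rd_{H_0}\O_\Sch<0\) comes for free once \(\rd_\ell\) is controlled at \(r_+\). Step 2 fixes \(r_0=r_+\), passes to the tortoise coordinate on a normalized interval, and differentiates in \(\ell\) under the integral. This reduces \(\rd_\ell\cbar>0\) to a pointwise inequality \(L(x_1,x,x_0)<0\) in \(x=1/r\), whose main term dominates the relative error \(\Ecal_0\) uniformly in \(x_0/x_1=(1-e)/(1+e)\).

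In part (2) your plan agrees with the paper: the Rioseco--Sarbach \((P,e)\) expansion is your Darwin parametrization, and the sign comes from \(\tfrac{d}{dP}\tfrac{P^2}{\sqrt{P-6}}>0\) for \(P>8\). Two corrections are needed there.
First, \(\rd_M\) at fixed \(H_0\) is not a derivative along circular orbits. What holds is \(\rd_\ell H_0=dH_c/dM\) at \(r=r_c\), with \(H_c(M)=V_{\min}(M)^{1/2}\), so that \(\rd_\ell(\O_\Sch)\to\kappa'(M)\) as \(e\to 0\).
Second, a direct proof of \(\rd_{H_0}\O_\Sch<0\) needs the \(e^2\)-coefficient at fixed \(M\), not the fixed-\(P\) one.

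Finally, \(dr=(1-m^2/4R^2)\,dR=\a_0A_0\,dR\), without your extra factor \(A_0\); your final \(r\)-integral is nevertheless correct.
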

\begin{proof}
(1) \step{0} By the chain rule we have the following identity: 
\[
\rd_\ell (\O_\Sch^{-1}(H_0, M)) = \rd_\ell H_0 \cdot (\rd_{H_0}\O_\Sch^{-1})(H_0, M) + (\rd_M\O_\Sch^{-1})(H_0, M).
\]
In the proof we first show that \((\rd_M \O_\Sch^{-1})(H_0, M) < 0\).
Therefore, because \(\rd_\ell H_0 = \frac{\a_0^2}{2H_0}\frac{1}{R^2A_0^2}\) decreases in \(R\), it suffices to prove when \(R\) is maximal for given \((H_0, M)\), i.e. when \(R = R_+^0(H_0, M)\).
There, we write \(\O\) in \((R_+^0, \ell = M)\) coordinates, and show that the \(\ell\)-derivative is negative. 
Since we already have an opposite sign for \(\rd_M\O_\Sch\), this automatically proves the inequality for \(\rd_{H_0} \O_\Sch\), hence completing the proof. 

In both steps, we simplify \eqref{Omegadef} by using the areal radius \(r\): especially because
\[
\frac{dr}{dR} = \a_0 A_0, 
\]
we have 
\[
c(H_0, M) \coloneqq 
\frac{\pi}{\O_\Sch(H_0, M)} = \int_{r_-^0(H_0, M)}^{r_+^0(H_0, M)} \frac{H_0 dr}{\big(1-\frac{2m}{r}\big)\sqrt{H_0^2 - (1-\frac{2m}{r})(1+\frac{M}{r^2})}},
\]
where \(r_-^0(H_0, M)\) and \(r_+^0(H_0, M)\) are \(R_-^0(H_0, M)\) and \(R_+^0(H_0, M)\) in areal radius coordinate. 
We abuse \(V_0(r, M)\) to denote the potential function 
\[
V_0(r, M) \coloneqq (1-\frac{2m}{r})(1+\frac{M}{r^2}),
\]
and use \(V_{\min}(M)\) to denote the local minimum of \(V_0(r, M)\) as a function of \(r\). 

\step{1} (\((\rd_M c)(H_0, M) < 0\)) For this, we define the area function 
\begin{equation}\label{Areadef}
I(H_0, M) = \int_{r_-^0(H_0, M)}^{r_+^0(H_0, M)} \frac{\sqrt{H_0^2 - V_0(r, M)}}{1-\frac{2m}{r}}dr,
\end{equation}
so that \(\rd_{H_0} I = c\). This gives 
\[
(H_0^2-V_{\min}(M))\rd_M c(H_0, M) = \rd_{H_0} ((H_0^2-V_{\min}(M))\rd_M I) - 2H_0\rd_M I.
\]
We can compute RHS as follows:
\begin{align*}
(H_0^2-V_{\min}(M))\rd_M I
&= -\int_{r_-}^{r_+} \frac{\sqrt{H_0^2 - V_0(r, M)}}{2r^2} dr 
-\int_{r_-}^{r_+} \frac{V_0(r, M)-V_{\min}(M)}{2r^2\sqrt{H_0^2 - V_0(r, M)}}dr, \\
\rd_{H_0} ((H_0^2-V_{\min}(M))\rd_M I)
&= -\int_{r_-}^{r_+} \left[\frac{1}{2r^2}+\frac{d}{dr}\left(\frac{V_0(r, M)-V_{\min}(M)}{r^2V'_0(r, M)}\right)\right]\frac{H_0}{\sqrt{H_0^2 - V_0(r, M)}}dr.
\end{align*}
Here \('\) denotes the \(r\) derivative, and we use \(r_\pm\) to denote \(r_\pm^0(H_0, M)\). In conclusion,
\[
(H_0^2-V_{\min}(M))\rd_M c(H_0, M)
= \int_{r_-}^{r_+}\left[\frac{1}{2r^2}- \frac{d}{dr}\left(\frac{V_0(r, M)-V_{\min}(M)}{r^2V'_0(r, M)}\right)\right]\frac{H_0}{\sqrt{H_0^2 - V_0(r, M)}}dr.
\]

To show that this is less than \(0\), we use the change of variable \(\r(r) = -1/2r\) so that \(\frac{d}{dr} = \frac{1}{2r^2}\frac{d}{d\r}\). 
This gives 
\[
(H_0^2-V_{\min}(M))\rd_M c(H_0, M)
= \int_{\r(r_-)}^{\r(r_+)}\left[1-2\frac{d}{d\r}\left(\frac{V_0(\r, M)-V_{\min}(M)}{V'_0(\r, M)}\right)\right]\frac{H_0}{\sqrt{H_0^2 - V_0(\r, M)}}d\r.
\]
where we abused \(V_0(\r, M) = V_0(r, M)\), hence
\[
V_0(\r, M) = (1+4m\r)(1+4M\r^2).
\]
Now the remaining part is to show that the contribution at two points with the same \(V_0\) value is negative. 
To be precise, we divide the interval \([r_-, r_+]\) into two parts \([r_-, r_*]\) and \([r_*, r_+]\), where \(r_*\) is the point where \(V_0(r, M)\) attains its minimum.
Then we define the change of variable \(\et(\r) = V_0(\r, M) - V_{\min}(M)\) on each interval, so that the above quantity has the following expression:
\begin{align*}
(H_0^2-V_{\min}(M))\rd_M c(H_0, M)
&= -\int_0^{H_0^2-V_{\min}(M)} \left[\frac{1}{V'_0}-\frac{2}{V'_0}\frac{d}{d\r}\left(\frac{V_0-V_{\min}(M)}{V'_0}\right)\right](\r(\et))\frac{H_0}{\g^{1/2}}d\et \\ 
&\qquad
+\int_0^{H_0^2-V_{\min}(M)} \left[\frac{1}{V'_0}-\frac{2}{V'_0}\frac{d}{d\r}\left(\frac{V_0-V_{\min}(M)}{V'_0}\right)\right](\rhotil(\et))\frac{H_0}{\g^{1/2}}d\et \\
&= \int_0^{H_0^2-V_{\min}(M)} \frac{d}{d\r}\left(\frac{V_0-V_{\min}(M)}{V_0'^2}\right)(\r(\et))\frac{H_0}{\g^{1/2}}d\et \\
&\qquad
-\int_0^{H_0^2-V_{\min}(M)} \frac{d}{d\r}\left(\frac{V_0-V_{\min}(M)}{V_0'^2}\right)(\rhotil(\et))\frac{H_0}{\g^{1/2}}d\et.
\end{align*}
Here, \(\r\) and \(\rhotil\) are used to denote the inverse of the change of variable \([\r(r_-), \r(r_*)]\) and \([\r(r_*), \r(r_+)]\) to \([0, H_0^2-V_{\min}(M)]\), respectively. 
Because \(\r \le \rhotil\), it suffice to show that the fraction \(\frac{V_0-V_{\min}(M)}{V_0'^2}\) is convex; this can be checked by direct computation, and this is actually true for any cubic polynomial including our \(V\).

\step{2} (Completing the proof for \(r_0 = r_+^0(H_0, M)\) case)
Because we will take \(\ell\)-derivative, we write this quantity as a function of \(r_0\) and \(\ell\):
\begin{equation}\label{Adef}
\cbar(r_0, \ell) \coloneqq \int_{r_1}^{r_0} \frac{dr}{\big(1-\frac{2m}{r}\big)\sqrt{H_0^2 - (1-\frac{2m}{r})(1+\frac{\ell}{r^2})}},
\end{equation} 
where \(r_1\) is the other root of the equation \(H_0^2 = (1-\frac{2m}{r})(1+\frac{\ell}{r^2})\) so that \(r_1 < r_0\).
Note that we deleted \(H_0\) here; since \(\rd_\ell H_0 > 0\), \(\rd_\ell \cbar > 0\) is a stronger statement than we want. 
To simplify further for this case, we use the change of variable \(\r(r) = r + 2m\log(r-2m)\), then we can write 
\[
\cbar(r_0, \ell) = \int_{\r_1}^{\r_0} \frac{d\r}{\sqrt{V_0(\r_0, \ell)-V_0(\r, \ell)}},
\]
where \(\r_0 = \r(r_0)\), \(\r_1 = \r(r_1)\), with \(V_0\) abused once more.
To avoid picking up the boundary term, we normalize the interval of integral: 
\[
\cbar(r_0, \ell) = \int_0^1 \frac{\r_0-\r_1}{\sqrt{V_0(\r_0, \ell)-V_0(\r(y), \ell)}}dy,
\]
where \(\r(y) = \r_0 + y(\r_1-\r_0)\).
Then, we can compute \(\rd_\ell \cbar\) by differentiating under the integral sign, and show that the integrand increases in \(\ell\), which completes the proof.
Therefore, the inequality that we want to show is, 
\begin{equation}\label{monotoneTarget}
\frac{2\r_1'}{\r_0-\r_1} + \frac{\rd_\ell V_0(\r_0, \ell)-\rd_\ell V_0(\r(y), \ell) - \rd_\r V_0(\r(y), \ell)\r_1'y}{V_0(\r_0, \ell)-V_0(\r(y), \ell)} < 0,
\end{equation}
where \(\r_1'\) denotes \(\rd_\ell \r_1\).

From \(V_0(\r_0, \ell) = V_0(\r_1, \ell)\), we know that
\[
\r_1' = \frac{\rd_\ell V_0 (\r_0, \ell) - \rd_\ell V_0 (\r_1, \ell)}{\rd_\r V_0 (\r_1, \ell)} > 0.
\]
To use the known formula of \(V_0\), we switch back to \(r\); then, the RHS of \eqref{monotoneTarget} can be written as 
\begin{multline*}
\frac{2}{\r_0-\r_1}[\rd_\ell V_0 (r_0, \ell) - \rd_\ell V_0 (r_1, \ell)] \\
+ \frac{1}{V_0(r_0, \ell)-V_0(r(y), \ell)}
[(\rd_\ell V_0(r_0, \ell)-\rd_\ell V_0(r(y), \ell))\rd_r V_0(r_1, \ell)\big(1-\frac{2m}{r_1}\big) \\
- (\rd_\ell V_0(r_0, \ell)-\rd_\ell V_0(r_1, \ell))\rd_r V_0(r(y), \ell)\big(1-\frac{2m}{r(y)}\big) y].
\end{multline*}
Now we use some algebraic identity. 
We set \(x_0 = 1/r_0\), \(x_1 = 1/r_1\), \(x = 1/r(y)\), and \(f(x) = x^2(1-2mx)\). 
Then, \(f\) is a increasing function in our domain of interest \([0, 1/4m]\), and 
\[
\ell = \frac{2m(x_1 - x_0)}{f(x_1)-f(x_0)}, \qquad
V_0(r, \ell) = 1-2mx + \ell f(x), \qquad
\rd_r V_0(r, \ell) = \frac{1}{r^2}(2m-\ell f'(x)),
\]
and most importantly \(\rd_\ell V_0(r, \ell) = f(x)\).
Thus, our target to show is now 
\begin{multline*}
2(f(x_0)-f(x_1))(x-x_0 + \frac{x_1 - x_0}{f(x_1)-f(x_0)}(f(x_0)-f(x))) \\
+ (\r_0-\r_1)(f(x_0)-f(x))f(x_1)(1-\frac{x_1 - x_0}{f(x_1)-f(x_0)}f'(x_1)) \\
- (\r_0-\r(y))(f(x_0)-f(x_1))f(x)(1-\frac{x_1 - x_0}{f(x_1)-f(x_0)}f'(x)) > 0.
\end{multline*}
Dividing this with \((x_0-x)(x-x_1)(x_0-x_1) < 0\), it suffices to show that 
\begin{align*}
&L(x_1, x, x_0) \\
&\coloneqq \frac{1}{x_1-x_0}\left[2\frac{f(x_1)-f(x)}{x_1-x} - 2\frac{f(x)-f(x_0)}{x-x_0}\right] \\
&\qquad + \frac{1}{x_1-x_0}\left[\frac{f(x_1)-f(x)}{x_1-x}f(x_0)\frac{\r-\r_0}{x-x_0}-\frac{f(x)-f(x_0)}{x-x_0}f(x_1)\frac{\r_1-\r}{x_1-x}\right] \\
&\qquad + \frac{x_1-x_0}{f(x_1)-f(x_0)}\frac{1}{x_1-x_0}\left[\frac{f(x)-f(x_0)}{x-x_0}\frac{\r_1-\r}{x_1-x}-\frac{f(x_1)-f(x)}{x_1-x}\frac{\r-\r_0}{x-x_0}\right]f(x)f'(x) \\
&\qquad + \frac{x_1-x_0}{f(x_1)-f(x_0)}\frac{\r_1-\r_0}{x_1-x_0}\frac{f(x)-f(x_0)}{x-x_0}\frac{f(x_1)f'(x_1)-f(x)f'(x)}{x_1-x} < 0.
\end{align*}
We label each line as \(L_1\), \(L_2\), \(L_3\) and \(L_4\) respectively.

Now we analyze the condition we have: when \(\ell\) is large, the differential quotient \((f(x_1)-f(x_0))/(x_1-x_0)\) should be small. 
Since \(f\) has first derivative away from zero for \(x\) away from zero, this means that \(x_0 < x < x_1\) are close to zero.
This means, for any \(\e_0 > 0\), by taking \(\ell\) sufficiently large, we have \(mx_0, mx, mx_1 < \e_0\).
We will regard the term that can be controlled by 
\[
\e_0 \left(\frac{\abs{x_1-x}}{x_1} + \frac{\abs{x_0-x}}{x_0}\right) \eqqcolon \Ecal_0
\]
to be an error, for the reason that will be clear later. 

\header{Estimate of \(L_1\):} This is easy: 
\begin{equation}\begin{aligned}\label{L1est}
L_1(x_1, x, x_0) &= 2 - 4m(x_1+x+x_0) = 2-12mx -4m(x_1-x) + 4m(x-x_0) \\
&= 2-12mx +\Ocal(\Ecal_0).
\end{aligned}\end{equation}

\header{Estimate of \(L_2\):} We need to analyze the term in the bracket. 
We first express each fraction up to first order in \(x_1-x\) and \(x-x_0\).
Here, \(y\) is a placeholder for \(x_0\) or \(x_1\).
\begin{align*}
\frac{f(y)-f(x)}{y-x} &= 2x - 6mx^2 + (y-x)(1-6mx) - (y-x)^2\cdot 2m \\
f(y)\frac{\r(y)-\r(x)}{y-x} &= -1 -(y-x)\frac{1-3mx}{x(1-2mx)} + (y-x)^2e_2(x, y).
\end{align*}
We estimate the second-order error \(e_2\): From the fundamental theorem of calculus, we have 
\[
e_2(x, y) = \frac{1}{(y-x)^3}\int_x^y \frac{1}{2}(y-t)^2 \frac{4m}{t}\frac{2-3mt}{(1-2mt)^2} dt.
\]
For \(y = x_0\), we have 
\[
\abs{e_2(x, x_0)} \le \frac{Cm(\ln x-\ln x_0)}{x-x_0} \le \frac{C\e_0}{x_1x_0},
\]
and for \(y = x_1\), we have
\[
\abs{e_2(x, x_1)} \le \frac{Cm(\ln x_1-\ln x)}{x_1-x} \le \frac{C\e_0}{x_1x}.
\]
Whenever second-order terms are involved we can control them with \(\Ecal_0\), so 
\begin{equation}\label{L2est}
L_2(x_1, x, x_0) = -(1 - 6mx) + \frac{2(1-3mx)^2}{1-2mx} + \Ocal(\Ecal_0)
\end{equation}
Note that the main terms can be written as 
\[
1 - \frac{2mx(1 - 3mx)}{1-2mx}.
\]

\header{Estimate of \(L_3\):} We have 
\[
L_3(x_1, x, x_0) = \frac{x_1-x_0}{f(x_1)-f(x_0)}
\frac{1}{x_1-x_0}\left[\frac{f(x)-f(x_0)}{x-x_0}\frac{\r_1-\r}{x_1-x}-\frac{f(x_1)-f(x)}{x_1-x}\frac{\r-\r_0}{x-x_0}\right]f(x)f'(x).
\]
We need to analyze two parts separately. 
For the first fraction, we want to write it as a perturbation from 
\[
\frac{1}{x_1+x_0} + \frac{3m}{2(1-3mx)}.
\]
The error can be controlled as follows: 
\begin{multline*}
\abs{\frac{1}{(x_1+x_0-2m(x_1^2+x_1x_0+x_0^2))} - \frac{1}{x_1+x_0}-\frac{3m}{2(1-3mx)}} \\
\le \frac{Cm}{x_1^2}\big((x_1-x_0)^2 + 6mx_1^2(\abs{x_1-x}+\abs{x-x_0})\big)
\le \frac{Cm}{x_1}(\abs{x_1-x}+\abs{x-x_0}).
\end{multline*}

For the term in the bracket, we proceed similarly with \(L_2\). 
\begin{align*}
\frac{f(y)-f(x)}{y-x} &= 2x - 6mx^2 + (y-x)(1-6mx) - (y-x)^2\cdot 2m, \\
\frac{\r(y)-\r(x)}{y-x} &= -\frac{1}{x^2} + \frac{y-x}{x^2y} -\frac{2mx}{x^2(1-2mx)} + \frac{m(1-4mx)}{x^2(1-2mx)^2}(y-x) + (y-x)^2e_2(x, y).
\end{align*}
Again we estimate \(e_2\) first: 
\begin{align*}
e_2(x, y) &= \frac{1}{(y-x)^3}\int_x^y \frac{1}{2}(y-t)^2 \left[-\frac{4m}{t^3}-\frac{32m^4}{(1-2mt)^3}\right] dt.
\end{align*}
For \(y = x_0\), we have 
\[
\abs{e_2(x, x_0)} \le \frac{Cm}{x^3(1-x_0/x)^3}\int_{x_0}^x \frac{(x_0-t)^2}{t^3} dt 
\le \frac{Cm}{x^3(1-x_0/x)^3} \left(\ln x_0/x - \frac{2x_0}{x} + \frac{x_0^2}{x^2} +\frac{3}{2}\right)
\le \frac{Cm}{x^2x_0}.
\]
For \(y = x_1\), we have 
\[
\abs{e_2(x, x_1)} \le \frac{Cm}{x_1-x}\int_x^{x_1} \frac{1}{t^3} dt \le \frac{Cm}{x^2x_1}.
\]
The second-order terms can be controlled exactly in the same way as \(L_2\), hence the part that we are considering is 
\[
\frac{1-6mx}{x^2(1-2mx)} + \frac{2m(1-4mx)(1-3mx)}{x(1-2mx)^2} + \frac{2(1-3mx)}{x_1x_0} + \frac{(x_1-x)(x-x_0)(1-6mx)}{x_1x^2x_0} + l.o.t.
\]
The lower order terms can be controlled without cancellations: This is controlled by 
\[
\frac{\e_0}{x^2}\big(\frac{\abs{x-x_0}}{x_0} + \frac{\abs{x_1-x}}{x_1}\big) = \frac{1}{x^2}\Ecal_0,
\]
which will be explained in a moment. 
Actually, this can control some part of the main terms as well: we can write 
\begin{align*}
&\frac{1-6mx}{x^2(1-2mx)} + \frac{2m(1-4mx)(1-3mx)}{x(1-2mx)^2} + \frac{2(1-3mx)}{x_1x_0} + \frac{(x_1-x)(x-x_0)(1-6mx)}{x_1x^2x_0} + \frac{1}{x^2}\Ocal(\Ecal_0) \\
&=\frac{x_1+x+x_0}{x_1xx_0}-\frac{4m}{x(1-2mx)} + \frac{2m(1-4mx)(1-3mx)}{x(1-2mx)^2} - \frac{6m}{x} + \frac{1}{x^2}\Ocal(\Ecal_0)
\end{align*}
Note that we used the identity 
\[
\frac{1}{x_1x_0}-\frac{1}{x^2} = \frac{-x_0(x_1-x)+x(x-x_0)}{x_1x^2x_0}.
\]

We can multiply two easy fractions first:
\begin{align*}
&\frac{x_1-x_0}{f(x_1)-f(x_0)}f(x)f'(x) \\
&= \left(\frac{1}{x_1+x_0}+\frac{3m}{2(1-3mx)} + \frac{\e_0}{x_1x}\Ocal(\abs{x_1-x}+\abs{x-x_0})\right)2x^3(1-2mx)(1-3mx) \\
&= \frac{2x^3}{x_1+x_0} + x^2(-5mx + 6m^2x^2) + 3mx^3(1-2mx) + x^2\Ocal(\Ecal_0) \\
&= \frac{2x^3}{x_1+x_0} -2mx^3 + x^2\Ocal(\Ecal_0).
\end{align*}
Now, we first figure out that the terms involving error terms are controlled; the only problematic term is \(\frac{x_1+x}{x_1xx_0}\).
This can be written as \(\frac{x_1+x}{x_1x^2} + \frac{x_1+x}{x_1x^2x_0}(x-x_0)\), then this is good.
For the main term, the product of two first terms gives 
\[
\frac{2x^2(x_1+x+x_0)}{x_1x_0(x_1+x_0)}.
\] 
Then, we can check that 
\[
mx^3\left(\frac{x_1+x+x_0}{x_1xx_0}-\frac{3}{x^2}\right), \quad
\frac{m}{x}\left(\frac{2x^3}{x_1+x_0}-x^2\right),
\]
are \(\Ocal(\Ecal_0)\).
Clearly the second one is controllable, and for the first one we can write it as 
\[
\frac{mx}{x_1x_0}[-2x_0(x_1-x) + (x+x_1)(x-x_0)],
\]
which shows that it is also controllable.
Therefore, 
\begin{equation}\label{L3est}
L_3(x_1, x, x_0) = \frac{2x^2(x_1+x+x_0)}{x_1x_0(x_1+x_0)} + \frac{-14mx + 30m^2x^2}{1-2mx} + \Ocal(\Ecal_0).
\end{equation}

\header{Estimate of \(L_4\):} This term is the most dangerous one due to the asymmetry. It is given by 
\[
L_4(x_1, x, x_0) = \frac{x_1-x_0}{f(x_1)-f(x_0)}\frac{\r_1-\r_0}{x_1-x_0}\frac{f(x)-f(x_0)}{x-x_0}\frac{f(x_1)f'(x_1)-f(x)f'(x)}{x_1-x}.
\]
The polynomial factors can be written as follows: 
\begin{align*}
\frac{f(x)-f(x_0)}{x-x_0} &= 2x - 6mx^2 - (x-x_0) + \frac{xx_0}{x_1}\Ocal(\Ecal_0) &\eqqcolon I_1, \\
\frac{f(x_1)f'(x_1)-f(x)f'(x)}{x_1-x} &= 6x^2 - 40mx^3 + 60m^2x^4 + (x_1-x)(2x_1+4x) + x_1^2\Ocal(\Ecal_0) &\eqqcolon I_2.\\
\intertext{Also, we can reuse the estimate for \(\frac{x_1-x_0}{f(x_1)-f(x_0)}\) from \(L_3\):}
\frac{x_1-x_0}{f(x_1)-f(x_0)} &= \frac{1}{x_1+x_0} + \frac{3m}{2(1-3mx)} + \frac{1}{x_1}\Ocal(\Ecal_0) &\eqqcolon I_3.\\
\intertext{For the \(\r\) term, we claim the following:}
\frac{\r_1-\r_0}{x_1-x_0} &= -\frac{1}{x_1x_0} - \frac{2m}{x(1-2mx)} + \frac{1}{x_1x}\Ocal(\Ecal_0) &\eqqcolon I_4.
\end{align*}

We now multiply these four terms.
First of all, we can see that the lower order term of \(I_4\) is controllable: we have \(\abs{I_1I_2I_3} \le C x\cdot x_1^2\cdot \frac{1}{x_1}\).
Then, we can write the main part of \(I_4\) in the following way: 
\[
-\frac{1}{x_1x_0} - \frac{2m}{x(1-2mx)} = -\frac{1}{x_1x} - \frac{2m}{x(1-2mx)} - \frac{x-x_0}{x_1xx_0}.
\]
This allows us to control terms from the lower order term of \(I_2\): we have \(\abs{I_1I_3} \le Cx\cdot \frac{1}{x_1}\), so both \(1/x_1x\) term and \(\frac{1}{x_1x}\frac{\abs{x-x_0}}{x_0}\) term are controllable.
Similarly, the lower order terms of \(I_1\) and \(I_3\) are also controllable.

Therefore, we can write \(L_4\) as 
\[
L_4 = I_1'I_2'I_3'I_4' + \Ocal(\Ecal_0), \text{ where }
\]
\begin{align*}
I_1' &= (x+x_0) - 6mx^2, 
& I_2' &= (2x^2+2xx_1+2x_1^2) - 40mx^3 + 60m^2x^4, \\
I_3' &= \frac{1}{x_1+x_0} + \frac{3m}{2(1-3mx)}, 
& I_4' &= -\frac{1}{x_1x_0} - \frac{2m}{x(1-2mx)} = -\frac{1}{x_1x} - \frac{2m}{x(1-2mx)} - \frac{x-x_0}{x_1xx_0}.
\end{align*}
Again, the main term is fine; the product of first terms gives 
\[
\frac{2(x+x_0)(x_1^2+x_1x+x^2)}{x_1x_0(x_1+x_0)},
\] 
and the other terms can be treated as \((28mx-60m^2x^2)/(1-2mx)\) with controllable error. 
For the terms in \(I_1'I_2'I_3'I_4'\), we first control the effect of \(\frac{x-x_0}{x_1xx_0}\) in \(I_4'\): This multiplied with the other terms is bounded by \(\Ecal_0\). 
After that the other terms can be controlled straightforwardly, to give the final conclusion
\begin{equation}\label{L4est}
L_4(x_1, x, x_0) = -\frac{2(x+x_0)(x_1^2+x_1x+x^2)}{x_1x_0(x_1+x_0)} + \frac{28mx - 60m^2x^2}{1-2mx} + \Ocal(\Ecal_0).
\end{equation}

Therefore, combining \eqref{L1est}, \eqref{L2est}, \eqref{L3est} and \eqref{L4est}, we have
\[
L(x_1, x, x_0) = -\frac{(2x_1x+2xx_0-x_1x_0-3x_0^2)}{x_0(x_1+x_0)} + \Ocal(\Ecal_0) < 0,
\]
where we used the fact that \(x_0 < x < x_1\), so that 
\[
\frac{(2x_1x+2xx_0-x_1x_0-3x_0^2)}{x_0(x_1+x_0)} 
= \frac{1}{x_1+x_0}(x_1-x) + \frac{2x_1+3x_0}{x_0(x_1+x_0)}(x-x_0) \ge C\Ecal_0
\]
for sufficiently large \(\ell\).
This completes the proof of the first case.

(2) For this case, we need to use the following result of Rioseco--Sarbach \cite{RS18}:
\begin{lem}[\cite{RS18}, (24)]\label{lem:SarbachCite}
    For \(I\) given in \eqref{Areadef}, we have the following expansion: 
    \[
    c(H_0, M) = \frac{\rd I}{\rd H_0} = \frac{\pi mP^2}{\sqrt{P-6}}\left[1 + \frac{3}{4}\frac{2P^3-32P^2+165P-266}{(P-2)(P-6)^2}e^2 + \Ocal(e^4)\right].
    \]
    Here, \(P\) and \(e\) are defined as follows: 
    for \(\x_\pm = r_\pm^0(H_0, M) / m\),
    \[
    \x_- = \frac{P}{1+e}, \qquad
    \x_+ = \frac{P}{1-e}.
    \]
\end{lem}
We take \(\d(m, \Mbar)\) so that we can fit in the region where the above expansion in \(e\) is valid with a constant \(C>0\). 
We regard \(P\) and \(e\) as functions of \(r, w, \ell\) then differentiate them in \(\ell\). 
To use the above expansion, we first show that 
\[
d \le \abs{\rd_\ell P} \le D, \qquad \abs{\rd_\ell e} \le D.
\]
for constants \(d, D > 0\).

We have 
\[
P = \frac{2\x_+\x_-}{\x_++\x_-}, \qquad
e = \frac{\x_+-\x_-}{\x_++\x_-}.
\]
From \(H^2 = (1-2/\x_\pm)(1+\l/\x_\pm^2)\) with \(\l = \ell/m^2\), we have 
\[
\big(1-\frac{2m}{r}\big)\frac{1}{r^2}
= \big(1-\frac{2m}{r_\pm}\big)\frac{1}{r_\pm^2}
 + \rd_\ell \x_\pm\big(\frac{2}{\x_\pm^2}-\frac{2\l}{\x_\pm^3} + \frac{6\l}{\x_\pm^4}\big),
\]
hence 
\[
\frac{\rd_\ell \x_\pm}{\x_\pm}
= \frac{\rd_\ell V_0(r, \ell)-\rd_\ell V_0(r_\pm, \ell)}{r_\pm\rd_r V_0(r_\pm, \ell)}.
\]
By fixing \(\Mbar\) we are restricting \(r_\pm\) within a compact set with \(r_\pm \ge 10m\), in particular \(\rd_\ell\rd_r V_0\) and \(\rd_{rr}^2 V_0\) are both away from zero.
This proves that there are constants \(d_1, D_1\) with, as before, \(r_*\) denoting the point where \(V_0(r, \ell)\) attains its minimum,
\[
\frac{\rd_\ell \x_\pm}{\x_\pm} \ge d_1\frac{\abs{r-r_\pm}}{\abs{r_+-r_-}}, \qquad
\frac{\rd_\ell \x_\pm}{\x_\pm} \le D_1\frac{\abs{r-r_\pm}}{\abs{r_\pm-r_*}}.
\]

Now we can estimate the \(\ell\)-derivative of \(P\) and \(e\) to conclude. 
\begin{align*}
\rd_\ell P &= \frac{2\x_+^2\x_-^2}{(\x_++\x_-)^2}\left(\frac{1}{\x_+^2}\rd_\ell\x_+ + \frac{1}{\x_-^2}\rd_\ell\x_-\right), \\
\rd_\ell e &= \frac{2\x_+\x_-}{(\x_++\x_-)^2}\left(\frac{1}{\x_+}\rd_\ell\x_+ - \frac{1}{\x_-}\rd_\ell\x_-\right).
\end{align*}
Because the second derivative of the potential curve \(\rd_{rr}^2 V_0\) is bounded and away from zero in this regime, we have 
\[
\abs{r_+-r_*}\sim \abs{r_*-r_-}.
\]
This with above estimates proves the desired claim. 

Now we can differentiate the expansion in Lemma \ref{lem:SarbachCite} in \(\ell\) to conclude that 
\[
\rd_\ell (c(H_0, M)) = \frac{3\pi m P(P-8)}{2(P-6)^{3/2}}\rd_\ell P + \Ocal(e).
\]
Since the main part is positive and away from zero in our regime where \(\x_{\pm}\ge 10\), we can take \(\d\) sufficiently small so that \(\Ocal(e)\) term is small compared to the main term, hence we have \(\rd_\ell c(H_0, M) > 0\).
This completes the proof.
\end{proof}

Because our linearized transport operator \(\Dfr^\lin_0\) is given by \(\rd_t + \O_{\Sch}(H_0, M)\rd_{Q}\) (this will be explained later), we need to define the vector field \(Z\) which commutes with this. 
\begin{equation}\label{Zdef}
Z(t, Q, H_0, M) \coloneqq \rd_M \O_{\Sch}(H_0, M)\rd_{H_0} - \rd_{H_0} \O_{\Sch}(H_0, M)\rd_M.
\end{equation}
This gives \(Z(\O_{\Sch}(H_0, M)) = 0\), and we will see that \(\O\) defined with perturbed metric still has small \(Z\) derivative. 
Therefore, we expect that \(Z^k f\) does not grow in time, which is crucial for the nonlinear estimates.

\section{Linear Estimates}\label{sec:LinEst}
\subsection{Coordinate defined by action angle variables}
We want to use the coordinate system \((t = s, Q, H_0, M)\) which transforms the Vlasov equation \eqref{eq:vlasov-lin} into a better form. 
\(H_0, M\) are already defined by \eqref{H0def}, so we will define \(Q\) here, and show that this is indeed a coordinate system on the support of \(f\) for all time. 
Note that we use \(t\) whenever we are using the action angle coordinates, in order to distinguish two coordinate vectors \(\rd_t\) and \(\rd_s\). 
Although these two vectors are the same for the (non-dynamical) angle variables that we are using in the linear estimates, they will be different for the dynamical angle variables that we will use in the nonlinear estimates, hence we want to keep this notation consistent.
\begin{defn}[Action angle \(Q\)]\label{Qdef1} Let \(H_0, M\) satisfy the condition given in Definition \ref{Omegadef1}, and set \(R_-^0(H_0, M)\) and \(R_+^0(H_0, M)\) as there. 
For \(R\in [R_-^0(H_0, M), R_+^0(H_0, M)]\), set 
\[
\Qtil(R, H_0, M) = \int_{R_-^0(H_0, M)}^{R} \frac{A_0(R')H_0}{\a_0(R')^2w(R', H_0, M)} dR',
\]
where \(R_-^0(H_0, M)\) and \(w(R, H_0, M)\ge 0\) are defined in Definition \ref{Omegadef1}. Then, we define the action angle \(Q\) by
\begin{equation}\label{Qdef}
Q(R, w, \ell) \coloneqq \sgn(w)\O_{\Sch}\Big(H_0(R, w, \ell), M = \ell\Big)\Qtil\Big(R, H_0(R, w, \ell), M = \ell\Big).
\end{equation}
Note that \((H_0, M)\) given by \((R, w, \ell)\) automatically satisfy the condition in Definition \ref{Omegadef1}, hence the above definition is well-defined for all \((R, w, \ell)\) in the support of \(f\).
\end{defn}
Note that \(Q(R, w, \ell)\) defined here is not globally smooth, as there is a discontinuity on \(\Qtil\) across \(R = R_+^0(H_0, M)\).
Namely, \(Q\) jumps from \(-\pi\) to \(\pi\) across these points.
This situation is just the same as the discontinuity occurring in the usual polar coordinate; especially, the coordinate vector \(\rd_Q\) is smooth in the region that we are interested in, so the discontinuity does not cause any issue for our analysis.

Now, we prove that this is a valid coordinate system on \(\Scal_{\cfr}\).
We need to compute the Jacobian of the transformation. 
\begin{prop}[\((t, Q, H_0, M)\) is a coordinate system]\label{prop:anglevar}
    On \(\RR_s\times (\Scal_{\cfr}\setminus \{R = R_+^0(H_0(R, w, \ell), M = \ell)\})\), the transformation \((s, R, w, \ell) \mapsto (t, Q, H_0, M)\) is a smooth diffeomorphism. 
    The Jacobian determinant of this change of variables map is given by \(A_0(R)\O_{\Sch}(H_0, M)\).
\end{prop}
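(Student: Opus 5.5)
The plan is to factor the map $(s,R,w,\ell)\mapsto(t,Q,H_0,M)$ through the intermediate coordinates $(s,R,H_0,M)$ and compute the Jacobian by the chain rule. Since $t=s$ and $M=\ell$, the full Jacobian determinant reduces to the $2\times 2$ determinant
\[
\det\frac{\partial(Q,H_0)}{\partial(R,w)}\Big|_{\ell\ \text{fixed}}.
\]

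First, $H_0=\a_0E$ and $E^2=1+w^2+\ell/(A_0^2R^2)$ give $\partial_w H_0=\a_0 w/E=\a_0^2 w/H_0$. Next, on each half ($w>0$ or $w<0$), $Q=\sgn(w)\,\O_{\Sch}(H_0,\ell)\,\Qtil(R,H_0,\ell)$. Differentiating in $R$ at fixed $(H_0,M)$ gives
\[
\partial_R Q\big|_{H_0,M}=\sgn(w)\,\O_{\Sch}\,\frac{A_0H_0}{\a_0^2\,|w|}=\O_{\Sch}\frac{A_0H_0}{\a_0^2 w}.
\]
Using the triangular structure, the determinant is
\[
\partial_R Q\big|_{H_0,M}\cdot \partial_w H_0\big|_{R,\ell}=\O_{\Sch}\frac{A_0H_0}{\a_0^2w}\cdot\frac{\a_0^2w}{H_0}=A_0\O_{\Sch}.
\]
To justify this, I write $dQ=(\partial_RQ)\,dR+(\partial_{H_0}Q)\,dH_0+(\partial_MQ)\,dM$ and $dH_0=(\partial_RH_0)\,dR+(\partial_wH_0)\,dw+\dots$. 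The wedge $dQ\wedge dH_0\wedge ds\wedge dM$ then keeps only the $\partial_RQ\,\partial_wH_0$ term. This yields the claimed formula. Since $A_0>0$ and $\O_{\Sch}>0$ (the period integral is finite and positive by the discussion after Definition~\ref{Omegadef1}), the determinant is nonzero wherever the map is smooth. The inverse function theorem then gives a local diffeomorphism.

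The main obstacle is smoothness near the turning points $w=0$, i.e. $R=R_\pm^0$. There $\partial_RQ$ blows up like $1/w$ while $\partial_wH_0$ vanishes like $w$, and $\Qtil$ has a square-root singularity, so the separate factors are not smooth. My plan is to handle the region near $R_-^0$ (where $Q$ is near $0$) by a local change of variables. I will write $H_0^2-V_0(R,M)=\a_0^2w^2$ and parametrize near the turning point by $\sqrt{H_0^2-V_0}\approx\pm\sqrt{c\,(R-R_-^0)}$, using $\partial_RV_0(R_-^0)\neq0$. Then $\Qtil=\int_{R_-^0}^{R}(\cdots)/w$ becomes a smooth odd function of $w$ (concretely, the substitution $R'=R_-^0+\sigma^2$ removes the singularity). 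This means $\sgn(w)\Qtil$ is smooth across $w=0$. Smooth dependence on $(H_0,M)$ follows because $R_\pm^0$ are smooth, being simple roots. At $R_+^0$ the same argument shows $Q$ is smooth modulo $2\pi$; that locus is excluded in the statement precisely because of the $\pm\pi$ jump.

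Finally, global injectivity holds because $(H_0,M)$ determine the closed orbit in the $(R,w)$ plane, and on that orbit $Q$ is strictly monotone (its $R$-derivative has the sign of $w$, so it is monotone along the flow), covering $(-\pi,\pi)$ exactly once by the normalization in \eqref{Omegadef}. Surjectivity onto the image follows similarly. Combined with the local diffeomorphism property, this gives a smooth diffeomorphism on $\RR_s\times(\Scal_{\cfr}\setminus\{R=R_+^0\})$.
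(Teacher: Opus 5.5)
Your proposal is correct, and its core is exactly the paper's proof: you compute the Jacobian through the intermediate variables $(R,H_0,M)$, the $\rd_{H_0}Q$ contributions cancel, and you are left with $\rd_w H_0\cdot\rd_R|_{(R,H_0,M)}Q=A_0\O_{\Sch}$. Like the paper, you should state explicitly that this identity extends to $w=0$ by continuity of the determinant, since you derive it only where $w\neq 0$.

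Where you differ is on two points the paper does not argue in its own proof.

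\textbf{Smoothness across the turning point.} The paper simply defers to Proposition~\ref{prop:regDAAV}. That proof desingularizes the endpoint by integration by parts (Lemmas~\ref{lem:intsep}--\ref{lem:intderiv}, subtracting $\g(R_-)\rd_R V/\rd_R V(R_-)$ so the numerator vanishes at $R_-$), plus a splitting around the minimum of the potential. That machinery is built to give quantitative $C^{N+2}$ bounds from finitely many derivatives of the perturbed time-$T$ coefficients, which the nonlinear argument needs. Your substitution $R'=R^0_-+\sigma^2$ is more elementary and self-contained in the smooth Schwarzschild case. Writing $H_0^2-V_0=(R-R^0_-)G$ with $G>0$, the $\sigma$-integrand is even, so $\Qtil=F(\sigma)$ with $F$ odd. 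Hence $\sgn(w)\Qtil=F(\a_0 w/\sqrt{G})$ is smooth. Strictly, $\Qtil$ is odd in this rescaled variable rather than in $w$ itself, but the conclusion is the same.

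\textbf{Global injectivity.} Your orbit argument supplies something the paper glosses over. The paper infers ``diffeomorphism'' from the nonvanishing Jacobian alone, which only gives a local diffeomorphism. Bijectivity needs, as you observe, that $\Scal_{\cfr}$ is a union of full bounded orbits. This holds because for $R\ge(\frac32+\sqrt2)m$, i.e.\ $r\ge 4m$, one has $V_0(4m,M)>1>H_0^2$, and $V_0$ is decreasing up to its minimum; so each $(H_0,M)$-level set there is a single closed orbit. On each such orbit, $Q$ is a monotone parametrization onto $(-\pi,\pi)$ once the right turning point is removed.
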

\begin{proof}
Clearly \(t, H_0, M\) are infinitely differentiable in \((s, R, w, \ell)\) coordinate. 
The proof that \(Q\) is smooth in \((s, R, w, \ell)\) coordinate is standard, and in particular follows as a subcase of Proposition \ref{prop:regDAAV} with metric coefficients taken from Schwarzschild and arbitrarily large \(N\), hence we omit the details here.

Then, we compute the Jacobian matrix of the transformation. We have, 
\[
\begin{bmatrix}
\rd_s t & \rd_s Q & \rd_s H_0 & \rd_s M \\
\rd_R t & \rd_R Q & \rd_R H_0 & \rd_R M \\
\rd_w t & \rd_w Q & \rd_w H_0 & \rd_w M \\
\rd_\ell t & \rd_\ell Q & \rd_\ell H_0 & \rd_\ell M
\end{bmatrix}
=
\begin{bmatrix}
1 & 0 & 0 & 0 \\
0 & \rd_R|_{(s, R, w, \ell)} Q & \rd_R (\a_0 E) & 0 \\
0 & \rd_w|_{(s, R, w, \ell)} Q & \frac{\a_0 w}{E} & 0 \\
0 & \rd_\ell|_{(s, R, w, \ell)} Q & \frac{\a_0}{2A_0^2R^2E} & 1
\end{bmatrix}.
\]
We make use of \(\rd_{H_0}|_{(R, H_0, M)}\) and \(\rd_R|_{(R, H_0, M)}\) to compute the Jacobian.
Whenever \(R\in (R^0_-(H_0, M), R^0_+(H_0, M))\), the vectors \(\rd_R|_{(R, H_0, M)}\) and \(\rd_{H_0}|_{(R, H_0, M)}\) are well-defined, and the following calculation makes sense:
From the definition of \(\Qtil\) in Definition \ref{Qdef1}, we have 
\begin{align*}
\rd_R|_{(R, H_0, M)}\Qtil &= \frac{A_0(R)H_0}{\a_0(R)^2w(R, H_0, M)}, \\
\rd_R|_{(R, H_0, M)}Q &= \sgn(w)\O_{\Sch}(H_0, M)\frac{A_0(R)H_0}{\a_0(R)^2w(R, H_0, M)}.
\end{align*}
From the chain rule, we have 
\begin{align*}
\rd_R Q &= \rd_R H_0 \cdot \rd_{H_0}|_{(R, H_0, M)} Q + \rd_R|_{(R, H_0, M)} Q, \\
\rd_w Q &= \rd_w H_0 \cdot \rd_{H_0}|_{(R, H_0, M)} Q,
\end{align*}
hence the Jacobian determinant is given by
\begin{align*}
\abs{\frac{\rd(t, Q, H_0, M)}{\rd(s, R, w, \ell)}}
&= \rd_w H_0 \cdot \rd_R|_{(R, H_0, M)} Q \\
&= \frac{\a_0 w}{E} \cdot \sgn(w)\O_{\Sch}(H_0, M)\frac{A_0(R)H_0}{\a_0(R)^2w(R, H_0, M)} \\
&= A_0(R)\O_{\Sch}(H_0, M).
\end{align*}
Note that the Jacobian determinant is continuous in its variables, hence this identity is true even when \(R = R^0_\pm(H_0, M)\).
Since this never touches zero, the transformation is a diffeomorphism.
\end{proof}

Now, we rewrite the linear Vlasov equation \eqref{eq:vlasov-lin} in the \((t, Q, H_0, M)\) coordinate system.
\begin{prop}[Vlasov equation in \((t, Q, H_0, M)\) coordinate]\label{prop:linvlasoveqn}
    The Vlasov equation \eqref{eq:vlasov-lin} can be rewritten in the \((t, Q, H_0, M)\) coordinate system as 
    \begin{equation}\label{eq:linvlasovA}
    \rd_t f + \O_{\Sch}(H_0, M)\rd_Q f = 0.
    \end{equation}
\end{prop}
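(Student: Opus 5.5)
The plan is to compute how the linear transport operator \(\Dfr^\lin_0\) in \eqref{eq:vlasov-lin} acts on each of the new coordinate functions \(t, Q, H_0, M\). Since Proposition \ref{prop:anglevar} shows \((t, Q, H_0, M)\) is a smooth coordinate system on the relevant region, the chain rule gives
\[
\Dfr^\lin_0 = (\Dfr^\lin_0 t)\rd_t + (\Dfr^\lin_0 Q)\rd_Q + (\Dfr^\lin_0 H_0)\rd_{H_0} + (\Dfr^\lin_0 M)\rd_M .
\]
It therefore suffices to show that \(\Dfr^\lin_0 t = 1\), \(\Dfr^\lin_0 H_0 = \Dfr^\lin_0 M = 0\) and \(\Dfr^\lin_0 Q = \O_{\Sch}(H_0, M)\).

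The first three identities are immediate. We have \(t = s\), and \(M = \ell\) has no \(\rd_\ell\) component in \(\Dfr^\lin_0\). For \(H_0 = \a_0 E\) with \(E^2 = 1+w^2+\ell/(A_0^2R^2)\), I will compute \(\rd_R H_0\) and \(\rd_w H_0 = \a_0 w/E\) and insert them into \eqref{eq:vlasov-lin}. The \(\rd_R\) coefficient \(\a_0 w/(A_0E)\) multiplies
\[
\rd_R H_0 = \a_0' E - \frac{\a_0 \ell}{E}\Big(\frac{A_0'}{A_0^3R^2} + \frac{1}{A_0^2R^3}\Big).
\]
This is cancelled exactly by the \(\rd_w\) coefficient times \(\a_0 w/E\). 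This is the geodesic conservation of \(\a_0 E = -g(\dot\gamma, \rd_s)\), and the computation is routine.

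The main step is \(\Dfr^\lin_0 Q\). Since \(H_0\) and \(M\) are transported, \(\Dfr^\lin_0\) acting on a function of \((R, H_0, M)\) only sees its \(\rd_R|_{(R,H_0,M)}\) derivative. The \(R\)-component of the flow is \(\a_0 w/(A_0 E)\). Writing \(E = H_0/\a_0\) on the level set, this equals \(\a_0^2 w/(A_0 H_0)\). Here \(\abs{w} = w(R, H_0, M)\) as in Definition \ref{Omegadef1}, with sign \(\sgn(w)\). In the formula from the proof of Proposition \ref{prop:anglevar},
\[
\rd_R|_{(R,H_0,M)} Q = \sgn(w)\,\O_{\Sch}\,\frac{A_0 H_0}{\a_0^2\, w(R,H_0,M)},
\]
so multiplying gives \(\Dfr^\lin_0 Q = \sgn(w)^2\O_{\Sch} = \O_{\Sch}(H_0, M)\) on the open set \(w \neq 0\). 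The \(\sgn(w)\) factor in \eqref{Qdef} is exactly what makes the sign work in both half-orbits.

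The remaining, slightly delicate point is the turning points \(w = 0\), where \(R = R^0_\pm\), and the cut where \(Q\) jumps from \(-\pi\) to \(\pi\). At these points the formula above is singular, and the factor \(\sgn(w)\) is discontinuous. I would handle this by continuity, as in the Jacobian argument: \(\rd_Q\) is smooth there and \(\Dfr^\lin_0 Q\) is a smooth function (by the smoothness of \(Q\) asserted in Proposition \ref{prop:anglevar}). It equals the continuous function \(\O_{\Sch}\) on a dense open set, so it equals \(\O_{\Sch}\) everywhere. The constant jump of \(2\pi\) on the cut does not affect derivatives.

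Alternatively, one can check directly that along a characteristic \(Q\) increases at rate \(\O_{\Sch}\) and returns after the period \(2\pi/\O_{\Sch}\) given by \eqref{Omegadef}. Either way one obtains \eqref{eq:linvlasovA}.
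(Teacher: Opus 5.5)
Your proposal is correct and follows essentially the same route as the paper: the paper also applies \(\Dfr^\lin_0\) to each coordinate function, treats \(\Dfr^\lin_0 t = 1\) and \(\Dfr^\lin_0 H_0 = \Dfr^\lin_0 M = 0\) as clear, and computes \(\Dfr^\lin_0 Q = \Dfr^\lin_0 R\cdot \rd_R|_{(R, H_0, M)}Q = \O_{\Sch}(H_0, M)\) via the same cancellation you describe. Your explicit check that \(H_0\) is conserved, and your continuity argument at the turning points, fill in details the paper leaves implicit; the paper uses the same continuity reasoning only for the Jacobian in Proposition \ref{prop:anglevar}.
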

\begin{proof}
It is clear that \(\Dfr^\lin_0 t = 1\) and \(\Dfr^\lin_0 H_0 = \Dfr^\lin_0 M = 0\). For \(\Dfr^\lin_0 Q\), we again use the vectors \(\rd_R|_{(R, H_0, M)}\), \(\rd_{H_0}|_{(R, H_0, M)}\), and \(\rd_M|_{(R, H_0, M)}\) to compute. We have 
\[
\Dfr^\lin_0 Q = \Dfr^\lin_0 R \cdot \rd_R|_{(R, H_0, M)} Q = \frac{\a_0 w}{A_0 E}\O_{\Sch}(H_0, M)\frac{A_0H_0}{\a_0^2w} = \O_{\Sch}(H_0, M).
\]
This completes the proof. 
\end{proof}

\subsection{Proof of Theorem \ref{thm:lin}} 
Now we prove Theorem \ref{thm:lin}.
\begin{proof}[Proof of Theorem \ref{thm:lin}]    
Since \eqref{eq:linvlasovA} is a linear transport equation, we have a clear solution representation in this coordinate system. This allows us to prove Theorem \ref{thm:lin} by the non-stationary phase argument.
\label{subsec:lin-decay}

Let \(f\) be a solution of the linear Vlasov equation \eqref{eq:vlasov-lin} with initial data \(f_\init\) satisfying the support condition. 
Then, in the action angle coordinate system, by Proposition \ref{prop:linvlasoveqn}, we have the following explicit solution representation in the \((t, Q, H_0, M)\) coordinate system: 
\begin{equation}\label{explsol}
f(t, Q, H_0, M) = f_\init(Q - t\O_{\Sch}(H_0, M), H_0, M).
\end{equation}
Then, the estimates \eqref{linthm-f1} and \eqref{linthm-f2} follow immediately:
For (1), because the coordinate transformation and \(\O_{\Sch}\) are smooth in \(H_0\) and \(M\), each derivative of \(f\) in \((t, Q, H_0, M)\) gives only a constant times \(t\) when the derivative hits the phase shift, so we have \eqref{linthm-f1}.
Note that this was possible as \(f\) is \(C^N\) and satisfies the size condition \eqref{linthm-init}.
For (2), we need to understand the \(Z\) derivative of \(f\). Since \(Z\O_{\Sch}(H_0, M) = 0\), we have 
\begin{multline*}
Z(f_\init(Q - t\O_{\Sch}(H_0, M), H_0, M)) \\
= (\rd_M\O_{\Sch}(H_0, M) \rd_{H_0} f_\init-\rd_{H_0}\O_{\Sch}(H_0, M) \rd_M f_\init)(Q - t\O_{\Sch}(H_0, M), H_0, M),
\end{multline*}
in particular this does not cause growth in \(t\). 
When we apply \(N\) derivatives, we will see at most \(N\)-th derivative of \(\O\) and \(N\)-th derivative of \(f_\init\), which are all bounded by a constant times \(B_\init\) by the assumption \eqref{linthm-init}, hence we have \eqref{linthm-f2}.

Now we prove (3); this is the heart of the proof. Note that the integral that we have to understand is of the form 
\[
I(s, R) = \frac{\pi}{R^2A_0(R)^2}\int_{-\infty}^{\infty}\int_0^\infty f(s, Q, H_0, M) \bar{\CC}_0(R, w, \ell) d\ell dw,
\] 
\[
\rd_s I(s, R) = \frac{\pi}{R^2A_0(R)^2}\int_{-\infty}^{\infty}\int_0^\infty (\rd_s f)(s, Q, H_0, M) \bar{\CC}_0(R, w, \ell) d\ell dw,
\]
where \(\bar{\CC}_0 \in \{E, -1/E, w, w^2/E\}\) is a function which is known to be regular, and \(f\) is given by the above explicit formula.
We use the Fourier expansion of \(f_\init\) in \(Q\) variable, which is given by 
\[
f_\init(Q, H_0, M) = \sum_{k\in \ZZ} e^{ikQ} \wh{(f_\init)}_k(H_0, M).
\]
Then, \(f\) and \(\rd_s f\) are written as 
\[
f(s, Q, H_0, M) = \sum_{k\in \ZZ} e^{ikQ}e^{-iks\O_{\Sch}(H_0, M)} \wh{(f_\init)}_k(H_0, M),
\]
\[
\rd_s f(s, Q, H_0, M) = \sum_{k\neq 0} (-ik\O_{\Sch}(H_0, M))e^{ikQ}e^{-iks\O_{\Sch}(H_0, M)} \wh{(f_\init)}_k(H_0, M).
\]
Hence, we get 
\[
\rd_s I(s, R) = \frac{\pi}{R^2A_0(R)^2}\sum_{k\neq 0} \G_k (s, R),
\]
where 
\[
\G_k (s, R) = \int_{-\infty}^{\infty}\int_0^\infty (-ik\O_{\Sch}(H_0, M))e^{ikQ}e^{-iks\O_{\Sch}(H_0, M)} \wh{(f_\init)}_k(H_0, M) \bar{\CC}_0(R, w, \ell) d\ell dw.
\]
Therefore, it suffices to prove the time decay estimate of \(\rd_R^i \G_k(s, R)\), based on the non-stationary phase argument with 
\[
e^{-iks\O_{\Sch}(H_0, M)} = \left(\frac{1}{(-iks\rd_\ell(\O_{\Sch}(H_0, M)))}\rd_\ell\right) e^{-iks\O_{\Sch}(H_0, M)}.
\]
Note that \(\rd_\ell(\O_{\Sch}(H_0, M))\) is away from zero on our support by Proposition \ref{prop:monotonic}, hence the above integration by parts is valid.
For the integration by parts, when \(\rd_\ell\) hits \(1/\rd_\ell(\O_{\Sch}(H_0, M))\) it produces higher order derivatives of \(\O_{\Sch}\), but all of them are bounded by a constant. 
We can control the integral as below, where the implicit constant depends on \(m, \cfr, N\) but not on \(k\), \(s\), or \(f_\init\). 
Here \(\CC_0\) is a generic function with bounded derivatives, which may change from line to line:
\begin{equation}\label{dRiGk}
\begin{aligned}
&\abs{\rd_R^i \G_k(s, R)} \\
&\lesssim \sum_{i_1 + \cdots + i_4 \le i}\abs{\int_{-\infty}^{\infty}\int_0^\infty k^{1+i_1+i_2}t^{i_2}e^{ikQ}e^{-iks\O_{\Sch}(H_0, M)} \wh{(\rd_{H_0}^{i_3}\rd_M^{i_4}f_\init)}_k(H_0, M) \CC_0(R, w, \ell) d\ell dw} \\
&\lesssim \sum_{\substack{i_1 + \cdots + i_4 \le i \\ j_1 + j_2 + j_3 \le N-i+i_2}} 
\frac{\abs{k}^{1+i_1}}{\abs{ks}^{N-i}} \int_{-\infty}^{\infty}\int_0^\infty \abs{k}^{j_1}\abs{\wh{(\rd_{H_0}^{i_3+j_2}\rd_M^{i_4+j_3}f_\init)}_k(H_0, M)} d\ell dw \\
&\lesssim \abs{s}^{-N+i}\int_{-\infty}^{\infty}\int_0^\infty \sum_{\substack{i_1 + \cdots + i_4 \le i \\ j_1 + j_2 + j_3 \le N-i+i_2}} 
\abs{k}^{1+i_1+j_1+i-N}\abs{\wh{(\rd_{H_0}^{i_3+j_2}\rd_M^{i_4+j_3}f_\init)}_k(H_0, M)} d\ell dw.
\end{aligned}
\end{equation}
For the penultimate inequality, we integrated by parts \(N-i+i_2\) times. Note that the exponent of \(k\) satisfies 
\(1 + i_1 + j_1 + i - N \le 1 + i_1 + i_2 \le N-1\) as \(i \le N-2\), and we also have 
\[
(2+i_1 +j_1+i-N) + (i_3+j_2) + (i_4+j_3) \le 2 + i \le N.
\]
Hence, for \(\a_1 = \max{2+i_1+j_1+i-N, 0}\) we have 
\begin{align*}
\abs{\rd_R^i \G_k(s, R)}
&\lesssim \abs{s}^{-N+i}\int_{-\infty}^{\infty}\int_0^\infty \sum_{\substack{i_1 + \cdots + i_4 \le i \\ j_1 + j_2 + j_3 \le N-i+i_2}} 
\frac{1}{k}\abs{\wh{(\rd_Q^{\a_1}\rd_{H_0}^{i_3+j_2}\rd_M^{i_4+j_3}f_\init)}_k(H_0, M)} d\ell dw \\
&\le \abs{s}^{-N+i}\int_{-\infty}^{\infty}\int_0^\infty \sum_{\a_1 + \a_2 + \a_3 \le N} 
\frac{1}{k}\abs{\wh{(\rd_Q^{\a_1}\rd_{H_0}^{\a_2}\rd_M^{\a_3}f_\init)}_k(H_0, M)} d\ell dw
\end{align*}
As we have a trivial bound for \(s\le 1\), from the second line above, 
\[
\abs{\rd_R^i \G_k(s, R)} \lesssim \sum_{i_1+\cdots+i_4\le i}\int_{-\infty}^{\infty}\int_0^\infty 
\frac{1}{k}\abs{\wh{(\rd_Q^{2+i_1+i_2}\rd_{H_0}^{i_3}\rd_M^{i_4} f_\init)}_k(H_0, M)} d\ell dw,
\]
we can write the above as 
\[
\abs{\rd_R^i \G_k(s, R)}
\le \jap{s}^{-N+i}\int_{-\infty}^{\infty}\int_0^\infty \sum_{\a_1 + \a_2 + \a_3 \le N} 
\frac{1}{k}\abs{\wh{(\rd_Q^{\a_1}\rd_{H_0}^{\a_2}\rd_M^{\a_3}f_\init)}_k(H_0, M)} d\ell dw
\]
Now we take a summation over \(k\). By the Cauchy--Schwarz inequality, we have
\begin{align*}
&\sum_{k\neq 0} \abs{\rd_R^i \G_k(s, R)} \\
&\lesssim \jap{s}^{-N+i}\int_{-\infty}^{\infty}\int_0^\infty \sum_{\a_1 + \a_2 + \a_3 \le N} 
\Big(\sum_{k\neq 0} \frac{1}{k^2}\Big)^{1/2}\Big(\sum_{k\neq 0} \abs{\wh{(\rd_Q^{\a_1}\rd_{H_0}^{\a_2}\rd_M^{\a_3}f_\init)}_k(H_0, M)}^2\Big)^{1/2} d\ell dw \\
&\lesssim \jap{s}^{-N+i}\int_{-\infty}^{\infty}\int_0^\infty \sum_{\a_1 + \a_2 + \a_3 \le N} 
\norm{\rd_Q^{\a_1}\rd_{H_0}^{\a_2}\rd_M^{\a_3}f_\init(\cdot, H_0, M)}_{L^2_Q} d\ell dw \\
&\lesssim \jap{s}^{-N+i}\sum_{\a_1 + \a_2 + \a_3 \le N} \sup_{Q, H_0, M}
\abs{\rd_Q^{\a_1}\rd_{H_0}^{\a_2}\rd_M^{\a_3}f_\init(Q, H_0, M)}.
\end{align*}
Since the change of variables \((R, w, \ell)\) to \((Q, H_0, M)\) is sufficiently regular, we can use the bound on the initial data \eqref{linthm-init}. This gives us the desired decay estimate \eqref{linthm-density}.

Notice that the \((H_0, M)\) support of the solution \eqref{explsol} remains unchanged over time.
In particular, the support in \((R, w, \ell)\) is still bounded uniformly in time. 
We fix \(R_{\textrm{out}} > 0\) to be a number that \(f(s, R, w, \ell) = 0\) for \(R > R_{\textrm{out}}\), which forces \(\r, \tr T, j, S_R \equiv 0\) there. 

For (4), we estimate the time derivative of the metric coefficient by using the known estimates on the time derivative of energy-momentum tensor. 
Taking \(\rd_s\) on \eqref{eq:linODE}, we get 
\begin{equation}\label{dslinODE}
\rd_s\vec{X}_\lin'(R) = \vec{F}_{\vec{X}}(\vec{X}_0(R), 0, R)\rd_s\vec{X}_\lin(R)
+ \vec{F}_{\vec{S}}(\vec{X}_0(R), 0, R)\rd_s\vec{S}(R).
\end{equation}
Note that \(\vec{X}_0\), metric coefficients of Schwarzschild spacetime in our foliation, is a function of \(R\) only. 
Therefore, the coefficients \(\vec{F}_{\vec{X}}(\vec{X}_0(R), 0, R)\) and \(\vec{F}_{\vec{S}}(\vec{X}_0(R), 0, R)\) are both smooth functions of \(R\), of which derivatives are bounded in the domain we consider. 
Therefore, this is a linear ODE of \(\rd_s\vec{X}_{\lin}\), with the boundary condition \(\rd_s\vec{X}_{\lin} \equiv 0\) for large \(R\). 

To get the desired estimate, first we use the estimate on \(\rd_s \vec{S}\). 
Using Gronwall's inequality on \(R\in [m/4, R_{\textrm{out}}]\), we obtain that \(\norm{\vec{X}}_{C^0}\le CB_\init\jap{s}^{-N}\).
Plugging this back to the equation, we also have \(\norm{\vec{X}}_{C^1}\le CB_\init\jap{s}^{-N}\), which proves the desired estimate for \(i\le 2\). 
Note that, the second derivative of \(\b_\lin\) can be controlled using the equation on \(\b_\lin\), which is the linearized version of \eqref{eq:MeanCurv}.

For \(i > 2\), we use induction. Say we have desired estimates for \(i < l\), where \(3\le l\le N\). 
Then, by differentiating the above ODE \(l-2\) times, we get the following: 
\[
\rd_s \vec{X}^{(l-1)}(R) = \vec{F}_{\vec{X}}(\vec{X}_0(R), 0, R)\rd_s\vec{X}_\lin^{(l-2)}(R)
+ \vec{F}_{\vec{S}}(\vec{X}_0(R), 0, R)\rd_s\vec{S}^{(l-2)}(R)
+ \text{ lower order terms }.
\]
Here, the lower order terms are already controlled by \(CB_\init \jap{t}^{-N + l-3}\).
Therefore, from the estimates that we have on \(\rd_s\vec{X}_{\lin}^{(i)}(R)\) with \(i\le l-2\) and \(\rd_s\vec{S}^{(l-2)}\), we get 
\[
\sup_R\abs{\rd_R^l\rd_s(\a_\lin, A_\lin, \b_\lin)(s, R)} \le CB_\init \jap{s}^{-N+l-2},
\]
which is the desired estimate.
This completes the proof of (4). 

Finally, we prove (5). From the proof of (3), the integral representing the energy-momentum tensor component can be written as 
\[
I(s, R) = \frac{\pi}{R^2A_0(R)^2} 
\sum_{k \in \ZZ} \G^{(0)}_k(s, R),
\]
where 
\[
\G^{(0)}_k(s, R) = \int_{-\infty}^{\infty}\int_0^\infty e^{ikQ}e^{-iks\O_{\Sch}(H_0, M)} \wh{(f_\init)}_k(H_0, M) \bar{\CC}_0(R, w, \ell) d\ell dw.
\]
Then, we define the limiting object \((\widetilde{\rho}, \widetilde{\tr T}, \widetilde{j}, \widetilde{S}_R)\) to be 
\[
(\widetilde{\rho}, \widetilde{\tr T}, \widetilde{j}, \widetilde{S}_R)(s, R) = \frac{\pi}{R^2A_0(R)^2} \G^{(0)}_0(s, R),
\]
where \(\G^{(0)}_0\) is written with corresponding \(\bar{\CC}_0\).
Then, what we have to show is that 
\[
\sum_{k\neq 0} \abs{\rd_R^i \G^{(0)}_k(s, R)} \le CB_\init \jap{s}^{-N+i}, \qquad \text{ for } i\le N-1.
\]

From the same non-stationary phase argument as in \eqref{dRiGk}, we have 
\[
\abs{\rd_R^i \G^{(0)}_k(s, R)}
\lesssim \abs{s}^{-N+i}\int_{-\infty}^{\infty}\int_0^\infty \sum_{\substack{i_1 + \cdots + i_4 \le i \\ j_1 + j_2 + j_3 \le N-i+i_2}} 
\abs{k}^{i_1+j_1+i-N}\abs{\wh{(\rd_{H_0}^{i_3+j_2}\rd_M^{i_4+j_3}f_\init)}_k(H_0, M)} d\ell dw.
\]
We have \((i_3+j_2)+(i_4+j_3)\le N\) together with 
\[
(1 + i_1+j_1+i-N) + (i_3+j_2)+(i_4+j_3) \le 1 + i \le N
\]
for \(i\le N-1\), so we have 
\[
\abs{\rd_R^i \G^{(0)}_k(s, R)}
\le \abs{s}^{-N+i}\int_{-\infty}^{\infty}\int_0^\infty \sum_{\a_1 + \a_2 + \a_3 \le N} 
\frac{1}{k}\abs{\wh{(\rd_Q^{\a_1}\rd_{H_0}^{\a_2}\rd_M^{\a_3}f_\init)}_k(H_0, M)} d\ell dw.
\]
Now we proceed in exactly the same way as in (3); replace \(\abs{s}\) to \(\jap{s}\) by adding trivial estimate for \(s\le 1\) and sum over \(k\), we get the desired estimate on the energy-momentum tensor component. 

For the metric component, we need to use the ODE estimate. 
Let \((\widetilde{\a}_{\lin}, \widetilde{A}_{\lin}, \widetilde{\b}_{\lin})\) be the metric coefficient obtained by the solution \(\widetilde{\vec{X}}_{\lin}\) of \eqref{eq:linODE} with the source term 
\[
\widetilde{\vec{S}} = (\widetilde{\r}, \widetilde{\tr T}, \widetilde{j}, \widetilde{S}_R, \Psi-\Psi_0).
\]
We subtract the ODEs for \(\vec{X}_{\lin}\) and \(\widetilde{\vec{X}}_{\lin}\) to get
\[
(\vec{X}_{\lin}-\widetilde{\vec{X}_{\lin}})'(R) = \vec{F}_{\vec{X}}(\vec{X}_0(R), 0, R)(\vec{X}_{\lin}-\widetilde{\vec{X}_{\lin}})(R)
+ \vec{F}_{\vec{S}}(\vec{X}_0(R), 0, R)(\vec{S}-\widetilde{\vec{S}})(R).
\]
This has exactly the same form with the ODE \eqref{dslinODE} in (4), with \(\vec{X}_{\lin}-\widetilde{\vec{X}_{\lin}}\) instead of \(\rd_s \vec{X}_{\lin}\) and \(\vec{S}-\widetilde{\vec{S}}\) instead of \(\rd_s\vec{S}\).
Therefore, by invoking the same estimate with there, we get 
\[
\sup_R\abs{\rd_R^i[(\a_\lin, A_\lin, \b_\lin)(s, R)-(\widetilde{\a}_\lin, \widetilde{A}_\lin, \widetilde{\b}_\lin)(R)]} \le CB_\init \jap{s}^{-N+\max{i-2, 0}},
\]
for \(i\le N+1\). This completes the proof of (5), hence the theorem.

\end{proof}


\section{Bootstrap Assumptions and Main \emph{a priori} Estimates}\label{sec:BA}
The bootstrap assumptions on the metric coefficients that we need in the entire paper are stated here. 
For here and below, the bar notation is used to denote the metric coefficients at time \(T\), i.e. \(\bar{\a}(R) = \a(T, R)\), \(\Abar(R) = A(T, R)\), \(\bar{\b}(R) = \b(T, R)\), and \(\Kbar(R) = K(T, R)\).
The symbol \(\a, A, \b, K\) without bar are the metric coefficients at time \(s < T\).
Here \(T_B\) denotes the running bootstrap time, and the bound \(T_B < T_f\) that we impose on it is exactly the final time \(T_f = \e^{-1}(\log 1/\e)^{-2}\) of Theorem \ref{thm:nonlin}; the continuity argument in Section \ref{sec:PutEverything} lets \(T_B\) exhaust \([0, T_f]\).
For fixed \(T_B < T_f\) that we bootstrap, we assume the following bounds for \(\a, A, \b\) for all \(0 < s < T < T_B\):
\begin{align}
\abs{\rd_R^I (\a-\a_0, A-A_0, \b-\b_0)} &\le \d^{3/4}\e && I \le N+2, 
\label{BA0}\\
\abs{\rd_R^I \rd_s (\a, A, \b)} &\le \d^{3/4}\e\jap{s}^{-N + \max{I-2, 0}} && I \le N, 
\label{BA1}\\
\abs{\rd_R^I (\a-\bar{\a}, A-\Abar, \b-\bar{\b})} &\le \d^{3/4}\e\jap{s}^{-N + \max{I-2, 0}} && I \le N+2, 
\label{BA2}
\end{align}
whereas for \(K\), 
\begin{align}
\abs{\rd_R^I (K-K_0)} &\le \d^{3/4}\e && I \le N+1, 
\label{BAK0}\\
\abs{\rd_R^I \rd_s K} &\le \d^{3/4}\e\jap{s}^{-N + \max{I-1, 0}} && I \le N-1, 
\label{BAK1}\\
\abs{\rd_R^I (K-\Kbar)} &\le \d^{3/4}\e\jap{s}^{-N + \max{I-1, 0}} && I \le N+1. 
\label{BAK2}
\end{align}
For the top order derivative, we lose one more \(\jap{s}\):
\begin{align}
\abs{\rd_R^{N+3} (\a-\a_0, A-A_0, \b-\b_0)} &\le \d^{3/4}\e\jap{s},  
\label{BAH0}\\
\abs{\rd_R^{N+1} \rd_s (\a, A, \b)} &\le \d^{3/4}\e, 
\label{BAH1}\\
\abs{\rd_R^{N+2} (K-K_0)} &\le \d^{3/4}\e\jap{s},
\label{BAHK0}\\
\abs{\rd_R^{N} \rd_s K} &\le \d^{3/4}\e.
\label{BAHK1}
\end{align}
Note that we always assume one less controlled derivative for \(K\), since it involves one more derivative in the definition.

Then, we can state the main bootstrap theorem as follows. The proof of this theorem will be the main part of this paper. 
\begin{thm}[]\label{thm:BT}
    Given \(m, \cfr\) and \(N\ge 6\), there is \(\d > 0\) and \(\e_0 > 0\) such that the following is true: For any \(0 < \e < \e_0\), say there is a smooth solution \((f, \a, A, \b, K)\) of \eqref{eq:Gauss}--\eqref{eq:vlasov-rwl} in time \([0, T_B)\) with \(T_B \le \e^{-1}(\log 1/\e)^{-2}\) and satisfies the bounds \eqref{BA0}--\eqref{BAHK1} on \(s, T \in [0, T_B)\). 
    Then, in fact, the solution should satisfy the bounds \eqref{BA0}--\eqref{BAHK1} with improved constant \(C\d\) instead of \(\d^{3/4}\). 
    Here, \(C\) is a constant independent of \(\e\), \(\d\), \(s\) and \(T\), and depending only on \(m, \cfr\) and \(N\ge 6\).
\end{thm}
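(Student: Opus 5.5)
The plan is to close the bootstrap in the order the outline suggests: Vlasov estimates, then moments, then metric. First, under \eqref{BA0}--\eqref{BAHK1}, I will set up the dynamical action angle variables \((t, Q_T, H, M)\) of Section~\ref{sec:DAAV}. These are built from the frozen metric \((\bar\a, \Abar, \bar\b, \Kbar)\) at time \(T\), with \(H\) from \eqref{Hdef}. Using \eqref{BA0} and \eqref{BA2}, I will show that the change of variables is a \(C^{N+1}\)-small perturbation of the linear one in Proposition~\ref{prop:anglevar}. I will also show that \(\O(H, M)\) is \(\d^{3/4}\e\)-close to \(\O_\Sch\) in \(C^N\). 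It follows that \(\rd_\ell(\O(H,M)) < -\cfr/2\) on the support (by Proposition~\ref{prop:monotonic} and the definition of \(\Sfr_\cfr\)), and that \(Z\O = \Ocal(\d^{3/4}\e)\). In these coordinates the Vlasov equation takes the form \(\rd_t f + \O\rd_{Q_T} f + \Pcal\rd_{Q_T} f + \Dfr H\,\rd_H f = 0\). The coefficients \(\Pcal\) and \(\Dfr H\) are controlled by \(A-\Abar\), \(\rd_s A\), and similar differences, so by \eqref{BA1}--\eqref{BA2} they decay like \(\d^{3/4}\e\jap{t}^{-N}\) at low order.

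Second, I will commute with \(\rd_{Q_T}, \rd_H, Z\) and integrate along characteristics. The good derivatives \(\rd_{Q_T}\) and \(Z\) produce commutators with decaying or \(\Ocal(\e)\) coefficients; \(\rd_H\) loses one \(\jap{t}\). Combining this with Gronwall and \(T_B\le\e^{-1}(\log 1/\e)^{-2}\), I expect to obtain \eqref{nonlinthm-f2} with \(C\d\e\), and, converting back, \eqref{nonlinthm-f1}. The top-order \(N+1\) bound loses one more \(\jap{t}\). The timescale enters here only through \(\e T_f \ll 1\), which keeps the support inside \(\Scal_{\cfr/2}\subset\Scal_\cfr\)-type regions.

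Third, and hardest, I will prove the moment estimates. Using the Duhamel representation \(f = f_\init(Q_T - t\O) + \int_0^t \Rcal\,d\t\), the linear part is handled exactly as in the proof of Theorem~\ref{thm:lin}(3), by integrating by parts in \(\ell\) against \(e^{-ik s\O}\). For the nonlinear part I will Fourier-expand in \(Q_T\) to get terms like \(\Tcal_{5,3}\). There \(\rd_\ell\)-integration by parts moves \(\rd_H\) off \(f\) onto the metric factor, which costs two \(R\)-derivatives of \(\rd_s A\); this is exactly what \eqref{BA1} affords. In the worst term \(\Ical^{I+1,0,0}_{k,l}\), I will apply the vector field \(Y_{k,-l\t}\) to gain \(\jap{ks-l\t}^{-1}\). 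I will then interpolate between the two bounds \(\jap{l}^{-1}\) and \(\jap{s}\jap{l}^{-2}\). The \(\t\)-integral and the \(l\)-sum each give a factor \(\log\jap{s}\), so the total is \(\d^{7/4}\e^2\jap{s}^{-N+I}(\log\jap s)^2\). This is \(\le \d\e\jap{s}^{-N+I}\) precisely because \(s\le T_f\) and \(\d\) is small. I expect this summation to be the main obstacle, together with the separate top-order versions (\eqref{qtop}-type bounds, which allow a \(\jap s\) loss) that feed \eqref{BAH1}.

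Finally, I will improve the metric bounds. I will apply Lemma~\ref{lem:ODEstab} to \eqref{absODE} with boundary data at \(R_{\mathrm{out}}\), as in Theorem~\ref{thm:lin}(4). For \(\rd_s\vec X\) and \(\vec X - \bar{\vec X}\), I will differentiate or subtract the ODE to get linear ODEs with sources \(\rd_s\vec S\) and \(\vec S-\bar{\vec S}\). The latter is bounded by integrating \(\rd_s\vec S\) from \(s\) to \(T\). The elliptic gain of two derivatives then yields \eqref{BA0}--\eqref{BAK2} with constant \(C\d\), using \eqref{eq:MeanCurv} for the extra derivative of \(\b\) and \eqref{eq:Codazzi} for \(K\) with one fewer derivative. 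Top orders \eqref{BAH0}--\eqref{BAHK1} follow in the same way from the weak top-order moment bounds and from the initial smallness \eqref{nonlinthm-initAK}.
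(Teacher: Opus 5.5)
Your architecture matches the paper's: frozen-time action--angle variables, commuted Vlasov estimates, a Duhamel plus Fourier decomposition with $Y_{k,-l\t}$ and the double logarithm, and then the ODE \eqref{absODE} with its two-derivative gain. However, your recovery of \eqref{BA2} and \eqref{BAK2} does not work. You bound $\vec{S}(s)-\vec{S}(T)$ by $\int_s^T \rd_\sigma\vec{S}\,d\sigma$. Since \eqref{dsqlow} only gives $\abs{\rd_R^I\rd_s\vec{S}}\lesssim\d\e\jap{\sigma}^{-N+I}$, this yields $\d\e\jap{s}^{-N+I+1}$, which is one power of $\jap{s}$ short of \eqref{qqbarlow}. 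After the elliptic gain you obtain the rate $\jap{s}^{-N+1+\max{I-2, 0}}$ for $\rd_R^I(\a-\bar{\a}, A-\Abar, \b-\bar{\b})$, not the rate $\jap{s}^{-N+\max{I-2, 0}}$ that \eqref{BA2} demands. This already fails at $I=0$: $\abs{\rd_s\a}\lesssim\d\e\jap{s}^{-N}$ integrates only to $\d\e\jap{s}^{-N+1}$. At $I=N-1$, which feeds \eqref{BA2} with $I=N+1$, it is worse. There only the non-decaying \eqref{dsqtop} is available, so you get at best a non-decaying $\Ocal(\d\e)$ bound, whereas $\jap{s}^{-1}$ is required.

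You cannot absorb this loss using $\e\jap{s}\lesssim 1$, because the dominant contribution is the linear part $\qbf_L$, which is only of size $\d\e$. The missing idea is to prove \eqref{qqbarlow} directly, as in Theorem \ref{thm:lin}(5):
\begin{itemize}
\item The nonzero modes of $\qbf_L(s)$ and $\qbf_L(T)$ are estimated separately by non-stationary phase, without the extra factor $k$ that $\rd_s$ produces. This is \eqref{stationaryphase} with $\a=0$, which reaches $I\le N-1$.
\item The zero mode carries no phase. Its difference is controlled through $H-H_T$ and $A-\Abar$, i.e.\ through \eqref{BA2} itself.
\item For $\qbf_N$ one uses the splitting of Proposition \ref{prop:qqbardecomp}. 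There the tail over $[s,T]$ is quadratic, so any $\jap{s}$ lost is paid for by the extra $\e$.
\end{itemize}

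Two further ingredients are missing.

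First, you commute only with $\rd_{Q_T}, \rd_H, Z$. By \eqref{Y1ontrans}, the vector field $Y_{k,-l\t}$ becomes $Y_H$ acting on $\rd_b f(\t)$. Bounding this naively through \eqref{YHdef} costs a factor $\jap{\t}\sim\jap{s}$ on $[s/2,s]$, which cancels the gain $\jap{ks-l\t}^{-1}$. You need the commuted estimate \eqref{flowYH}, with one $Y_H$ and no extra growth. It is obtained by including $Y_H$ in the commutator argument, since $Y_H$ commutes with $\rd_t+\O(H,M)\rd_{Q_T}$.

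Second, your moment analysis only addresses $\t\in[s/2,s]$. For $\t\in[0,s/2]$ the decay of $p_b(\t)$ cannot be converted into decay in $s$. There one applies $Y_{k,-k\t}$ about $N-I-1$ times, as in Lemma \ref{lem:T5comp}(2). This is non-stationary phase for the free transport over $[\t,s]$: each application gains $k(s-\t)\gtrsim ks$ at the price of one $\rd_H$ on $f(\t)$.

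Two minor corrections. The worst late-time contribution is $\d^{7/4}\e^2\jap{s}^{-N+I+1}(\log\jap{s})^2$, and it is the factor $\e\jap{s}(\log\jap{s})^2$ that the restriction $s\le T_f$ absorbs. In the Vlasov step, the timescale enters through the non-decaying commutator $Z\O=\Ocal(\d^{3/4}\e)$. It does not enter through the support control, which only uses the integrable decay of $\Dfr H$.
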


From here on, we will focus on proving Theorem \ref{thm:BT}.
Section \ref{sec:DAAV} is devoted to the definition of the dynamical action angle coordinate system to obtain the improved bounds on the derivatives of \(f\).
In Section \ref{sec:vlasov}, we will obtain estimates on \(f\) by analyzing the Vlasov equation, and then in Section \ref{sec:Integral} we control the density variables, which are defined to be the integrated quantity of \(f\) on each fiber, by using the estimates on \(f\) and the phase mixing mechanism.
In Section \ref{sec:Elliptic}, we estimate Einstein's equation to obtain improved bounds on the metric coefficients.
In particular, the non-decaying improved estimates for the top order derivatives \eqref{BA0}, \eqref{BAH0}, and \eqref{BAH1} will be proved in Subsection \ref{subsec:metrictop}, and the decaying estimates will be proved in Subsection \ref{subsec:metricdsa} and \ref{subsec:metricaabar}.
In Section \ref{sec:PutEverything}, we will use this theorem with a standard continuity argument to prove Theorem \ref{thm:nonlin}.
Up to that point, we will always assume that we have a solution \((f, \a, A, \b, K)\) on time interval \([0, T_B)\) satisfying the bootstrap assumptions \eqref{BA0}--\eqref{BAHK1}.

In the following, whenever we use a constant \(C\), \(\lesssim\), or big-O notation, the constant in the inequality depends only on \(m, \cfr\) and \(N\ge 6\), and is independent of \(\e\), \(\d\), \(t = s\), and \(T\).

\section{Definition of Dynamical Action Angle Variables}\label{sec:DAAV}
\subsection{Bootstrap assumption needed to define the new coordinate}
Here, we want to define the dynamical action angle variables. Because we need a coordinate definition which depends on the metric coefficients, we need to make some assumptions on the metric coefficients in order to define this coordinate system.
However, we do not need to use the full strength of the bootstrap assumptions \eqref{BA0}--\eqref{BAHK1} to define the coordinate system, and the following is the list of the assumptions that we need to define the coordinate system:

For all \(0 < s < T_B\), we assume the following bounds for \(\a, A, \b\):
\begin{align}
\abs{\rd_R^I (\a-\a_0, A-A_0, \b-\b_0)} &\le \d^{3/4}\e && I \le N+2,
\label{wBA0}\\
\abs{\rd_R^I \rd_s (\a, A, \b)} &\le \d^{3/4}\e && I \le N +1
\label{wBA1}.
\end{align}
\begin{align}
\intertext{For the top order derivative, we allow to lose one \(\jap{s}\):}
\abs{\rd_R^{N+3} (\a-\a_0, A-A_0, \b-\b_0)} &\le \d^{3/4}\e\jap{s}.
\label{wBAH0}
\intertext{For the low order derivatives, we need a weak decay in time: for all \(0 < s < T < T_B\),}
\abs{\rd_R^{\le 1} (\a-\bar{\a}, A-\Abar, \b-\bar{\b})} &\le \d^{3/4}\e\jap{s}^{-2},
\label{wBAL0}\\
\abs{\rd_R^{\le 1} (K-\Kbar)} &\le \d^{3/4}\e\jap{s}^{-2},
\label{wBALK0}\\
\abs{\rd_s (\a, A, \b)} &\le \d^{3/4}\e\jap{s}^{-2}.
\label{wBAL1}
\end{align}

\subsection{Definition of conserved quantities, support of \(f\)} 
We will use the same class \(\Scal_{\cfr}\) for the initial data as in the linear estimates. 
However, contrary to the linear case, \(H\) is not a conserved quantity for the nonlinear Vlasov equation.
Still, because there is a small room given by \(\cfr\) in the support condition, we can hope that the variation of \(H\) is controlled by the weak bootstrap assumptions, so that the action angle variable can be defined for all time.
\begin{prop}[Support of \(f\)]\label{prop:suppf}
    For \(f\) solving the nonlinear Vlasov equation \(\Dfr f = 0\) with initial data \(f_\init\) with support condition \(\supp(f_\init)\subset \Scal_{\cfr}\), if \(f(s, R, w, \ell)\) is nonzero, then we have \((R, w, \ell)\in\Scal_{\frac{1}{2}\cfr}\), in particular for \(H = H(s, R, w, \ell)\) and \(M = \ell\),
    \begin{equation}\begin{aligned}\label{nonlinsupp}
        V_{\min}(M) + \frac{1}{2}\cfr \le H^2 \le 1-\frac{1}{2}\cfr, \\
        \abs{(\rd_\ell\O_{\Sch})(H, M)}, \abs{\rd_H\O_{\Sch}(H, M)} \ge \frac{1}{2}\cfr.
    \end{aligned}\end{equation}
\end{prop}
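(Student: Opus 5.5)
The plan is to follow characteristics of the nonlinear transport operator \(\Dfr\) backwards to the initial slice and control how far the quantities \((H, M, R)\) can drift. Since \(f\) is constant along characteristics, \(f(s, R, w, \ell) \neq 0\) forces the backward characteristic through \((s, R, w, \ell)\) to start at some \((R_0, w_0, \ell)\in \Scal_{\cfr}\). Here \(M = \ell\) is exactly conserved, since \(\ell\) does not move under \eqref{eq:vlasov-rwl}. It therefore suffices to show two things: \(H\) changes by at most \(C\d^{3/4}\e\) along the characteristic, and the map \((R, w, \ell)\mapsto H_0\) differs from \(H\) by \(O(\d^{3/4}\e)\). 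Then every defining inequality of \(\Scal_{\cfr}\) with margin \(\cfr\) persists with margin \(\frac12\cfr\), once \(\e_0\) is small.

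\textbf{Step 1: compute \(\Dfr H\).} Write \(H = \a E - \b A w\) and apply \eqref{eq:vlasov-rwl}. In the static case the identity \(\Dfr^\lin_0 H_0 = 0\) holds, because \(H\) is the pairing of the geodesic velocity with \(\rd_s\). For general metrics the terms that fail to cancel are exactly those containing \(\rd_s\a\), \(\rd_s A\), \(\rd_s \b\) (and \(\rd_s K\) via \eqref{eq:Keq}). Concretely, \(\Dfr H = -(\text{velocity})\cdot\mathcal{L}_{\rd_s} g(\cdot,\cdot)/2\), up to normalization. I expect
\[
\abs{\Dfr H} \lesssim \abs{\rd_s(\a, A, \b)} + \abs{\rd_s \rd_R(\a,A,\b)}
\]
on the region where \(w, \ell\) are bounded, with coefficients bounded by \eqref{wBA0}. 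Verifying this via the identity \(\Dfr H = \frac12 \dot{x}^\mu \dot{x}^\nu \rd_s g_{\mu\nu}\) in the reduced variables, rather than by brute force, is the cleanest route. Then \eqref{wBAL1} gives \(\abs{\Dfr H}\lesssim \d^{3/4}\e\jap{s}^{-2}\) for the undifferentiated terms. For the \(\rd_R\rd_s\) pieces, if they appear, I would use \eqref{wBAL0} differentiated in time, or else rewrite \(\rd_R \rd_s\) through \eqref{wBA1}. In the latter case the bound is only \(\d^{3/4}\e\), and integrating in time up to \(T_B\le T_f\) yields \(\d^{3/4}\e T_f\), which is too big. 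So the main obstacle is ensuring that only integrable-in-time quantities appear. Indeed \(H\) involves only \(\a, A, \b\) and not their derivatives, so \(\Dfr H\) involves \(\rd_s\) of these and \(\rd_s\b\) paired with \(w\); I will check that no \(\rd_s\rd_R\) term survives. Integrating along the characteristic then gives \(\abs{H(s)-H(0)}\lesssim \d^{3/4}\e\).

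\textbf{Step 2: continuity argument on the support.} The bound in Step 1 needs the characteristic to stay in a compact region where \(A\), \(R\), \(E\) are controlled. I would run a continuity argument in time. As long as the characteristic remains in \(\Scal_{\frac12\cfr}\), the orbit is trapped between \(R^0_\pm\), so \(R, w\) are bounded. By \eqref{wBA0}, \(\abs{H - H_0}\lesssim\d^{3/4}\e\) at the same point. Hence \(H_0^2\) stays within \(C\d^{3/4}\e\) of its initial value, remaining in \((V_{\min}(\ell)+\frac34\cfr, 1-\frac34\cfr)\). This bootstraps membership in \(\Scal_{\frac12\cfr}\) strictly, and it also keeps \(R\ge (3/2+\sqrt2)m\), since the potential barrier persists under the perturbation.

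\textbf{Step 3: the \(\O_\Sch\) conditions.} By Definition~\ref{Sfrdef}, \(\rd_H\O_\Sch<-\cfr\) and \(\rd_\ell(\O_\Sch)<-\cfr\) hold on \(\Scal_\cfr\). Both are continuous functions of \((R, w, \ell)\), respectively of \((H_0,M)\), on a compact set, so they are uniformly continuous. The point \((H,M)\) lies within \(C\d^{3/4}\e\) of a point satisfying them. Possibly the \(R\)-range differs slightly too, and the \(\rd_\ell\) condition was imposed for all \(R\in[R_-^0,R_+^0]\), so I would apply continuity on a slightly enlarged interval. Choosing \(\e_0\) small then gives the bounds \(\ge\frac12\cfr\) in \eqref{nonlinsupp}.
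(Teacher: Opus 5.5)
Your proposal is correct and is essentially the paper's argument. The paper computes \(\Dfr H = \rd_s(\a E) - \rd_s(\b A)w - (\rd_s A)w\big(\frac{\a w}{AE}-\b\big)\), which contains only undifferentiated \(\rd_s(\a, A, \b)\); your geodesic identity \(\Dfr H = -\frac{1}{2p^0}\rd_s g_{\mu\nu}p^\mu p^\nu\) reproduces exactly this formula. One small correction: no \(\rd_s K\) term arises, since the \(K\)-term in \(\Dfr\) is eliminated through \eqref{eq:Keq}, and no \(\rd_s\rd_R\) term survives either. The paper then integrates the \(\jap{s}^{-2}\) bound from \eqref{wBAL1} to get an \(\Ocal(\d^{3/4}\e)\) drift of \(H\), which the \(\cfr\)-margin absorbs, closing the argument with a bootstrap of \(H\le 1\). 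Your continuity argument on membership in \(\Scal_{\frac12\cfr}\) plays the same role, and it also supplies the lower bound on \(R\) (keeping \(\a\) away from zero) and the uniform-continuity step for the \(\O_\Sch\) conditions, both of which the paper leaves implicit.
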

\begin{proof}
The point of this claim is to show that the difference of \(H\) and \(M\) in the support of \(f\) with \(H_0\) and \(M\) of the initial data can be controlled within \(\Ocal(\e)\).
Hence, it suffices to show that, i) the difference between \(\a_0(R)^2\big(1+w^2+\ell/A_0(R)^2R^2\big)\) and \(H(0, R, w, \ell)^2\), and ii) the variation of \(H\) along the characteristics is of scale \(\Ocal(\e)\). 
The control of the first quantity is straightforward from the definition of \(H\) and the bootstrap assumption \eqref{wBA0}.

To control the second quantity, we need to compute \(\Dfr H\).
From the definition of \(H\) in \eqref{Hdef}, we have
\begin{equation}\begin{aligned}\label{DfrH}
\Dfr H
&= \rd_s H + \big(\frac{\a w}{AE}-\b\big)\left(\a'E-\frac{\a}{E}\big(\frac{\ell}{R^3A^2}+\frac{\ell A'}{R^2A^3}\big)-(\b A)'w\right) \\
&\qquad \qquad
+ \left(\frac{\a}{A^3}\frac{\ell}{R^3E}+\frac{\a A'}{A^4}\frac{\ell}{R^2E}-\frac{\a'}{A}E+\a Kw\right)(\a\frac{w}{E}-\b A) \\
&= \rd_s (\a E) - \rd_s (\b A)w -(\rd_s A)w\big(\frac{\a w}{AE}-\b\big).
\end{aligned}\end{equation}
Under the bootstrap assumptions \eqref{wBA0}--\eqref{wBAL1}, we have \(\abs{w}\le 2H\) on the support; this is because 
\[
\abs{H} \ge \a E - \abs{\b}A\abs{w} \ge c\abs{w}.
\]
Note that \(R\) is bounded below in the support, hence \(\a\) is bounded below by a positive constant, say \(2c\). 
Then, we can bootstrap \(H \le 1\): this is initially satisfied, and under this assumption we have 
\[
\abs{\Dfr H^2} = 2H\abs{\Dfr H} \le C\abs{\rd_s (\a, A, \b)} \le C\d^{3/4}\e\jap{s}^{-2}.
\]
Integrating this in time, the variation of \(H^2\) is of scale \(\Ocal(\d^{3/4}\e)\), and in particular this is much smaller than the room \(\cfr\) given in the support condition, hence we can recover the bootstrap assumption \(H \le 1\).
This completes the proof.
\end{proof}
Therefore, we still have \(\ell\) which gives the potential curve with a single local minimum, and \(H^2\) which is above the potential at local minimum but below the potential at infinity, so we can define the action angle variables as in the linear case.

Note that at this point the \(R\)-support of \(f\) is bounded, globally in time. 
We fix \(R_{\textrm{out}}>0\) so that \(f(s, R, w, \ell) \neq 0\) then \(R \in [m/4, R_{\textrm{out}}]\).

\subsection{Dynamical action angle variables}
Now we define the dynamical action angle variables, which is the coordinate system that we will use for the nonlinear estimates.
We will use \((t, Q_T, H, M)\) to denote this coordinate system, where \(t = s\), \(M = \ell\), and \(H\) given by \eqref{Hdef}.
Here, \(T\) refers to the fixed time which is greater than \(s\): We need it to define the action angle, as we want to use an integral which refers to the solution, especially the metric coefficients at time \(T\). 
Recall that the bar notation is used to denote the metric coefficients at time \(T\); especially \(\Ebar\) is defined by 
\[
\Ebar(R, w, \ell)^2 = 1 + w^2 + \frac{\ell}{R^2\Abar^2(R)}. 
\]

In the meantime, we need to use the quantity \(H_T\) which is defined as follows: 
\begin{equation}\label{HTdef}
H_T(R, w, \ell) \coloneqq \bar{\a}(R)\Ebar(R, w, \ell) - \bar{\b}(R)\Abar(R)w.
\end{equation}
We can control the variation of \(H_T\) on the characteristics by the bootstrap assumptions.
First, \eqref{eq:Keq} at \(s = T\) tells us that 
\[
(\bar{\b}\Abar)' = (\rd_s A)|_{s=T} + \bar{\a}\Abar\Kbar
\]
Using this, we have
\begin{equation}\label{DfrHT}
\begin{aligned}
\Dfr H_T &= \left(\frac{\a w}{AE}-\b\right)
\left(\bar{\a}'\Ebar - \frac{\bar{\a}}{\Ebar}\frac{\ell}{R^3\Abar^2}-\frac{\bar{\a}}{\Ebar}\frac{\Abar'\ell}{R^2\Abar^3}-(\bar{\b}\Abar)'w\right) \\
&\qquad
+ \left(\frac{\a}{A^3}\frac{\ell}{R^3E}+\frac{\a A'}{A^4}\frac{\ell}{R^2E}-\frac{\a'}{A}E+\a Kw\right)
\left(\frac{\bar{\a}}{\Ebar}w - \bar{\b}\Abar\right) \\
&= \frac{1}{A}\Bigg[\left(\frac{\a w}{E}-\b A\right)
\left(\bar{\a}'\Ebar - \frac{\bar{\a}}{\Abar^2}\frac{\ell}{R^3\Ebar}-\frac{\bar{\a}\Abar'}{\Abar^3}\frac{\ell}{R^2\Ebar}-\bar{\a}\Abar\Kbar w - (\rd_s A)|_{s = T}w\right) \\
&\qquad
- \left(\frac{\bar{\a}}{\Ebar}w - \bar{\b}\Abar\right)
\left(\a'E-\frac{\a}{A^2}\frac{\ell}{R^3E}-\frac{\a A'}{A^3}\frac{\ell}{R^2E}-\a AKw\right)\Bigg].\\
\end{aligned}
\end{equation}
Therefore, aside from the \(\rd_s A\) term everything is controlled by \((\a-\bar{\a}, A-\Abar, \b-\bar{\b}, K-\Kbar)\) with \eqref{wBAL0} and \eqref{wBALK0}, and \(\rd_s A\) at time \(T\) is also controlled by \eqref{wBAL1}.
In particular, we have 
\[
\abs{\Dfr H_T} \le C\d^{3/4}\e \jt^{-2},
\]
hence the variation of \(H_T\) over time is small, in particular of the scale \(\Ocal(\e)\).

Now, we can define the action angle referring to the value \(H_T\) and \(M\). 
\begin{defn}[Dynamical action angle \(Q_T\)]\label{def:QT}
    Given \((s, R, w, \ell)\in \supp (f)\), we can first compute \(H_T(R, w, \ell)\) by \eqref{HTdef}. 
    Proposition \ref{prop:suppf} together with \eqref{wBAL0} tells us that \((H_T, M = \ell)\) is in the set that the following definition is valid.
    \begin{enumerate}
        \item We define the potential \(V_T(M, R)\) by
        \begin{equation}\label{VTdef}
        V_T(M, R) = (\bar{\a}^2-\bar{\b}^2\Abar^2)\Big(1+\frac{M}{R^2\Abar^2(R)}\Big).
        \end{equation}
        Note that \(V_T\) is close to \(V_0\) defined in \eqref{Vdef}, and in particular \(V_T\) also has a single local minimum, and \(H_T^2\) is above the local minimum but below the value at infinity.
        \item We define \(\big(\frac{3}{2}+\sqrt{2}\big)m < R_-(H_T, M) < R_+(H_T, M)\) to be two roots of the equation 
        \[
        H_T^2 = V_T(M, R).
        \]
        We know that there are two solutions, as we have \eqref{nonlinsupp}, and the difference of \(V_{\min}(M)\) and the potential minimum, \(H\) and \(H_T\) can be controlled by choosing \(\e_0\) small. 
        \item For \(R\in [R_-(H_T, M), R_+(H_T, M)]\) we define \(w_\pm(R, H_T, M)\) to be the two roots of the equation 
        \[
        H_T = \bar{\a}\Ebar - \bar{\b}\Abar w.
        \]
        Note that \(\Ebar\) depends on \(w\). We can explicitly solve this equation and get 
        \begin{equation}\label{wpmdef}
        w_\pm = \frac{1}{\bar{\a}^2-\bar{\b}^2\Abar^2} \bigg[H_T\bar{\b}\Abar \pm \bar{\a}\sqrt{H_T^2 - V_T(M, R)}\bigg].            
        \end{equation}
        Here we see that \(w_\pm\) degenerates at \(R_\pm(H_T, M)\), but those are not necessarily zero. 
        \item We define \(\O(H_T, M)\) to be the quantity satisfying 
        \begin{equation}\label{Odef}
        \frac{2\pi}{\O(H_T, M)} 
        = \int_{R_-(H_T, M)}^{R_+(H_T, M)} \frac{dR}{\frac{\bar{\a}w_+}{\Abar\Ebar(w_+)}-\bar{\b}} 
        - \int_{R_-(H_T, M)}^{R_+(H_T, M)} \frac{dR}{\frac{\bar{\a}w_-}{\Abar\Ebar(w_-)}-\bar{\b}}.            
        \end{equation}
        \item Finally, for given \((R, w, \ell)\), we define \(Q_T\) in the following way: We first compute \(H_T\) and \(M\) as above. 
        Then, from the definition of \(w_\pm\) we either have \(w = w_-(R, H_T, M)\) or \(w = w_+(R, H_T, M)\).
        For \(R\in [R_-(H_T, M), R_+(H_T, M))\), we first define \(\Qtil_T(R, H_T, M)\) to be the integral 
        \begin{equation}\label{Qtildef}
        \Qtil_T(R, H_T, M) \coloneqq \begin{cases}
        \displaystyle\int_{R_-(H_T, M)}^R \frac{dR'}{\frac{\bar{\a}w_+(R')}{\Abar\Ebar(w_+(R'))}-\bar{\b}} &\text{ if } w = w_+(R, H_T, M) \\[0.5cm]
        \displaystyle\int_{R_-(H_T, M)}^R \frac{dR'}{\frac{\bar{\a}w_-(R')}{\Abar\Ebar(w_-(R'))}-\bar{\b}} &\text{ if } w = w_-(R, H_T, M),
        \end{cases}
        \end{equation}
        and set 
        \[
        \Qtil_T(R_+(H_T, M), H_T, M) = \int_{R_-(H_T, M)}^{R_+(H_T, M)} \frac{dR'}{\frac{\bar{\a}w_-(R')}{\Abar\Ebar(w_-(R'))}-\bar{\b}}.
        \]
        Then, we define \(Q_T(s, R, w, \ell)\) to be 
        \[
        Q_T(s, R, w, \ell) = \O(H_T(R, w, \ell), M = \ell)\Qtil_T(R, H_T(R, w, \ell), M = \ell).
        \]
    \end{enumerate}
\end{defn}
Some remarks are in order. 
\begin{rmk}[\(Q_T\) is not globally smooth]
    Note that the value of \(Q_T\) jumps at \(R = R_+(H_T(R, w, \ell), M = \ell)\), of which size of discontinuity is \(2\pi\). However, this is just similar to the discontinuity in the usual polar coordinate. 
    In particular, we will show that the derivatives are regular enough for our purpose in the propositions below. 
\end{rmk}
\begin{rmk}
The motivation of this definition is the following identity: we have 
\[
\rd_R|_{(R, H_T(R, w, \ell), M = \ell)} Q_T
= \frac{\O(H_T, M)}{\frac{\bar{\a}w}{\Abar\Ebar(w)}-\bar{\b}}.
\]
Here we are taking \(R\) derivative with fixed \(H_T\) and \(M\), so this is not well defined at \(R = R_\pm(H_T, M)\). 
Also, this quantity depends on which value of \(w\) we are working with, which should be one of \(w_\pm(R, H_T, M)\).
This fact will be revisited when we calculate the change of variables map. 
\end{rmk}

\begin{defn}[The \((t, Q_T, H, M)\) coordinate system]\label{def:DAAV}
    For every fixed \(T < T_B\), on the region 
\[
\{(s, R, w, \ell)|s\in [0, T], (R, w, \ell)\in \Scal_{\frac{1}{2}\cfr}\},
\] 
    we can define the coordinate system \((t, Q_T, H, M)\) as above.
    To distinguish the coordinate vectors in \((s, R, w, \ell)\) and \((t, Q_T, H, M)\), we use different variables even though \(t = s\) and \(M = \ell\): 
    We use \((\rd_s, \rd_R, \rd_w, \rd_\ell)\) and \((\rd_t, \rd_{Q_T}, \rd_H, \rd_M)\) in each coordinate system. 
    In the next subsection, we actually prove that this is a well-defined coordinate system, and the change of variables map is regular enough. 
\end{defn}

Finally, as in the linear case, we need to define the vector field \(Z\) which does not cause a growth in time. 
\begin{defn}[The vector field \(Z\)]\label{def:Zdef}
    For a fixed \(T\), we define the vector field \(Z\) in the \((t, Q_T, H, M)\) coordinate system as 
    \[
    Z = (\rd_M \O_{\Sch})(H, M)\rd_H - (\rd_H \O_{\Sch})(H, M)\rd_M.
    \]
    Note that this satisfies \(Z(\O_\Sch(H, M)) = 0\), and commutes with \(\rd_{Q_T}\). 
    We have \[[\rd_H, Z] = (\rd_H\rd_M \O_\Sch)(H, M)\rd_H - (\rd_H^2 \O_\Sch)(H, M)\rd_M,\] which is still in the span of \(\rd_H\) and \(Z\) with bounded coefficients.
\end{defn}
One of the important relations between the vector fields \(\rd_R, \rd_w, \rd_\ell\) and \(\rd_{Q_T}, \rd_H, Z\) can be expressed as follows. 
\begin{align}
\rd_\ell &= \rd_\ell Q_T \rd_{Q_T} + \rd_\ell H \rd_H + \rd_M \notag\\
&= \rd_\ell Q_T \rd_{Q_T} + \Big(\rd_\ell H +\frac{\rd_M\O_\Sch(H, M)}{\rd_H\O_\Sch(H, M)}\Big)\rd_H -\frac{1}{\rd_H \O_{\Sch}(H, M)}Z, \notag \\
&= \rd_\ell Q_T \rd_{Q_T} + \frac{\rd_\ell(\O_\Sch(H, M))}{\rd_H\O_\Sch(H, M)}\rd_H -\frac{1}{\rd_H \O_{\Sch}(H, M)}Z, \notag \\
\intertext{which gives}
\rd_H &= \frac{\rd_H \O_{\Sch}(H, M)}{\rd_\ell(\O_\Sch(H, M))}\rd_\ell + \frac{1}{\rd_\ell(\O_\Sch(H, M))}Z - \frac{\rd_H\O_{\Sch}(H, M)\rd_\ell Q_T}{\rd_\ell(\O_\Sch(H, M))}\rd_{Q_T}. \label{Hlincomb}
\end{align}
Later, we will use this relation to control the \(\rd_H\) derivative by the \(\rd_{Q_T}\) and \(Z\) derivatives for the integral estimates. Note that \(\rd_\ell\) is the one which can be moved by integration by parts, and \(\rd_{Q_T}\) and \(Z\) are good derivatives.

\subsection{Regularity of the change of variables map}
The goal of this section is to prove the following proposition. 
\begin{prop}[Regularity of the change of variables map]\label{prop:regDAAV}
    Let \(T < T_B\) be fixed, and consider the quantities \((t, Q_T, H, M)\) defined on the domain given in Definition \ref{def:DAAV}.
    \begin{enumerate}
        \item For every fixed \(T < T_B\), the change of variables map from \((s, R, w, \ell)\) to \((t, Q_T, H, M)\) is \(C^{N+2}\). 
        More precisely, we have the following bounds for the derivatives:
        \begin{align}
        \norm{\rd_{R, w, \ell}^I\left(\frac{\rd(t, Q_T, H, M)}{\rd(s, R, w, \ell)}\right)} &\le C & \text{ for } I&\le N+1. \label{CoVptoA}
        \end{align}
        In particular, \(\O(H_T, M)\) is \(C^{N+2}\) with bounded derivatives in its arguments. 
        \item The Jacobian of the change of variables map is \(\Abar\O(H_T(R, w, \ell), M = \ell) + \Ocal(\e)\), in particular bounded away from zero. Therefore, \((t, Q_T, H, M)\) is indeed a coordinate system. 
        \item The following estimates hold: 
        \begin{align}
        \norm{\rd_{Q_T, H, M}^I (R, w, \ell)} &\le C & \text{ for } I&\le N+2 \label{CoVAtop}
        \end{align}
    \end{enumerate}
\end{prop}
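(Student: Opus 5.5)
The plan is to regard the whole construction as a smooth function of finitely many ``frozen'' profiles. These are the time-\(T\) metric coefficients \((\bar\a,\Abar,\bar\b)\), which are \(C^{N+2}\) in \(R\) with bounds uniform in \(T\) by \eqref{wBA0}, and the time-\(s\) coefficients \((\a,A,\b)\) entering \(H\). We then verify that every operation in Definition~\ref{def:QT} preserves the required regularity, up to the singular endpoints \(R_\pm\).

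First, \(t=s\), \(M=\ell\) and \(H=\a E-\b Aw\) are explicit. They are \(C^{N+2}\) in \((s,R,w,\ell)\) with bounded derivatives on the compact support \(\Scal_{\frac12\cfr}\), using \eqref{wBA0} and \eqref{wBA1}. The same holds for \(H_T\), \(V_T\) and \(w_\pm\) in \eqref{wpmdef}, since \(\bar\a^2-\bar\b^2\Abar^2\) is bounded below. For \(R_\pm(H_T,M)\) we apply the implicit function theorem to \(H_T^2=V_T(M,R)\). By Proposition~\ref{prop:suppf}, \(H_T^2\) stays at distance about \(\cfr\) from the local minimum of \(V_T\), which is \(\Ocal(\e)\)-close to \(V_0\). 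Hence \(\rd_RV_T(M,R_\pm)\) is bounded away from zero, and \(R_\pm\) are \(C^{N+2}\) in \((H_T,M)\).

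The main obstacle is the square-root singularity of the integrands in \eqref{Odef} and \eqref{Qtildef} at \(R_\pm\). Differentiating naively in \((H_T,M)\) produces nonintegrable terms. I would remove the singularity by the standard substitution on each half-orbit. Write \(H_T^2-V_T(M,R)=(R-R_-)(R_+-R)\,G(R,H_T,M)\), where \(G\) is bounded below and \(C^{N+1}\). Then substitute \(R=\frac{R_++R_-}{2}-\frac{R_+-R_-}{2}\cos\theta\), which turns \(dR/\sqrt{H_T^2-V_T}\) into \(d\theta/\sqrt{G}\).

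For the combination \(w_+\) and \(w_-\), expand
\[
\frac{1}{\frac{\bar\a w_\pm}{\Abar\Ebar(w_\pm)}-\bar\b}
\]
around the square root. The integrals in \eqref{Odef} and \(\Qtil_T\) then become integrals over fixed \(\theta\)-intervals of \(C^{N+1}\) functions of \((\theta,H_T,M)\). This shows that \(\O\) is \(C^{N+2}\) in \((H_T,M)\), losing at most one derivative, with the top derivative controlled through \eqref{wBAH0} if needed. It also shows that \(\Qtil_T\), and hence \(Q_T\), is \(C^{N+2}\) in \((R,w,\ell)\) across \(w=w_\pm\). This is because \(\theta\) itself, defined by \(\cos\theta\) together with the sign branch, is a smooth angular coordinate in \((R,w)\), exactly as in polar coordinates, away from the \(2\pi\) jump. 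The bound \eqref{CoVptoA} follows by the chain rule.

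For (2), I would compute the Jacobian exactly as in Proposition~\ref{prop:anglevar}. Using \(\rd_R|_{(R,H_T,M)}Q_T=\O/(\bar\a w/(\Abar\Ebar)-\bar\b)\) and \(\rd_wH_T=\bar\a w/\Ebar-\bar\b\Abar\), the determinant in the variables \((t,Q_T,H_T,M)\) equals \(\Abar\,\O(H_T,M)\). Replacing \(H_T\) by \(H\) changes the determinant by \(\Ocal(\e)\), by \eqref{wBAL0} and the bound \(|H-H_T|\lesssim\e\). Since \(\O\approx\O_\Sch\) is bounded below, the map is a local diffeomorphism. Injectivity comes from the orbit structure: fixing \((H_T,M)\), the map \(Q_T\) is a bijection of the closed curve onto the circle. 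Finally, (3) follows from the inverse function theorem, since the inverse Jacobian is a polynomial in the \(C^{N+1}\) entries of the forward Jacobian divided by a determinant bounded below. This gives \(N+2\) derivatives of \((R,w,\ell)\) with respect to \((Q_T,H,M)\), which is \eqref{CoVAtop}.
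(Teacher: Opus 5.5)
Your proof is correct, but it takes a different route at the key step: removing the square-root singularity of \eqref{Odef} and \eqref{Qtildef} at \(R_\pm\).

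The paper follows \cite{CL24} and stays in the \(R\) variable. Lemma~\ref{lem:intsep} subtracts the endpoint value of the weight and integrates by parts. Lemma~\ref{lem:intderiv} then differentiates in \((R,w,\ell)\) with explicit formulas, whose boundary terms recombine into derivatives of \(\Ebar\rd_wH_T\) via \eqref{Dsqrt}. Because those formulas divide by \(\rd_RV_T\), the paper has to split \([R_-,R_+]\) at \(R_c(M)\pm\bfr\) and treat three regions separately.

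Your substitution \(R=\frac{R_++R_-}{2}-\frac{R_+-R_-}{2}\cos\theta\) puts the whole singularity into \(1/\sqrt{G}\), with \(G>0\) on the closed interval, and fixes the integration domain. So derivatives in \((H_T,M)\) pass directly under the integral, nothing is divided by \(\rd_RV_T\), and no splitting near the minimum of \(V_T\) is needed. In exchange, the paper's version gives explicit derivative formulas in the original variables.

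Two points in your sketch need to be stated precisely.

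\textbf{Smoothness of \(\theta\).} That \(\theta\) is smooth in \((R,w,\ell)\) across \(R_\pm\) does not follow from ``\(\arccos\) plus a sign branch''; it is exactly the content of \eqref{Dsqrt}. Since \((R-R_-)(R_+-R)=\frac{(R_+-R_-)^2}{4}\sin^2\theta\) and \(\Ebar\rd_wH_T=\pm\sqrt{D}\) on \(w=w_\pm\), one gets \(\sin\theta=2\Ebar\rd_wH_T/\big((R_+-R_-)\sqrt{G}\big)\). This, together with \(\cos\theta\), is regular in \((R,w,\ell)\), which is what your polar-coordinate analogy needs.

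\textbf{Derivative count.} \(G\) loses one derivative relative to \(V_T\). With \eqref{wBA0} alone your integrands are only \(C^{N+1}\), and so are \(\O\), \(\theta\) and \(\Qtil_T\). The \((N+3)\)-th derivative of the time-\(T\) coefficients from \eqref{wBAH0} is therefore required, not merely ``if needed''. It is bounded only because \(\d^{3/4}\e\jap{T}\lesssim 1\) for \(T\le T_f\), which the paper states explicitly.

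A minor remark: your orbit argument gives injectivity of \((Q_T,H_T,M)\). To pass to \(H\), use \(\rd_{H_T}H=1+\Ocal(\e)\) at fixed \((Q_T,M)\); the paper leaves this point implicit.
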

For this, the key is to control the derivatives of \(Q_T\), which is defined in terms of the singular integral.
We first derive a better expression of the integrand, which allows us to see the singularity more explicitly. 

\begin{defn}[\(D(R, H_T, M)\)]\label{Intsimplify}
    We define the quantity \(D(R, H_T, M)\) as follows:
    \begin{equation}\label{Ddef}
    D(R, H_T, M) = H_T^2 - V_T(M, R) = H_T^2 - (\bar{\a}^2-\bar{\b}^2\Abar^2)\Big(1+\frac{M}{R^2\Abar^2(R)}\Big).
    \end{equation}
    This allows us to simplify \eqref{wpmdef} and the expression of \(\Ebar\): 
    \[
    w_\pm = \frac{\bar{\b}\Abar H_T \pm \bar{\a}\sqrt{D}}{\bar{\a}^2-\bar{\b}^2\Abar^2}, \qquad
    \Ebar(w_\pm) = \frac{H_T + \bar{\b}\Abar w_\pm}{\bar{\a}} = \frac{\bar{\a}H_T \pm \bar{\b}\Abar\sqrt{D}}{\bar{\a}^2-\bar{\b}^2\Abar^2}.
    \]
    Therefore, the integrand in the definition of \(Q_T\) can be simplified as 
    \begin{equation}\label{ptintegrand}
    \frac{1}{\frac{\bar{\a}w_\pm}{\Abar\Ebar(w_\pm)}-\bar{\b}}
    = \frac{\Abar(\pm\bar{\a}H_T + \bar{\b}\Abar\sqrt{D})}{\bar{\a}^2-\bar{\b}^2\Abar^2}\frac{1}{\sqrt{D}}
    = \pm\underbrace{\frac{\Abar\bar{\a}}{\bar{\a}^2-\bar{\b}^2\Abar^2}}_{\eqqcolon W_T^{(1)}(R)}\frac{H_T}{\sqrt{D}} 
    + \underbrace{\frac{\bar{\b}\Abar^2}{\bar{\a}^2-\bar{\b}^2\Abar^2}}_{\eqqcolon W_T^{(2)}(R)}.
    \end{equation}
    Further, with \(D\) we can simplify the expression for \(\O(H_T, M)\) given in \eqref{Odef}.
    \begin{equation}\label{Odefsimp}
    \frac{\pi}{\O(H_T, M)} = \int_{R_-(H_T, M)}^{R_+(H_T, M)}\frac{W_T^{(1)}(R)H_T}{\sqrt{H_T^2-V_T(M, R)}} dR.
    \end{equation}
\end{defn}
\begin{rmk}[Singularity of the integrand]
    For our domain of parameters, \((\rd_R V_T)(R_\pm(H_T, M), M)\) is away from \(0\), which means that \(D\) vanishes linearly at \(R = R_\pm(H_T, M)\).
    Therefore, the singularity of the integrand is of order \(\frac{1}{\sqrt{R-R_\pm(H_T, M)}}\), which makes all the integrals finite. 
    However, this might cause a problem when we take derivatives, as it might pick up a boundary term or have a non-integrable singularity.
    Still, in the case when the singularity is multiplied by a factor which vanishes at \(R = R_\pm(H_T, M)\), the singularity can be cancelled, and have a finite derivative. 
    This is what we try to do in the following lemmas. 
\end{rmk}
\begin{rmk}[Identity on \(D\)]
    The following identity is useful for the foregoing calculation: 
    \begin{equation}\label{Dsqrt}
    D(R, H_T(R, w, \ell), M = \ell) = \Ebar^2(w)\big(\frac{\bar{\a}w}{\Ebar}-\bar{\b}\Abar\big)^2
    = \Ebar^2(w)((\rd_w H_T)(R, w, \ell))^2.
    \end{equation}
\end{rmk}

From here, we state results only for \(w_+\), near \(R_-\). The other three cases follow with exactly the same argument. 
\begin{lem}[Lemma 5.10 of \cite{CL24}]\label{lem:intsep}
    Let \(\g(R, H_T, M)\) be a \(C^1\) function. Then
    \begin{multline*}
    \int_{R_-(H_T, M)}^{R} \frac{\g(R', H_T, M) dR'}{[H_T^2 - V_T(R', M)]^{1/2}} \\
    = -\frac{2\g(R_-(H_T, M), H_T, M)}{\partial_R V_T(R_-(H_T, M), M)} \sqrt{H_T^2 - V_T(R, M)} + \int_{R_-(H_T, M)}^{R} \frac{\tilde{\g}(R, H_T, M)}{[H_T^2 - V_T(R', M)]^{1/2}} dR',
    \end{multline*}
    where
    \[
    \tilde{\g}(R, H_T, M) = \g(R, H_T, M) - \frac{\g(R_-(H_T, M), H_T, M)\partial_R V_T(R, M)}{\partial_R V_T(R_-(H_T, M), M)}
    \]
    which satisfies the property that
    \[
    \lim_{R \to R_-} \tilde{\g}(R, H_T, M) = 0.
    \]
\end{lem}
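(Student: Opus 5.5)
The identity is a direct rewriting, so I plan to verify it by adding and subtracting the right multiple of \(\partial_R V_T\) in the numerator and then integrating that piece exactly. Abbreviate \(R_- = R_-(H_T, M)\), \(c \coloneqq \g(R_-, H_T, M)/\partial_R V_T(R_-, M)\) and \(D(R') = H_T^2 - V_T(R', M)\). By the definition of \(\tilde\g\), we have \(\g = \tilde\g + c\,\partial_R V_T\). The integral therefore splits as
\[
\int_{R_-}^{R}\frac{\tilde\g\,dR'}{D^{1/2}} + c\int_{R_-}^{R}\frac{\partial_R V_T(R', M)}{D(R')^{1/2}}\,dR'.
\]

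For the second integral, I note that \(\partial_{R'} D = -\partial_R V_T\). This gives \(\partial_R V_T/D^{1/2} = -2\,\partial_{R'}\sqrt{D}\), so the integral equals \(-2c\big(\sqrt{D(R)}-\sqrt{D(R_-)}\big)\). The boundary term \(\sqrt{D(R_-)}\) vanishes because \(R_-\) is a root of \(H_T^2 = V_T\) by Definition \ref{def:QT}. What remains is exactly \(-2c\sqrt{H_T^2-V_T(R,M)}\), which is the claimed identity.

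I must also check that both integrals converge, so that the split is legitimate. As in the remark on the singularity of the integrand, \(\partial_R V_T(R_-, M)\) is bounded away from \(0\) on our parameter range. Hence \(D\) vanishes linearly at \(R_-\), the factor \(D^{-1/2}\) is integrable there, and \(c\) is well defined. Since \(\g\) and \(\partial_R V_T\) are continuous, each piece is absolutely integrable. To make the fundamental theorem of calculus step fully rigorous, I integrate on \([R_-+\eta, R]\) and then let \(\eta\to0\), using that \(\sqrt{D}\) is continuous.

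The limit property follows from continuity of \(\g\) and of \(\partial_R V_T\) in \(R\): we get \(\tilde\g(R_-) = \g(R_-) - \g(R_-) = 0\). Since \(\g\) is \(C^1\), the vanishing is in fact linear in \(R - R_-\), which is what later makes \(\tilde\g/D^{1/2}\) regular enough to differentiate. The argument has no real obstacle. The only point needing care is the nondegeneracy \(\partial_R V_T(R_-, M)\neq 0\), which is inherited from Proposition \ref{prop:suppf} and the closeness of \(V_T\) to \(V_0\) under \eqref{wBA0}.
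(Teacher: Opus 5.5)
Your proof is correct and matches the paper's, which simply defers to the integration-by-parts argument of Lemma 5.10 in \cite{CL24}: splitting $\g = \tilde\g + \frac{\g(R_-)}{\partial_R V_T(R_-)}\,\partial_R V_T$ and integrating $\partial_R V_T/\sqrt{H_T^2-V_T} = -2\,\partial_{R'}\sqrt{H_T^2-V_T}$ exactly, with the boundary term vanishing at $R_-$, is that argument. You also correctly read the integrand on the right as $\tilde\g(R',H_T,M)$, not the misprinted $\tilde\g(R,H_T,M)$.
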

\begin{proof}
The proof is exactly the same as the proof of Lemma 5.10 of \cite{CL24}, which is based on integration by parts.
\end{proof}

Note that \(\rd_R, \rd_w, \rd_\ell\) in the following lemma are in \((R, w, \ell)\) coordinates. 
In particular, \(\Ebar\) and \(H_T\) are differentiable in \((R, w, \ell)\) as much as the metric coefficients. 
\begin{lem}[Lemma 5.11 of \cite{CL24}]\label{lem:sqrtderiv}
    Suppose \(\g(R, H_T, M)\) is a \(C^1\) function. Define
    \[
    \bar{\et}(R, H_T, M) = \g(R, H_T, M)[H_T^2 - V_T(R, M)]^{1/2}
    \]
    and
    \[
    \et(R, w, \ell) = \bar{\et}(R, H_T(R, w, \ell), M = \ell).
    \]
    Then, for \(w\) satisfying \(w = w_+(R, H_T(R, w, \ell), M = \ell)\),
    \begin{align*}
    (\rd_R \et)(R, w, \ell) &= \Ebar(\rd_w H_T)(R, w, \ell)\cdot (\rd_R \g + \rd_R H_T \rd_{H_T} \g)(R, H_T(R, w, \ell), \ell) \\
    &\qquad
    + \g(R, H_T(R, w, \ell), \ell)\cdot \rd_R(\Ebar\rd_w H_T)(R, w, \ell), \\
    (\rd_w \et)(R, w, \ell) &= \Ebar(\rd_w H_T)(R, w, \ell)\cdot (\rd_w H_T\rd_{H_T}\g)(R, H_T(R, w, \ell), \ell) \\
    &\qquad
    + \g(R, H_T(R, w, \ell), \ell)\cdot \rd_w(\Ebar\rd_w H_T)(R, w, \ell), \\
    (\rd_\ell \et)(R, w, \ell) &= \Ebar(\rd_w H_T)(R, w, \ell)\cdot (\rd_\ell H_T\rd_{H_T}\g + \rd_M \g)(R, H_T(R, w, \ell), \ell) \\
    &\qquad
    + \g(R, H_T(R, w, \ell), \ell)\cdot \rd_\ell(\Ebar\rd_w H_T)(R, w, \ell).
    \end{align*}
\end{lem}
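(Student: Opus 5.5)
The key identity is \eqref{Dsqrt}: on the branch \(w = w_+(R, H_T(R,w,\ell), \ell)\) we have
\[
\big[H_T^2 - V_T(R, M)\big]^{1/2} = \Ebar(w)\,(\rd_w H_T)(R,w,\ell),
\]
with the sign fixed by the choice of \(w_+\). This is the plan: composing \(\bar\et\) with the map \((R,w,\ell)\mapsto (R, H_T(R,w,\ell), \ell)\) gives the product representation
\[
\et(R,w,\ell) = \g\big(R, H_T(R,w,\ell), \ell\big)\cdot \big(\Ebar\,\rd_w H_T\big)(R,w,\ell).
\]
The second factor is an explicit smooth function of \((R,w,\ell)\), built from \(\bar\a,\Abar,\bar\b\) and \(\Ebar\), and it is differentiable as often as the metric coefficients. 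So \(\et\) no longer contains any square root of a vanishing quantity, and we can differentiate it directly by the product rule.

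First I would check \eqref{Dsqrt} by a direct computation. Set \(\rd_w H_T = \bar\a w/\Ebar - \bar\b\Abar\). Then
\[
\Ebar^2(\rd_w H_T)^2 = (\bar\a w - \bar\b\Abar \Ebar)^2 .
\]
Now substitute \(\Ebar^2 = 1 + w^2 + \ell/(R^2\Abar^2)\) and \(H_T = \bar\a\Ebar - \bar\b\Abar w\), and expand \(H_T^2 - V_T\). This gives \(D = (\bar\a w - \bar\b\Abar\Ebar)^2\), using \((\bar\a^2 - \bar\b^2\Abar^2)(1+w^2+\ell/R^2\Abar^2) - (\bar\a^2-\bar\b^2\Abar^2) w^2\) type cancellations. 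The sign check comes from the explicit formulas in Definition \ref{Intsimplify}: for \(w_+\) we have \(\bar\a w_+ - \bar\b\Abar\Ebar(w_+) = +\sqrt{D}\), since \(\bar\a w_+ - \bar\b\Abar \Ebar(w_+) = \big[\bar\a(\bar\b\Abar H_T + \bar\a\sqrt D) - \bar\b\Abar(\bar\a H_T + \bar\b\Abar\sqrt D)\big]/(\bar\a^2-\bar\b^2\Abar^2) = \sqrt D\).

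Next, the chain rule applied to the first factor gives \(\rd_R\) of \(\g(R, H_T(R,w,\ell),\ell)\) as \(\rd_R\g + \rd_R H_T\,\rd_{H_T}\g\). Similarly \(\rd_w\) of it is \(\rd_w H_T\,\rd_{H_T}\g\), and \(\rd_\ell\) of it is \(\rd_\ell H_T\,\rd_{H_T}\g + \rd_M\g\); the last holds because \(M=\ell\). The product rule then yields exactly the three displayed formulas.

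The only delicate point is validity at the turning points \(R = R_\pm\), where \(D=0\) and \(\bar\et\) is not differentiable in \(H_T\) in the naive sense. The product representation sidesteps this. Both factors are \(C^1\) in \((R,w,\ell)\) on an open set of the \((R,w,\ell)\)-space, including near \(\rd_w H_T = 0\). The identity \(\et = \g\cdot\Ebar\rd_w H_T\) holds on the closed branch set \(\{w = w_+\}\), so no singular derivative of \(\sqrt D\) is ever taken. This sign and branch bookkeeping is the main obstacle, and I would state it as a remark. For \(w_-\) the same argument gives \(\sqrt D = -\Ebar\rd_w H_T\), and the formulas hold with an overall sign change.
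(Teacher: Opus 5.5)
Your proposal is correct and is the paper's argument: on the \(w_+\) branch you use \eqref{Dsqrt} to write \(\sqrt{D} = \Ebar\,\rd_w H_T\), so that \(\et = \g(R, H_T, \ell)\,\Ebar\,\rd_w H_T\), and the three formulas then follow from the chain and product rules. Your explicit check that \(D = (\bar\a w - \bar\b\Abar\Ebar)^2\), with the sign \(+\sqrt{D}\) on \(w_+\), fills in a step that the paper only asserts in a remark.
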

\begin{proof}
For \(w = w_+\), we have 
\[
\sqrt{H_T^2-V_T(R, M)} = \sqrt{D} = \Ebar\rd_w H_T,
\]
the desired identities are direct from the chain rule. 
\end{proof}

Considering the decomposition in \eqref{ptintegrand}, we need to analyze the derivatives of the integral of two types of integrands.
Thanks to Lemma \ref{lem:intsep}, for the first type of integrand we can assume that the function vanishes at \(R = R_-(H_T, M)\). 
\begin{lem}\label{lem:intderiv}
Suppose \(\g_1(R, H_T, M)\) is a \(C^1\) function which satisfies \(\g_1(R_-(H_T, M), H_T, M) = 0\). Define 
\[
\bar{\et}(R, H_T, M) = \int_{R_-(H_T, M)}^{R} \frac{\g_1(R', H_T, M)}{\sqrt{H_T^2 - V_T(R', M)}} dR',
\] 
and 
\[
\et(R, w, \ell) = \bar{\et}(R, H_T(R, w, \ell), M = \ell).
\]
Then, we have the following identity, for \((R, w, \ell)\) with \(\rd_R V_T(R, M = \ell) < 0\). Here, all the quantities including \(H_T\) and \(M\) on the RHS is evaluated at \((R, w, \ell)\).
\begin{align}
&\begin{aligned}
(\rd_R\et)(R, w, \ell)
&= \rd_R H_T\int_{R_-(H_T, M)}^{R} \frac{(\rd_{H_T} \g_1 + 2H_T \rd_R (\frac{\g_1}{\rd_R V_T}))(R', H_T, M)}{\sqrt{H_T^2 - V_T(R', M)}} dR' \\
&\qquad
- \frac{2\g_1}{\rd_R V_T}\rd_R (\Ebar\rd_w H_T),
\end{aligned} \label{intderivR}\\
&\begin{aligned}
(\rd_w\et)(R, w, \ell)
&= \rd_w H_T\int_{R_-(H_T, M)}^{R} \frac{(\rd_{H_T} \g_1 + 2H_T \rd_R (\frac{\g_1}{\rd_R V_T}))(R', H_T, M)}{\sqrt{H_T^2 - V_T(R', M)}} dR' \\
&\qquad
- \frac{2\g_1}{\rd_R V_T}\rd_w (\Ebar\rd_w H_T), 
\end{aligned} \label{intderivw}\\
&\begin{aligned}
(\rd_\ell\et)(R, w, \ell)
&= \rd_\ell H_T\int_{R_-(H_T, M)}^{R} \frac{(\rd_{H_T} \g_1 + 2H_T \rd_R (\frac{\g_1}{\rd_R V_T}))(R', H_T, M)}{\sqrt{H_T^2 - V_T(R', M)}} dR' \\
&\qquad
+ \int_{R_-(H_T, M)}^{R} \frac{(\rd_M \g_1 - \rd_R (\frac{\g_1 \rd_\ell V_T}{\rd_R V_T}))(R', H_T, M)}{\sqrt{H_T^2 - V_T(R', M)}} dR'
- \frac{2\g_1}{\rd_R V_T}\rd_\ell (\Ebar\rd_w H_T).  
\end{aligned} \label{intderivell}
\end{align} 
As a function of \((R, H_T, M)\), the integral satisfies the following identity. 
\begin{equation}\label{dHTlem}
\lim_{R\to R_-(H_T, M)+}\rd_{H_T}|_{(R, H_T, M)} \int_{R_-(H_T, M)}^{R} \frac{\g_1(R', H_T, M)}{\sqrt{H_T^2 - V_T(R', M)}} dR' = 0.
\end{equation}    
\end{lem}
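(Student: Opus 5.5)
The plan is to first compute the derivatives of \(\bar\et\) in the \((R, H_T, M)\) variables, and only then convert to \((R, w, \ell)\) by the chain rule. The \(R\)-derivative at fixed \((H_T, M)\) is simply \(\g_1/\sqrt{D}\). This is singular at \(R_-\), so it must be combined with the other terms.

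\textbf{The \(H_T\)-derivative.} I will not differentiate under the integral sign, since the endpoint \(R_-(H_T, M)\) moves and the integrand is singular there. Instead I write
\[
\frac{\g_1}{\sqrt{D}} = -\frac{2\g_1}{\rd_R V_T}\,\rd_R\sqrt{D},
\]
which holds because \(\rd_R D = -\rd_R V_T\), and then integrate by parts. This gives
\[
\bar\et = -\frac{2\g_1}{\rd_R V_T}\sqrt{D}\Big|_{R'=R} + \int_{R_-}^R 2\rd_R\Big(\frac{\g_1}{\rd_R V_T}\Big)\sqrt{D}\,dR'.
\]
The boundary term at \(R_-\) vanishes since \(D(R_-)=0\). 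The new integrand \(\sqrt{D}\) vanishes at the moving endpoint, so I can differentiate in \(H_T\) (and in \(M\)) under the integral with no boundary contribution. Using \(\rd_{H_T}\sqrt D = H_T/\sqrt D\) and \(\rd_M\sqrt D = -\rd_M V_T/(2\sqrt D)\), I then undo the integration by parts on the pieces that are not singular, and collect terms.

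This should produce:
\begin{itemize}
\item \(\rd_{H_T}\bar\et = \int (\rd_{H_T}\g_1 + 2H_T\rd_R(\g_1/\rd_R V_T))/\sqrt D\), plus boundary pieces proportional to \(\sqrt D(R)\);
\item \(\rd_M\bar\et\) as the stated \(\ell\)-integral, plus similar boundary pieces.
\end{itemize}
Alternatively, I can simply differentiate the integrated-by-parts form directly and check that it agrees with the formula above.

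\textbf{Passing to \((R, w, \ell)\).} The chain rule gives, for instance,
\[
\rd_R\et = \rd_R|_{(R,H_T,M)}\bar\et + \rd_R H_T\,\rd_{H_T}\bar\et,
\]
and similarly \(\rd_w\et = \rd_w H_T\,\rd_{H_T}\bar\et\) and \(\rd_\ell\et = \rd_\ell H_T\,\rd_{H_T}\bar\et + \rd_M\bar\et\). The remaining non-integral terms are those of the form \((\text{coefficient})\cdot\sqrt D\) coming from the boundary term \(-\frac{2\g_1}{\rd_R V_T}\sqrt D\), together with the term \(\g_1/\sqrt D\).

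For these I use the identity \eqref{Dsqrt}: on the \(w_+\) branch, \(\sqrt D = \Ebar\rd_w H_T\) as a function of \((R, w, \ell)\). Then Lemma \ref{lem:sqrtderiv} applies directly to \(\g\sqrt{D}\) with \(\g = -2\g_1/\rd_R V_T\). It produces the term \(-\frac{2\g_1}{\rd_R V_T}\rd_{R,w,\ell}(\Ebar\rd_w H_T)\), and the terms with \(\g\)-derivatives times \(\sqrt D\) cancel against the corresponding pieces from the integration by parts. The assumption \(\rd_R V_T<0\) places us on the \(R_-\) side, so that \(\g_1/\rd_R V_T\) is \(C^1\) and \(\rd_R V_T\) does not vanish.

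\textbf{The limit \eqref{dHTlem} and the main obstacle.} The limit follows from the formula for \(\rd_{H_T}\bar\et\): the integral over \([R_-, R]\) of an integrable \(1/\sqrt{\cdot}\) singularity tends to \(0\), and the boundary terms carry a factor \(\sqrt{D}\to0\). The main obstacle is the bookkeeping in the integration by parts. I need to show that every boundary term at the moving endpoint \(R_-\) genuinely vanishes, which uses \(\g_1(R_-)=0\) and \(D(R_-)=0\), and that the leftover \(\sqrt D\)-weighted terms recombine exactly into the stated closed forms. I would verify this carefully for \(\rd_\ell\), where both \(\rd_M V_T\) and \(\rd_\ell H_T\) appear.
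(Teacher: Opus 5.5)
Your argument for \eqref{dHTlem} has a genuine error: your predicted formula for \(\rd_{H_T}\bar{\et}\) omits the only boundary term that survives. When you differentiate the integrated-by-parts representation in \(H_T\) at fixed \((R, M)\), the derivative also falls on \(\sqrt{D}\) in the boundary term \(-\frac{2\g_1}{\rd_R V_T}\sqrt{D}\). By the identity \(\rd_{H_T}\sqrt{D} = H_T/\sqrt{D}\) that you quote, this produces
\[
-\frac{2\g_1(R, H_T, M)}{\rd_R V_T(R, M)}\,\frac{H_T}{\sqrt{D(R, H_T, M)}},
\]
which is of size \(D^{-1/2}\), not \(\sqrt{D}\). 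After you undo the integration by parts, the genuinely \(\sqrt{D}\)-weighted pieces cancel, and this is exactly the boundary term in the paper's formula for \(\rd_{H_T}\bar{\et}\). The same happens for \(\rd_M\bar{\et}\), whose boundary term is \(\g_1\rd_\ell V_T/(\rd_R V_T\sqrt{D})\).

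So the claim that ``the boundary terms carry a factor \(\sqrt{D}\to 0\)'' is false, and your argument never uses the hypothesis \(\g_1(R_-) = 0\), which is precisely what \eqref{dHTlem} rests on. Your reasoning would apply verbatim to \(\g_1\equiv 1\), where in fact \(\rd_{H_T}\bar{\et}\sim 2H_T\abs{\rd_R V_T(R_-)}^{-3/2}(R-R_-)^{-1/2}\to\infty\).

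The fix is to keep this term and estimate it. Since \(\g_1\in C^1\) with \(\g_1(R_-) = 0\), you have \(\abs{\g_1(R)}\lesssim R-R_-\). Since \(\rd_R V_T\le -c<0\) on \([R_-, R]\), you have \(D(R)\ge c(R-R_-)\). Hence the term is \(O(\sqrt{R-R_-})\), and the remaining integral tends to \(0\) as the integral of an integrable function over a shrinking interval. Relatedly, the endpoint contributions at \(R_-\) in your integration by parts vanish by \(D(R_-)=0\) alone; the hypothesis on \(\g_1\) is not what removes them.

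For \eqref{intderivR}--\eqref{intderivell}, by contrast, your alternative route is sound and is a mild reorganization of the paper's argument. Write \(\et = \g\sqrt{D} + \int_{R_-}^R 2\rd_R(\g_1/\rd_R V_T)\sqrt{D}\,dR'\) with \(\g = -2\g_1/\rd_R V_T\). Apply Lemma \ref{lem:sqrtderiv} to the first term, differentiate the second, and then undo the integration by parts. The terms \(\sqrt{D}\,\rd_R\g\), \(\sqrt{D}\,\rd_{H_T}\g\), \(\sqrt{D}\,\rd_M\g\) cancel exactly as you predict. The singular term above never appears explicitly, because it is absorbed into \(\rd(\Ebar\rd_w H_T)\).

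The paper instead records the \((R, H_T, M)\)-derivatives with their singular boundary terms \(\g_1/\sqrt{D}\), \(-2H_T\g_1/(\rd_R V_T\sqrt{D})\), \(\g_1\rd_\ell V_T/(\rd_R V_T\sqrt{D})\). It then recombines them via \(-\frac{\g_1}{\rd_R V_T}\frac{\rd D}{\sqrt{D}} = -\frac{2\g_1}{\rd_R V_T}\rd(\Ebar\rd_w H_T)\). Your version has the advantage of never handling individually singular terms, and it shows these three identities do not need \(\g_1(R_-)=0\) at all.

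Do not mix the two bookkeepings, however. In your route there is no separate \(\g_1/\sqrt{D}\) term: the fixed-\((H_T, M)\) \(R\)-derivative of the integrated-by-parts integral is \(2\rd_R(\g_1/\rd_R V_T)\sqrt{D}\).
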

\begin{proof}
We first compute the derivative of the integral as a function of \((R, H_T, M)\). 
Here, the point is that the derivative does not pick up the boundary term, as the function \(\g_1\) vanishes at \(R = R_-(H_T, M)\).
We use \(D(R, H_T, M) = H_T^2 - V_T(R, M)\), which is evaluated at \((R', H_T, M)\) within the integral, and at \((R, H_T, M)\) on RHS in the following calculation.
Note that we can do the integration by parts for last two identities as we have \(\rd_R V_T(R', M) < 0\) on \(R'\in [R_-(H_T, M), R]\).
\begin{align*}
\rd_R|_{(R, H_T, M)}\int_{R_-(H_T, M)}^{R}\frac{\g_1(R', H_T, M)}{\sqrt{D}} dR'
&= \frac{\g_1(R, H_T, M)}{\sqrt{D}}, \\
\rd_{H_T}|_{(R, H_T, M)}\int_{R_-(H_T, M)}^{R}\frac{\g_1(R', H_T, M)}{\sqrt{D}} dR'
&= \int_{R_-(H_T, M)}^{R} \frac{(\rd_{H_T} \g_1 + 2H_T \rd_R (\frac{\g_1}{\rd_R V_T}))(R', H_T, M)}{\sqrt{D}} dR' \\
&\qquad
- \frac{2\g_1(R, H_T, M)}{\rd_R V_T}\frac{H_T}{\sqrt{D}}, \\
\rd_M|_{(R, H_T, M)}\int_{R_-(H_T, M)}^{R}\frac{\g_1(R', H_T, M)}{\sqrt{D}} dR'
&= \int_{R_-(H_T, M)}^{R} \frac{(\rd_M \g_1 - \rd_R (\frac{\g_1 \rd_\ell V_T}{\rd_R V_T}))(R', H_T, M)}{\sqrt{D}} dR' \\
&\qquad
+ \frac{\g_1(R, H_T, M)}{\rd_R V_T}\frac{\rd_\ell V_T}{\sqrt{D}}.
\end{align*}
Now, using the chain rule we obtain the desired identities. 
The integral terms are straightforward, and to show that there is no singular term, we need the following identities: 
\begin{align*}
\frac{\g_1}{\sqrt{D}}\big(1 - \rd_R H_T \frac{2H_T}{\rd_R V_T}\big) &= -\frac{\g_1}{\rd_R V_T}\frac{\rd_R D}{\sqrt{D}} = -\frac{2\g_1}{\rd_R V_T}\rd_R (\Ebar\rd_w H_T), \\
\frac{\g_1}{\sqrt{D}}\big(-\rd_w H_T \frac{2H_T}{\rd_R V_T}\big) &= -\frac{\g_1}{\rd_R V_T}\frac{\rd_w D}{\sqrt{D}} = -\frac{2\g_1}{\rd_R V_T}\rd_w (\Ebar\rd_w H_T), \\
\frac{\g_1}{\sqrt{D}}\big(-\rd_\ell H_T \frac{2H_T}{\rd_R V_T} + \frac{\rd_\ell V_T}{\rd_R V_T}\big) 
&= -\frac{\g_1}{\rd_R V_T}\frac{\rd_\ell D}{\sqrt{D}} = -\frac{2\g_1}{\rd_R V_T}\rd_\ell (\Ebar\rd_w H_T).
\end{align*}
\end{proof}

Now, we prove Proposition \ref{prop:regDAAV}.
\begin{proof}[Proof of Proposition \ref{prop:regDAAV}]
\step{1}
Fix \(T < T_B\) and define \((t, Q_T, H, M)\) as in Definition \ref{def:DAAV}. We first show that those quantities satisfies the estimates \eqref{CoVptoA}.
Note that in the definition we refer to the metric coefficients at time \(T\), and in our timescale the weak bootstrap assumptions \eqref{wBAH0} still gives the boundedness of \(\rd_R^{N+3}\) of the metric coefficients at time \(T\).
Therefore, in this proof, it suffices to show that the desired derivatives can be controlled by \((N+3)\)-th order derivatives of the metric coefficients. 

For \(t\) and \(M\) every derivative is trivial.
For \(H\), it is \(C^{N+2}\) in \((R, w, \ell)\); especially \((N+2)\)-th order derivative of \(\rd_R H\) is bounded by \((N+3)\)-th order derivative of the metric coefficients. 
For \(\rd_s H = \rd_s (\a E- \b Aw)\), the weak bootstrap assumption \eqref{wBA1} proves that this quantity is \(C^{N+1}\).
For \(Q_T\), this quantity is independent of \(s\), hence it suffices to show that \(Q_T\) is \(C^{N+2}\) in \((R, w, \ell)\).

First, notice that the lower limit of the integral \(R_-(H_T, M)\) has bounded \(C^{N+3}\) derivatives in \((R, w, \ell)\); first we have the following identity from the definition of \(R_-\): 
\begin{align}
\rd_{H_T}(R_-(H_T, M)) &= \frac{2H_T}{\rd_R V_T(R_-(H_T, M), M)}, \label{dHTR-}\\
\rd_M(R_-(H_T, M)) &= -\frac{\rd_\ell V_T(R_-(H_T, M), M)}{\rd_R V_T(R_-(H_T, M), M)}. \label{dMR-}
\end{align} 
Since \(H_T\) and \(M\) have bounded \((N+3)\)-th order derivatives in \((R, w, \ell)\) and so is \(V_T\), and since we are working on the regime that \(\rd_R V_T\neq 0\) at \(R_-(H_T, M)\), we can conclude that \((N+3)\)-th order derivatives of \(R_-(H_T, M)\) can be controlled.
This is also true for \(R_+\).

As \(Q_T\) is defined as a product of \(\O(H_T, M)\) and \(\Qtil(R, H_T, M)\), we first prove that \(\Qtil\) given by \eqref{Qtildef} is \(C^{N+2}\) in \((R, w, \ell)\). Then, since \(R_+\) is \(C^{N+3}\), the normalization \(\O\) should also be \(C^{N+2}\) in \((R, w, \ell)\), and the desired regularity of \(Q_T\) follows.
We also restrict to the case when \(w = w_+(R, H_T, M)\), without loss of generality. 
Using the representation \eqref{Intsimplify} and Lemma \ref{lem:intsep}, we can write \(\Qtil\) as a sum of three terms: 
\begin{align*}
\Qtil(R, H_T, M)
&= \int_{R_-(H_T, M)}^{R} \frac{W_T^{(1)}(R')H_T}{\sqrt{H_T^2-V_T(R', M)}} dR' + \int_{R_-(H_T, M)}^{R} W_T^{(2)}(R') dR' \\
&= -\frac{2W_T^{(1)}(R_-(H_T, M))H_T}{\rd_R V_T(R_-(H_T, M), M)} \sqrt{H_T^2-V_T(R, M)} \\
&\quad + \int_{R_-(H_T, M)}^{R} \frac{\tilde{W}_T^{(1)}(R', H_T, M)H_T}{\sqrt{H_T^2-V_T(R', M)}} dR' + \int_{R_-(H_T, M)}^{R} W_T^{(2)}(R') dR',
\end{align*}
The last term is the easiest: we can compute the first derivative as follows. 
\begin{align*}
\rd_R \int_{R_-(H_T, M)}^{R} W_T^{(2)}(R') dR' &= W_T^{(2)}(R)-\rd_R H_T \rd_{H_T}R_-(H_T, M) W_T^{(2)}(R_-(H_T, M)), \\
\rd_w \int_{R_-(H_T, M)}^{R} W_T^{(2)}(R') dR' &= -\rd_w H_T \rd_{H_T}R_-(H_T, M) W_T^{(2)}(R_-(H_T, M)), \\
\rd_\ell \int_{R_-(H_T, M)}^{R} W_T^{(2)}(R') dR' &= -(\rd_\ell H_T \rd_{H_T}R_-(H_T, M)+\rd_M R_-(H_T, M))W_T^{(2)}(R_-(H_T, M)).
\end{align*}
These are all sufficiently regular. For the first two terms, we need to use Lemma \ref{lem:sqrtderiv} and Lemma \ref{lem:intderiv} repeatedly to see that both of them are \(C^{N+2}\) in \((R, w, \ell)\).
We notice that the \(I\)-th derivative of \(\Qtil\) requires \((I+1)\)-th order derivative of the metric coefficients: this is mainly because of the term \(\rd_R\big(\frac{\g_1}{\rd_R V_T}\big)\).
This also tells us that the term that requires the highest-order derivative always appears through \(V_T\). 
For the first term, the induction with Lemma \ref{lem:sqrtderiv} works; using the identity once produces a term with the same structure which we can induct. 

For the second term, Lemma \ref{lem:intderiv} allows us to argue in the same way for \((R, w, \ell)\) with \(\rd_R V_T(R, M = \ell) < 0\). 
To cover the other points, we proceed as follows: set \(R_c(M)\) to be the point where \(V_T(R, M)\) as a function of \(R\) attains its local minimum, and choose a fixed small number \(\bfr>0\) (independent of \(M\)).
Since \(\rd_{RR}^2 V_T\) is bounded from below near \(R_c(M)\) in our domain of interest, we know that \(\abs{\rd_R V_T(R, M)}\) is bounded away from zero for \(R\) such that \(\abs{R-R_c(M)} \geq \bfr\).
Hence, we can separate three cases: if \(R \in [R_-(H_T, M), R_c(M)-\bfr]\), then the above argument proves that the derivatives are bounded up to \((N+2)\)-th order. \\
For \(R \in [R_c(M)-\bfr, R_c(M)+\bfr]\), we separate the integral as follows: 
\[
\int_{R_-(H_T, M)}^{R_c(M)-\bfr} \frac{\tilde{W}_T^{(1)}(R', H_T, M)H_T}{\sqrt{H_T^2-V_T(R', M)}} dR'
+\int_{R_c(M)-\bfr}^{R} \frac{\tilde{W}_T^{(1)}(R', H_T, M)H_T}{\sqrt{H_T^2-V_T(R', M)}} dR'.
\]
The derivative of first term can be calculated as in Lemma \ref{lem:intderiv}; the difference is that there is no \(R\) derivative here.
Since \(\abs{\rd_R V_T(R, M)}\) is bounded from below, the derivatives are bounded. 
For the second term, the integrand is regular, in particular \(\sqrt{H_T^2-V_T}\) is bounded below, so we need one less derivative to control this.
Note that it does not matter that \(\rd_R V_T = 0\) at \(R_c\), as we do not put it on the denominator in this interval. 
\\
For \(R \in [R_c(M)+\bfr, R_+(H_T, M)]\), from the argument for the second case it suffices to control the integral on \([R_c(M)+\bfr, R]\); this is equal to 
\[
\int_{R_c(M)+\bfr}^{R_+(H_T, M)} \frac{\tilde{W}_T^{(1)}(R', H_T, M)H_T}{\sqrt{H_T^2-V_T(R', M)}} dR'
- \int_{R}^{R_+(H_T, M)} \frac{\tilde{W}_T^{(1)}(R', H_T, M)H_T}{\sqrt{H_T^2-V_T(R', M)}} dR'.
\]
The first term can be controlled by the symmetric argument as the second case, and the second term can be controlled by the symmetric argument for the first case with symmetric version of Lemma \ref{lem:intderiv}.

\step{2}
Now we estimate the Jacobian of the change of variables map. We have
\[
\begin{bmatrix}
\rd_s t & \rd_s Q_T & \rd_s H & \rd_s M \\
\rd_R t & \rd_R Q_T & \rd_R H & \rd_R M \\
\rd_w t & \rd_w Q_T & \rd_w H & \rd_w M \\
\rd_\ell t & \rd_\ell Q_T & \rd_\ell H & \rd_\ell M
\end{bmatrix}
=
\begin{bmatrix}
1 & 0 & \rd_s H & 0 \\
0 & \rd_R Q_T & \rd_R H & 0 \\
0 & \rd_w Q_T & \rd_w H & 0 \\
0 & \rd_\ell Q_T & \rd_\ell H & 1
\end{bmatrix},
\]
hence the Jacobian is \(\rd_w H \cdot \rd_R Q_T - \rd_R H \cdot \rd_w Q_T\).
Away from \(R = R_\pm (H_T, M)\), we can calculate this as 
\begin{equation}\label{JacCalc}
\begin{aligned}
&\rd_w H \cdot \rd_R Q_T - \rd_R H \cdot \rd_w Q_T \\
&= \rd_w H\cdot\rd_R|_{(R, H_T, M)} Q_T + \rd_w H\cdot\rd_R H_T \rd_{H_T}|_{(R, H_T, M)} Q_T - \rd_R H\cdot\rd_w H_T \rd_{H_T}|_{(R, H_T, M)} Q_T \\
&\qquad
+\underbrace{\Abar\O(H_T, M) - \rd_w H_T\cdot\rd_R|_{(R, H_T, M)} Q_T}_{= 0} \\
&= \Abar\O(H_T, M) + \rd_w (H-H_T) (\rd_R|_{(R, H_T, M)} Q_T) \\
&\qquad
+ (\rd_w H\rd_R H_T - \rd_R H\rd_w H_T)(\rd_{H_T}|_{(R, H_T, M)} Q_T) \\
&= \Abar\O(H_T, M) + \rd_w (H-H_T)\rd_R Q_T - \rd_R(H-H_T)\rd_w Q_T.
\end{aligned}
\end{equation}
From the weak bootstrap assumption \eqref{wBAL0} and the conclusions in Step 1 we know that the second and third term are of size \(\Ocal(\e)\), hence the Jacobian is \(\Abar\O(H_T, M) + \Ocal(\e)\), which is bounded away from zero.

\step{3} Now this is a direct consequence of the inverse function theorem, with the estimates proved in Step 1 and Step 2.
\end{proof}

\subsection{Refined estimates on the period function}
We previously defined the function \(\O\) in the way that it depends on the metric coefficients at time \(T\). 
Of course we assume that the metric coefficients at time \(T\) are close to the Schwarzschild metric, but we still need to estimate the difference between \(\O\) and \(\O_{\Sch}\) to derive estimates from the transport equation.
There are two reasons for this: 
First, we need the monotonicity of \(\O\) in \(\ell\); we proved that \(\rd_\ell (\O_{\Sch}(H, M))\) has a sign in Proposition \ref{prop:monotonic}, and we want to claim that \(\rd_\ell (\O(H, M))\) also has a sign as it is close to \(\rd_\ell (\O_{\Sch}(H, M))\).
Second, we defined the vector field \(Z\) in terms of the derivatives of \(\O_{\Sch}\) so that 
\[
Z(\O_{\Sch}(H, M)) = 0,
\]
to keep \(Z\) as a regular vector field.

From the definition of \(Z\) we know that for any \(i_1, i_2\ge 0\), we have \(\rd_H^{i_1}Z^{i_2}(\O_{\Sch}(H, M)) = 0\).
In order to do the estimates on the Vlasov equation, we need to show that \(Z(\O(H_T, M))\) (and its derivatives) is small.
The following proposition partially answers this question, in the sense that the difference between derivatives of \(\O\) and \(\O_{\Sch}\) are small.
\begin{prop}[Refined estimates on the period function]\label{prop:PeriodRefined}
    For \(I\le N + 1\), we have the following estimate: 
    \[
    \sum_{i_1+i_2\le I} \sup_{(H_T, M)} \abs{\rd_{H_T}^{i_1}\rd_M^{i_2}(\O(H_T, M) - \O_{\Sch}(H_T, M))} \lesssim \sum_{i\le N+2} \sup_R \abs{\rd_R^i(\bar{\a}-\a_0, \Abar-A_0, \bar{\b}-\b_0)(R)},
    \]
    where the supremum on the LHS is taken over \((H_T, M)\) in the domain of interest.
    In particular, for \(I\le N\) we have 
    \begin{equation}\label{Orefined}
    \abs{\rd_{H_T}^{i_1}\rd_M^{i_2}(\O(H_T, M) - \O_{\Sch}(H_T, M))} \lesssim \d^{3/4}\e.    
    \end{equation}
\end{prop}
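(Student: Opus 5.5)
We interpolate between the Schwarzschild coefficients and the time-\(T\) coefficients. For \(\l\in[0,1]\) set \(\bar{\a}_\l = \a_0 + \l(\bar{\a}-\a_0)\), and define \(\Abar_\l, \bar{\b}_\l\) analogously. Let \(V_\l, W^{(1)}_\l, R_\pm^\l\) and \(\O_\l\) be the objects of Definition \ref{def:QT} and \eqref{Odefsimp} built from these coefficients. At \(\l = 0\), \(\bar{\b}_0 = 0\), so \(V_0\) is the potential \eqref{Vdef}, \(W^{(1)}_0 = A_0/\a_0\), and \(\O_0 = \O_\Sch\), since \eqref{Odefsimp} agrees with \eqref{Omegadef} after writing \(w = \sqrt{D}/\a_0\). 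At \(\l = 1\), \(\O_1 = \O\). Hence
\[
\O - \O_\Sch = \int_0^1 \rd_\l \O_\l \, d\l ,
\]
and it suffices to bound \(\rd_{H_T}^{i_1}\rd_M^{i_2}\rd_\l \O_\l\) uniformly in \(\l\) by \(\sum_{i\le N+2}\sup_R|\rd_R^i(\bar{\a}-\a_0,\Abar-A_0,\bar{\b}-\b_0)|\). For each \(\l\), the smallness \eqref{wBA0} keeps the interpolated potential in the regime of Proposition \ref{prop:suppf}: there is a single minimum, \(\rd_R V_\l \neq 0\) at \(R_\pm^\l\), and the roots \(R_\pm^\l\) exist on the domain of interest.

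To differentiate a singular integral without producing boundary terms, we first desingularize. Substituting \(R = R_\l(\th)\) with \(V_\l(R_\l(\th), M) = V_{\l,\min}(M) + (H_T^2 - V_{\l,\min}(M))\sin^2\th\), taken on the two branches on either side of the minimum point \(R_c^\l(M)\), gives
\[
\frac{\pi}{\O_\l} = H_T \int_{-\pi/2}^{\pi/2} W^{(1)}_\l(R_\l(\th))\,\frac{\sqrt{H_T^2-V_{\l,\min}}\,\cos\th}{\rd_R V_\l(R_\l(\th))\cdot(\dots)}\,d\th .
\]
The integrand \(W^{(1)}_\l\, dR/d\th \,/\sqrt{D}\) is a smooth function of \((\th, H_T, M, \l)\), with a fixed domain of integration. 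Two ingredients give this smoothness. First, near the minimum the square root of \(V_\l - V_{\l,\min}\) is a smooth coordinate by the Morse lemma, which uses that \(\rd_{RR}V_\l\) is bounded below. Second, away from the minimum \(\rd_R V_\l\) is bounded below. Both facts were already used in Step 1 of Proposition \ref{prop:regDAAV}. As an alternative, one could repeat the route via Lemma \ref{lem:intsep} and Lemma \ref{lem:intderiv}, which never produces boundary terms. In either formulation all derivatives in \(H_T\), \(M\) and \(\l\) can be taken under the integral sign.

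Each \(\rd_\l\) produces a factor linear in \(\bar{\a}-\a_0\), \(\Abar-A_0\), \(\bar{\b}-\b_0\) and their \(R\)-derivatives. These enter through \(V_\l\), \(W^{(1)}_\l\) and the change of variables \(R_\l(\th)\), including \(\rd_\l R_c^\l\) and \(\rd_\l V_{\l,\min}\), both obtained from the implicit function theorem using \(\rd_{RR}V_\l\neq 0\). The remaining factors are bounded by \eqref{wBA0} and \eqref{wBAH0}.

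The step I expect to be the main obstacle is the derivative count. Each \(\rd_{H_T}\) or \(\rd_M\) applied to \(R_\l(\th)\) costs one \(R\)-derivative of \(V_\l\), as with the term \(\rd_R(\g_1/\rd_R V_T)\) in Lemma \ref{lem:intderiv}. The Morse reparametrization costs roughly one more derivative of the coefficients. The plan is to track, by induction on \(I\), that \(\rd^{I}\rd_\l\) of the integrand involves at most \(I+1\) derivatives of the metric difference, multiplied by at most \(I+2\) derivatives of \(\bar{\a}_\l, \Abar_\l, \bar{\b}_\l\). The difference factor then stays within the right-hand side's \(N+2\) derivatives for \(I\le N+1\). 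The bounded factors, which need up to \(N+3\) derivatives, are controlled by \eqref{wBAH0}, since \(\jap{T}\d^{3/4}\e\lesssim 1\) on \(T\le T_f\).

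If the Morse chart costs one derivative too many, I would fall back on Lemma \ref{lem:intderiv}. There the top-order derivative appears only through \(V_T\), as Step 1 of Proposition \ref{prop:regDAAV} notes, so only the difference \(V_1 - V_0\) needs to be taken in \(C^{N+2}\). Finally, \eqref{Orefined} follows by inserting \eqref{wBA0} at \(s = T\), and replacing \(\pi/\O_\l\) by \(\O_\l\) is harmless because \(\O_\l\) is bounded away from zero.
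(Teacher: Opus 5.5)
Your interpolation in $\l$, the identification $\O_0=\O_{\Sch}$, $\O_1=\O$, and the reduction to bounding $\rd_{H_T}^{i_1}\rd_M^{i_2}\rd_\l(\pi/\O_\l)$ uniformly in $\l$ all coincide with the paper. The gap is in your primary desingularization: the global Morse chart through the minimum costs one derivative too many, and this breaks the case $I=N+1$.

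Write the chart as $V_\l(R_\l(u),M)-V_{\l,\min}(M)=u^2$ with $u=\sqrt{H_T^2-V_{\l,\min}}\,\sin\th$. The $\th$-integrand is then $H_T\,W^{(1)}_\l(R_\l(u))\,\rd_u R_\l(u)$. (In your display the factor should be $\sin\th$, since $dR/\sqrt{H_T^2-V_\l}=2\sqrt{H_T^2-V_{\l,\min}}\,\sin\th\,d\th/\rd_R V_\l$.) Near $R_c^\l$, each of
$\rd_u R_\l=2u/\rd_R V_\l(R_\l)$,
$\rd_M R_\l=-\big(\rd_M V_\l(R_\l)-\rd_M V_\l(R_c^\l)\big)/\rd_R V_\l(R_\l)$ and
$\rd_\l R_\l=-\big(\rd_\l V_\l(R_\l)-\rd_\l V_\l(R_c^\l)\big)/\rd_R V_\l(R_\l)$
is a quotient of two functions vanishing at $R_c^\l$. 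Each therefore needs one more $R$-derivative than at a simple turning point. Concretely:
\begin{itemize}
\item $\rd_\l$ of the integrand already contains $\rd_R^2\rd_\l V_\l$ and $\rd_R^3V_\l$, whereas the paper's route has only $\rd_R\rd_\l V_\l$ and $\rd_R^2V_\l$.
\item Every further $\rd_M$ acts on the chart itself and adds one more derivative.
\end{itemize}
So $N+1$ derivatives in $(H_T,M)$ need $N+3$ derivatives of the metric difference, but the right-hand side contains only $N+2$. They also need $N+4$ derivatives of the coefficients, which no bootstrap assumption controls. Your inductive count ($I+1$ derivatives on the difference, $I+2$ on the coefficients) is therefore false for this chart. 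The chart gives the estimate only for $I\le N$: enough for \eqref{Orefined}, but not for the first inequality at $I=N+1$.

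The loss is an artifact. The integrand $W^{(1)}_\l H_T/\sqrt{H_T^2-V_\l}$ is regular near the minimum, so nothing needs reparametrizing there. (Step 1 of Proposition \ref{prop:regDAAV} never uses a Morse chart; it only uses the lower bound on $\abs{\rd_R V_T}$ away from $R_c(M)$.) Your fallback is exactly the paper's proof:
\begin{itemize}
\item Split at $R_c(M)\pm\bfr$, with $R_c(M)$ the minimizer of the Schwarzschild potential, so the split points do not move with $\l$. Treat the middle piece directly.
\item Desingularize only at the simple turning points $R^\l_\pm$, via Lemma \ref{lem:intsep}.
\item Remove the $\l$-derivative from the singular factor using $\rd_\l(H_T^2-V_\l)^{-1/2}=\frac{\rd_\l V_\l}{\rd_R V_\l}\,\rd_R(H_T^2-V_\l)^{-1/2}$ and an integration by parts. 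No boundary term arises at $R_-^\l$, because the desingularized weight vanishes there.
\item Take the $(H_T,M)$-derivatives as in Lemma \ref{lem:intderiv}.
\end{itemize}
This delivers exactly your count. Your remark that the resulting $N+3$ coefficient derivatives at time $T$ are bounded by \eqref{wBAH0}, because $\jap{T}\d^{3/4}\e\lesssim 1$ for $T\le T_f$, is right; the paper leaves this implicit in this proof. With the fallback promoted to the main argument, your proof is correct and is the paper's.
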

\begin{proof}
Throughout we suppress the time argument: all metric coefficients are evaluated at time \(T\), so that the barred coefficients \(\bar{\a}, \Abar, \bar{\b}\) are the solution coefficients at time \(T\), while \(\a_0, A_0, \b_0 = 0\) are the Schwarzschild coefficients \eqref{sch-metric}. We abbreviate the right-hand side of the asserted estimate by
\[
\Phi \coloneqq \sum_{i\le N+2}\sup_R \abs{\rd_R^i(\bar{\a}-\a_0, \Abar-A_0, \bar{\b}-\b_0)(R)}.
\]
The argument follows the proof of Proposition 5.21 of \cite{CL24}. 
For \(\l\in[0, 1]\) set
\[
\a_\l = \a_0 + \l(\bar{\a}-\a_0), \quad A_\l = A_0 + \l(\Abar-A_0), \quad \b_\l = \l\bar{\b},
\]
and, in analogy with \eqref{VTdef} and \eqref{ptintegrand},
\[
V_\l(M, R) = (\a_\l^2-\b_\l^2 A_\l^2)\Big(1+\frac{M}{R^2 A_\l^2}\Big), \qquad
W_\l^{(1)}(R) = \frac{A_\l\a_\l}{\a_\l^2-\b_\l^2 A_\l^2}.
\]
Let \(\big(\frac{3}{2}+\sqrt{2}\big)m < R_-^\l(H_T, M) < R_+^\l(H_T, M)\) be the two roots of \(H_T^2 = V_\l(M, R)\), and define
\[
\frac{\pi}{\O_\l(H_T, M)} = \int_{R_-^\l(H_T, M)}^{R_+^\l(H_T, M)} \frac{W_\l^{(1)}(R)\,H_T}{\sqrt{H_T^2-V_\l(M, R)}}\,dR.
\]
By \eqref{sch-metric} and the weak bootstrap assumption \eqref{wBA0}, for every \(i\le N+2\),
\[
\abs{\rd_R^i(\a_\l-\a_0, A_\l-A_0, \b_\l)} = \l\,\abs{\rd_R^i(\bar{\a}-\a_0, \Abar-A_0, \bar{\b})} \le \Phi
\quad\text{uniformly in }\l\in[0, 1].
\]
In particular each \(V_\l\) is an \(\Ocal(\Phi)\) perturbation of \(V_0\), with a single non-degenerate minimum and with \(\rd_R V_\l(R_\pm^\l, M)\) bounded away from \(0\) uniformly in \(\l\), exactly as in Proposition \ref{prop:suppf} and the discussion following \eqref{wpmdef}. 
By construction we have \(\O_1 = \O\) and \(\O_0 = \O_{\Sch}\), moreover \(\rd_\l \a_\l = \bar{\a}-\a_0\), etc., so that
\begin{equation}\label{lambdaderiv}
\abs{\rd_R^i\rd_\l V_\l},\ \abs{\rd_R^i\rd_\l W_\l^{(1)}} \lesssim \Phi \qquad (i\le N+1),
\end{equation}
uniformly in \(\l\), since each is a polynomial expression in the coefficients (with denominators bounded away from \(0\)) at least one of whose factors is \(\rd_\l(\a_\l, A_\l, \b_\l) = (\bar{\a}-\a_0, \Abar-A_0, \bar{\b})\).

Finally we pass from the half-periods to \(\O\) itself. By Proposition \ref{prop:regDAAV} (and the analogous, simpler statement for the background) the maps \((H_T, M)\mapsto \pi/\O_\l\) are \(C^{N+2}\), and by Propositions \ref{prop:suppf} and \ref{prop:monotonic} the frequencies \(\O_\l\) are bounded above and below by positive constants, uniformly in \(\l\). Since
\[
\O - \O_{\Sch} = -\frac{\O\,\O_{\Sch}}{\pi}\Big(\frac{\pi}{\O}-\frac{\pi}{\O_{\Sch}}\Big),
\]
and \(\O, \O_{\Sch}\) together with their \((H_T, M)\)-derivatives up to order \(N+1\) are bounded, the Leibniz rule reduces the first asserted estimate to
\[
\sup_{(H_T, M)}\sum_{i_1 + i_2 \le N+1}\abs{\rd_{H_T}^{i_1}\rd_M^{i_2}\Big(\frac{\pi}{\O(H_T, M)}-\frac{\pi}{\O_{\Sch}(H_T, M)}\Big)} \lesssim \Phi.
\]
Writing \(\frac{\pi}{\O}-\frac{\pi}{\O_{\Sch}} = \int_0^1 \frac{d}{d\l}\frac{\pi}{\O_\l}\,d\l\), it suffices to prove
\begin{equation}\label{lambdatemp}
\sup_{(H_T, M)}\sum_{i_1+i_2\le N+1}\abs{\rd_{H_T}^{i_1}\rd_M^{i_2}\frac{d}{d\l}\frac{\pi}{\O_\l(H_T, M)}} \lesssim \Phi
\qquad\text{ uniformly in }\l\in[0, 1].
\end{equation}
The point of the interpolation is that, by \eqref{lambdaderiv}, every \(\l\)-derivative produces a factor of the deviation \(\Phi\), which is the smallness on the right-hand side.

To localize the singularities, let \(R_c(M)\) be the minimizer of \(V_0(R, M)\) as a function of \(R\) and let \(\bfr>0\) be the fixed small constant of the proof of Proposition \ref{prop:regDAAV}, chosen so that \(\abs{\rd_R V_\l(R, M)}\) is bounded away from \(0\) for \(\abs{R-R_c(M)}\ge \bfr\) and all \(\l\in[0, 1]\). 
(Note that we want a bit more on the choice of \(\bfr\), as this means the local minimum of \(V_\l(R, M)\) is in the interval \([R_c(M)-\bfr, R_c(M)+\bfr]\).)
Split
\[
\frac{\pi}{\O_\l}
= \underbrace{\int_{R_-^\l}^{R_c-\bfr}}_{\eqqcolon\,\Ical_\l^{(1)}}
+ \underbrace{\int_{R_c-\bfr}^{R_c+\bfr}}_{\eqqcolon\,\Ical_\l^{(2)}}
+ \underbrace{\int_{R_c+\bfr}^{R_+^\l}}_{\eqqcolon\,\Ical_\l^{(3)}}
\frac{W_\l^{(1)}(R)\,H_T}{\sqrt{H_T^2-V_\l(M, R)}}\,dR,
\]
and prove \eqref{lambdatemp} for each of \(\Ical_\l^{(1)}, \Ical_\l^{(2)}, \Ical_\l^{(3)}\) separately.

\step{1} (On \([R_-^\l, R_c-\bfr]\): \(\Ical_\l^{(1)}\).)
Applying Lemma \ref{lem:intsep} with \(\g = W_\l^{(1)} H_T\), and then differentiating in \(\l\) and integrating by parts in \(R\) (via \(\rd_\l(H_T^2-V_\l)^{-1/2} = \frac{\rd_\l V_\l}{\rd_R V_\l}\rd_R(H_T^2-V_\l)^{-1/2}\), as is done repeatedly above), we obtain
\begin{align*}
\Ical_\l^{(1)}
&= -\frac{2 W_\l^{(1)}(R_-^\l)\,H_T}{\rd_R V_\l(R_-^\l, M)}\sqrt{H_T^2-V_\l(M, R_c-\bfr)}
+ \int_{R_-^\l}^{R_c-\bfr} \frac{\tilde{W}_\l^{(1)}(R)\,H_T}{\sqrt{H_T^2-V_\l(M, R)}}\,dR, \\
\frac{d}{d\l}\Ical_\l^{(1)}
&= \big(\text{regular terms, evaluated at } R_c-\bfr\big) \\
&\qquad
+ \int_{R_-^\l}^{R_c-\bfr} \bigg(\rd_\l \tilde{W}_\l^{(1)}(R) - \rd_R\Big[\tilde{W}_\l^{(1)}(R)\,\frac{\rd_\l V_\l(M, R)}{\rd_R V_\l(R, M)}\Big]\bigg)\frac{H_T}{\sqrt{H_T^2-V_\l(M, R)}}\,dR,
\end{align*}
where \(\tilde{W}_\l^{(1)}\) is the desingularized weight of Lemma \ref{lem:intsep}, so that \(\tilde{W}_\l^{(1)}(R_-^\l) = 0\) and the moving-endpoint contribution at \(R_-^\l\) vanishes; the displayed boundary terms are regular because \(H_T^2-V_\l\) and \(\rd_R V_\l(R_-^\l, M)\) are bounded away from \(0\). 
Every term on the right carries a factor \(\rd_\l V_\l\) or \(\rd_\l W_\l^{(1)}\) (the former also through \(\rd_\l R_-^\l = -\rd_\l V_\l(M, R_-^\l)/\rd_R V_\l(R_-^\l, M)\)), hence is \(\Ocal(\Phi)\) by \eqref{lambdaderiv}, and the surviving integral is again of the form to which Lemma \ref{lem:intsep} applies. 
The derivatives \(\rd_{H_T}^{i_1}\rd_M^{i_2}\) are then taken exactly as in the proof of Proposition \ref{prop:regDAAV}, each distributed using the analogues of \eqref{dHTR-}--\eqref{dMR-} for \(R_-^\l\) and remove singularity via Lemma \ref{lem:intsep}, each application costing one further \(R\)-derivative of the coefficients. 
Since \(i_1+i_2\le N+1\), at most \(N+2\) coefficient derivatives occur, all entering through \(V_\l\) and \(W_\l^{(1)}\) and therefore can be controlled by \(\Phi\). 
This proves \eqref{lambdatemp} for \(\Ical_\l^{(1)}\).

\step{2} (On \([R_c-\bfr, R_c+\bfr]\): \(\Ical_\l^{(2)}\).)
On \([R_c-\bfr, R_c+\bfr]\) the denominator \(H_T^2-V_\l\) is bounded away from \(0\) uniformly in \(\l\), and the split points \(R_c\pm\bfr\) are independent of \(\l\), so
\[
\frac{d}{d\l}\Ical_\l^{(2)}
= \int_{R_c-\bfr}^{R_c+\bfr}\bigg(\frac{(\rd_\l W_\l^{(1)})\,H_T}{\sqrt{H_T^2-V_\l}}
+ \frac{W_\l^{(1)} H_T\,(\rd_\l V_\l)}{2\,(H_T^2-V_\l)^{3/2}}\bigg)dR.
\]
By \eqref{lambdaderiv} the integrand and its \(\rd_{H_T}^{i_1}\rd_M^{i_2}\) derivatives are \(\Ocal(\Phi)\), which costs one less derivative than in Step 1. 
Note that \(\rd_M\) picks the boundary term but this is also regular, as the denominator is away from zero. 
This gives \eqref{lambdatemp} for \(\Ical_\l^{(2)}\).

\step{3} (On \([R_c+\bfr, R_+^\l]\): \(\Ical_\l^{(3)}\).)
This is identical to Step 1 upon using the symmetric version of the argument near \(R_+^\l\).

\medskip
Combining the above items gives \eqref{lambdatemp}, hence the first asserted estimate with right-hand side \(\Phi\). 
Finally, for \(i_1+i_2 = I\le N\) the weak bootstrap assumption \eqref{wBA0} yields \(\Phi\lesssim \d^{3/4}\e\), which is \eqref{Orefined}. 
\end{proof}

\subsection{Vlasov equation in the new coordinate}
We can write the Vlasov equation in the new coordinate system as follows.
\begin{prop}[Vlasov equation in dynamical action angle variables]\label{prop:vlasovDAAV}
    The Vlasov equation \(\Dfr f = 0\) can be rewritten in the dynamical action angle coordinate system as 
    \begin{equation}\label{vlasov-DAAV}
    \rd_t f + \O(H, M)\rd_{Q_T} f  + \Pcal(t, T, Q_T, H, M)\rd_{Q_T} f + \Dfr H \rd_H f = 0.
    \end{equation}
    where \(\O\) is the period function defined by the metric coefficients at time \(T\). Here, \(\Pcal\) is given by
    \begin{equation}
    \begin{aligned}\label{Pcaldef}
    \Pcal
    &= \frac{1}{A}\left[\rd_w(H -H_T)\rd_R Q_T - \rd_R(H-H_T)\rd_w Q_T 
    - (A-\Abar)\O(H_T, M) - w\rd_s A\cdot \rd_w Q_T\right] \\
    &\qquad + \O(H_T, M) - \O(H, M)\,
    \end{aligned}
    \end{equation}
    where each of the terms on the RHS is evaluated at \((s = t, R(Q_T, H, M), w(Q_T, H, M), \ell = M)\).
\end{prop}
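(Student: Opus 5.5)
The plan is to apply the chain rule for the operator \(\Dfr\) in \eqref{eq:vlasov-rwl}, using the coordinates \((t, Q_T, H, M)\) of Definition \ref{def:DAAV}. This gives
\[
\Dfr = (\Dfr t)\rd_t + (\Dfr Q_T)\rd_{Q_T} + (\Dfr H)\rd_H + (\Dfr M)\rd_M .
\]
Here \(\Dfr t = 1\), and \(\Dfr M = \Dfr \ell = 0\) because \(\eqref{eq:vlasov-rwl}\) has no \(\rd_\ell\) component. The \(\Dfr H\,\rd_H\) term is kept exactly as it is. So the whole content of the statement is the identity
\[
\Dfr Q_T = \O(H, M) + \Pcal .
\]
The fixed time \(T\) enters only through the barred coefficients, so \(Q_T\) does not depend on \(s\). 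Hence \(\Dfr Q_T\) involves only the \(\rd_R\) and \(\rd_w\) parts of \(\Dfr\).

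\textbf{Step 1: Hamiltonian form of \(\Dfr\).} I will rewrite the \(\rd_R,\rd_w\) part of \(\Dfr\) in terms of \(H\).
\begin{itemize}
\item From \eqref{Hdef}, \(\rd_w H = \frac{\a w}{E}-\b A\), so the \(\rd_R\)-coefficient of \(\Dfr\) is \(\frac{1}{A}\rd_w H\).
\item Computing \(\rd_R H\) as in \eqref{DfrH}, and substituting \((\b A)' = \rd_s A + \a A K\) from \eqref{eq:Keq}, the \(\rd_w\)-coefficient becomes \(\frac{1}{A}\big(-\rd_R H - w\,\rd_s A\big)\).
\end{itemize}
Therefore, for any \(s\)-independent function \(g(R,w,\ell)\),
\[
A\,\Dfr g = \rd_w H\,\rd_R g - \rd_R H\,\rd_w g - w\,\rd_s A\,\rd_w g .
\]

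\textbf{Step 2: the frozen identity.} Take \(g = Q_T\) and write \(H = H_T + (H - H_T)\). The key identity needed is
\[
\rd_w H_T\,\rd_R Q_T - \rd_R H_T\,\rd_w Q_T = \Abar\,\O(H_T, M).
\]
This is exactly the vanishing underbraced term in \eqref{JacCalc}. Away from \(R = R_\pm(H_T,M)\), it follows from two facts:
\begin{itemize}
\item by the chain rule through \((R, H_T, M)\), the \(\rd_{H_T}|_{(R,H_T,M)}Q_T\) contributions cancel;
\item what remains is \(\rd_w H_T \cdot \rd_R|_{(R,H_T,M)} Q_T\). By the remark after Definition \ref{def:QT} this equals \(\rd_w H_T\, \O(H_T,M)\big/\big(\tfrac{\bar\a w}{\Abar\Ebar}-\bar\b\big)\), and since \(\rd_w H_T = \frac{\bar\a w}{\Ebar}-\bar\b\Abar\), this is \(\Abar\,\O(H_T,M)\).
\end{itemize}
At the turning points \(R = R_\pm\), I will extend the identity by continuity, using that both sides are \(C^{N+2}\) by Proposition \ref{prop:regDAAV}. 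I expect this identity, and in particular its justification across the singular set \(R=R_\pm\) and the discontinuity of \(Q_T\), to be the only delicate point. It is handled as in the proof of Proposition \ref{prop:regDAAV}, since \(\rd_{Q_T}\) and the derivatives of \(Q_T\) are regular there.

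\textbf{Step 3: assembling \(\Pcal\).} Combining Steps 1 and 2,
\[
\Dfr Q_T = \frac{1}{A}\Big[\Abar\O(H_T,M) + \rd_w(H-H_T)\rd_R Q_T - \rd_R(H-H_T)\rd_w Q_T - w\,\rd_s A\,\rd_w Q_T\Big].
\]
Writing \(\Abar = A - (A - \Abar)\), and then \(\O(H_T,M) = \O(H,M) + \big(\O(H_T,M) - \O(H,M)\big)\), gives \(\Dfr Q_T = \O(H,M) + \Pcal\) with \(\Pcal\) exactly as in \eqref{Pcaldef}. Finally, each term is expressed at \((t, R(Q_T,H,M), w(Q_T,H,M), M)\) using the inverse map of Proposition \ref{prop:regDAAV}(3).
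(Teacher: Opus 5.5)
Your proposal is correct and follows essentially the same route as the paper. The paper's proof also reduces to showing \(\Dfr Q_T = \O(H,M)+\Pcal\), rewrites the \(\rd_R,\rd_w\) coefficients of \(\Dfr\) as \(\frac{1}{A}\rd_w H\) and \(-\frac{1}{A}(\rd_R H + w\rd_s A)\) using \eqref{eq:Keq}, applies the vanishing bracket in \eqref{JacCalc} to get \(\rd_w H_T\rd_R Q_T-\rd_R H_T\rd_w Q_T=\Abar\O(H_T,M)\), and then rearranges. Your extension of that identity across \(R=R_\pm\) by continuity fills in a point the paper leaves implicit; strictly, you only need continuity of the first derivatives of \(Q_T\) there, not \(C^{N+2}\) regularity of both sides.
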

\begin{rmk}[The coefficient of \(\rd_{Q_T}\)]
    We choose \(\O(H, M)\) to be the coefficient of the main linear term.
    Given the definition of \(Q_T\), it is more natural to use \(\O(H_T, M)\) and regard the difference as a nonlinear error term, but as we use vector fields \(\rd_{Q_T}\), \(\rd_H\), and \(Z\) for the transport estimate, writing the equation as \eqref{vlasov-DAAV} simplifies calculations.
    The \(\Pcal\) term is the nonlinear error that is small and decaying. 
    From our bootstrap assumptions, we have the estimate \eqref{Pcalest} on \(\Pcal\), which is sufficient for our desired control on \(f\). 
\end{rmk}
\begin{proof}
We have \(\Dfr t = 1\), \(\Dfr M = 0\), so the only thing that we have to prove is that \(\Dfr Q_T = \O(H, M) + \Pcal\).
\begin{align*}
&\Dfr Q_T - \O(H, M) \\
&= \big(\frac{\a w}{AE} - \b\big)\rd_R Q_T
+ \big(\frac{\a}{A^3}\frac{\ell}{R^3E} + \a\frac{A'}{A^4}\frac{\ell}{R^2E}-\frac{\a'}{A}E+ \a Kw\big)\rd_w Q_T - \O(H, M)\\
&= \frac{1}{A}\rd_w H \rd_R Q_T - \frac{1}{A}\rd_R H\rd_w Q_T - \frac{w}{A}\rd_s A\rd_w Q_T - \O(H, M) \\
&= \frac{1}{A}\left[\rd_w(H -H_T)\rd_R Q_T - \rd_R(H-H_T)\rd_w Q_T\right]
    + \Big(\frac{\Abar}{A}-1\Big)\O(H_T, M) - \frac{w}{A}\rd_s A\rd_w Q_T \\
&\qquad + \O(H_T, M) - \O(H, M) \\
&= \Pcal.
\end{align*}
We used the identity \eqref{JacCalc} in the penultimate identity. 
This gives the desired conclusion.
\end{proof}
\begin{rmk}[The choice of \((t, Q_T, H, M)\) coordinates]\label{rmk:coordchoice}
    It seems alternative choices for \(Q_T\) and \(H\) as our coordinates do not give the same decay.
    We might consider the action angle \(Q\) in \eqref{Qdef} or the energy \(H_0\) in \eqref{H0def} on the Schwarzschild background, but then coefficients that depend on the metric difference \((\a-\a_0, A-A_0, \b-\b_0)\) will appear, which we do not expect to decay in time. 
    We may consider the action angle that is defined with the metric coefficient at time \(s\) (not the fixed time \(T\)), but then there is no reason to believe that this angle is a regular function of \((s, R, w, \ell)\): the \(\rd_s\) derivative picks up the derivative of metric coefficients, so the integral is divergent at the endpoint. 
    We may also consider using \(H_T\) instead of \(H\), but then the coefficient of \(\rd_{H_T}\), which is \(\Dfr H_T\) given in \eqref{DfrHT}, involves \(\rd_R(\a-\bar{\a}, A-\Abar)\), which decays one power slower considering our bootstrap assumptions \eqref{BA2}.
\end{rmk}

\section{Estimates on the Distribution Function and Its Derivatives}\label{sec:vlasov}
In this section, we use the Vlasov equation in dynamical action angle coordinates \eqref{vlasov-DAAV} to derive the growth estimates on \(\rd f\). 
As in the linear case, there is a distinction between good and bad derivatives, and we will see that \(\rd_{Q_T}, Z\) do not cause additional growth, while \(\rd_H\) causes growth in time \(\jt\). 
In particular, for the top order derivative \(\rd^{N+1} f\), we will see that we lose one more \(\jt\) as we assumed so in the bootstrap assumptions for the metric coefficients. 
The loss of decay for the metric coefficients will be apparent in the next section. 

First, we define the vector field \(Y_H\) which depends on \(t\). 
\begin{defn}[The vector field \(Y_H\)]\label{def:YH}
    At \((t, Q_T, H, M)\), we define \(Y_H\) by 
    \begin{equation}\label{YHdef}
    Y_H = t\rd_H(\O(H, M))\rd_{Q_T} + \rd_H.
    \end{equation} 
\end{defn}
\begin{rmk}
    The point of this definition is that the commutator between \(Y_H\) and the linear part of the transport operator \(\Dfr^\lin \coloneqq \rd_t + \O(H, M)\rd_{Q_T}\) vanishes.
    This corresponds to the vector field \(t\rd_x + \rd_v\) for the Vlasov equation in the flat spacetime. 
\end{rmk}

Now we can state the main result of this section.
\begin{thm}[Estimates on \(f\) derivatives]\label{thm:Vlasov}
    Under the bootstrap assumptions \eqref{BA0}--\eqref{BAHK1}, we have the following estimates on \(\rd f\).
    \begin{enumerate}
        \item For \(i_1 + i_2 + i_3 \le N\), we have 
        \begin{equation}\label{flow}
        \norm{(\rd_{Q_T}^{i_1}\rd_H^{i_2}Z^{i_3} f)(t, Q_T, H, M)}_{L^\infty} 
        \lesssim \d\e\jt^{i_2}.
        \end{equation}
        \item For \(1 + i_1 + i_2 + i_3 \le N\), we have 
        \begin{equation}\label{flowYH}
        \norm{(Y_H\rd_{Q_T}^{i_1}\rd_H^{i_2}Z^{i_3} f)(t, Q_T, H, M)}_{L^\infty} 
        \lesssim \d\e\jt^{i_2}.
        \end{equation}
        \item For \(i_1 + i_2 + i_3 =  N + 1\), we have
        \begin{equation}\label{ftop}
        \norm{(\rd_{Q_T}^{i_1}\rd_H^{i_2}Z^{i_3} f)(t, Q_T, H, M)}_{L^\infty} 
        \lesssim \d\e\jt^{i_2+1}.
        \end{equation}
    \end{enumerate}
    As before the implicit constant in the above estimates depends on \(N\) and the background parameters \(m, \cfr\), but is independent of \(\d\), \(\e\), and \(t\).
\end{thm}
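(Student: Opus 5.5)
We will prove Theorem~\ref{thm:Vlasov} by commuting vector fields with the transport equation \eqref{vlasov-DAAV} and integrating along characteristics. Write $\Dfr^\lin = \rd_t + \O(H,M)\rd_{Q_T}$, so that \eqref{vlasov-DAAV} reads $\Dfr^\lin f = -\Pcal\rd_{Q_T}f - \Dfr H\,\rd_H f$.

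\emph{Coefficient bounds.} Before commuting, we record bounds on the coefficients. From \eqref{Pcaldef}, \eqref{DfrH}, the bootstrap assumptions \eqref{BA1}--\eqref{BA2}, \eqref{BAK2}, and Proposition~\ref{prop:regDAAV}, we aim to prove
\[
\abs{\rd_{Q_T}^{j_1}\rd_H^{j_2}Z^{j_3}(\Pcal,\Dfr H)} \lesssim \d^{3/4}\e\jt^{-N+j_1+j_2+j_3+1}
\]
for $j_1+j_2+j_3\le N-1$, with a weaker, non-decaying bound at the top orders $N, N+1$ coming from \eqref{BAH0}, \eqref{BAH1}. The term $\O(H_T,M)-\O(H,M)$ in $\Pcal$ is controlled using $H-H_T=\Ocal(\d^{3/4}\e\jt^{-N})$, which follows from \eqref{BA2}, together with the regularity of $\O$. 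Each derivative in $(Q_T,H,M)$ is converted to $(R,w,\ell)$ derivatives via \eqref{CoVAtop}, and these hit the metric differences.

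\emph{Commutators.} Next we compute the commutators with $\Dfr^\lin$:
\[
[\rd_{Q_T},\Dfr^\lin]=0,\qquad [\rd_H,\Dfr^\lin]=\rd_H\O\,\rd_{Q_T},\qquad [Y_H,\Dfr^\lin]=0,\qquad [Z,\Dfr^\lin]=Z(\O)\rd_{Q_T}.
\]
By Proposition~\ref{prop:PeriodRefined} and $Z\O_\Sch=0$, we have $Z(\O)=Z(\O-\O_\Sch)=\Ocal(\d^{3/4}\e)$, with the same bound for its derivatives up to order $N$. Hence $[\rd_H,\Dfr^\lin]$ produces one $\rd_{Q_T}$, which costs a factor $\jt$ after integrating in $t$. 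This accounts for the $\jt^{i_2}$ growth. The term $Z(\O)\rd_{Q_T}$ costs $\d^{3/4}\e\,t \le \d^{3/4}(\log 1/\e)^{-2}\ll 1$ on $t\le T_f$, so it causes no growth. The commutator $[\rd_H,Z]$ lies in the span of $\rd_H,Z$ by Definition~\ref{def:Zdef}, and we order the derivatives so that this is harmless.

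\emph{Induction.} For $\Gamma=\rd_{Q_T}^{i_1}\rd_H^{i_2}Z^{i_3}$ we obtain
\[
\Dfr^\lin(\Gamma f) = \sum(\text{commutator terms}) + \sum \Gamma_1(\Pcal,\Dfr H)\,\Gamma_2\rd f.
\]
We run a double induction, on total order and then on $i_2$, with the weighted quantity $\jt^{-i_2}\norm{\Gamma f}_{L^\infty}$, and estimate each term by integrating along $\Dfr^\lin$, which is free transport in $Q_T$. The nonlinear terms fall into two cases.
\begin{itemize}
\item If few derivatives fall on the coefficients, they decay like $\jt^{-2}$ or faster, while the $\rd_H f$ factor loses $\jt^{i_2+1}$. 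The product is integrable for $N\ge 6$, giving $\d\e+\d^{3/4}\e\cdot\d\e$.
\item If many derivatives fall on the coefficients (up to $N$), the coefficient bound loses powers of $t$, but $f$ then carries few derivatives. The exponents balance as $\jt^{-N+j+1}\cdot\jt^{i_2-j_2+1}\le \jt^{i_2}\cdot\jt^{-N+j+2}$, and this is integrable except in borderline cases. In those cases we use the top-order bootstrap bound together with $t\le T_f$ to absorb a factor $\e t\lesssim 1$.
\end{itemize}
The estimate \eqref{flowYH} follows identically, since $Y_H$ commutes with $\Dfr^\lin$. The commutator of $Y_H$ with the nonlinear terms produces $t\rd_H\O\,\rd_{Q_T}$ on the coefficients, which we absorb using their decay.

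\emph{Main obstacle.} The top-order estimate \eqref{ftop} is where we expect the real difficulty. There, $\Gamma$ has order $N+1$, and the coefficient term $\rd^{N+1}\Pcal$ involves $\rd_R^{N+2}$ of metric differences and $\rd_R^{N+1}\rd_sA$. These are only bounded, by \eqref{BAH0}, \eqref{BAH1}, \eqref{BAHK0}, and \eqref{BAHK1}, not decaying. Multiplied by $\rd f\lesssim\d\e\jt$, this gives a growth $\d^{7/4}\e^2\jt^{2}$ after integration. Since $\e t\le1$, that is at most $\d\e\jt$, which is exactly the extra $\jt$ allowed in \eqref{ftop}. Care is needed to place $\rd_H$ derivatives so that no $\jt^{i_2+2}$ appears; the plan is to rewrite offending $\rd_H$ via \eqref{Hlincomb} when needed. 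Finally, to close the bootstrap we take $\d$ small, so that the $\d^{7/4}$ terms are dominated by $\d\e$.
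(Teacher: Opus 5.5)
Your proposal has a genuine gap: it loses a derivative at top order. You integrate along the free flow $\rd_t+\O\rd_{Q_T}$ and put the whole nonlinearity on the right-hand side. The Leibniz expansion of $\Gamma(\Pcal\rd_{Q_T}f+\Dfr H\,\rd_H f)$ then contains the terms $\Pcal\,\rd_{Q_T}\Gamma f$ and $\Dfr H\,\rd_H\Gamma f$, in which no derivative falls on the coefficients. For $\abs{\Gamma}=N+1$ these involve $N+2$ derivatives of $f$. Nothing in \eqref{fBAlow}--\eqref{fBAtop} controls these, and \eqref{nonlinthm-initf} only gives $N+1$ derivatives of $f_\init$, so \eqref{ftop} cannot be closed this way. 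Your ``main obstacle'' paragraph treats only the opposite extreme, where all derivatives fall on $\Pcal$. (At orders $\le N$ these terms need only $N+1$ derivatives and can be handled with \eqref{fBAtop}.) The missing step is the one in \eqref{commutedeqn}: commute $\Gamma$ with the full operator $\rd_t+(\O+\Pcal)\rd_{Q_T}+\Dfr H\,\rd_H$ and integrate along its nonlinear characteristics, which costs nothing for $L^\infty$ bounds. Then only $[\Pcal\rd_{Q_T},\Gamma]f$ and $[\Dfr H\,\rd_H,\Gamma]f$ appear. Each has at least one derivative on the coefficient, hence at most $N+1$ on $f$.

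Second, the coefficient bounds you aim for are too weak to close even the order-$N$ estimates. You propose $\jt^{-N+j+1}$ for $j\le N-1$ and a non-decaying bound at order $N$. For $\Gamma=\rd_{Q_T}^N$, the term $\rd_{Q_T}^N(\Dfr H)\,\rd_H f$ would be bounded by $\d^{3/4}\e\cdot\d^{3/4}\e\jt$. This integrates to $\d^{3/2}\e^2\jt^2$, which is not $\lesssim\d\e$ for $t\sim T_f$, since $\e t\le 1$ absorbs only one power of $t$; the case $j=N-1$ fails the same way. You need the sharper bounds \eqref{Pcalest} and \eqref{DfrHest}: $\abs{\rd^I\Pcal}\lesssim\d^{3/4}\e\jt^{-N+\max{I-1, 0}}$ for $I\le N+1$, and $\abs{\rd^I\Dfr H}\lesssim\d^{3/4}\e\jt^{-N+\max{I-2, 0}}$ for $I\le N$. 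They hold because, by \eqref{DfrH}, $\Dfr H$ is linear in $\rd_s(\a, A, \b)$ with no $R$-derivatives, so \eqref{BA1} applies directly; $\Pcal$ carries only one $R$-derivative of the differences controlled by \eqref{BA2}. With these bounds, at orders $\le N$ the worst products are $\d^{3/2}\e^2\jt^{i_2-1}$, and the resulting logarithm is absorbed by $\e t\lesssim 1$. Finally, rewriting $\rd_H$ via \eqref{Hlincomb} gains nothing in a pointwise bound, since $\rd_\ell$ only helps after integrating by parts in $\ell$. The extra $\rd_H$ on $f$ is instead paid for by the decay of $\Dfr H$.
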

\begin{rmk}
    Contrary to the Vlasov--Poisson on torus case, we do not gain faster decay from using more \(Y_H\) derivatives: the main reason is that \(\rd_{Q_T}\) which is on the tangent space of the spacetime might have zero spatial part, hence the vector field \(Y_H\) does not give better decay of the integrated quantities. 
    This limits us to use at most one \(Y_H\) derivative for \(f\), but nevertheless this fact that we can use it once is important when we estimate integrals of \(f\) to get the metric coefficients.
    This will be clearer in Section \ref{sec:Integral}. 
\end{rmk}

The proof of this theorem is again a bootstrap argument.
The next proposition displays the bootstrap assumptions for \(f\) derivatives. 
\begin{prop}[Bootstrap argument for \(f\) derivatives]\label{prop:BTVlasov}
    For given small \(\d, \e > 0\), say we have the following estimates on \(f\) for \(t\in [0, T')\) with \(T' < T_B\):
    \begin{align}
    \norm{(\rd_{Q_T}^{i_1}\rd_H^{i_2}Z^{i_3} f)(t, Q_T, H, M)}_{L^\infty} 
    &\le \d^{3/4}\e\jt^{i_2}, 
    &&\text{ for } i_1 + i_2 + i_3 \le N,
    \label{fBAlow}\\
    \norm{(Y_H\rd_{Q_T}^{i_1}\rd_H^{i_2}Z^{i_3} f)(t, Q_T, H, M)}_{L^\infty} 
    &\le \d^{3/4}\e\jt^{i_2}, 
    &&\text{ for } 1 + i_1 + i_2 + i_3 \le N,
    \label{fBAlowYH}\\
    \norm{(\rd_{Q_T}^{i_1}\rd_H^{i_2}Z^{i_3} f)(t, Q_T, H, M)}_{L^\infty} 
    &\le \d^{3/4}\e\jt^{i_2+1}, 
    &&\text{ for } i_1 + i_2 + i_3 = N + 1.
    \label{fBAtop}
    \end{align}
    Then, there is a constant \(C > 0\) that is independent of \(\d\), \(\e\), and \(t\) such that the improved version of the above estimates with the constant \(C\d\) instead of \(\d^{3/4}\) holds.
\end{prop}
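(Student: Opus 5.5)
The plan is to commute the vector fields $\rd_{Q_T}^{i_1}\rd_H^{i_2}Z^{i_3}$ (and one $Y_H$) through the transport equation \eqref{vlasov-DAAV}. Then I integrate along the characteristics of the full operator $\Dfr = \rd_t + (\O(H,M)+\Pcal)\rd_{Q_T} + \Dfr H\,\rd_H$, exploiting that the commutators either vanish at the linear level or carry decaying coefficients. The starting bound is at $t=0$: by \eqref{nonlinthm-initf} and the regularity of the change of variables \eqref{CoVptoA}, \eqref{CoVAtop}, every derivative of $f_\init$ of order $\le N+1$ in $(Q_T,H,Z)$ is $\lesssim \d\e$. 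Since $L^\infty$ norms are transported by $\Dfr$, for $g = \Gamma f$ with $\Gamma$ one of our commuted strings it suffices to show
\[
\int_0^t \norm{[\Dfr,\Gamma] f(\t)}_{L^\infty}\,d\t \lesssim \d^{3/2}\e\,\jt^{i_2}\ (\text{resp. }\jt^{i_2+1}),
\]
which is $\ll \d\e\jt^{i_2}$ once $\d$ is small. Here we use $\d^{3/4}\cdot\d^{3/4}=\d^{3/2}$ from the bootstrap assumptions \eqref{fBAlow}--\eqref{fBAtop} together with \eqref{BA0}--\eqref{BAHK1}.

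The first step is to record the coefficient bounds. From \eqref{Pcaldef}, \eqref{JacCalc}, \eqref{BA2}, \eqref{BAK2}, \eqref{BA1}, and Proposition \ref{prop:PeriodRefined} (for $\O(H_T,M)-\O(H,M)$, with $|H-H_T|$ controlled by \eqref{BA2}), $\rd^{I}\Pcal$ is bounded by $\d^{3/4}\e\jt^{-N+\max{I-1,0}}$, up to the loss coming from $\rd_R$ derivatives of $\a-\bar\a$. From \eqref{DfrH}, $\rd^I\Dfr H$ is bounded by $\d^{3/4}\e\jt^{-N+\max{I-2,0}}$, using \eqref{BA1}. Derivatives in $(Q_T,H,M)$ are converted to $\rd_R$ derivatives via \eqref{CoVAtop}.

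Next I compute the commutators. The linear part is $\Dfr^{\lin}=\rd_t+\O(H,M)\rd_{Q_T}$. Here $\rd_{Q_T}$ commutes with it. $[\rd_H,\Dfr^\lin]=\rd_H\O\,\rd_{Q_T}$, which is the source of the $\jt^{i_2}$ growth and gives the inductive structure on $i_2$. $[Z,\Dfr^\lin]=Z(\O)\rd_{Q_T}=Z(\O-\O_\Sch)\rd_{Q_T}$, which is $\Ocal(\d^{3/4}\e)$ by \eqref{Orefined}; over $t\le T_f$ this costs $\d^{3/4}\e T_f\ll 1$, but that is not decaying. This is the first delicate point. I would instead absorb it by writing $\O(H,M)=\O(H_T,M)+(\O(H,M)-\O(H_T,M))$, note that the second piece decays, and use the fact that on each $T$-slab one may compare with $\O_\Sch$. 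Failing that, I would settle for the bound $\e t\lesssim(\log1/\e)^{-2}$, which is still small and suffices since the target has constant $C\d$. Finally $Y_H$ commutes with $\Dfr^\lin$ by construction, and the $[\rd_H,Z]$ terms are handled by Definition \ref{def:Zdef}.

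The nonlinear commutators with $\Pcal\rd_{Q_T}$ and $\Dfr H\rd_H$ produce terms of the form $(\rd^{a}\Pcal)\,\rd_{Q_T}\rd^{b}f$ and $(\rd^{a}\Dfr H)\,\rd_H\rd^{b}f$ with $a+b$ at most the total order. The $\rd_H$ factor costs $\jt$, but it is multiplied by a coefficient decaying like $\jt^{-N+a}$, and $N\ge 6$ makes the time integral converge whenever $a$ is low. At high $a$ the derivatives land on the metric, and the bootstrap bounds \eqref{BA1}, \eqref{BAH1} give at worst $\d^{3/4}\e$ without decay. In that case $f$ carries few derivatives, so the bound $\jt^{i_2}$ holds with room to spare.

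The main obstacle is the top order \eqref{ftop}, together with the single $Y_H$ estimate \eqref{flowYH}, where $Y_H$ contains $t\rd_H\O\,\rd_{Q_T}$ and so commutators with $\Pcal,\Dfr H$ pick up an extra factor $t$. For the top order, terms with all $N+1$ derivatives on the metric use \eqref{BAH0}/\eqref{BAH1}. They are integrated to give $\d^{3/2}\e\,t\lesssim \d^{3/2}\jt\cdot\e$, which is exactly the allowed single $\jt$ loss. For $Y_H$ I would check that the factor $t$ is always paired with a coefficient decaying at least like $\jt^{-2}$ (true since $N\ge6$ and at most $N-1$ other derivatives appear), so the integral stays bounded.
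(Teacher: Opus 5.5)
Your proposal follows essentially the paper's argument. You commute \eqref{vlasov-DAAV} with $Y_H^{i_0}\rd_{Q_T}^{i_1}\rd_H^{i_2}Z^{i_3}$, bound the linear, $\Pcal$- and $\Dfr H$-commutators via Lemma~\ref{lem:Oderivest}, \eqref{Pcalest} and \eqref{DfrHest}, integrate along characteristics with an induction on $i_2$ to close the $\Ocal(1)$ term $(\rd_H\O)\rd_{Q_T}$, and absorb the non-decaying contributions (the $Z(\O)$ commutator and the top-order terms) using $\e t\lesssim 1$ on $[0,T_f]$. Your ``fallback'' is exactly what the paper does; your first idea of splitting off $\O(H,M)-\O(H_T,M)$ would not help, since $Z(\O(H_T,M))$ is just as non-decaying.

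One correction: the sufficiency criterion in your first paragraph (a $\d^{3/2}$ gain) cannot hold for the $(\rd_H\O)\rd_{Q_T}$ commutator. That term carries no small coefficient and closes with constant $C\d$ only through the already-improved bounds at lower $i_2$, which your induction does provide.
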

The rest of this section is devoted to the proof of this proposition, which directly implies Theorem \ref{thm:Vlasov} by the standard continuity argument.
In each subsection, we analyze commutator terms coming from the \(\O(H, M)\rd_{Q_T}\) term, the nonlinearity \(\Pcal\rd_{Q_T} f\), and the \(\Dfr H\rd_H f\) term separately, and finally the initial data term. 
In the last subsection, we will combine all the estimates to retrieve the bootstrap assumptions.

\subsection{Setting up commutator estimates}
To derive the estimate on \(f\) derivatives, we commute our Vlasov equation \eqref{vlasov-DAAV} with \(Y_H^{i_0}\rd_{Q_T}^{i_1}\rd_H^{i_2}Z^{i_3}\).
We need the following identity for \(i_0 =1\) and \(i_1+i_2+i_3\le N-1\), or \(i_0 = 0\) and \(i_1 +i_2+i_3 \le N+1\). 
\begin{prop}\label{prop:commutedeqn}
    For \(f\) satisfying \eqref{vlasov-DAAV}, the derivative \(Y_H^{i_0}\rd_{Q_T}^{i_1}\rd_H^{i_2}Z^{i_3} f\) satisfies the following equation: 
    \begin{equation}\begin{aligned}\label{commutedeqn}
    &\Big(\rd_t + \O(H, M)\rd_{Q_T} + \Pcal\rd_{Q_T} + \Dfr H\rd_H\Big)(Y_H^{i_0}\rd_{Q_T}^{i_1}\rd_H^{i_2}Z^{i_3} f) \\
    &= Y_H^{i_0}[\O(H, M)\rd_{Q_T}, \rd_{Q_T}^{i_1}\rd_H^{i_2}Z^{i_3}]f 
    + [\Pcal\rd_{Q_T}, Y_H^{i_0}\rd_{Q_T}^{i_1}\rd_H^{i_2}Z^{i_3}]f 
    + [\Dfr H\rd_H, Y_H^{i_0}\rd_{Q_T}^{i_1}\rd_H^{i_2}Z^{i_3}]f \\
    &\eqqcolon T_1 + T_2 + T_3.
    \end{aligned}\end{equation}
\end{prop}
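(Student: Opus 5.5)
The identity \eqref{commutedeqn} is pure algebra; no estimates are needed. Write \(L \coloneqq \rd_t + \O(H, M)\rd_{Q_T} + \Pcal\rd_{Q_T} + \Dfr H\rd_H\), so that \eqref{vlasov-DAAV} reads \(Lf = 0\). Write also \(\Gamma \coloneqq Y_H^{i_0}\rd_{Q_T}^{i_1}\rd_H^{i_2}Z^{i_3}\) and \(\Gamma' \coloneqq \rd_{Q_T}^{i_1}\rd_H^{i_2}Z^{i_3}\). Then \(L\Gamma f = \Gamma L f + [L, \Gamma]f = [L,\Gamma]f\), so the task is to split \([L,\Gamma]\) into the three displayed pieces. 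The signs must be tracked carefully: the right-hand side is written as \([X,\Gamma]f\), which agrees with \(L\Gamma f - \Gamma L f\) because \(\Gamma L f = 0\).

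I would proceed in three steps.

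\emph{Step 1: the \(\rd_t\) part.} The coordinates \((t, Q_T, H, M)\) are fixed for fixed \(T\), so \(\rd_{Q_T}, \rd_H, \rd_M\) commute with \(\rd_t\). The coefficients of \(Z\) depend only on \((H,M)\), so \([\rd_t, Z]=0\). The only explicit \(t\)-dependence is in \(Y_H\), giving \([\rd_t, Y_H] = \rd_H(\O(H,M))\rd_{Q_T}\).

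\emph{Step 2: combine with the \(\O\rd_{Q_T}\) part.} Compute \([\O\rd_{Q_T}, Y_H]\). Since \(\O\) is independent of \(Q_T\) and \(t\), we have \([\O\rd_{Q_T}, t\rd_H\O\,\rd_{Q_T}]=0\) and \([\O\rd_{Q_T}, \rd_H] = -\rd_H\O\,\rd_{Q_T}\). Hence \([\rd_t + \O\rd_{Q_T}, Y_H]=0\), which is the remark following Definition~\ref{def:YH}. Consequently
\[
[\rd_t+\O\rd_{Q_T}, Y_H^{i_0}\Gamma'] = Y_H^{i_0}[\rd_t+\O\rd_{Q_T},\Gamma'] = Y_H^{i_0}[\O\rd_{Q_T},\Gamma'],
\]
where the last equality uses \([\rd_t,\Gamma']=0\) from Step 1. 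This is \(T_1\).

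\emph{Step 3: the remaining two pieces.} The commutators with \(\Pcal\rd_{Q_T}\) and \(\Dfr H\rd_H\) are left unexpanded as \(T_2, T_3\). This completes the identity.

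The only delicate point is the sign and ordering convention. The statement places the commutators in the order \([X,\Gamma]\), while the derivation naturally gives \(L\Gamma f = [L,\Gamma]f\). I would check this against \(Lf=0\) once, and then confirm that moving \(Y_H^{i_0}\) outside in \(T_1\) is legitimate. It is, precisely because \([\rd_t+\O\rd_{Q_T},Y_H]=0\) in Step 2; the time-dependence of \(Y_H\) is exactly what makes that commutator vanish.
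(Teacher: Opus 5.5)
Your proposal is correct and is essentially the paper's argument. In both, \(Lf=0\) gives \(L\Gamma f=[L,\Gamma]f\), and the linear piece reduces to \(Y_H^{i_0}[\O(H,M)\rd_{Q_T},\Gamma']f\) via \([\rd_t+\O(H,M)\rd_{Q_T},Y_H]=0\) together with the commutation of \(\rd_{Q_T},\rd_H,Z\) with \(\rd_t\); the latter holds because the coefficients of \(Z\) depend only on \((H,M)\). The sign point you flag is not actually delicate: \([L,\Gamma]\) splits linearly into exactly the three commutators \([X,\Gamma]\) in the statement.
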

\begin{proof}
This follows from the direct calculation. 
For \(\Dfr^\lin\), we first commute with \(Y_H\), and for the commutator with \(\rd_{Q_T}^{i_1}\rd_H^{i_2}Z^{i_3}\) we used the fact that each of those vectors commutes with \(\rd_t\). 
(Note that \(Z\) is defined in terms of \(H\) and \(M\), not \(H_T\).)
\end{proof}
We analyze \(T_1\) in Subsection \ref{subsec:commutator}, \(T_2\) in Subsection \ref{subsec:Pcal}, and \(T_3\) in Subsection \ref{subsec:DfrH}.
Comparing \(T_2\) with \(T_3\), note that \(\rd_{Q_T}\) is a good derivative whereas \(\rd_H\) causes additional loss of \(\jt\). 
This is compensated from the fact that \(\Dfr H\) has one less derivative than \(\Pcal\), hence gains one more \(\jt\) than \(\Pcal\) does.
For \(T_1\), the situation when we commute with \(\rd_H\) and \(\rd_{Q_T}\) or \(Z\) can be compared similarly.
When we commute with \(\rd_H\), the \(\rd_H\) on \(f\) is exchanged to \(\rd_{Q_T}\), hence the commutator term integrated in time will not cause additional growth. 
On the other hand, when we commute with \(\rd_{Q_T}\) or \(Z\), the smallness of commutator term follows from the fact that \(\rd_{Q_T}\) or \(Z\) of \(\O(H, M)\) is small (or zero). This will be elaborated more in the next subsection. 

\subsection{Estimates on the derivative of \(\O(H, M)\)}\label{subsec:commutator}
In this subsection, we use \((X, Y)\) to be the dummy variables for the first and second variable of \(\O\) respectively; i.e. we will use \(\rd_X \O\) and \(\rd_Y \O\) to denote the partial derivative of \(\O\) with respect to the first and second variable.
We have the following expression on \(Z(\O(H, M))\):
\begin{align}\label{ZO}
\begin{aligned}
Z(\O(H, M)) &= ((\rd_M\O_\Sch)(H, M)-(\rd_M\O)(H, M))(\rd_H\O_\Sch)(H, M) \\
&\qquad
- ((\rd_H\O_\Sch)(H, M)-(\rd_H\O)(H, M))(\rd_M\O_\Sch)(H, M).
\end{aligned}
\end{align}
To get the estimates on the higher order derivatives of \(\O\), we need the following lemma. 
\begin{lem}\label{lem:Oderivest}
    For \(\O(H, M)\) defined in \eqref{Odef}, we have the following estimates:
    \begin{enumerate}
        \item For \(i_0 + i_1 + i_2 + i_3 \le N\), we have 
        \begin{align}\label{Oderivlow}
            \norm{Y_H^{i_0}\rd_{Q_T}^{i_1}\rd_H^{i_2}Z^{i_3}(\O(H, M))}
            &\le C\d^{3/4}\e &&\text{ if } i_1 + i_3 > 0, \\
            \norm{Y_H^{i_0}\rd_{Q_T}^{i_1}\rd_H^{i_2}Z^{i_3}(\O(H, M))}
            &\le C &&\text{ if } i_1 + i_3 = 0.
        \end{align}
        \item For \(i_1 + i_2 + i_3 = N+1\), we have 
        \begin{equation}\label{Oderivtop}
            \norm{\rd_{Q_T}^{i_1}\rd_H^{i_2}Z^{i_3}(\O(H, M))}
            \le C.
        \end{equation}
    \end{enumerate}
\end{lem}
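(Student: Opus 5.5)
We argue directly from the structure of the vector fields, using that \(\O(H, M)\) is a function of \((H, M)\) only. The first step is trivial: \(\rd_{Q_T}\) annihilates any function of \((H, M)\), so whenever \(i_1 > 0\) and no \(Y_H\) sits to the left, the quantity vanishes. In general \(Y_H = t\,\rd_H(\O(H,M))\rd_{Q_T} + \rd_H\). Acting on a function \(g(H,M)\), this gives \(Y_H g = \rd_H g\), which is again a function of \((H,M)\). Hence, for functions of \((H,M)\), \(Y_H\) acts exactly as \(\rd_H\), and the factor \(t\) never appears.

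The expression therefore reduces to \(\rd_H^{i_0+i_2}Z^{i_3}(\O)\) when \(i_1 = 0\), and to \(0\) when \(i_1 > 0\). The claim thus becomes:
\begin{enumerate}
\item \(\abs{\rd_H^{a}Z^{b}\O} \le C\) for \(a+b\le N+1\);
\item \(\abs{\rd_H^{a}Z^{b}\O}\lesssim \d^{3/4}\e\) for \(b\ge 1\) and \(a+b\le N\).
\end{enumerate}

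For the boundedness in (1), we expand \(Z^b\) as a differential operator in \((\rd_H,\rd_M)\). Its coefficients are derivatives of \(\O_\Sch\) of order at most \(b\), and these are smooth and bounded on the compact admissible region, since \(\O_\Sch\) is smooth there. We then need \(\abs{\rd_H^{i}\rd_M^{j}\O}\le C\) for \(i+j\le N+1\). This follows from Proposition~\ref{prop:regDAAV}(1), which states that \(\O(H_T,M)\) is \(C^{N+2}\) with bounded derivatives. Note that \(\O\) is here evaluated at \(H\) rather than \(H_T\), but as a function on \((H,M)\)-space it is the same function. Alternatively, it follows from Proposition~\ref{prop:PeriodRefined} combined with the smoothness of \(\O_\Sch\). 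Both routes require the domain to be the support region given by Proposition~\ref{prop:suppf}.

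For (2), we use the fact that \(Z(\O_\Sch)\equiv 0\) identically in \((H,M)\). This gives \(\rd_H^{a}Z^{b}\O_\Sch = 0\) for \(b\ge1\), since \(Z^{b-1}\) and \(\rd_H^a\) act on the zero function. Therefore
\[
\rd_H^{a}Z^{b}\O = \rd_H^{a}Z^{b}(\O-\O_\Sch).
\]
Expanding in coordinate derivatives with bounded coefficients, this is bounded by \(\sum_{i+j\le a+b}\abs{\rd_H^i\rd_M^j(\O-\O_\Sch)}\). By \eqref{Orefined}, this sum is \(\lesssim \d^{3/4}\e\) when \(a+b\le N\). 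The displayed formula \eqref{ZO} is the case \(a=0,\ b=1\) of this argument.

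The main points requiring care are bookkeeping ones.
\begin{itemize}
\item The reduction \(Y_H\mapsto \rd_H\) is valid only because every function in the chain depends on \((H,M)\) alone. No \(Q_T\)-dependence is ever produced, so the \(t\)-weight is harmless.
\item The derivative count must be checked. In case (1) of the lemma, \(i_0+i_1+i_2+i_3\le N\), so the total order is at most \(N\) and \eqref{Orefined} applies. In case (2), \eqref{Oderivtop} uses only boundedness to order \(N+1\), which Proposition~\ref{prop:PeriodRefined} provides (it covers \(I\le N+1\)), or alternatively Proposition~\ref{prop:regDAAV}.
\end{itemize}
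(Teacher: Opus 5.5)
Your proposal is correct and follows essentially the same argument as the paper. Because \(\O\) depends only on \((H,M)\), \(Y_H\) reduces to \(\rd_H\) and any \(\rd_{Q_T}\) derivative gives zero. Boundedness then comes from the \(C^{N+2}\) regularity of \(\O\) and \(\O_\Sch\), and the \(\d^{3/4}\e\) bound comes from writing \(Z\O = Z(\O-\O_\Sch)\) (the paper's \eqref{ZO}) and applying Proposition~\ref{prop:PeriodRefined} with \(I\le N\).
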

\begin{proof}
First, note that \(\O\) is a function of \(H\) and \(M\) only; hence for \(Y_H\), it suffices to consider the \(\rd_H\) part, which means without loss of generality we can assume \(i_0 = 0\), and for \(Q_T\) every derivative is zero, so the result for the case \(i_1 > 0\) is trivial. 
Moreover, since we know that \(\O\) as well as \(\O_\Sch\) has bounded \((N+2)\)-th order derivative, any \(\rd_H\) and \(Z\) derivative that we consider is bounded. 
Therefore, the only inequality that we need to prove is 
\[
\norm{\rd_H^{i_2}Z^{i_3}(\O(H, M))} \le C\d^{3/4}\e,
\]
for \(i_3 > 0\), \(i_2 + i_3 \le N\). 

From \eqref{ZO}, we have 
\begin{align*}
\rd_H^{i_2}Z^{i_3}(\O(H, M))
&= (\rd_H^{i_2}Z^{i_3-1})\Big[((\rd_M\O_\Sch)(H, M)-(\rd_M\O)(H, M))(\rd_H\O_\Sch)(H, M)\Big] \\
&\qquad
- (\rd_H^{i_2}Z^{i_3-1})\Big[((\rd_H\O_\Sch)(H, M)-(\rd_H\O)(H, M))(\rd_M\O_\Sch)(H, M) \big].
\end{align*}
From Proposition \ref{prop:PeriodRefined} with \(I \le N\), together with the fact that \(\O_\Sch\) has bounded derivatives, we know that the above quantity is bounded by \(C\d^{3/4}\e\).
Note that we used the full strength of Proposition \ref{prop:PeriodRefined} here; for if \(i_2 + i_3 = N+1\) we cannot run the same argument. 
\end{proof}

With this lemma, we can control the commutator term \(T_1\) in \eqref{commutedeqn} by the smallness of the derivatives of \(\O(H, M)\) and the bootstrap assumptions for \(f\) derivatives.
Note that we keep the derivatives of \(f\) on RHS, which will be cleared during the main induction step.
The main reason of doing so is because we do not have enough smallness for these linear terms in the equation. 
\begin{lem}\label{lem:VlasovT1}
    For \(T_1\) defined in \eqref{commutedeqn} with given \(i_0, i_1, i_2, i_3\), we have the following estimates: 
    \begin{align}
    \abs{T_1} &\lesssim \d\e^2\jt^{i_2} + \sum_{\substack{j_0\le i_0, j_1\le i_1, \\j_2 < i_2, j_3 \le i_3}} \abs{Y_H^{j_0}\rd_{Q_T}^{1+j_1}\rd_H^{j_2}Z^{j_3} f} 
    &&(i_0 + i_1 + i_2 + i_3 \le N), \label{T1low} \\
    \abs{T_1} &\lesssim \d\e^2\jt^{i_2+1} + \sum_{\substack{j_0\le i_0, j_1\le i_1, \\j_2 < i_2, j_3 \le i_3}} \abs{Y_H^{j_0'}\rd_{Q_T}^{1+j_1'}\rd_H^{j_2'}Z^{j_3'} f}
    + \abs{\rd_{Q_T}f}
    &&(i_0 = 0, i_1 + i_2 + i_3 = N+1). \label{T1top}
    \end{align}
\end{lem}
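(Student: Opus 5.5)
We will expand \(T_1 = Y_H^{i_0}[\O(H, M)\rd_{Q_T}, \rd_{Q_T}^{i_1}\rd_H^{i_2}Z^{i_3}]f\) explicitly. Since \(\O(H, M)\) does not depend on \(Q_T\), and \(\rd_{Q_T}\) commutes with \(\rd_H\), \(Z\) and \(Y_H\), we have
\[
[\O\rd_{Q_T}, \rd_{Q_T}^{i_1}\rd_H^{i_2}Z^{i_3}]f = -\big[\rd_{Q_T}^{i_1}\rd_H^{i_2}Z^{i_3}, \O\big]\rd_{Q_T} f.
\]
We move all \(\rd_H, Z\) to the right using Definition \ref{def:Zdef}: \([\rd_H, Z]\) lies in the span of \(\rd_H, Z\) with coefficients that are smooth functions of \((H, M)\) and have bounded derivatives. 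The commutator is then a finite sum of products
\[
\big(\rd_H^{a_2}Z^{a_3}\O\big)\cdot c(H,M)\cdot \rd_{Q_T}^{1+j_1}\rd_H^{j_2}Z^{j_3} f, \qquad a_2+a_3\ge 1,\ j_2+j_3 + a_2+a_3 \le i_2+i_3,\ j_1 = i_1.
\]
Applying \(Y_H^{i_0}\) and using the Leibniz rule, we either let \(Y_H\) hit \(f\), or let its \(\rd_H\) part hit the coefficient; its \(t\rd_H\O\,\rd_{Q_T}\) part annihilates functions of \((H,M)\). Lemma \ref{lem:Oderivest} controls the \(\O\)-factors.

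Next we split the sum according to whether at least one \(Z\) lands on \(\O\). If \(a_3\ge 1\), Lemma \ref{lem:Oderivest} gives \(\abs{\rd_H^{a_2}Z^{a_3}\O}\lesssim \d^{3/4}\e\). The \(f\)-factor carries at most \(N\) derivatives, with at most \(i_2\) of them being \(\rd_H\), so the bootstrap assumption \eqref{fBAlow}/\eqref{fBAlowYH} bounds it by \(\d^{3/4}\e\jt^{i_2}\). The product is therefore \(\lesssim \d^{3/2}\e^2\jt^{i_2}\le \d\e^2\jt^{i_2}\). If \(a_3 = 0\), then \(a_2\ge 1\), which means at least one \(\rd_H\) was transferred from \(f\) to \(\O\). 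The coefficient is then only bounded, but the remaining term has the form \(Y_H^{j_0}\rd_{Q_T}^{1+j_1}\rd_H^{j_2}Z^{j_3}f\) with \(j_2<i_2\), which is exactly the summand kept on the right of \eqref{T1low}. Commutators \([\rd_H, Z]\) generated while reordering preserve this structure: each either produces \(\rd_H\) (fewer total derivatives) or \(Z\) with a bounded coefficient, and the total order does not increase. This gives \eqref{T1low}.

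For \eqref{T1top} (\(i_0=0\), total order \(N+1\)), we argue in the same way, with two differences. First, the case \(a_2+a_3 = N+1\) puts every derivative on \(\O\). This leaves \(\rd_{Q_T} f\), and Lemma \ref{lem:Oderivest}(2) only gives a bounded coefficient, which explains the extra term \(\abs{\rd_{Q_T} f}\). Second, when \(a_3\ge1\), the \(f\)-factor may have order up to \(N+1\), so we use \eqref{fBAtop} and obtain \(\jt^{i_2+1}\). The main obstacle is bookkeeping: we must ensure that the \(\O\)-factor never needs more than \(N\) derivatives when smallness is required (Lemma \ref{lem:Oderivest} loses smallness at order \(N+1\)), and that the \(Z\)-\(\rd_H\) reordering never raises the \(\rd_H\) count on \(f\) above \(i_2\) in the smallness branch. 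We will organize this by induction on \(i_2+i_3\).
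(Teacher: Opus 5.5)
Your proposal is correct and follows essentially the same route as the paper. Both write \(T_1 = Y_H^{i_0}[\O, \rd_{Q_T}^{i_1}\rd_H^{i_2}Z^{i_3}]\rd_{Q_T}f\), distribute the derivatives by Leibniz, and invoke Lemma~\ref{lem:Oderivest} for smallness whenever a \(Z\) lands on \(\O\). When only \(\rd_H\)'s land on \(\O\), the resulting terms (with \(j_2<i_2\)) are kept on the right-hand side, and \(\abs{\rd_{Q_T}f}\) is isolated when all \(N+1\) derivatives fall on \(\O\). The only difference is that your \([\rd_H, Z]\) reordering and induction are unnecessary: the Leibniz rule for a composition of vector fields already splits \(\rd_H^{i_2}Z^{i_3}\) into ordered subwords \(\rd_H^{a_2}Z^{a_3}\) and \(\rd_H^{i_2-a_2}Z^{i_3-a_3}\), which is exactly the form (with the \(Z\)'s applied first) in which Lemma~\ref{lem:Oderivest} gives smallness.
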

\begin{proof}
For \(T_1\), we have an identity 
\[
T_1 = Y_H^{i_0}[\O(H, M), \rd_{Q_T}^{i_1}\rd_H^{i_2}Z^{i_3}]\rd_{Q_T} f,
\]
because \(\rd_{Q_T}\) commutes with \(\rd_H\) and \(Z\). 
This allows us to control \(T_1\) as follows. 
Here, we sum over \((j_0, j_1, j_2, j_3)\) and \((j_0', j_1', j_2', j_3')\) with \(j_0+j_0' \le i_0\), \(j_1+j_1' \le i_1\), \(j_2+j_2' \le i_2\), and \(j_3+j_3' \le i_3\), hence the sum of indices is at most \(i_0+i_1+i_2+i_3\). 
If \(i_0+i_1+i_2+i_3\le N\), we have
\begin{align*}
\abs{T_1} &\le C\sum_{j_1+j_2+j_3 > 0} \abs{Y_H^{j_0}\rd_{Q_T}^{j_1}\rd_H^{j_2}Z^{j_3}(\O(H, M))} \cdot \abs{Y_H^{j_0'}\rd_{Q_T}^{1+j_1'}\rd_H^{j_2'}Z^{j_3'} f} \\
&\le C\d^{3/4}\e \cdot \d^{3/4}\e\jt^{i_2} 
+ C\cdot \sum_{j_2' < i_2}\abs{Y_H^{j_0'}\rd_{Q_T}^{1+j_1'}\rd_H^{j_2'}Z^{j_3'} f} \\
&\le C\d\e^2\jt^{i_2} + C \sum_{j_2' < i_2} \abs{Y_H^{j_0'}\rd_{Q_T}^{1+j_1'}\rd_H^{j_2'}Z^{j_3'} f}.
\end{align*}
There are at most \(N\) derivatives on \(f\) in the above sum, and we can control it by the bootstrap assumptions \eqref{fBAlow} and \eqref{fBAlowYH}.
Note that for the second term, the situation that we cannot use the improved bound for \(\O\) is when \(j_1 = j_3 = 0\), and in this case \(j_2 > 0\) and \(j_2' < i_2\). \\
Else if \(i_1 + i_2 + i_3 = N+1\), there are at most \(N+1\) derivatives on \(f\), hence we can proceed as follows: 
\begin{align*}
\abs{T_1} &\le C\sum_{j_1+j_2+j_3 > 0} \abs{\rd_{Q_T}^{j_1}\rd_H^{j_2}Z^{j_3}(\O(H, M))} \cdot \abs{\rd_{Q_T}^{1+j_1'}\rd_H^{j_2'}Z^{j_3'} f} \\
&\le C\d^{3/4}\e \cdot \d^{3/4}\e\jt^{i_2 + 1} 
+ C \sum_{j_2' < i_2} \abs{\rd_{Q_T}^{1+j_1'}\rd_H^{j_2'}Z^{j_3'} f}
+ C\abs{\rd_{Q_T}f} \\
&\le C\d\e^2\jt^{i_2+1} + C \sum_{j_2' < i_2} \abs{\rd_{Q_T}^{1+j_1'}\rd_H^{j_2'}Z^{j_3'} f}
+ C\abs{\rd_{Q_T}f},
\end{align*}
where we used the bootstrap assumption \eqref{fBAlow}--\eqref{fBAtop}. 
We have an extra term here for the case when all the derivatives hit \(\O\), where we cannot use the improved bound for \(\O\).
\end{proof}

\subsection{Estimates on \(\Pcal\)}\label{subsec:Pcal}
\(\Pcal\) has been defined in \eqref{Pcaldef} as the coefficient of \(\rd_{Q_T}f\) in the Vlasov equation \eqref{vlasov-DAAV}.
From \eqref{BA0}, \eqref{BA1}, \eqref{BA2}, and \eqref{BAH1}, and Proposition \ref{prop:regDAAV}, we have the following decay estimates for \(\Pcal\) and its derivatives:
\begin{equation}\label{Pcalest}
    \abs{\rd_{Q_T, H, M}^I\Pcal} \lesssim \d^{3/4}\e\jt^{-N+\max{I-1, 0}}, \quad \text{ for } I \le N+1.
\end{equation}
Note that for the term \(\O(H_T, M)-\O(H, M)\) we used mean value theorem, hence the fact that \(\O\) has bounded \(N+2\) derivatives was necessary.
With this, we can estimate \(T_2\) as follows: 
\begin{lem}\label{lem:vlasovT2}
    For \(T_2\) defined in \eqref{commutedeqn} with given \(i_0, i_1, i_2, i_3\), we have the following estimate: 
    \begin{equation}\label{T2}
        \abs{T_2} \lesssim \begin{cases}
            \d\e^2\jt^{i_2} &\text{ for } i_0 + i_1 + i_2 + i_3 \le N, \\
            \d\e^2\jt^{i_2+1} &\text{ for } i_0 = 0, i_1 + i_2 + i_3 = N+1.
        \end{cases} 
    \end{equation}
\end{lem}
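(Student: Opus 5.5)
We will expand the commutator \(T_2 = [\Pcal\rd_{Q_T}, Y_H^{i_0}\rd_{Q_T}^{i_1}\rd_H^{i_2}Z^{i_3}]f\) by the Leibniz rule, so that every term has the form (derivatives of \(\Pcal\)) \(\times\) (derivatives of \(\rd_{Q_T}f\)). Since \(\rd_{Q_T}\) commutes with \(\rd_H\), \(Z\), and with \(Y_H\) (the coefficient \(t\rd_H(\O(H,M))\) does not depend on \(Q_T\)), only derivatives landing on the coefficient \(\Pcal\) survive. Schematically,
\[
T_2 = -\sum \binom{\cdot}{\cdot}\,\big(Y_H^{j_0}\rd_{Q_T}^{j_1}\rd_H^{j_2}Z^{j_3}\Pcal\big)\,\big(Y_H^{j_0'}\rd_{Q_T}^{1+j_1'}\rd_H^{j_2'}Z^{j_3'}f\big),
\]
summed over \(j_\bullet+j_\bullet'=i_\bullet\) with \(j_0+j_1+j_2+j_3\ge 1\). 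Reordering \(Y_H,\rd_H,Z\) only produces commutators with bounded coefficients; Definition \ref{def:Zdef} and \([Y_H,\rd_H]=-t\rd_H^2\O\,\rd_{Q_T}\) both lie in the span of the good fields. The \(t\) in that last commutator is paired with a \(\rd_{Q_T}\) in place of a \(\rd_H\), so it gives no net loss. We will then bound each factor separately.

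For the \(\Pcal\) factor we use \eqref{Pcalest}, writing \(Z\) and \(\rd_H\) as combinations of \(\rd_H,\rd_M\) with coefficients that are bounded together with their derivatives (from the smoothness of \(\O_\Sch\)). This gives \(|\rd^{J}\Pcal|\lesssim\d^{3/4}\e\jt^{-N+\max{J-1,0}}\) for \(J=j_1+j_2+j_3\le N+1\). If \(j_0=1\), the \(Y_H\) falling on \(\Pcal\) costs \(t\rd_H\O\,\rd_{Q_T}\Pcal+\rd_H\Pcal\), that is, one extra derivative together with a factor \(\jt\). This is still \(\lesssim\d^{3/4}\e\jt^{-N+J+1}\) with \(J+1\le N\). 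For the \(f\) factor we use the bootstrap assumptions \eqref{fBAlow}--\eqref{fBAtop}. The factor carries \(1+j_1'+j_2'+j_3'\) plain derivatives (plus at most one \(Y_H\)) and has size \(\d^{3/4}\e\jt^{j_2'}\), or \(\jt^{j_2'+1}\) when it reaches order \(N+1\).

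Next we multiply the two bounds and count powers of \(\jt\). When \(j=j_0+j_1+j_2+j_3\) derivatives fall on \(\Pcal\), that factor contributes at most \(\jt^{-N+j}\), while \(f\) contributes \(\jt^{i_2-j_2}\). The product is then \(\d^{3/2}\e^2\jt^{i_2-j_2-N+j}\le\d\e^2\jt^{i_2}\), because \(j\le N\) in the low-order case. In the top case \(i_1+i_2+i_3=N+1\), the term with \(j\) derivatives on \(\Pcal\) pairs \(\jt^{-N+j-1}\) (when \(j\ge1\)) with an \(f\)-factor of order \(N+2-j\). For \(j=1\) this factor has order \(N+1\) and we use \eqref{fBAtop}, which costs \(\jt^{i_2-j_2+1}\). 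The product is \(\jt^{-N+i_2+1}\), which is well within \(\jt^{i_2+1}\).

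The main obstacle is the endpoint case in which all \(N+1\) derivatives fall on \(\Pcal\). Here \eqref{Pcalest} gives only \(\d^{3/4}\e\jt^{0}\), since it depends on the top-order assumptions \eqref{BAH0}--\eqref{BAH1} for the metric. It is multiplied by \(|\rd_{Q_T}f|\lesssim\d^{3/4}\e\), so the product is \(\lesssim\d\e^2\le\d\e^2\jt^{i_2+1}\), which is fine. The step that needs real care is justifying \eqref{Pcalest} itself at order \(N+1\). This requires Proposition \ref{prop:regDAAV} at its top regularity, the mean value theorem for \(\O(H_T,M)-\O(H,M)\) (which uses the bounded \(N+2\) derivatives of \(\O\)), and the fact that \(H-H_T\) is controlled by \eqref{BA2} with the one-derivative shift. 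We will take \eqref{Pcalest} as already established and verify only that the \(Y_H\) term never pushes the derivative count beyond \(N+1\). This holds because \(Y_H\) is used only when \(i_0+i_1+i_2+i_3\le N\).
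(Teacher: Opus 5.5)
Your proposal is correct and is essentially the paper's proof: expand \([\Pcal, Y_H^{i_0}\rd_{Q_T}^{i_1}\rd_H^{i_2}Z^{i_3}]\rd_{Q_T}f\) by the Leibniz rule, then bound the \(\Pcal\)-factor via \eqref{Pcalest} and the \(f\)-factor via \eqref{fBAlow}--\eqref{fBAtop}. The paper is cruder in two places. It never tracks the time decay of \(\Pcal\), using only the uniform bound \(\d^{3/4}\e\) for every derivative of \(\Pcal\) (including those hit by \(Y_H\)) together with \(\jt^{j_2'}\le\jt^{i_2}\) (resp.\ \(\jt^{i_2+1}\)) on the \(f\)-factor. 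It also needs no reordering of \(Y_H,\rd_H,Z\), because the Leibniz sub-words are already in the required order.
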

\begin{proof}
We can proceed as follows: note that \(\rd_{Q_T}\) commutes with \(Y_H\), \(\rd_H\) and \(Z\) so we have the first identity. 
Here, we sum over \((j_0, j_1, j_2, j_3)\) and \((j_0', j_1', j_2', j_3')\) with \(j_0+j_0'\le i_0\), \(j_1+j_1'\le i_1\), \(j_2+j_2'\le i_2\), and \(j_3 + j_3' \le i_3\), hence the sum of indices is at most \(i_0+i_1+i_2+i_3\).
\begin{align*}
\abs{T_2} &= \abs{[\Pcal, Y_H^{i_0}\rd_{Q_T}^{i_1}\rd_H^{i_2}Z^{i_3}]\rd_{Q_T}f} \\
&\le C\sum_{j_0 + j_1 + j_2 + j_3 > 0} \abs{Y_H^{j_0}\rd_{Q_T}^{j_1}\rd_H^{j_2}Z^{j_3}(\Pcal)} \cdot \abs{Y_H^{j_0'}\rd_{Q_T}^{1+j_1'}\rd_H^{j_2'}Z^{j_3'} f}.
\end{align*}
If \(i_0 + i_1 +i_2 + i_3 \le N\), the \(\Pcal\) term is bounded by \(\d^{3/4}\e\), and there are at most \(N\) derivatives on \(f\) in the above sum, hence from \eqref{Pcalest} and \eqref{fBAlowYH} we have 
\[
\abs{T_2}\lesssim \d\e^2\jt^{i_2},
\]
which is the desired estimate. \\
For \(i_0 = 0\), \(i_1 + i_2 + i_3 \le N+1\) case, because one more \(\jt\) is acceptable we can proceed as follows: 
\begin{equation*}
\abs{T_2} \lesssim \d^{3/4}\e \cdot \d^{3/4}\e\jt^{i_2 + 1} \lesssim \d\e^2\jt^{i_2 + 1},
\end{equation*}
which follows from \eqref{Pcalest} and \eqref{fBAtop}.
This completes the proof. 
\end{proof}

\subsection{Estimates on \(\Dfr H\)}\label{subsec:DfrH}
\(\Dfr H\) has been calculated in \eqref{DfrH}. From \eqref{BA0}, \eqref{BA1}, and \eqref{BAH1}, we have the following decay estimates for \(\Dfr H\) and its derivatives:
\begin{equation}\label{DfrHest}
    \abs{\rd_{Q_T, H, M}^I(\Dfr H)} \lesssim 
    \begin{cases}
        \d^{3/4}\e\jt^{-N+\max{I-2, 0}} &\text{ if } I \le N, \\
        \d^{3/4}\e &\text{ if } I = N+1. 
    \end{cases}
\end{equation}
With this, we can estimate \(T_3\) as follows: 
\begin{lem}\label{lem:vlasovT3}
    For \(T_3\) defined in \eqref{commutedeqn} with given \(i_0, i_1, i_2, i_3\), we have the following estimate: 
    \begin{equation}\label{T3}
        \abs{T_3} \lesssim \begin{cases}
            \d\e^2\jt^{i_2} &\text{ for } i_0 + i_1 + i_2 + i_3 \le N, \\
            \d\e^2\jt^{i_2+1} &\text{ for } i_0 = 0, i_1 + i_2 + i_3 = N+1.
        \end{cases} 
    \end{equation}
\end{lem}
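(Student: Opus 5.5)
We follow the pattern of Lemma \ref{lem:vlasovT2}. The difference is that the extra derivative lands on \(f\) as a bad \(\rd_H\) rather than a good \(\rd_{Q_T}\). This is compensated by the faster decay of \(\Dfr H\) recorded in \eqref{DfrHest}.

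First we expand the commutator. The vector fields \(\rd_{Q_T}\), \(Z\) and \(Y_H\) do not commute with \(\rd_H\) exactly. We have \([\rd_H, Z]\in \mathrm{span}(\rd_H, Z)\) with bounded coefficients (Definition \ref{def:Zdef}), \([\rd_H,\rd_{Q_T}]=0\), and \([\rd_H, Y_H] = t\,\rd_H^2\O\,\rd_{Q_T}\). So we write
\[
T_3 = [\Dfr H, Y_H^{i_0}\rd_{Q_T}^{i_1}\rd_H^{i_2}Z^{i_3}]\rd_H f + \Dfr H\,[\rd_H, Y_H^{i_0}\rd_{Q_T}^{i_1}\rd_H^{i_2}Z^{i_3}] f .
\]
We then reorder all vector fields into the standard order \(Y_H^{j_0}\rd_{Q_T}^{j_1}\rd_H^{j_2}Z^{j_3}\). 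The reordering commutators carry bounded coefficients, namely derivatives of \(\O_\Sch\), plus at most one factor \(t\) coming from \([\rd_H, Y_H]\). This gives a sum of terms
\[
\rd^{J}(\Dfr H)\cdot Y_H^{j_0'}\rd_{Q_T}^{j_1'}\rd_H^{j_2'}Z^{j_3'} f, \qquad \abs{J}+j_0'+j_1'+j_2'+j_3' \le i_0+i_1+i_2+i_3+1 .
\]
The total \(\rd_H\)-count on \(f\) is at most \(i_2+1\). The first-type terms have \(\abs{J}\ge 1\), while the second-type terms have \(\abs{J}=0\). A factor \(Y_H\) hitting \(\Dfr H\) equals \(t\rd_H\O\,\rd_{Q_T}\Dfr H+\rd_H\Dfr H\), so it costs at most one \(\jt\) times an extra derivative on \(\Dfr H\).

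Next we count powers of \(\jt\). By \eqref{fBAlow}, \eqref{fBAlowYH} and \eqref{fBAtop}, the factor on \(f\) is bounded by \(\d^{3/4}\e\jt^{j_2'}\), and by \(\d^{3/4}\e\jt^{j_2'+1}\) at order \(N+1\). By \eqref{DfrHest}, each factor on \(\Dfr H\) of order \(\abs{J}\le N\) is bounded by \(\d^{3/4}\e\jt^{-N+\max{\abs{J}-2,0}}\).

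In the low-order case \(i_0+i_1+i_2+i_3\le N\) we always have \(j_2'\le i_2+1\). The \(\jt^{-N+\max{\abs{J}-2,0}}\) decay, with \(\abs{J}\le N\), beats the possible single extra \(\jt\) from the bad derivative and from a \(Y_H\) weight, since \(N\ge 6\). There is one exception: \(\abs{J}=0\) with \(i_0+i_1+i_2+i_3=N\) puts \(N+1\) derivatives on \(f\). In that case we use \eqref{fBAtop}, which gives \(\jt^{i_2+2}\), against the full \(\jt^{-N}\) decay of \(\Dfr H\). Every term is therefore bounded by \(\d^{3/2}\e^2\jt^{i_2}\lesssim\d\e^2\jt^{i_2}\).

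In the top case \(i_0=0\), \(i_1+i_2+i_3=N+1\), the worst terms are the following.
\begin{enumerate}
\item All \(N+1\) derivatives fall on \(\Dfr H\). Here we use only the bound \(\d^{3/4}\e\) from \eqref{DfrHest}, together with \(\abs{\rd_H f}\lesssim\d^{3/4}\e\jt\).
\item No derivative falls on \(\Dfr H\). Then \(f\) carries \(N+2\) derivatives, which is not bootstrapped.
\end{enumerate}
Case (2) is the main obstacle. It is not a genuine term: the \(\Dfr H\,\rd_H\) piece with all derivatives on \(f\) is part of the transport operator on the left of \eqref{commutedeqn}, and it is not in \(T_3\). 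The reordering commutators only produce terms of order \(\le N+1\) on \(f\) with an \(O(1)\) coefficient times \(\Dfr H\). These we bound by \(\d^{3/4}\e\jt^{-N}\cdot\d^{3/4}\e\jt^{i_2+1}\). The intermediate splittings interpolate between cases (1) and (2): as long as \(\abs{J}\le N-1\), the decay \(\jt^{-N+\abs{J}-2}\) absorbs the at most \(\abs{J}\) bad derivatives removed from \(f\), so the total is \(\lesssim\d\e^2\jt^{i_2+1}\). The delicate count is exactly \(\abs{J}=N\) or \(N+1\) with \(j_2'\le 1\), where the exponent gives \(\jt^{1}\le\jt^{i_2+1}\). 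Summing the finitely many terms yields \eqref{T3}.
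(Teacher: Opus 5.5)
Your proposal is correct and is essentially the paper's proof. You use the same splitting $T_3=[\Dfr H, V]\rd_H f+\Dfr H\,[\rd_H, V]f$ (with $V=Y_H^{i_0}\rd_{Q_T}^{i_1}\rd_H^{i_2}Z^{i_3}$) together with $[\rd_H,Y_H]=t\,\rd_H^2\O\,\rd_{Q_T}$ and $[\rd_H,Z]\in\mathrm{span}(\rd_H,Z)$, and then the same $\jt$-counting with \eqref{DfrHest} and \eqref{fBAlow}--\eqref{fBAtop}, including the two extreme top-order cases.

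Two small corrections, neither of which affects the conclusion. First, $[\rd_H,V]$ has the same order as $V$, so in the low-order case $\Dfr H\,[\rd_H,V]f$ never carries $N+1$ derivatives of $f$, and your ``exception'' does not occur. This matters, because \eqref{fBAtop} contains no $Y_H$ and so could not have covered $i_0=1$. Second, in the top case these terms can reach $\jt^{i_2+2}$ rather than $\jt^{i_2+1}$, since $[\rd_H,Z]$ can turn a $Z$ into a $\rd_H$; the factor $\jt^{-N}$ easily absorbs this.
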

\begin{proof}
We have the following commutation identities:
\begin{equation}\begin{aligned}\label{rdHcommutator}
[\rd_H, Y_H] &= t(\rd_{XX}^2\O)(H, M)\rd_{Q_T}, \\
[\rd_H, Z] &= \big(\rd_{XY}^2\O_\Sch - \frac{\rd_Y\O_\Sch}{\rd_X\O_\Sch}\rd_{XX}^2\O_\Sch\big)\rd_H
+ \frac{\rd_{XX}^2\O_\Sch}{\rd_X\O_\Sch} Z.
\end{aligned}\end{equation}
This with simple induction allows us to write any product of \(i_2\) \(\rd_H\)'s and \(i_3\) \(Z\)'s as a linear combination of \(\rd_H^{i_2'}Z^{i_3'}\), where \(i_2' \le i_2\) and \(i_3' \le i_3\), with bounded coefficients. 
Still, it is better to separate the case when \(i_0 = 1\). 
\begin{equation*}
T_3 = \Dfr H [\rd_H, Y_H]\rd_{Q_T}^{i_1}\rd_H^{i_2}Z^{i_3}f
+ [\Dfr H, Y_H \rd_{Q_T}^{i_1}\rd_H^{i_2}Z^{i_3}]\rd_H f
+ (\Dfr H) Y_H \rd_{Q_T}^{i_1}\rd_H^{i_2} [\rd_H, Z^{i_3}]f,
\end{equation*}
so we can estimate \(T_3\) as follows:
\begin{align*}
\abs{T_3} &\le Ct\abs{\Dfr H}\cdot \abs{\rd_{Q_T}^{1+i_1}\rd_H^{i_2}Z^{i_3}f} 
+ C\sum_{\substack{j_0 + j_1 + j_2 + j_3 > 0 \\ j_2''+j_3\le j_3'+1, j_2''\le 1}} \abs{Y_H^{j_0}\rd_{Q_T}^{j_1}\rd_H^{j_2}Z^{j_3}(\Dfr H)} \cdot \abs{Y_H^{j_0'}\rd_{Q_T}^{j_1'}\rd_H^{j_2'+j_2''}Z^{j_3''} f} \\
&\qquad
+ C\sum_{\substack{i_2'\le 1, \\i_2' + i_3' < 1 + i_3}} \abs{\Dfr H} \cdot \abs{Y_H\rd_{Q_T}^{i_1}\rd_H^{i_2+i_2'}Z^{i_3'} f}.
\end{align*}
Under the condition \(i_0 + i_1 + i_2 + i_3\le N\), from \eqref{DfrHest}, \eqref{fBAlow}, and \eqref{fBAlowYH} this can be bounded by 
\begin{align*}
\abs{T_3} &\le C\d^{3/4}\e\jt^{-N+1}\cdot \d^{3/4}\e\jt^{i_2} + C\d^{3/4}\e\jt^{-1}\cdot \d^{3/4}\e\jt^{1+i_2} \\
&\qquad
+ C\d^{3/4}\e\jt^{-N}\cdot \d^{3/4}\e\jt^{i_2+1} \\
&\le \d\e^2\jt^{i_2}.
\end{align*}

Now we consider the case when \(i_0 = 0\); in this case the first term above is absent: 
\[
T_3 = [\Dfr H, \rd_{Q_T}^{i_1}\rd_H^{i_2}Z^{i_3}]\rd_H f
+ (\Dfr H)\rd_{Q_T}^{i_1}\rd_H^{i_2} [\rd_H, Z^{i_3}]f.
\]
This allows us to estimate \(T_3\) as follows:
\begin{align*}
\abs{T_3} &\le 
C\sum_{\substack{j_1 + j_2 + j_3 > 0 \\ j_2''+j_3\le j_3'+1, j_2''\le 1}} \abs{\rd_{Q_T}^{j_1}\rd_H^{j_2}Z^{j_3}(\Dfr H)} \cdot \abs{\rd_{Q_T}^{j_1'}\rd_H^{j_2'+j_2''}Z^{j_3''} f}
+ C\abs{\Dfr H} \cdot \sum_{\substack{i_2'\le 1, \\i_2' + i_3' < 1 + i_3}} \abs{\rd_{Q_T}^{i_1}\rd_H^{i_2+i_2'}Z^{i_3'} f}.
\end{align*}
For the case \(i_1 + i_2 + i_3\le N\), we can use \eqref{DfrHest} and \eqref{fBAlow} to conclude that 
\[
\abs{T_3} \le C\d^{3/4}\e\jt^{-2} \cdot \d^{3/4}\e\jt^{1+i_2} + C\d^{3/4}\e\jt^{-N}\cdot \d^{3/4}\e\jt^{i_2+1}
\le C\d\e^2\jt^{i_2}.
\]
Note that in any case the number of derivatives on \(f\) is at most \(N\). \\
For the case \(i_1 + i_2 + i_3 = N+1\), we can use \eqref{DfrHest} and \eqref{fBAtop}: here the number of derivatives on \(f\) is at most \(N+1\), but we have an extra \(\jt\) to spare, hence we can proceed as follows:
\begin{align*}
\abs{T_3}&\le C\d^{3/4}\jt^{-2}\e\cdot\d^{3/4}\e\jt^{2+i_2} + C\d^{3/4}\e\jt^{-N}\cdot \d^{3/4}\e\jt^{i_2+2} \\
&\qquad
+ C\d^{3/4}\e\cdot \d^{3/4}\e\jt \\
&\le \d\e^2\jt^{i_2+1} 
\end{align*}
which is the desired estimate. 
Note that the last term is needed for the case when all the derivatives hit \(\Dfr H\) so that we need to use the top order estimate of \eqref{DfrHest}.
This completes the proof.
\end{proof}

\subsection{Estimates for the initial data term}
In the next subsection, we will use the Vlasov equation \eqref{vlasov-DAAV} to retrieve the bootstrap assumptions for \(\rd f\).
We already have control for the source terms \(T_1\), \(T_2\), and \(T_3\) from Lemma \ref{lem:VlasovT1}, Lemma \ref{lem:vlasovT2}, and Lemma \ref{lem:vlasovT3}.
The only remaining term is the initial data term, which can be estimated as follows:
\begin{lem}\label{lem:vlasovinit}
    For initial data \(f_\init\) with \(\supp(f_\init)\subset \Scal_{\cfr}\) and the size condition \eqref{nonlinthm-initf}, we have the following estimate for the initial data term in \eqref{commutedeqn}:
    \begin{equation}\label{init}
        \sup_{(Q_T, H, M)} \abs{Y_H^{i_0}\rd_{Q_T}^{i_1}\rd_H^{i_2}Z^{i_3}f_\init} 
        \le \d\e, \quad \text{ for } i_0 + i_1 + i_2 + i_3 \le N+1.
    \end{equation}
\end{lem}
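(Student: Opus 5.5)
The plan is to transfer the \((R,w,\ell)\)-regularity of \(f_\init\) in \eqref{nonlinthm-initf} to the dynamical coordinates \((Q_T,H,M)\) at \(t=0\) by the chain rule, using the bounds on the change of variables in Proposition \ref{prop:regDAAV}. The first point is that at \(t=0\) the vector field \(Y_H\) from \eqref{YHdef} reduces to \(\rd_H\), because the term \(t\,\rd_H(\O(H,M))\rd_{Q_T}\) vanishes. So \(Y_H^{i_0}\rd_{Q_T}^{i_1}\rd_H^{i_2}Z^{i_3}f_\init\) is a differential operator of order at most \(N+1\) built only from \(\rd_{Q_T}\), \(\rd_H\), and \(\rd_M\), and no growth in \(t\) enters. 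By Definition \ref{def:Zdef}, \(Z\) is a combination of \(\rd_H\) and \(\rd_M\) with coefficients \((\rd_M\O_\Sch)(H,M)\) and \((\rd_H\O_\Sch)(H,M)\). These coefficients are smooth and bounded with all derivatives on the compact range of \((H,M)\) fixed by Proposition \ref{prop:suppf}. Expanding the composition with the Leibniz rule, the expression becomes a finite sum of terms \(c(H,M)\,\rd_{Q_T}^{a}\rd_H^{b}\rd_M^{c}f_\init\) with \(a+b+c\le N+1\) and uniformly bounded coefficients \(c\).

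Next I rewrite each coordinate derivative \(\rd_{Q_T},\rd_H,\rd_M\) in terms of \(\rd_R,\rd_w,\rd_\ell\), using \(\rd_{Q_T}=\rd_{Q_T}R\,\rd_R+\rd_{Q_T}w\,\rd_w\) and the analogous formulas for \(\rd_H\) and \(\rd_M\), the latter including \(\rd_M\ell=1\). Applying the Faà di Bruno formula, a derivative of order \(k\le N+1\) in \((Q_T,H,M)\) becomes a sum of products. Each product is a derivative \(\rd_{R,w,\ell}^{J}f_\init\) with \(|J|\le k\), times a product of derivatives \(\rd_{Q_T,H,M}^{I}(R,w,\ell)\) with \(|I|\le k\le N+1\). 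By \eqref{CoVAtop} every such factor is bounded by \(C\), since that estimate covers orders up to \(N+2\). Combined with \eqref{nonlinthm-initf}, this gives the bound \(C\d\e\).

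To obtain the stated constant exactly (\(\d\e\) rather than \(C\d\e\)), I either absorb \(C\) by applying \eqref{nonlinthm-initf} with a slightly smaller \(\d\) or accept the implicit constant. The proof of Proposition \ref{prop:BTVlasov} only needs the bound with some constant \(C\), so this choice does not matter downstream. I also check that the coordinates are defined on the support of \(f_\init\): since \(\supp f_\init\subset\Scal_\cfr\subset\Scal_{\frac12\cfr}\), Definition \ref{def:DAAV} applies at \(s=0\) for every \(T<T_B\), and the constant does not depend on \(T\) because the bounds in Proposition \ref{prop:regDAAV} are uniform in \(T\).

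The main obstacle is regularity across the discontinuity surface \(R=R_+(H_T,M)\), where \(Q_T\) jumps by \(2\pi\), and near the turning points, where \(w_\pm\) coalesce. There I rely on the fact that \(\rd_{Q_T}\) and the inverse-map derivatives are smooth, as stated after Definition \ref{def:QT}. I also treat \(f_\init\) as a \(2\pi\)-periodic function of \(Q_T\), so that differentiating across the cut produces no boundary contribution. The available derivative count is sufficient: \(f_\init\) is controlled to order \(N+1\) and the coordinate map to order \(N+2\).
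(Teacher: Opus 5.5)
Your proposal is correct and follows the paper's own argument. You observe that \(Y_H=\rd_H\) at \(t=0\), then transfer the \((R,w,\ell)\)-bounds of \eqref{nonlinthm-initf} to \((Q_T,H,M)\) via the chain rule and the change-of-variables bounds \eqref{CoVAtop} of Proposition \ref{prop:regDAAV}; your remarks on expanding \(Z\) with bounded coefficients and on the harmless constant \(C\) simply spell out details the paper leaves implicit.
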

\begin{proof}
This is a direct consequence of the size condition \eqref{nonlinthm-initf} and the regularity of the change of variables from the standard coordinates to the dynamical action angle variables, which is guaranteed by Proposition \ref{prop:regDAAV}.
Note that at \(t = 0\) we have \(Y_H = \rd_H\).
\end{proof}

\subsection{Proof of Proposition \ref{prop:BTVlasov}}
Now we have all the ingredients to prove Proposition \ref{prop:BTVlasov}.
Basically we integrate \eqref{commutedeqn} along the characteristic curves, and use the estimates for \(T_1\), \(T_2\), \(T_3\), and the initial data term to retrieve the bootstrap assumptions for \(\rd f\).
Because of the commutators arising from the linear terms, we need to be careful in the order of retrieving the bootstrap assumptions for \(\rd f\); for this we need to induct on \(i_2\) first.
\begin{proof}[Proof of Proposition \ref{prop:BTVlasov}]
We first retrieve \eqref{flow} and \eqref{flowYH}. Setting \(i_0 \le 1\) and \(i_0 + i_1 + i_2 + i_3 \le N\), from \eqref{T1low}, \eqref{T2}, \eqref{T3}, and \eqref{init}, we have 
\begin{align*}
\abs{Y_H^{i_0}\rd_{Q_T}^{i_1}\rd_H^{i_2}Z^{i_3}f} 
&\lesssim \d\e + \int_0^t \d\e^2\jap{s}^{i_2}\,ds 
+ \int_0^t \sum_{\substack{j_0\le i_0, j_1\le i_1, \\j_2 < i_2, j_3 \le i_3}} \abs{Y_H^{j_0}\rd_{Q_T}^{1+j_1}\rd_H^{j_2}Z^{j_3} f} \, ds.
\end{align*}
We induct on \(i_2\): For \(i_2 = 0\), the second integral above is absent, hence we have \(\abs{Y_H^{i_0}\rd_{Q_T}^{i_1}Z^{i_3}f} \le C\d\e\).
For given \(i_2 > 0\), if we have improved inequality \eqref{flow} and \eqref{flowYH} up to \(i_2-1\), then we can control the above as 
\[
\abs{Y_H^{i_0}\rd_{Q_T}^{i_1}\rd_H^{i_2}Z^{i_3}f} 
\lesssim \d\e\jap{t}^{i_2} 
+ \int_0^t \d\e\jap{s}^{i_2-1}\, ds \le C\d\e\jap{t}^{i_2}.
\]
This completes the retrieval of \eqref{flow} and \eqref{flowYH}.

For \eqref{ftop}, we set \(i_0 = 0\) and \(i_1 + i_2 + i_3 = N+1\); for this case there is one extra term in the estimate for \(T_1\), for which we already improved the bound.
From \eqref{T1top}, \eqref{T2}, \eqref{T3}, and \eqref{init}, we have
\[
\abs{\rd_{Q_T}^{i_1}\rd_H^{i_2}Z^{i_3}f}
\lesssim \d\e + \int_0^t \d\e^2\jap{s}^{i_2+1}\, ds
+ \int_0^t \sum_{\substack{j_1\le i_1, \\j_2 < i_2, j_3 \le i_3}} \abs{\rd_{Q_T}^{1+j_1}\rd_H^{j_2}Z^{j_3} f} \, ds
+ \int_0^t \abs{\rd_{Q_T}f} \, ds.
\]
The first and third integrals can be controlled by \(C\d\e\jt^{i_2+1}\), and we again need an induction on \(i_2\) to control the second integral.
For \(i_2 = 0\), the second integral is absent, hence we have 
\[
\abs{\rd_{Q_T}^{i_1}Z^{i_3}f} \le C\d\e\jt.
\]
For given \(i_2 > 0\), if we have improved inequality \eqref{ftop} up to \(i_2-1\), then we can control the above as 
\[
\abs{\rd_{Q_T}^{i_1}\rd_H^{i_2}Z^{i_3}f}
\lesssim \d\e\jap{t}^{i_2+1} + \int_0^t \d\e\jap{s}^{i_2} \, ds \le C\d\e\jap{t}^{i_2+1}.
\]
This completes the retrieval of \eqref{ftop}, hence concluding the proof of Proposition \ref{prop:BTVlasov}.
\end{proof}

\section{Estimates on the Density Variables}\label{sec:Integral}
In this section, we control the density variables \(\r, \tr T, j, S_R\) defined by \eqref{rhodef}--\eqref{SRdef} based on the estimates for \(f\) obtained in Proposition \ref{prop:BTVlasov} and estimates on the metric coefficients assumed in \eqref{BA0}--\eqref{BAHK1}.

\subsection{Density estimates setup}
The goal of this section is to prove the following proposition.
\begin{prop}[Density estimates]\label{prop:Density}
    Under the bootstrap assumptions \eqref{BA0}--\eqref{BAHK1}, we have the following estimates for the density variables \(\r, \tr T, j, S_R\) and their derivatives:
    \begin{align}
        \abs{\rd_R^I(\r, \tr T, j, S_R)} &\lesssim \d\e, &&\text{ for } I \le N, \label{qlow}\\
        \abs{\rd_R^I(\r, \tr T, j, S_R)} &\lesssim \d\e\jap{s}, &&\text{ for } I = N+1, \label{qtop} \\
        \abs{\rd_R^I\rd_s(\r, \tr T, j, S_R)} &\lesssim \d\e\jap{s}^{-N+I}, &&\text{ for } I \le N-2, \label{dsqlow} \\
        \abs{\rd_R^I\rd_s(\r, \tr T, j, S_R)} &\lesssim \d\e, &&\text{ for } I = N-1, \label{dsqtop} \\
        \abs{\rd_R^I(\r-\bar{\r}, \tr T-\overline{\tr T}, j-\jbar, S_R-\Sbar_R)} &\lesssim \d\e\jap{s}^{-N+I}, &&\text{ for } I \le N-1. \label{qqbarlow}        
    \end{align}
\end{prop}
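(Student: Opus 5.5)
The plan is to write every density variable as $\frac{\pi}{R^2A^2}\int\!\int f\,\bar{\CC}\,d\ell\,dw$ with $\bar{\CC}\in\{E,-1/E,w,w^2/E\}$, and then estimate each term in the dynamical action angle variables $(t,Q_T,H,M)$ of Definition \ref{def:DAAV}, using Theorem \ref{thm:Vlasov} for $f$ and the bootstrap assumptions \eqref{BA0}--\eqref{BAHK1} for the metric. Throughout, the basic tool is that $\rd_R$, $\rd_w$, $\rd_\ell$ (in $(s,R,w,\ell)$ coordinates) are smooth combinations of $\rd_{Q_T},\rd_H,\rd_M$ with bounded coefficients, by Proposition \ref{prop:regDAAV}. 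Inside the integral we rewrite each bad $\rd_H$ through \eqref{Hlincomb} as $\rd_\ell$ plus $Z$ and $\rd_{Q_T}$ terms, and integrate the $\rd_\ell$ by parts. Since $\rd_\ell(\O_\Sch(H,M))$ is bounded away from zero on the support (Propositions \ref{prop:monotonic} and \ref{prop:suppf}), this costs only bounded factors.

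\textbf{Bounds \eqref{qlow}, \eqref{qtop}.} An $\rd_R^I$ falling on $f$ yields terms $\rd_{Q_T}^{i_1}\rd_H^{i_2}Z^{i_3}f$ with $i_1+i_2+i_3\le I$. Every $\rd_H$ is converted as above and the $\rd_\ell$ moved onto the weight, which leaves only $\rd_{Q_T}$ and $Z$ derivatives on $f$. Theorem \ref{thm:Vlasov}\eqref{flow} then gives $\d\e$ for $I\le N$, and \eqref{ftop} gives $\d\e\jap{s}$ at $I=N+1$. Derivatives falling on $A$ or on the weights are controlled by \eqref{BA0} and \eqref{BAH0}.

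\textbf{Bounds \eqref{dsqlow}, \eqref{dsqtop}, \eqref{qqbarlow} (phase mixing).} For $\rd_s$ we use Duhamel in the form
\[
f(t,Q_T,H,M)=f_\init(Q_T-t\O,H,M)+\int_0^t\Rcal(\t,Q_T-(t-\t)\O,H,M)\,d\t,
\qquad \Rcal=-\Pcal\rd_{Q_T}f-\Dfr H\,\rd_H f .
\]

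\emph{Linear piece.} Here we repeat the non-stationary phase argument of Theorem \ref{thm:lin}. We Fourier expand in $Q_T$ and integrate by parts $N-I$ times in $\ell$ using $e^{-iks\O}=(-iks\,\rd_\ell(\O(H,M)))^{-1}\rd_\ell e^{-iks\O}$. We need $\rd_\ell(\O(H,M))$ nonvanishing, which follows from \eqref{Orefined} together with Proposition \ref{prop:monotonic}. The $k$-sum is closed by Cauchy--Schwarz, exactly as in \eqref{dRiGk}.

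\emph{Nonlinear piece.} In $\rd_s\qbf_N$ the main term is $-\O\,\rd_{Q_T}\Rcal$. We expand it in frequencies as in $\Tcal_{5,3}$, pairing $\wh{(p_b)}_l$ (time derivatives or differences of metric coefficients, decaying by \eqref{BA1}, \eqref{BA2}, \eqref{Pcalest} and \eqref{DfrHest}) against $e^{-ik(s-\t)\O}\wh{(\rd_b f)}_{k-l}$. We then remove $\rd_H$ from $f$ by integrating by parts in $\ell$. This puts up to two extra $R$-derivatives on the metric factor; these are paid for by the two-derivative gain built into \eqref{BA1} and \eqref{BAH1}.

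\emph{Summing over $(k,l)$.} For the summation we use $Y_{k,-l\t}$ through \eqref{flowYH}. This gives a factor $\jap{l}/\jap{ks-l\t}$, together with the interpolated bound $\min(\jap{l}^{-1},\jap{s}\jap{l}^{-2})$ obtained by spending one more $R$-derivative. The $\t$-integral and the $l$-sum each produce a factor $\log\jap{s}$, so the nonlinear piece is bounded by
\[
\d^{7/4}\e^2\jap{s}^{-N+I}(\log\jap{s})^2 .
\]
This is $\lesssim\d\e\jap{s}^{-N+I}$ because $s\le T_f=\e^{-1}(\log 1/\e)^{-2}$.

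\emph{Top order and convergence.} The top order \eqref{dsqtop} is done crudely, accepting the loss, using \eqref{ftop} and \eqref{BAH1}. For \eqref{qqbarlow} we either integrate \eqref{dsqlow} from $s$ to $T$, or apply the same Fourier argument to the difference between times $s$ and $T$, which gives one more admissible derivative.

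The main obstacle is the nonlinear frequency-summation step. There the $\t$-integral prevents us from using the $\ell^2$ summability of $a_l(\t)$, and the $R$-derivatives produce growing factors $(s-\t)\O$. I would first try the $Y_{k,-l\t}$ trick with exactly one $Y_H$ on $f$, and rely on the $T_f$ restriction to absorb the two logarithmic losses.
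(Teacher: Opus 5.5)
Your bounds \eqref{qlow} and \eqref{qtop}, the linear phase-mixing piece, and your nonlinear argument follow the paper. The nonlinear argument (decay of $p_b(\t)$, one $Y_{k,-l\t}$ for the $l$-sum, two logarithms) is in effect the paper's treatment of $\t\in[s/2,s]$.

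\textbf{Main gap: the early-time Duhamel contribution.} For $\t\in[0,s/2]$ the source $p_b(\t)\,\rd_b f(\t)$ is only of size $\d^{7/4}\e^2\jap{\t}^{-N+1}$, with no smallness in $s$. Any bound in this range that does not exploit the oscillation of $e^{-ik(s-\t)\O}$ is therefore $O(\d^{7/4}\e^2)$, not decaying. A single factor $\jap{l}/\jap{ks-l\t}$ gains at most one power of $s$, whereas $\rd_R^I\rd_s\qbf_N$ must decay like $\jap{s}^{-N+I}$.

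What is missing is phase mixing of the nonlinear source itself over the long interval $s-\t\ge s/2$. The paper inserts $1=\bigl((Y_{k,-k\t}-\rd_H)/(ik(s-\t)\rd_X\O)\bigr)^{N-I-1}$ and uses \eqref{Y2ontrans}. This gains $\jap{k(s-\t)}^{-(N-I-1)}$ at the price of $N-I-1$ further $\rd_H$'s on $f$, i.e. growth $\jap{\t}^{N-I-1}$. That is affordable only because $\t\le s/2$ and $p_b(\t)$ decays like $\jap{\t}^{-N}$. The counting is delicate at $I=N-3,N-2$, where derivatives are traded for $\jap{l}$ and $\jap{k-l}$. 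It is also delicate at $I=0$: there $N$ copies of $\rd_H$ on $f$ would destroy $\t$-integrability, so one combines one $Y_{k,-l\t}$ with $N-2$ copies of $Y_{k,-k\t}$; see \eqref{T5compst}--\eqref{T5compstI0}.

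\textbf{Integrating $\rd_H$ by parts through the phase.} Your step of removing $\rd_H$ from $f$ by integrating by parts in $\ell$ fails inside the phase-mixed terms. The $\rd_\ell$ falls on $e^{-ik(s-\t)\O}$ and reproduces the growth as $k(s-\t)\rd_\ell\O$; this is exactly what \eqref{Y2ontrans} expresses. In $\Tcal_{5,b}$ the paper instead keeps $\rd_H$ on $f$ and pays $\jap{\t}$ through \eqref{flow}. It compensates with the extra power of decay of $p_2=-\Dfr H$ in \eqref{DfrHest} compared with \eqref{Pcalest}. Integration by parts in $\ell$ is used only where no oscillating phase is present: $\Tcal_{1,b}$, zero modes, and the non-decaying bounds.

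\textbf{The estimate \eqref{qqbarlow}.} Your first route, integrating \eqref{dsqlow} from $s$ to $T$, gives only $\int_s^T\jap{s'}^{-N+I}\,ds'\lesssim\jap{s}^{-N+I+1}$, and only for $I\le N-2$. It does not prove the claim. The second route is the right one, but the essential point is that the zero mode in $Q_T$ does not mix. The paper bounds the $k\neq0$ modes at times $s$ and $T$ separately, with no cancellation between them. The absence of the extra factor $k$ present in $\rd_s\qbf$ is what allows $I=N-1$, via \eqref{stationaryphase} with $\a=0$ for $\qbf_L$. Smallness of the zero-mode differences comes from two sources:
\begin{itemize}
\item $H-H_T$ and $A-\Abar$, controlled through \eqref{BA2}, for $\qbf_L$ and $\Dcal_{2,b}$;
\item the tail $\int_s^T$ of the decaying source, for $\Dcal_{3,b}$.
\end{itemize}

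\textbf{Two smaller points.} First, \eqref{dsqtop} does not need \eqref{ftop}, which would cost a factor $\jap{s}$. Replace $\rd_s f$ via the Vlasov equation and integrate the resulting $\rd_H$ by parts in $\ell$ (this term has no phase). Then at most $N$ derivatives of type $\rd_{Q_T},Z$ fall on $f$, and \eqref{flow} gives $\d\e$. Second, your nonlinear bound should read $\d^{7/4}\e^2\jap{s}^{-N+I+1}(\log\jap{s})^2$. It is exactly the requirement $\e\jap{s}(\log\jap{s})^2\lesssim1$ that forces $s\le T_f$; with your exponent a much longer timescale would suffice.
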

Note that the desired estimates do not decay for \eqref{qlow}, \eqref{qtop}, and \eqref{dsqtop}.
These do not require to decompose \(f\) into its Fourier modes, and can be proved by direct estimates on the integral representation of the density variables. 
This will be done in the next subsection. 
For \eqref{dsqlow} and \eqref{qqbarlow} we actually exploit the phase mixing mechanism to obtain decay, and for this we need to decompose \(f\) into its Fourier modes.
This will be done in the other subsections.

To deal with four density variables at the same time, we will write them in a unified way with the following symbol: 
\begin{equation}\label{qdef}
\qbf(s, R) = \frac{\pi}{R^2A^2}\int_{-\infty}^\infty\int_0^\infty 
f(s, R, w, \ell) \bar{\CC}(s, R, w, \ell) \, d\ell dw,
\end{equation}
where \(\bar{\CC}\in \{E, -1/E, w, w^2/E\}\).
Note that the \(s\)-dependence of \(\bar{\CC}\) is only through the metric coefficients \(A\), so its \(\rd_s\) derivative can be controlled by \(\rd_s A\).

\subsection{Non-decaying estimates for the density variables} 
In this subsection, we establish the non-decaying estimates for the density variables, i.e. \eqref{qlow}, \eqref{qtop}, and \eqref{dsqtop}.
For this, we will use the fact that we integrate over \(\ell\), hence we can integrate by parts in \(\ell\). 
Hence, the following identity which represents \(\rd_R\) as a linear combination of \(\rd_\ell\) and good derivatives \(\rd_{Q_T}\), \(Z\) is useful.
\begin{lem}\label{lem:drintodell}
    The following identities hold among vectors:
    \begin{align}
    \rd_H &= \frac{\rd_X\O_\Sch(H, M)}{\rd_\ell(\O_\Sch(H, M))}
\left(\rd_\ell - (\rd_\ell Q_T)\rd_{Q_T} + \frac{1}{\rd_X\O_\Sch(H, M)}Z\right), \label{dHintoell} \\
    \rd_R &= (\rd_R Q_T)\rd_{Q_T} + (\rd_R H)\frac{\rd_X\O_\Sch(H, M)}{\rd_\ell(\O_\Sch(H, M))}
\left(\rd_\ell - (\rd_\ell Q_T)\rd_{Q_T} + \frac{1}{\rd_X\O_\Sch(H, M)}Z\right). \label{drintodell}
    \end{align}
\end{lem}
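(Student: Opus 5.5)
These are pure identities among coordinate vector fields, so the plan is to derive them by linear algebra from the chain rule in the two coordinate systems \((s, R, w, \ell)\) and \((t, Q_T, H, M)\), together with the definition of \(Z\) in Definition~\ref{def:Zdef}.

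First I would express \(\rd_\ell\) (taken in \((s,R,w,\ell)\) coordinates) through the chain rule. Since \(t = s\) and \(M = \ell\), this gives
\[
\rd_\ell = (\rd_\ell Q_T)\rd_{Q_T} + (\rd_\ell H)\rd_H + \rd_M .
\]
Next I would eliminate \(\rd_M\) using the definition of \(Z\). Because \(\rd_X\O_\Sch(H,M)\neq 0\) on the support by \eqref{nonlinsupp}, we can write
\[
\rd_M = \frac{(\rd_Y\O_\Sch)(H,M)}{(\rd_X\O_\Sch)(H,M)}\rd_H - \frac{1}{(\rd_X\O_\Sch)(H,M)}Z .
\]
Substituting, the coefficient of \(\rd_H\) becomes
\[
\rd_\ell H + \frac{\rd_Y\O_\Sch}{\rd_X\O_\Sch} = \frac{\rd_\ell(\O_\Sch(H,M))}{\rd_X\O_\Sch(H,M)},
\]
by the chain rule \(\rd_\ell(\O_\Sch(H,M)) = \rd_\ell H\,\rd_X\O_\Sch + \rd_Y\O_\Sch\). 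This is exactly the computation already displayed before \eqref{Hlincomb}.

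Then I would solve for \(\rd_H\), dividing by \(\rd_\ell(\O_\Sch(H,M))/\rd_X\O_\Sch\). This step needs \(\rd_\ell(\O_\Sch(H,M))\) to be nonzero, which holds by \eqref{nonlinsupp} in Proposition~\ref{prop:suppf}, since \((R,w,\ell)\in\Scal_{\frac12\cfr}\) on the support. The division yields \eqref{dHintoell} directly; the \(+\frac{1}{\rd_X\O_\Sch}Z\) sign comes from moving \(-\frac{1}{\rd_X\O_\Sch}Z\) to the other side.

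For \eqref{drintodell}, I would apply the chain rule to \(\rd_R\):
\[
\rd_R = (\rd_R Q_T)\rd_{Q_T} + (\rd_R H)\rd_H ,
\]
noting that \(\rd_R t = \rd_R M = 0\). Substituting \eqref{dHintoell} then gives \eqref{drintodell}.

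There is no real obstacle. The only points to check are that every coefficient is well defined and bounded on the support, and the handling of the \(Q_T\) discontinuity at \(R = R_+\). For the coefficients, Proposition~\ref{prop:regDAAV} gives smoothness and boundedness of \(\rd_R Q_T\), \(\rd_\ell Q_T\), \(\rd_R H\), \(\rd_\ell H\). For the discontinuity, \(Q_T\) jumps by a constant \(2\pi\) there, so the derivatives of \(Q_T\) remain smooth and the identities hold everywhere on the support.
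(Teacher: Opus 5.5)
Your proposal is correct and is essentially the paper's proof: the paper obtains \eqref{dHintoell} from the chain-rule expansion of \(\rd_\ell\), eliminates \(\rd_M\) via \(Z\) as displayed in \eqref{Hlincomb}, and then gets \eqref{drintodell} by substituting into \(\rd_R = (\rd_R Q_T)\rd_{Q_T} + (\rd_R H)\rd_H\), using that \(\rd_\ell(\O_\Sch(H,M))\neq 0\) on the support. Your added checks, that \(\rd_X\O_\Sch\neq 0\) by \eqref{nonlinsupp} and that the constant \(2\pi\) jump of \(Q_T\) does not affect its derivatives, are consistent with the paper's Proposition \ref{prop:suppf} and its remark on the discontinuity of \(Q_T\).
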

\begin{proof}
The proof of \eqref{dHintoell} is already given at \eqref{Hlincomb}. For \(\rd_R\), we have
\[
\rd_R = (\rd_R Q_T)\rd_{Q_T} + (\rd_R H)\rd_H
\]
which gives the desired identity \eqref{drintodell} by plugging in \eqref{dHintoell}.
Note here that \(\rd_\ell(\O_\Sch(H, M))\) is nonzero on the domain we consider.
\end{proof}
\begin{rmk}
    In this identity, the coefficients are guaranteed to have bounded \(N+1\)-st derivatives by Proposition \ref{prop:regDAAV}, hence do not cause any issue for the following analysis. 
\end{rmk}

Now we analyze \(\rd_R^I \qbf(s, R)\) and prove \eqref{qlow} and \eqref{qtop} using this identity.
\begin{proof}[Proof of \eqref{qlow} and \eqref{qtop}]
Notice that functions \(\bar{\CC}\) have bounded \((N+2)\)-th derivatives in \(R\), and can be indefinitely differentiated in \(w\) and \(\ell\). 
When \(\rd_R\) hits an integral of the form \(\qbf(s, R)\), then either it hits the term outside integral, or it hits \(f\), or it hits \(\bar{\CC}\).
If it hits \(f\), we can decompose \(\rd_R\) with \eqref{drintodell}, and integrate by parts \(\rd_\ell\) to \(\bar{\CC}\). 
Therefore, for \(I\le N+1\) we can bound \(\rd_R^I \qbf(s, R)\) as follows. 
\[
\abs{\rd_R^I \qbf(s, R)}
\le C\sum_{i_1 + i_2 + i_3 + i_4 + i_5\le I} \abs{\rd_R^{i_1} \big(\frac{1}{R^2A^2}\big)} 
\int_{-\infty}^\infty\int_0^\infty \abs{\rd_{Q_T}^{i_2}Z^{i_3} f}\abs{\rd_R^{i_4}\rd_\ell^{i_5} \bar{\CC}} d\ell dw.
\]
Note here that the implicit constant \(C\) absorbs all the derivative of coefficients in the identity \eqref{drintodell}, hence we crucially use the fact that the coefficients are bounded up to their \((N+2)\)-th derivative.
Now we invoke \eqref{BA1} to control the derivative of \(A\) and \(\bar{\CC}\), and Theorem \ref{thm:Vlasov} to control the derivative of \(f\).
This gives us that
\begin{align*}
\abs{\rd_R^I \qbf(s, R)}
&\le C\sum_{i_1 + i_2 + i_3 + i_4 + i_5\le I} \abs{\rd_R^{i_1} \big(\frac{1}{R^2A^2}\big)} 
\norm{\rd_{Q_T}^{i_2}Z^{i_3} f}_{L^\infty}\norm{\rd_R^{i_4}\rd_\ell^{i_5} \bar{\CC}}_{L^\infty}  \\
&\le \begin{cases}
    C\d\e &\text{ if } I \le N, \\
    C\d\e\jt &\text{ if } I = N+1. 
\end{cases}
\end{align*}
\end{proof}

The proof for \eqref{dsqtop} is not very different; here we need to use the equation to write \(\rd_s f\) in terms of the other derivatives.
\begin{proof}[Proof of \eqref{dsqtop}]
For \eqref{dsqtop} on \(\rd_s \qbf(s, R)\), we first write its integral form: 
\begin{align*}
\rd_s \qbf(s, R)
&= -\frac{2\pi \rd_s A}{R^2 A^3}\int_{-\infty}^\infty\int_0^\infty 
f(s, R, w, \ell) \bar{\CC}(s, R, w, \ell) \, d\ell dw \\
&\qquad
+ \frac{\pi}{A^2R^2}\int_{-\infty}^\infty\int_0^\infty 
(\rd_s f)(s, R, w, \ell) \bar{\CC}(s, R, w, \ell) \, d\ell dw \\
&\qquad
+ \frac{\pi}{A^2R^2}\int_{-\infty}^\infty\int_0^\infty 
f(s, R, w, \ell) \rd_s \bar{\CC}(s, R, w, \ell) \, d\ell dw \\
&\eqqcolon \Ical_1 + \Ical_2 + \Ical_3.
\end{align*}
Note that \(\Ical_1\) and \(\Ical_3\) can be controlled in exactly the same way as in the proof of \eqref{qlow} and \eqref{qtop}.
\(\rd_s A\) has bounded (actually small and decaying) \(N\)-th derivative by \eqref{BA1}, and \(\rd_s \bar{\CC}\) can be controlled by \(\rd_s A\) as well.

For \(\Ical_2\), we write this as follows using the equation: 
\[
\rd_s f = \rd_t f + (\rd_s H)\rd_H f
= -\O(H, M)\rd_{Q_T}f - \Pcal(t, T, Q_T, H, M)\rd_{Q_T}f - (\Dfr H - \rd_s H)\rd_H f,
\]
We need to show that \(\rd_R^I \Ical_2(s, R)\) can be controlled by \(\d\e\) for \(I \le N-1\). 
For the \(\rd_H f\) term, we can just use \eqref{dHintoell} to write \(\rd_H\) as a linear combination of \(\rd_\ell\), \(\rd_{Q_T}\), and \(Z\), and then integrate by parts in \(\ell\) to move the \(\rd_\ell\) derivative to \(\bar{\CC}\).
\begin{align*}
\Ical_2(s, R)
&= -\frac{\pi}{A^2R^2}\int_{-\infty}^\infty\int_0^\infty 
(\O(H, M)+\Pcal(t, T, Q_T, H, M))\rd_{Q_T} f\cdot \bar{\CC} \, d\ell dw \\
&\qquad
-\frac{\pi}{A^2R^2}\int_{-\infty}^\infty\int_0^\infty 
(\Dfr H - \rd_s H) \bar{\CC}\cdot\frac{\rd_X\O_\Sch}{\rd_\ell(\O_\Sch)}
\left(- (\rd_\ell Q_T)\rd_{Q_T} + \frac{1}{\rd_X\O_\Sch}Z\right)f \, d\ell dw \\
&\qquad
+ \frac{\pi}{A^2R^2}\int_{-\infty}^\infty\int_0^\infty 
\rd_\ell\left((\Dfr H - \rd_s H) \bar{\CC}\cdot\frac{\rd_X\O_\Sch}{\rd_\ell(\O_\Sch)}\right)
f \, d\ell dw
\end{align*}
Then, we use the same trick as above; since we are applying at most \(N-1\) derivatives, the coefficients are differentiated at most \(N\) times, which are all bounded by Proposition \ref{prop:regDAAV}, \eqref{Pcalest}, \eqref{DfrHest}, and \eqref{BA1}.
Note also that 
\[
\Dfr H - \rd_s H = -(\rd_s A)w\big(\frac{\a w}{AE}-\b\big).
\]
Therefore, we can control \(\rd_R^I \Ical_2(s, R)\) by \(\d\e\), by \eqref{flow} with \(i_2 = 0\). 
\end{proof}

\subsection{Decomposition of the density variables}
Now, we have to prove decay for the density variables, specifically \eqref{dsqlow} and \eqref{qqbarlow}.
There are two sources of decay: the first one is the decay of the metric coefficients, and the second one is the phase mixing mechanism.
As we see in Section \ref{sec:LinEst}, the linear Vlasov equation does not have a decaying solution, but the phase mixing mechanism can still give us decay, and this is what we will exploit for the linear part.
For the nonlinear part, it is a product of a decaying derivative of metric coefficients, and a derivative of \(f\) with controlled growth;
hence we need to use the first source of decay to control the nonlinear part.
In this subsection, we will decompose the density variables into a linear part and a nonlinear part, and then we will estimate them separately in the next subsections.

We first rewrite the Vlasov equation as follows:
\[
\rd_t f + \O(H, M)\rd_{Q_T} f = (\Rcal_1+\Rcal_2)(t, T, Q_T, H, M),
\]
where
\begin{align*}
\Rcal_1(t, T, Q_T, H, M) &= -\Pcal(t, T, Q_T, H, M)\rd_{Q_T} f, \\
\Rcal_2(t, T, Q_T, H, M) &= -\Dfr H \rd_H f.
\end{align*}
Note that the coefficients in the RHS are all small and decaying; this will be discussed in more detail later.
Then, by Duhamel's principle, we can write the solution \(f\) to this equation as follows:
\begin{align*}
f(t, Q_T, H, M) &= f_\init(Q_T-t\O(H, M), H, M) \\
&\qquad
+ \sum_{b = 1, 2}\int_0^t \Rcal_b (\t, T, Q_T-(t-\t)\O(H, M), H, M)d\t.
\end{align*}
Therefore, we can define \(\qbf_L\) and \(\qbf_N\) with \(\qbf = \qbf_L + \qbf_N\) in the following way: 
\begin{align}
\qbf_L(s, R) &= \frac{\pi}{R^2A^2}\int_{-\infty}^\infty\int_0^\infty f_\init(Q_T - s\O(H, M), H, M) \bar{\CC} \,d\ell dw, \label{qLdef}\\
\qbf_N(s, R) &= \frac{\pi}{R^2A^2}\sum_{b=1, 2}\int_{-\infty}^\infty\int_0^\infty\int_0^s \Rcal_b(\t, T, Q_T-(s-\t)\O(H, M), H, M) \bar{\CC} \, d\t d\ell dw. \label{qNdef}
\end{align}
Note that, the variables in these integrands depend completely on \(t = s, R, w, \ell\), meaning that it has nothing to do with \(\t\). 
The quantity \(H\) there refers to the metric coefficient at time \(t = s\), not \(\t\).

\subsection{Estimates on the linear part}\label{subsec:density-linear}
In this section we prove the decay for the linear part \(\qbf_L\), which is the contribution from the initial data.
The strategy of the proof is essentially the same as what we did in Section \ref{sec:LinEst} after noting the fact that \(\O\) defined in terms of the metric coefficients at time \(T\) is \(C^{N+2}\).
The main result of this subsection is the following. 
\begin{prop}[Control on \(\qbf_L\)]\label{prop:qL}
    For \(\qbf_L(s, R)\) defined in \eqref{qLdef} with \(\bar{\CC}\in \{E, -1/E, w, w^2/E\}\), we have the following estimates:
    \begin{align}
        \abs{\rd_R^I\rd_s \qbf_L(s, R)} &\lesssim \d\e\jap{s}^{-N+I}, &&\text{ for } I \le N-2, \label{dsqLlow}\\
        \abs{\rd_R^I(\qbf_L(s, R)-\qbf_L(T, R))} &\lesssim \d\e\jap{s}^{-N+I}, &&\text{ for } I \le N-1. \label{qLqbarlow}
    \end{align}
\end{prop}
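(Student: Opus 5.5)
The plan is to repeat the non-stationary phase argument of Section \ref{sec:LinEst} in the dynamical action angle coordinates $(t, Q_T, H, M)$, with two new features: the prefactor $\frac{\pi}{R^2A^2}$ and the weight $\bar{\CC}$ now depend on $s$ through $A$, and the phase is $\O$ (built from the time-$T$ metric) rather than $\O_\Sch$. We expand $f_\init$ in Fourier modes in its first argument,
\[
f_\init(Q_T - s\O(H,M), H, M) = \sum_{k\in\ZZ} e^{ikQ_T}e^{-iks\O(H,M)}\wh{(f_\init)}_k(H, M),
\]
where $\wh{(f_\init)}_k$ are the Fourier coefficients of $f_\init$ written in the $(Q_T,H,M)$ coordinates. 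By Lemma \ref{lem:vlasovinit} and Proposition \ref{prop:regDAAV}, these coefficients have $N+1$ bounded derivatives in $(Q_T,H,M)$ of size $\d\e$.

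For the phase we use Proposition \ref{prop:PeriodRefined} together with Proposition \ref{prop:suppf}: the function $\O$ is $C^{N+1}$ close to $\O_\Sch$ up to $\Ocal(\d^{3/4}\e)$, and $\rd_\ell(\O_\Sch(H,M))<-\cfr/2$ on the support. Since $\rd_\ell H$ differs from its Schwarzschild value by $\Ocal(\e)$ by \eqref{BA0}, we get $\abs{\rd_\ell(\O(H,M))}\ge \cfr/4$. This lets us integrate by parts with the operator $\frac{1}{-iks\rd_\ell(\O(H,M))}\rd_\ell$, taking $\rd_\ell$ in $(R,w,\ell)$ coordinates at fixed $s$.

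For \eqref{dsqLlow}, we differentiate in $s$ at fixed $(R,w,\ell)$. Since $Q_T$ is independent of $s$, the $\rd_s$ derivative falls on three places: on the phase, producing $-ik\O$ as in the linear proof; on $H$, producing $\rd_s H\cdot\rd_H(\cdots)$; and on $A$ in the prefactor and in $\bar{\CC}$. The last two contributions carry the factor $\rd_s(\a,A,\b)$, which by \eqref{BA1} decays like $\jap{s}^{-N+\max{I-2,0}}$. These terms are treated as non-decaying integrals, exactly as in the proof of \eqref{qlow}: we move every $\rd_H$ into $\rd_\ell$, $\rd_{Q_T}$ and $Z$ via \eqref{dHintoell} and integrate by parts, so that no time growth appears. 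That suffices here because $I\le N-2$. The main term is handled as in \eqref{dRiGk}. We apply $\rd_R^I$, noting that each $\rd_R$ hitting the phase costs $ks\,\rd_R(\O(H,M))$. We then integrate by parts $N-I+i_2$ times in $\ell$, and derivatives landing on $1/\rd_\ell\O$, on $Q_T$, on $H$ and on the metric weights are bounded by Proposition \ref{prop:regDAAV} up to order $N+1$. Finally we sum in $k$ by Cauchy--Schwarz and Plancherel in $Q_T$, as in the linear proof.

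For \eqref{qLqbarlow}, we write $\qbf_L(s)-\qbf_L(T)$ as the difference of the $k\neq0$ parts, which are each bounded by $\jap{s}^{-N+I}$ (respectively $\jap{T}^{-N+I}\le\jap{s}^{-N+I}$) by the same argument run without the $ik\O$ factor, as in item (5) of Theorem \ref{thm:lin}. It is also the difference of the zero modes $\int\wh{(f_\init)}_0(H,M)\bar{\CC}\,d\ell dw$ at times $s$ and $T$. The zero mode depends on time only through the metric, that is through $H$, $A$ and the prefactor, so its difference is controlled by \eqref{BA2} at order $I\le N-1$, again after trading $\rd_H$ for $\rd_\ell$.

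The main obstacle we expect is derivative counting at the top orders $I=N-2$ and $I=N-1$. Each integration by parts costs one derivative of $\O$ and of the change of variables, and we only have $C^{N+2}$ control from Proposition \ref{prop:regDAAV}, with \eqref{Orefined} available only up to order $N$. Summing over $k$ requires one spare power of $k$, which is exactly why the range stops at $N-2$ (respectively $N-1$) and why the initial data are assumed to have $N+1$ derivatives. We will track the indices as in \eqref{dRiGk} to check that at most $N+1$ derivatives ever fall on $f_\init$ and at most $N+2$ fall on the metric.
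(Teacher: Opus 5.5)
Your proposal is correct and follows essentially the same route as the paper: Fourier expansion of $f_\init$ in $Q_T$, non-stationary phase in $\ell$ using the lower bound on $\rd_\ell(\O(H,M))$ (the paper's Lemma~\ref{lem:qL:stationaryphase}, with $\a=1$ for $\rd_s\qbf_L$ and $\a=0$ for the nonzero modes of $\qbf_L(s)-\qbf_L(T)$), Cauchy--Schwarz/Plancherel in $k$, and \eqref{BA2} for the zero-mode difference. The only deviation is that you handle the $\rd_s(\a,A,\b)$-weighted terms, in particular the $\rd_s H\,\rd_H$ term, in physical space via \eqref{dHintoell} and integration by parts rather than mode by mode; this is a harmless and slightly cleaner variation, provided you note that $Z$ acting on the free transport produces $t\,Z(\O(H,M))=\Ocal(\d^{3/4}\e\, t)=\Ocal(1)$ for $t\le T_f$ (Lemma~\ref{lem:Oderivest}).
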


Because we have to use the non-stationary phase argument, we need to decompose the frequency components of \(f_\init\).
We denote each component by \(\fhat_{\init, k}(H, M)\), i.e. 
\[
f_\init(Q_T, H, M) = \sum_{k\in \ZZ} e^{ikQ_T}\fhat_{\init, k}(H, M).
\]

Then, the quantities that we want to analyze can be written as follows:
\begin{lem}\label{lem:qLform}
    For \(\qbf_L(s, R)\) defined in \eqref{qLdef}, we have the following identity:
    \begin{align*}
    \rd_s \qbf_L(s, R)
    &= \sum_{k\in \ZZ} -\frac{2\pi\rd_s A}{R^2A^3}\int_{-\infty}^\infty\int_0^\infty e^{-ik s\O(H, M)}e^{ikQ_T}\fhat_{\init, k}(H, M) \bar{\CC} \,d\ell dw \\
    &\quad - \sum_{k\in \ZZ} \frac{\pi}{R^2A^2}\int_{-\infty}^\infty\int_0^\infty ik\O(H, M)e^{-ik s\O(H, M)}e^{ikQ_T}\fhat_{\init, k}(H, M) \bar{\CC} \,d\ell dw \\
    &\quad + \sum_{k\in \ZZ} \frac{\pi}{R^2A^2}\int_{-\infty}^\infty\int_0^\infty e^{-ik s\O(H, M)}e^{ikQ_T}\fhat_{\init, k}(H, M) \rd_s \bar{\CC} \,d\ell dw \\
    &\quad + \sum_{k\in \ZZ} \frac{\pi}{R^2A^2}\int_{-\infty}^\infty\int_0^\infty e^{-ik s\O(H, M)}e^{ikQ_T} \\
    &\qquad \hspace{20mm}
    \cdot(-iks\O_X(H, M)\fhat_{\init, k}(H, M) + \wh{\rd_H f}_{\init, k}(H, M))\rd_s H \bar{\CC} \,d\ell dw, \\
    \intertext{}
    \qbf_L(s, R) - \qbf_L(T, R) &= \sum_{k\in \ZZ\setminus\{0\}} \frac{\pi}{R^2A^2}\int_{-\infty}^\infty\int_0^\infty e^{-ik s\O(H, M)}e^{ikQ_T}\fhat_{\init, k}(H, M) \bar{\CC}(s) \,d\ell dw \\
    &\qquad
    - \sum_{k\in \ZZ\setminus\{0\}} \frac{\pi}{R^2\Abar^2}\int_{-\infty}^\infty\int_0^\infty e^{-ik T\O(H_T, M)}e^{ikQ_T}\fhat_{\init, k}(H_T, M) \bar{\CC}(T) \,d\ell dw \\
    &\qquad
    + \frac{\pi}{R^2\Abar^2}\int_{-\infty}^\infty\int_0^\infty (\fhat_{\init, 0}(H, M)-\fhat_{\init, 0}(H_T, M)) \bar{\CC}(T) \,d\ell dw \\
    &\qquad
    + \frac{\pi}{R^2}\int_{-\infty}^{\infty}\int_0^{\infty} \fhat_{\init, 0}(H, M)
    \left(\frac{\CC(s)}{A^2}-\frac{\CC(T)}{\Abar^2}\right) \,d\ell dw.  
    \end{align*}
\end{lem}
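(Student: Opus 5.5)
The plan is a direct computation starting from the definition \eqref{qLdef}, combined with the Fourier expansion of \(f_\init\) in its first slot. The key point is that the integrand
\[
F(s, R, w, \ell) \coloneqq f_\init\big(Q_T - s\O(H, M), H, M\big)
\]
depends on \(s\) in three ways. It depends explicitly through the phase shift \(s\O\). It depends through \(H = H(s, R, w, \ell)\), since \(H\) is built from the metric at time \(s\). It also depends through the weight \(\frac{\pi}{R^2A^2}\bar{\CC}\), whose \(s\)-dependence is only through \(A(s, R)\). By contrast, \(Q_T\) and \(M = \ell\) are independent of \(s\): \(Q_T\) is defined from the metric at the fixed time \(T\) (Definition \ref{def:QT}). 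Expanding gives
\[
F = \sum_k e^{ikQ_T} e^{-iks\O(H, M)} \fhat_{\init, k}(H, M).
\]
The series converges absolutely and may be differentiated termwise, by the regularity of \(f_\init\) from \eqref{nonlinthm-initf} and Proposition \ref{prop:regDAAV}.

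\emph{Formula for \(\rd_s\qbf_L\).} I will differentiate under the integral at fixed \((R, w, \ell)\) and use the product rule. The derivative hitting \(\frac{1}{A^2}\) produces the first sum, with prefactor \(-2\pi\rd_s A/(R^2A^3)\). The explicit \(s\) in the phase produces the second sum, with factor \(-ik\O(H, M)\). The derivative hitting \(\bar{\CC}\) produces the third sum. The chain rule through \(H\) gives \(\rd_s H\cdot\rd_H\big(e^{-iks\O(H, M)}\fhat_{\init, k}(H, M)\big)\), which equals
\[
\rd_s H\cdot e^{-iks\O}\big(-iks\O_X(H, M)\fhat_{\init, k} + \wh{\rd_H f}_{\init, k}\big),
\]
and this is the fourth sum. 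Here \(\wh{\rd_H f}_{\init, k} = \rd_H\fhat_{\init, k}\), because \(\rd_H\) commutes with Fourier expansion in \(Q_T\). No term arises from differentiating \(e^{ikQ_T}\), since \(Q_T\) is \(s\)-independent.

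\emph{Formula for \(\qbf_L(s) - \qbf_L(T)\).} I will separate off the \(k=0\) mode in both \(\qbf_L(s, R)\) and \(\qbf_L(T, R)\). At time \(T\) the coefficients carry bars, so \(H\) becomes \(H_T\) and \(A\) becomes \(\Abar\). The \(k\neq0\) modes give the first two sums directly. The zero modes give
\[
\frac{\pi}{R^2}\int\!\!\int\Big(\frac{\fhat_{\init, 0}(H, M)\bar{\CC}(s)}{A^2} - \frac{\fhat_{\init, 0}(H_T, M)\bar{\CC}(T)}{\Abar^2}\Big)\,d\ell\, dw.
\]
I will add and subtract \(\fhat_{\init, 0}(H, M)\bar{\CC}(T)/\Abar^2\) inside this integral, which splits it into exactly the last two lines of the claimed identity. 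For the \(k\neq0\) sums, the integration is still in the same \((w, \ell)\) variables at fixed \(R\), so nothing further is needed.

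I expect no real obstacle here. The only points needing care are the justification of termwise differentiation and summation, which follows from the \(N+1\) derivatives of \(f_\init\) giving \(\jap{k}^{-2}\) decay of \(\fhat_{\init, k}\) together with the compact support, and the bookkeeping of which quantities are \(s\)-dependent. If one insists on evaluating at \(s = T\) to identify \(H_T\) with \(H\) at that time, this is consistent with the definitions \eqref{Hdef} and \eqref{HTdef}, because \(H(T, \cdot) = H_T\).
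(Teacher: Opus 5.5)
Your proposal is correct and follows the same route as the paper. You expand \(f_\init\) in Fourier modes in \(Q_T\), differentiate at fixed \((R, w, \ell)\) while tracking the three sources of \(s\)-dependence (the explicit phase \(s\O\), the energy \(H\), and \(A\) inside \(\frac{\pi}{R^2A^2}\bar{\CC}\)), and for the difference you separate the \(k=0\) mode, adding and subtracting \(\fhat_{\init, 0}(H, M)\bar{\CC}(T)/\Abar^2\); this spells out what the paper calls ``a rearrangement of the definition,'' including the fact that \(Q_T\) and the function \(\O\) carry no \(s\)-dependence.
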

\begin{proof}
We first have 
\[
f_\init(Q_T-s\O(H, M), H, M)
= \sum_{k\in \ZZ} e^{-ik s\O(H, M)}e^{ikQ_T}\fhat_{\init, k}(H, M).
\]
Then, we can write \(\qbf_L\) as follows:
\[
\qbf_L(s, R) = \sum_{k\in \ZZ} \frac{\pi}{R^2A^2}\int_{-\infty}^\infty\int_0^\infty e^{-ik s\O(H, M)}e^{ikQ_T}\fhat_{\init, k}(H, M) \bar{\CC} \,d\ell dw.
\]
First we have to take \(s\) derivative; for the integrand we fix \(R, w, \ell\) and take a derivative in \(s\).
The integrand depends on \(s\) not only explicitly but also implicitly through \(H\), and \(\bar{\CC}\) contains metric coefficients which depends on \(s\). 
Collecting terms with \(\rd_H\) at last, we obtain the expression above. 

For the second identity, the above identity just separates the \(k = 0\) mode and the \(k\neq 0\) modes.
The RHS is just a rearrangement of the definition of \(\qbf_L\).
\end{proof}

\begin{rmk}
For the above expression of \(\qbf_L(s, R)-\qbf_L(T, R)\), we keep the \(k\neq 0\) term as we can show that it is small and decay by itself, and for the \(k = 0\) term we do not see the difference of exponential term, so we can use the fact that \(H-H_T\) is small to control it. 
This will be discussed when we prove Proposition \ref{prop:qL}.    
\end{rmk}

We prove the following lemma which contains the effect of phase mixing mechanism. 
This controls the terms in the form repeated above. 
\begin{lem}\label{lem:qL:stationaryphase}
    Let \(\bar{\CC}(t, R, w, \ell)\) be a function with bounded derivative up to the \(N\)-th order, and \(g\) is a \(C^N\) function with \(N\)-th derivative bounded.
    Then, for \(k\neq 0\) and \(\a = 0, 1\), we have the decay of the following integral for \(I \le N-1-\a\): 
    \begin{multline}\label{stationaryphase}
    \abs{\rd_R^I \int_{-\infty}^{\infty}\int_0^\infty
    k^\a e^{-iks\O(H, M)}e^{ikQ_T}\ghat_k(H, M) \bar{\CC} \, d\ell dw} \\
    \lesssim \jap{s}^{-N+I} \sum_{i_1+i_2+i_3\le N}\int_{-\infty}^\infty\int_0^\infty \frac{1}{\abs{k}}\abs{\wh{(\rd_{Q_T}^{i_1}\rd_H^{i_2}\rd_M^{i_3}g)}_k(H, M)} \, d\ell dw.
    \end{multline}
    For \(k = 0\) we can still prove the boundedness of this integral for \(I \le N\): 
    \begin{equation}\label{stationaryphase0}
    \abs{\rd_R^I \int_{-\infty}^{\infty}\int_0^\infty
    \ghat_0(H, M) \bar{\CC} \, d\ell dw}
    \lesssim \sum_{i_1+i_2\le I} \int_{-\infty}^\infty\int_0^\infty 
    \abs{\wh{(\rd_H^{i_1}\rd_M^{i_2}g)}_0(H, M)} \, d\ell dw.
    \end{equation}
\end{lem}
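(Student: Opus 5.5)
The plan is to repeat the non-stationary phase argument of \eqref{dRiGk} in the dynamical coordinates, now using $\O(H,M)$ (built from the metric at time $T$) in place of $\O_\Sch$. The basic identity is
\[
e^{-iks\O(H,M)} = \frac{1}{-iks\,\rd_\ell(\O(H,M))}\,\rd_\ell\, e^{-iks\O(H,M)},
\]
where $\rd_\ell$ is the $(R,w,\ell)$-coordinate derivative at fixed $(R,w)$. For this to be usable we need $\rd_\ell(\O(H,M))$ bounded away from zero on the support. This comes from Proposition~\ref{prop:suppf} (which gives $\abs{\rd_\ell(\O_\Sch(H,M))}\ge \cfr/2$) together with \eqref{Orefined} and the bound $\rd_\ell H = \Ocal(1)$. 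Since $\O-\O_\Sch$ and its first derivatives are $\Ocal(\d^{3/4}\e)$, we get $\abs{\rd_\ell(\O(H,M))}\ge \cfr/4$ once $\e_0$ is small.

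Step one is to distribute the $\rd_R^I$ derivatives. Each $\rd_R$ lands on one of three places.
\begin{itemize}
\item On the phase $e^{-iks\O}$, it produces a factor $ks\,\rd_R(\O(H,M))$.
\item On $e^{ikQ_T}$, it produces $ik\,\rd_R Q_T$.
\item On $\ghat_k(H,M)$, it produces $\rd_R H\,\wh{(\rd_H g)}_k$.
\end{itemize}
It may also land on $\bar\CC$ or on the coefficients. All coefficients have bounded derivatives of the required order by Proposition~\ref{prop:regDAAV}. The result is a sum of terms of the form
\[
k^{\a+i_1+i_2}s^{i_2}e^{-iks\O}e^{ikQ_T}\wh{(\rd_H^{i_3}\rd_M^{i_4}g)}_k\,\CC,
\qquad i_1+\dots+i_4\le I .
\]
Here I will not convert $\rd_H$ into $Z$. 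Instead I let $\rd_H$ and $\rd_M$ fall directly on $g$, since the right-hand side of \eqref{stationaryphase} allows $\rd_H$ and $\rd_M$ derivatives of $g$.

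Step two is to integrate by parts in $\ell$ exactly $N-I+i_2$ times with the operator above. This gains $\abs{ks}^{-(N-I+i_2)}$, which cancels the $s^{i_2}$ and leaves $\abs{s}^{-N+I}$. Each $\rd_\ell$ falls on one of the following:
\begin{itemize}
\item on $e^{ikQ_T}$, giving a factor $k$;
\item on $\ghat_k$, giving $\rd_H$ and $\rd_M$ derivatives;
\item on $1/\rd_\ell\O$, on $\bar\CC$, or on the coefficients, all of which are bounded.
\end{itemize}
The total number of derivatives on the coefficients is at most $N$. This is covered by \eqref{CoVptoA} and by the $C^{N+2}$ regularity of $\O$, and needs $\bar\CC\in C^N$. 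The boundary terms vanish because $g$ has compact support in $(H,M)$, namely in $\Scal_{\frac12\cfr}$.

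Step three is the bookkeeping in powers of $k$. The leftover power is $\abs{k}^{\a+i_1+j_1+I-N}$, where $j_1$ counts $\rd_\ell$'s landing on $e^{ikQ_T}$. The condition $I\le N-1-\a$ guarantees that this power plus one, plus the number of $\rd_H,\rd_M$ derivatives, is at most $N$. We then trade $\abs{k}^{p}\abs{\ghat_k}$ for $\abs{k}^{-1}\abs{\wh{(\rd_{Q_T}^{p+1}\cdots g)}_k}$ whenever $p+1\ge0$; if $p<-1$, we simply use $\abs{k}^p\le\abs{k}^{-1}$. For $\abs{s}\le1$ we do not integrate by parts and use the trivial bound, which upgrades $\abs{s}$ to $\jap{s}$. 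The $k=0$ estimate \eqref{stationaryphase0} involves no phase at all: only $\rd_R$ acts, through $\rd_R H\,\rd_H$ and through derivatives of $\bar\CC$, which gives the bound directly for $I\le N$.

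I expect the main obstacle to be the regularity count. We must check that after $N$ total derivatives, shared among $\rd_R$ and $\rd_\ell$ acting on $1/\rd_\ell(\O(H,M))$, $\rd_R Q_T$ and $\rd_\ell Q_T$, no more than $N+2$ metric derivatives appear. This is what \eqref{wBA0} and Proposition~\ref{prop:regDAAV} provide. The bookkeeping also has to make sure that the lower bound on $\rd_\ell\O$ survives the perturbation.
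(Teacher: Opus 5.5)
Your proposal is correct and is essentially the paper's own proof. You distribute \(\rd_R^I\) into terms \(k^{\a+i_1+i_2}s^{i_2}e^{-iks\O}e^{ikQ_T}\wh{(\rd_H^{i_3}\rd_M^{i_4}g)}_k\), integrate by parts \(N-I+i_2\) times in \(\ell\) with the non-stationary phase identity, and close with the same index count \((1+\a+i_1+j_1-N+I)+(i_3+j_2)+(i_4+j_3)\le 1+\a+I\le N\); the \(k=0\) case is handled directly, as in the paper. The points you make explicit are left implicit in the paper: the perturbed lower bound on \(\rd_\ell(\O(H,M))\) via Proposition~\ref{prop:suppf} and \eqref{Orefined}, the trivial bound for \(\abs{s}\le 1\), and vanishing boundary terms from the support of \(g\). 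Your count of ``at most \(N\)'' coefficient derivatives is slightly low, since a factor such as \(\rd_R(\O(H,M))\) or \(1/\rd_\ell(\O(H,M))\) can absorb \(N+1\) derivatives of \(\O\), but the \(C^{N+2}\) regularity of \(\O\) you cite still covers this.
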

\begin{proof} For \(k\neq 0\), we use the following identity: 
\[
e^{-iks\O(H, M)} = \left(\frac{1}{-iks\rd_\ell(\O(H, M))}\right)\rd_\ell e^{-iks\O(H, M)}.
\]
With this, we can control the integral as follows. Here, \(\CC\) denotes a generic bounded function, which may change from line to line.
\begin{align*}
&\abs{\rd_R^I \int_{-\infty}^{\infty}\int_0^\infty
k^\a e^{-iks\O(H, M)}e^{ikQ_T}\ghat_k(H, M) \bar{\CC} \, d\ell dw} \\
&\lesssim \sum_{i_1 + \cdots + i_4 \le I} \abs{\int_{-\infty}^{\infty}\int_0^\infty
k^{\a+i_1+i_2}s^{i_2}e^{-iks\O(H, M)}e^{ikQ_T}\wh{(\rd_H^{i_3}\rd_M^{i_4}g)}_k(H, M) \CC \, d\ell dw} \\
&\lesssim \sum_{\substack{i_1 + \cdots + i_4 \le I \\ j_1 + j_2 + j_3 \le N-I+i_2}}
\frac{\abs{k}^{\a+i_1}}{\abs{ks}^{N-I}}
\int_{-\infty}^{\infty}\int_0^\infty \abs{k}^{j_1}\abs{\wh{(\rd_H^{i_3+j_2}\rd_M^{i_4+j_3}g)}_k(H, M)} \, d\ell dw \\
&\lesssim \jap{s}^{-N+I} \sum_{\substack{i_1 + \cdots + i_4 \le I \\ j_1 + j_2 + j_3 \le N-I+i_2}}
\int_{-\infty}^\infty\int_0^\infty \abs{k}^{\a+i_1+j_1-N+I}
\abs{\wh{(\rd_H^{i_3+j_2}\rd_M^{i_4+j_3}g)}_k(H, M)} \, d\ell dw.
\end{align*}
Then, counting the index, we have 
\[
(1+\a+i_1+j_1-N+I) + (i_3 + j_2) + (i_4 + j_3)
\le 1 + \a + I \le N,
\]
which proves that the above is bounded by the RHS of \eqref{stationaryphase}.

For the second inequality, we can just control the integral by the triangle inequality, and then use the fact that \(\bar{\CC}\) has bounded derivative up to the \(N\)-th order to control the \(R\) derivative.
\end{proof}

\begin{proof}[Proof of Proposition \ref{prop:qL}]
For \(\rd_s \qbf_L (s, R)\), there are four terms to control.
For the first term, \(k = 0\) term is bounded and the decay follows from \(\rd_s A\): for \(I \le N-2\) we have
\begin{align*}
&\abs{\rd_R^I\left(-\frac{2\pi\rd_s A}{R^2A^3}
\int_{-\infty}^\infty\int_0^\infty \fhat_{\init, 0}(H, M) \bar{\CC} \,d\ell dw\right)} \\
&\lesssim \sum_{i_1 + i_2 \le I} \abs{\rd_R^{i_1} \rd_s A}
\abs{\rd_R^{i_2} \int_{-\infty}^\infty\int_0^\infty \fhat_{\init, 0}(H, M) \bar{\CC} \,d\ell dw} \\
&\lesssim \sum_{i_1 + i_2 + i_3 \le I} \d^{3/4}\e\jap{s}^{-N + \max{i_1-2, 0}}
\int_{-\infty}^\infty\int_0^\infty \abs{\wh{(\rd_H^{i_2}\rd_M^{i_3}f_\init)}_0(H, M)} \, d\ell dw 
\end{align*}
which is of course much smaller than we need. 
Note that we used \eqref{BA1} and \eqref{stationaryphase0} at the last inequality, and the last integral can be controlled by \eqref{nonlinthm-initf}.
For \(k\neq 0\) terms, we need to sum them over \(\ZZ\); for this we proceed as follows: 
\begin{align*}
&\abs{\rd_R^I\sum_{k\in \ZZ\setminus\{0\}} \frac{2\pi\rd_s A}{R^2A^3}\int_{-\infty}^\infty\int_0^\infty e^{-ik s\O(H, M)}e^{ikQ_T}\fhat_{\init, k}(H, M) \bar{\CC} \,d\ell dw} \\
&\lesssim \sum_{i_1+i_2\le I} \abs{\rd_R^{i_1}\rd_s A}
\sum_{k\in \ZZ\setminus\{0\}}\abs{
    \rd_R^{i_2}\int_{-\infty}^\infty\int_0^\infty e^{-ik s\O(H, M)}e^{ikQ_T}\fhat_{\init, k}(H, M) \bar{\CC} \,d\ell dw} \\
&\lesssim \sum_{i_1+i_2\le I} \d^{3/4}\e\jap{s}^{-N + \max{i_1-2, 0}}
\sum_{k\in \ZZ\setminus\{0\}}\sum_{j_1 + j_2 +j_3\le N}
\int_{-\infty}^\infty\int_0^\infty \frac{1}{\abs{k}}\abs{\wh{(\rd_{Q_T}^{j_1}\rd_H^{j_2}\rd_M^{j_3}f_\init)}_k(H, M)} \, d\ell dw \\ 
&\lesssim \d^{3/4}\e\jap{s}^{-N+I}\sum_{j_1 + j_2 +j_3\le N}\int_{-\infty}^\infty\int_0^\infty
\bigg(\sum_{k\in \ZZ\setminus\{0\}}\frac{1}{\abs{k}^2}\bigg)^{1/2}
\norm{\wh{(\rd_{Q_T}^{j_1}\rd_H^{j_2}\rd_M^{j_3}f_\init)}_k(H, M)}_{\ell^2_k} \, d\ell dw \\
&\lesssim \d^{3/4}\e\jap{s}^{-N+I}\sum_{j_1 + j_2 +j_3\le N}
\sup_{(Q_T, H, M)}\abs{\rd_{Q_T}^{j_1}\rd_H^{j_2}\rd_M^{j_3}f_\init(Q_T, H, M)} \\
&\lesssim \d\e\jap{s}^{-N+I}.
\end{align*}

The third term can be controlled in the same way, after noticing that 
\[
\rd_s \bar{\CC} \in \left\{
-\frac{\ell\rd_s A}{R^2A^3E}, \, \frac{\ell\rd_s A}{R^2A^3E^3}, \, 0, \, \frac{w^2\ell\rd_s A}{R^2A^3E^3}
\right\}.
\]
For the fourth term, because \(\rd_s H\) can be written as a linear combination of \(\rd_s (\a, A, \b)\) we can treat this term similarly; but here we need to control the terms with one more derivative \(\wh{(\rd_H f)}\) and \(k\fhat\) together with multiplied \(s\).
Since we actually have \(f_\init\in C^{N+1}\) with smallness \(\e\) from \(\rd_s H\), we can control this term in the same way as above.

For the second term, we need to use the full power of \eqref{stationaryphase} with \(\a = 1\). For \(I \le N-2\),
\begin{align*}
&\abs{\rd_R^I \sum_{k\in \ZZ} \frac{\pi}{R^2A^2}\int_{-\infty}^\infty\int_0^\infty ik\O(H, M)e^{-ik s\O(H, M)}e^{ikQ_T}\fhat_{\init, k}(H, M) \bar{\CC} \,d\ell dw} \\
&\lesssim \sum_{k\in \ZZ\setminus\{0\}} \abs{\rd_R^I \int_{-\infty}^\infty\int_0^\infty ke^{-ik s\O(H, M)}e^{ikQ_T}\fhat_{\init, k}(H, M) \CC \,d\ell dw} \\
&\lesssim \jap{s}^{-N+I}\sum_{k\in \ZZ\setminus\{0\}}\sum_{i_1 + i_2 + i_3 \le N} \int_{-\infty}^\infty\int_0^\infty \frac{1}{\abs{k}}\abs{\wh{(\rd_{Q_T}^{i_1}\rd_H^{i_2}\rd_M^{i_3}f_\init)}_k(H, M)} \, d\ell dw \\
&\lesssim \jap{s}^{-N+I}\sum_{i_1 + i_2 + i_3 \le N} \int_{-\infty}^\infty\int_0^\infty 
\norm{\wh{(\rd_{Q_T}^{i_1}\rd_H^{i_2}\rd_M^{i_3}f_\init)}_k(H, M)}_{\ell^2_k} \, d\ell dw \\
&\lesssim \jap{s}^{-N+I}\sum_{i_1 + i_2 + i_3 \le N}\sup_{(Q_T, H, M)}\abs{\rd_{Q_T}^{i_1}\rd_H^{i_2}\rd_M^{i_3}f_\init}.
\end{align*} 
The last line can be controlled by \eqref{nonlinthm-initf}, which gives us the desired smallness and decay.

Now we turn to the second estimate on \(\qbf_L(s, R) - \qbf_L(T, R)\).
For the \(k\neq 0\) terms, we can just use the same argument as above with \eqref{stationaryphase} \(\a = 0\) to show that they are small and decaying. 
For the \(k = 0\) term, we control two terms separately. 
We have 
\begin{align*}
&\frac{\pi}{R^2\Abar^2}\int_{-\infty}^\infty\int_0^\infty (\fhat_{\init, 0}(H, M)-\fhat_{\init, 0}(H_T, M)) \bar{\CC}(T) \,d\ell dw \\
&= \int_0^1 \frac{\pi}{R^2\Abar^2}\int_{-\infty}^\infty\int_0^\infty
\wh{(\rd_H f_\init)}_0(H_T + \th(H-H_T), M)(H-H_T) \bar{\CC}(T) \,d\ell dw d\th,
\end{align*}
and 
\begin{align*}
&\abs{\rd_R^I \int_{-\infty}^\infty\int_0^\infty
\wh{(\rd_H f_\init)}_0(H_T + \th(H-H_T), M)(H-H_T) \bar{\CC}(T) \,d\ell dw} \\
&\lesssim \sum_{i_1 + i_2 + i_3 \le I} \int_{-\infty}^\infty\int_0^\infty
\abs{\wh{(\rd_H^{1+i_1}\rd_M^{i_2} f_\init)}_0(H_T + \th(H-H_T), M) \rd_R^{i_3}(H-H_T)} \,d\ell dw \\
&\lesssim \sum_{i_1 + i_2 + i_3 \le I} \sup_{Q_T, H, M}\abs{\rd_H^{1+i_1}\rd_M^{i_2} f_\init}\abs{\rd_R^{i_3}(H-H_T)},
\end{align*}
which is smaller than the desired bound by \eqref{nonlinthm-initf} and \eqref{BA2}.
For the last term, this can be controlled exactly in the same way as \(k = 0\) for the first term of \(\rd_s \qbf\), after noticing that 
\[
\rd_R^I \big(\frac{\CC(s)}{A^2}-\frac{\CC(T)}{\Abar^2}\big)
\le C\sum_{i\le I}\rd_R^i(A-\Abar)
\le C\d^{3/4}\e\jap{s}^{-N+\max{I-2, 0}}
\]
by \eqref{BA2}.
\end{proof}

\subsection{Setting up the nonlinear estimates}
We recall that the terms we are considering here are of the form 
\[
\qbf_N(s, R) = \frac{\pi}{R^2A^2}\sum_{b = 1, 2}
\int_{-\infty}^\infty\int_0^\infty\int_0^s \Rcal_b(\t, T, Q_T-(s-\t)\O(H, M), H, M) \bar{\CC} \, d\t d\ell dw,
\]
and the estimates we want to prove are the following:
\begin{prop}[Control on \(\qbf_N\)]\label{prop:qN}
    For \(\qbf_N(s, R)\) defined in \eqref{qNdef} with \(\bar{\CC}\in \{E, -1/E, w, w^2/E\}\), we have the following estimates:
    \begin{align}
        \abs{\rd_R^I\rd_s \qbf_N(s, R)} &\lesssim \d\e\jap{s}^{-N+I}, &&\text{ for } I \le N-2, \label{dsqNlow}\\
        \abs{\rd_R^I(\qbf_N(s, R)-\qbf_N(T, R))} &\lesssim \d\e\jap{s}^{-N+I}, &&\text{ for } I \le N-1. \label{qNqNbarlow}
    \end{align}
\end{prop}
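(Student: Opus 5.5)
I would start from the Duhamel representation \eqref{qNdef} and compute $\rd_s \qbf_N$ directly. Differentiating in $s$ at fixed $(R,w,\ell)$ produces four types of terms. The first is the boundary term at $\t = s$, namely $\frac{\pi}{R^2A^2}\int\int \Rcal_b(s,T,Q_T,H,M)\bar\CC\,d\ell dw$. The second is the main term
\[
-\frac{\pi}{R^2A^2}\int\int\int_0^s \O(H,M)\rd_{Q_T}\Rcal_b(\t,T,Q_T-(s-\t)\O,H,M)\bar\CC\,d\t d\ell dw .
\]
The remaining terms come from $\rd_s$ falling on $A$ or $\bar\CC$, or on $H$ through $\rd_s H$ (which generates $(s-\t)\rd_X\O\,\rd_{Q_T}\Rcal_b$ and $\rd_H\Rcal_b$). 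I would bound the boundary term and the $\rd_s A$, $\rd_s\bar\CC$, $\rd_s H$ terms crudely. They already carry a factor $\Pcal$, $\Dfr H$ or $\rd_s(\a,A,\b)$, which decays like $\jap{s}^{-N+\max{I-2,0}}$ by \eqref{Pcalest}, \eqref{DfrHest} and \eqref{BA1}. After converting any $\rd_H$ on $f$ via \eqref{dHintoell} and integrating $\rd_\ell$ by parts, as in the proof of \eqref{dsqtop}, the $f$-factor only needs the non-growing bounds \eqref{flow} with $i_2=0$. So these terms are of size $\d^{3/2}\e^2\jap{s}^{-N+I}$ times at most polynomial-in-$\log$ losses, which is acceptable on $s\le T_f$.

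For the main term, I would expand in Fourier modes in $Q_T$. I write $\Rcal_b = p_b\cdot \rd_b f$ with $p_1=-\Pcal$, $\rd_1 = \rd_{Q_T}$, $p_2 = -\Dfr H$ (and its $\rd_s A$ part $p_3$), $\rd_2 = \rd_H$. This gives sums over $k,l$ of the integrals $\Ical^{i_1,i_2,i_3}_{k,l}(\t,s,R)$ from the introduction:
\[
\int\int e^{ikQ_T}\wh{(\rd_R^{i_1}p_b)}_l\, e^{-ik(s-\t)\O}\wh{(\rd_b\rd_{Q_T}^{i_2}Z^{i_3}f)}_{k-l}\,\CC\,d\ell dw .
\]
These come from distributing $\rd_R^I$ via \eqref{drintodell}. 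The $\rd_{Q_T}$ and $Z$ pieces commute with $e^{-ik(s-\t)\O}$ up to commutators controlled by Lemma~\ref{lem:Oderivest}, and the $\rd_\ell$ piece is integrated by parts. When $\rd_H$ lands on $f$ I would use \eqref{dHintoell} again.

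For the terms where $p_b$ carries many derivatives, I would use the decay of $p_b$ in $\t$. For the terms where $f$ carries many derivatives, I would use non-stationary phase in $\ell$ exactly as in Lemma~\ref{lem:qL:stationaryphase}, since $\rd_\ell\O$ is bounded away from zero by Proposition~\ref{prop:suppf} and \eqref{Orefined}. This gives a factor $\jap{k(s-\t)}^{-j}$ which, integrated against the $\t$-decay of $p_b$, yields $\jap{s}^{-N+I}$.

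I expect the main obstacle to be the case $i_1 = I+1$, $i_2=i_3=0$, where one derivative of $p$ is needed merely to sum the Fourier modes. Here I would follow the introduction's strategy. First I use $Y_{k,-l\t} = i(ks-l\t)\rd_X\O + \rd_H$: when it falls on $f$ it produces $Y_H f$, controlled by \eqref{flowYH}, and otherwise it puts one derivative on $p$. This gives $\jap{ks-l\t}^{-1}$ weights. I then interpolate with the bound that spends one more derivative on $p$ (via \eqref{BAH1} and \eqref{ftop}), giving $\min(\jap{l}^{-1},\jap{s}\jap{l}^{-2})\jap{k-l}^{-2}$. Integrating $\jap{l}/\jap{ks-l\t}$ in $\t$ costs one $\log\jap{s}$, and summing the $\min$ costs another. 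The result is $\d^{7/4}\e^2\jap{s}^{-N+I}(\log\jap{s})^2 \lesssim \d\e\jap{s}^{-N+I}$ because $s\le T_f = \e^{-1}(\log 1/\e)^{-2}$.

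Finally, for \eqref{qNqNbarlow} I would write
\[
\qbf_N(s)-\qbf_N(T) = -\int_s^T \rd_{s'}\qbf_N(s')\,ds'
\]
plus corrections from the dependence of the integrand variables on $s$ versus $T$. These corrections are controlled by \eqref{BA2}. The main piece is handled by integrating \eqref{dsqNlow}, which gives $\jap{s}^{-N+I+1}$, one power short. To recover the lost power, I would redo the estimate for the difference directly, without the extra $\rd_s$. Using the $\alpha=0$ version of the phase-mixing bound gains a full $\jap{s}$, which is why $I\le N-1$ is allowed there.
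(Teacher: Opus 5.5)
Your decomposition of $\rd_s\qbf_N$, the reduction to the Fourier pieces $\Ical^{i_1,i_2,i_3}_{k,l}$, and the use of $Y_{k,-l\t}$ with two logarithms absorbed at $T_f$ follow the paper. However, there is a genuine gap: the phase term with $b=2$ ($p_2=-\Dfr H$, $\rd_2=\rd_H$) at $I=0$ is not covered by what you propose.

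Your claim that non-stationary phase in $\ell$, integrated against the $\t$-decay of $p_b$, yields $\jap{s}^{-N+I}$ fails there.

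\begin{itemize}
\item Each integration by parts gains $(k(s-\t))^{-1}$. But on functions of $(H,M)$ one has $\rd_\ell=\frac{\rd_\ell(\O_{\Sch}(H,M))}{\rd_X\O_{\Sch}}\rd_H$ modulo $Z$, so in the worst term it puts another bad $\rd_H$ on $f(\t)$ and costs $\jap{\t}$.
\item With $\abs{p_2}\lesssim\jap{\t}^{-N}$ and $\abs{\rd_H f}\lesssim\jap{\t}$, after $j$ steps the integrand has size $\e^2\jap{s}^{-j}\jap{\t}^{-N+1+j}$.
\item Its integral over $[0,cs]$ is no better than $\e^2\jap{s}^{-N+2}$ for any $j$ and any fixed $c>0$. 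For $j=N-2$ it gives $\jap{s}^{-N+2}\log\jap{s}$, and for $j\ge N-1$ the region $\t\sim cs$ dominates.
\item That is below $\d\e\jap{s}^{-N}$ only for $s\lesssim\e^{-1/2}$, far short of $T_f$.
\end{itemize}

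Your $Y_{k,-l\t}$ interpolation cannot be used on $[0,cs]$ either. The bounds you interpolate turn the $\t$-decay of $\rd_R^{I+2}p_b$ and $\rd_R^{I+3}p_b$ into $s$-decay, which requires $\t\gtrsim s$.

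The missing idea is the mixed estimate \eqref{T5compstI0} on $[0,s/2]$. Use one $Y_{k,-l\t}$, which by \eqref{Y1ontrans} lands on $f$ as $Y_H$ and by \eqref{flowYH} costs no power of $\jap{\t}$, together with $N-2$ factors $Y_{k,-k\t}$. This produces $\jap{s}^{-N+2}\jap{\t}^{-1}\jap{ks-l\t}^{-1}$. Then separate $\abs{ks}\ge2\abs{l\t}$ from its complement, where $l\t/ks\ge\tfrac12$ and \eqref{intineq1} applies. The result is $\e^2\jap{s}^{-N+1}\log\jap{s}$, which is absorbable.

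Two related corrections:
\begin{itemize}
\item On $[s/2,s]$ at $I=0$, the term $\Ical^{0,1,0}_{k,l}$ also needs $Y_{k,-l\t}$, not only the case $i_1=I+1$.
\item Your worst-case outcome is $\d^{7/4}\e^2\jap{s}^{-N+I+1}(\log\jap{s})^2$, not $\jap{s}^{-N+I}(\log\jap{s})^2$. That extra power is exactly why $\e\jap{s}(\log\jap{s})^2\lesssim1$, i.e.\ $s\le T_f$, is needed.
\end{itemize}

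For \eqref{qNqNbarlow}, the identity $\qbf_N(s)-\qbf_N(T)=-\int_s^T\rd_{s'}\qbf_N\,ds'$ is exact, so there are no corrections to add. The real obstacle is that the $\a=0$ phase-mixing bound only controls the modes $k\neq0$. The zero $Q_T$-mode of $\qbf_N(s)$ converges to a nonzero limit and must be estimated as a difference. The paper handles this in three parts:
\begin{itemize}
\item it bounds the $k\neq0$ parts at $s$ and at $T$ separately ($\Dcal_{1,b}$);
\item it compares the zero-mode integrands on $[0,s]$, which differ only through $H-H_T$ and $A-\Abar$ and are controlled by \eqref{BA2} ($\Dcal_{2,b}$);
\item it bounds the tail $\int_s^T$ by the decay of $p_b$ ($\Dcal_{3,b}$).
\end{itemize}
Your time-integration idea would also work if applied to the zero mode alone. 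Its $s$-derivative has no phase term, so the integrated bound loses at most a power absorbed by $\e\jap{s}\lesssim1$. But the proposal never separates the modes.

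Your crude bounds for the $\rd_s A$, $\rd_s\bar{\CC}$, $\rd_s H$ terms are fine. Note, though, that the $(s-\t)\rd_X\O$ factor costs a full $\jap{s}$ rather than a logarithm; it is absorbed only because those terms are cubic.
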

The goal of the remaining part of this section is to prove the above proposition.

To analyze \(\Rcal_1, \Rcal_2\) in an integrated way, we use the following notation: we write \(\Rcal_b = p_b \rd_b f\), where 
\[
p_b = \begin{cases}
-\Pcal(t, T, Q_T, H, M), &\text{ for } b = 1, \\
-\Dfr H, &\text{ for } b = 2,
\end{cases}
\qquad
\rd_b = \begin{cases}
\rd_{Q_T}, &\text{ for } b = 1, \\
\rd_H, &\text{ for } b = 2.
\end{cases}
\] 
For \(p_b\), we have the following estimates:
\begin{align*}
\abs{\rd^I p_1 (s, T, Q_T, H, M)} &\lesssim \d^{3/4}\e\jap{s}^{-N + \max{I-1, 0}} 
&\text{ for } I \le N+1,
\\
\abs{\rd^I p_2 (s, T, Q_T, H, M)} &\lesssim \d^{3/4}\e\jap{s}^{-N + \max{I-2, 0}} 
&\text{ for } I \le N, \\
&\lesssim \d^{3/4}\e &\text{ for } I = N+1.
\end{align*} 
The estimates for \(p_1\) and \(p_2\) were already stated in \eqref{Pcalest} and \eqref{DfrHest}.

Note that, in general \(\Rcal_2\) is worse to control. 
For \(\Rcal_1\), the \(p\) part loses one \(\jap{s}\); but this is compensated by the fact that \(\rd_{Q_T}\) does not lose any \(s\) decay.
Therefore, \(\Rcal_1\) is better than \(\Rcal_2\) in the sense that the first derivative might not lose \(\jap{s}\) for \(\Rcal_1\) when it hits \(p_1\), but for \(\Rcal_2\) we lose \(\jap{s}\) coming from \(\rd_H f\).

Now, we compute \(\rd_s \qbf_N(s, R)\) and \(\qbf_N(s, R)-\qbf_N(T, R)\). 
We separate the terms as in the following propositions. 
\begin{prop}\label{prop:dsqNdecomp}
    We have the following decomposition for \(\rd_s \qbf_N(s, R)\):
    \[
    \rd_s \qbf_N (s, R) = \frac{\pi}{R^2A^2}\sum_{a = 1}^5\sum_{b = 1}^2
    \Tcal_{a, b}(s, R),
    \]
    where \(\Tcal_{a, b}\) are defined as follows:
    \begin{align*}
    \Tcal_{1, b}(s, R) &\coloneqq \int_{-\infty}^\infty\int_0^\infty \Rcal_b(s, T, Q_T, H, M) \bar{\CC} \,d\ell dw, \\
\Tcal_{2, b}(s, R) &\coloneqq -\frac{2\rd_s A}{A}\int_{-\infty}^\infty\int_0^\infty\int_0^s \Rcal_b(\t, T, Q_T-(s-\t)\O(H, M), H, M) \bar{\CC} \, d\t d\ell dw, \\
\Tcal_{3, b}(s, R) &\coloneqq \int_{-\infty}^\infty\int_0^\infty\int_0^s \Rcal_b(\t, T, Q_T-(s-\t)\O(H, M), H, M) \rd_s \bar{\CC} \, d\t d\ell dw, \\
\Tcal_{4, b}(s, R) &\coloneqq \int_{-\infty}^\infty\int_0^\infty\int_0^s 
(-(s-\t)\rd_X \O\rd_{Q_T}\Rcal_b + \rd_H \Rcal_b)(\t, T, Q_T-(s-\t)\O(H, M), H, M) \\
&\hspace{30mm} \cdot \rd_s H \bar{\CC} \,d\t d\ell dw, \\
\Tcal_{5, b}(s, R) &\coloneqq -\int_{-\infty}^\infty\int_0^\infty\int_0^s 
\O(H, M)\rd_{Q_T}\Rcal_b(\t, T, Q_T-(s-\t)\O(H, M), H, M) \bar{\CC} \, d\t d\ell dw.
    \end{align*}
\end{prop}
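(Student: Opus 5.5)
This is a direct computation: differentiate the representation \eqref{qNdef} in \(s\) at fixed \((R, w, \ell)\) and sort the resulting terms. Write
\[
\qbf_N(s, R) = \frac{\pi}{R^2A^2}\sum_{b}\int\!\!\int G_b(s, R, w, \ell)\,\bar{\CC}\,d\ell\,dw,
\qquad
G_b \coloneqq \int_0^s \Rcal_b\big(\t, T, Q_T-(s-\t)\O(H, M), H, M\big)\,d\t .
\]
Here \(T\) is fixed, so \(Q_T\) depends only on \((R, w, \ell)\). The variable \(H = H(s, R, w, \ell)\) depends on \(s\) through the metric coefficients, and \(M = \ell\) does not depend on \(s\). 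The quantity \(s\) enters the integrand in four places: the prefactor \(\frac{1}{R^2A^2}\), the weight \(\bar{\CC}\), the upper limit and the phase shift \(-(s-\t)\O\) inside \(G_b\), and the argument \(H\). I will apply the product rule to each of these in turn. Differentiating under the integral is justified because the integrand is \(C^1\) with compact support in \((w,\ell)\) (Proposition \ref{prop:suppf}), and the change of variables is regular (Proposition \ref{prop:regDAAV}).

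The order of steps is as follows.

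First, \(\rd_s\) hitting \(\frac{1}{A^2}\) gives \(-\frac{2\rd_s A}{A}\cdot\frac{1}{A^2}\). After pulling out \(\frac{\pi}{R^2A^2}\), this is exactly \(\Tcal_{2,b}\).

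Second, \(\rd_s\) hitting \(\bar{\CC}\) gives \(\Tcal_{3,b}\).

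Third, \(\rd_s\) hitting the upper limit of the \(\t\)-integral gives the integrand at \(\t = s\). There the phase shift vanishes, so we get \(\Rcal_b(s, T, Q_T, H, M)\), which is \(\Tcal_{1,b}\).

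Fourth, \(\rd_s\) hitting the explicit \(s\) in \(Q_T - (s-\t)\O(H, M)\), with \(H\) held fixed, gives \(-\O(H, M)\,(\rd_{Q_T}\Rcal_b)(\t, T, Q_T-(s-\t)\O, H, M)\). This is \(\Tcal_{5,b}\).

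Fifth, the chain rule through \(H\) multiplies by \(\rd_s H\). The variable \(H\) appears twice. In the angle slot it contributes \(-(s-\t)\rd_X\O(H, M)\,\rd_{Q_T}\Rcal_b\). In the third slot of \(\Rcal_b\) it contributes \(\rd_H\Rcal_b\), which is the partial derivative of \(\Rcal_b\) in its \(H\)-argument, evaluated at the shifted point. Together these give \(\Tcal_{4,b}\).

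Summing the five contributions over \(b = 1, 2\) yields the decomposition.

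The only delicate point is notational bookkeeping, not analysis. I have to be careful that \(\rd_H\Rcal_b\) in \(\Tcal_{4,b}\) means the partial derivative in the third argument of the function \(\Rcal_b(\t, T, \cdot, \cdot, \cdot)\), taken with \(\t\) and the angle argument fixed. I also have to check that \(Q_T\) itself carries no \(s\)-dependence; this holds by Definition \ref{def:QT}, since \(Q_T\) is built from the time-\(T\) coefficients alone.

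Possible discontinuities of \(Q_T\) across \(R = R_+\) do not matter here. The integrand is \(2\pi\)-periodic in the angle argument, so it is smooth in \((R, w, \ell)\) as a function, and the differentiation is legitimate.
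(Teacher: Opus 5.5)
Your proposal is correct and follows the paper's proof: both differentiate \eqref{qNdef} in \(s\) at fixed \((R,w,\ell)\) by the Leibniz and chain rules. In each, \(\Tcal_{1,b},\dots,\Tcal_{5,b}\) come from, respectively, the upper limit of the \(\t\)-integral, the prefactor \(A^{-2}\), the weight \(\bar{\CC}\), the implicit \(s\)-dependence through \(H\) (in both the angle slot and the \(H\)-slot), and the explicit \(s\) in the phase \(Q_T-(s-\t)\O(H,M)\).
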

\begin{proof}
The first term follows when \(\rd_s\) hits the limit of the innermost integral. 
The second term follows when \(\rd_s\) hits the \(A^2\) in the denominator outside the integral.
The third term follows when \(\rd_s\) hits \(\bar{\CC}\).
For the fourth term, this follows when \(\rd_s\) hits the implicit \(s\) dependence in the integrand, which is through \(H\); this term contains \(\rd_s H\) which will be the main source of decay.
The last term follows when \(\rd_s\) hits the explicit \(s\) in the phase of the integrand \(Q_T - (s-\t)\O(H, M)\).
\end{proof}

Now we turn to \(\qbf_N(s, R) - \qbf_N(T, R)\).
Note that the \(\t\) integral in the definition of \(\qbf_N\) is from \(0\) to \(s\) for \(\qbf_N(s, R)\) and from \(0\) to \(T\) for \(\qbf_N(T, R)\); therefore, on \([0, s]\) we should control the difference \(\Dcal_{1, b}(s, R)-\Dcal_{1, b}(T, R)\) and \(\Dcal_{2, b}\), and on \([s, T]\) we need to use the decay to control the remaining terms \(\Dcal_{3, b}\).
Also, as we have seen in the linear part, the \(k = 0\) mode and \(k\neq 0\) modes behave differently; we need to separate them into \(\Dcal_{1, b}(s, R)-\Dcal_{1, b}(T, R)\) and \(\Dcal_{2, b}\) and use different arguments to control them.
This leads us to decompose in the frequency components at this point. 
\begin{prop}\label{prop:qqbardecomp}
We have the following decomposition for \(\qbf_N(s, R) - \qbf_N(T, R)\):
\[
\qbf_N(s, R) - \qbf_N(T, R) = \sum_{b = 1, 2} \big[\Dcal_{1, b}(s, R) - \Dcal_{1, b}(T, R) + \Dcal_{2, b}(s, T, R) - \Dcal_{3, b}(s, T, R)\big],
\]
where \(\Dcal_{a, b}\) are defined as follows:
\begin{equation}\begin{aligned}\label{DibFT}
\Dcal_{1, b}(s, R) &\coloneqq \sum_{k\neq 0, l}\frac{\pi}{R^2A^2}\int_{-\infty}^\infty\int_0^\infty\int_0^s e^{ikQ_T}\wh{(p_b)}_l \cdot e^{-ik(s-\t)\O}\wh{(\rd_b f)}_{k-l}\bar{\CC}(s) \, d\t d\ell dw, \\
\Dcal_{2, b}(s, T, R) &\coloneqq \sum_l \frac{\pi}{R^2A^2}\int_{-\infty}^\infty\int_0^\infty\int_0^s \wh{(p_b)}_l \wh{(\rd_b f)}_{-l}(H, M)\bar{\CC}(s) \, d\t d\ell dw \\
&\qquad
- \sum_l \frac{\pi}{R^2\Abar^2}\int_{-\infty}^\infty\int_0^\infty\int_0^s \wh{(p_b)}_l \wh{(\rd_b f)}_{-l}(H_T, M)\bar{\CC}(T) \, d\t d\ell dw, \\
\Dcal_{3, b}(s, T, R) &\coloneqq \sum_l\frac{\pi}{R^2\Abar^2}\int_{-\infty}^\infty\int_0^\infty\int_s^T \wh{(p_b)}_l \wh{(\rd_b f)}_{-l}\bar{\CC}(T) \, d\t d\ell dw.
\end{aligned}\end{equation}
\end{prop}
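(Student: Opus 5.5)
This identity is purely algebraic, so the plan is to expand both \(\qbf_N(s, R)\) and \(\qbf_N(T, R)\) in Fourier modes in \(Q_T\) and then regroup the terms. Fix \(b\). Expand \(p_b(\t, T, \cdot, H, M)\) and \((\rd_b f)(\t, \cdot, H, M)\) in Fourier series in the angle variable. Then
\[
\Rcal_b(\t, T, Q_T-(s-\t)\O, H, M) = \sum_{k, l} e^{ikQ_T}e^{-ik(s-\t)\O}\wh{(p_b)}_l\wh{(\rd_b f)}_{k-l}.
\]
This uses that \(p_b\rd_b f\) has \(k\)-th coefficient \(\sum_l \wh{(p_b)}_l\wh{(\rd_b f)}_{k-l}\), and that the shift by \((s-\t)\O\) multiplies the \(k\)-th mode by \(e^{-ik(s-\t)\O}\). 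Interchanging the sum with the \(\t, \ell, w\) integrals is justified by the regularity in Theorem \ref{thm:Vlasov}, \eqref{Pcalest} and \eqref{DfrHest}, which give absolutely summable coefficients for each fixed \(\t\) on the compact support. Inserting this into \eqref{qNdef} at time \(s\), the \(k\neq 0\) terms are exactly \(\Dcal_{1, b}(s, R)\). For \(k = 0\) the phase is trivial, and we get \(\frac{\pi}{R^2A^2}\sum_l\iiint_0^s \wh{(p_b)}_l\wh{(\rd_b f)}_{-l}(H, M)\bar{\CC}(s)\).

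Next, do the same at time \(T\): the outer weight becomes \(\frac{\pi}{R^2\Abar^2}\), the energy variable becomes \(H_T\) (since \(H\) at time \(T\) is \(H_T\) by \eqref{HTdef}), \(\bar{\CC}\) becomes \(\bar{\CC}(T)\), and the \(\t\)-integral runs over \([0, T]\). The \(k\neq 0\) part is \(\Dcal_{1, b}(T, R)\) by definition; here \(Q_T\) and the angle variable coincide with their time-\(T\) values. For the \(k = 0\) part, split \(\int_0^T = \int_0^s + \int_s^T\). The piece over \([s, T]\) is \(\Dcal_{3, b}(s, T, R)\). The piece over \([0, s]\), subtracted from the time-\(s\) zero mode, gives exactly \(\Dcal_{2, b}(s, T, R)\) as written in \eqref{DibFT}.

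Summing over \(b = 1, 2\) gives
\[
\qbf_N(s)-\qbf_N(T) = \sum_b\bigl[\Dcal_{1,b}(s) + (\text{zero mode at } s) - \Dcal_{1,b}(T) - (\text{zero mode at } T \text{ on } [0,s]) - \Dcal_{3,b}\bigr],
\]
which is the claim.

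The only delicate point is bookkeeping: the Fourier coefficients are taken in the dynamical angle \(Q_T\) at fixed \((H, M)\). When we evaluate at time \(T\), the coefficients must be evaluated at \((H_T, M)\) with the same \(Q_T\). One must therefore check that \(Q_T\) at time \(T\), as a function of \((R, w, \ell)\), is the same \(Q_T\) from Definition \ref{def:QT}. It is, since \(Q_T\) does not depend on \(s\).
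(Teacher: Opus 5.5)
Your proposal is correct and follows the route the paper indicates; the paper gives no separate proof, only the remark preceding the proposition. That route is: Fourier-expand the Duhamel integrand in \(Q_T\); identify the \(k\neq 0\) modes at times \(s\) and \(T\) with \(\Dcal_{1,b}(s,R)\) and \(\Dcal_{1,b}(T,R)\); and split the time-\(T\) zero-mode \(\t\)-integral into a \([0,s]\) piece, which pairs with the time-\(s\) zero mode to give \(\Dcal_{2,b}\), and a \([s,T]\) piece, which is \(\Dcal_{3,b}\). Your extra remarks — absolute summability justifying the interchange of sums and integrals, and \(H\) at time \(T\) being \(H_T\) — are exactly the bookkeeping the paper leaves implicit.
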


We end this subsection by summarizing the following two subsections which will be devoted to controlling the nonlinear part.
For \(\Tcal_{a, b}\), clearly \(\Tcal_{5, b}\) is the worst term to control.
For \(\Tcal_{1, b}\), we can prove the desired decay without decomposing it into frequency components, which will be shown in the end of this section. 

For the other terms, we need to decompose them into frequency components as follows: 
\begin{equation}\begin{aligned}\label{TibFT}
\Tcal_{2, b}(s, R) &= \sum_{k, l\in \ZZ} -\frac{2\rd_s A}{A}\int_{-\infty}^\infty\int_0^\infty\int_0^s e^{ikQ_T}e^{-ik(s-\t)\O}\wh{(p_b)}_l\wh{(\rd_b f)}_{k-l} \bar{\CC} \, d\t d\ell dw, \\
\Tcal_{3, b}(s, R) &= \sum_{k, l\in \ZZ} \int_{-\infty}^\infty\int_0^\infty\int_0^s e^{ikQ_T}e^{-ik(s-\t)\O}\wh{(p_b)}_l\wh{(\rd_b f)}_{k-l} \rd_s \bar{\CC} \, d\t d\ell dw, \\
\Tcal_{4, b}(s, R) &= \sum_{k, l\in \ZZ} \int_{-\infty}^\infty\int_0^\infty\int_0^s 
-(s-\t)\rd_X \O\cdot ike^{ikQ_T}e^{-ik(s-\t)\O}\wh{(p_b)}_l\wh{(\rd_b f)}_{k-l} \rd_s H \bar{\CC} \\
&\hspace{40mm} + e^{ikQ_T}e^{-ik(s-\t)\O}\rd_H(\wh{(p_b)}_l\wh{(\rd_b f)}_{k-l}) \rd_s H \bar{\CC} \,d\t d\ell dw, \\
\Tcal_{5, b}(s, R) &= -\sum_{k, l\in \ZZ} \int_{-\infty}^\infty\int_0^\infty\int_0^s 
ik\O e^{ikQ_T}e^{-ik(s-\t)\O}\wh{(p_b)}_l\wh{(\rd_b f)}_{k-l} \bar{\CC} \, d\t d\ell dw.
\end{aligned}\end{equation}
For \(\Tcal_{2, b}\), \(\Tcal_{3, b}\), and \(\Tcal_{4, b}\), because of the \(s\)-decaying term \(\rd_s A\), \(\rd_s \bar{\CC}\) and \(\rd_s H\), these terms are better than \(\Tcal_{5, b}\).
Thus, we mainly focus on controlling \(\Tcal_{5, b}\) in the next section, then the estimates on the other terms will follow easily.

For \(\Dcal_{a, b}\), \(\Dcal_{1, b}\) is the worst term to control, but this is strictly easier than \(\Tcal_{5, b}\) because there is one less \(k\), and the estimate we need is the same. 
Of course we want one more derivative on \(\Dcal\), but we will see that the top order term is not hard to control. 
For the zero mode terms, we can just use the decay to control them, which will be shown in the last section.

In the next subsection, we prove some general integral inequalities, and in the last subsection these will be used to control \(\Tcal_{a, b}\) and \(\Dcal_{a, b}\).

This is the promised control on \(\Tcal_{1, b}(s, R)\).
\begin{lem}\label{lem:T1b}
    For \(\Tcal_{1, b}(s, R)\) defined in Proposition \ref{prop:dsqNdecomp} with \(b = 1, 2\), we have the following estimates:
    \begin{equation}\label{T1b}
        \abs{\rd_R^I \Tcal_{1, b}(s, R)} \lesssim \d^{7/4}\e\jap{s}^{-N+I}.
    \end{equation}
\end{lem}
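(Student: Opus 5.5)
The plan is to estimate \(\Tcal_{1, b}(s, R) = \int\int p_b\,\rd_b f\,\bar{\CC}\,d\ell dw\) directly, exactly as in the proof of \eqref{qlow}. There is no Duhamel time integral and no oscillating phase here, so no Fourier decomposition is needed. The decay has to come entirely from the coefficient \(p_b\), while the controlled growth of the derivatives of \(f\) is kept in check by moving bad derivatives onto smooth weights through integration by parts in \(\ell\). I would distribute \(\rd_R^I\) by the Leibniz rule over \(p_b\), \(\rd_b f\) and \(\bar{\CC}\) (and over the prefactor, if one keeps it). Whenever \(\rd_R\) falls on \(f\), I would rewrite it with \eqref{drintodell} as a combination of \(\rd_{Q_T}\), \(Z\) and \(\rd_\ell\). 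The \(\rd_\ell\) pieces are then integrated by parts onto \(p_b\), \(\bar{\CC}\) and the coefficients of \eqref{drintodell}. Those coefficients have bounded derivatives by Proposition \ref{prop:regDAAV}. The result is that every derivative on \(f\) is of type \(\rd_{Q_T}\) or \(Z\), apart from the single \(\rd_b\).

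The case \(b=1\) is straightforward. After these manipulations the typical term is \(\rd^{i_1}p_1\cdot \rd_{Q_T}^{1+i_2}Z^{i_3}f\cdot \rd^{i_4}\bar{\CC}\), with \(i_1+i_2+i_3+i_4\le I\). By \eqref{flow} the factor involving \(f\) is \(O(\d\e)\) with no growth. By \eqref{Pcalest} we have \(\abs{\rd^{i_1}p_1}\lesssim \d^{3/4}\e\jap{s}^{-N+\max{i_1-1,0}}\), which is at most \(\d^{3/4}\e\jap{s}^{-N+I}\). Multiplying gives \(\d^{7/4}\e^2\jap{s}^{-N+I}\), which is stronger than the claimed bound. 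The number of derivatives on \(f\) is at most \(I+1\le N\), provided \(I\le N-1\); I take this to be the range intended in \eqref{T1b}, since it feeds into \eqref{dsqNlow} with \(I\le N-2\).

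For \(b=2\) the extra difficulty is the bad derivative \(\rd_H f\), which by \eqref{flow} costs a factor \(\jap{s}\). I would remove it before counting. Using \eqref{dHintoell}, write \(\rd_H\) as \(\frac{\rd_X\O_\Sch}{\rd_\ell(\O_\Sch)}\big(\rd_\ell - (\rd_\ell Q_T)\rd_{Q_T} + \frac{1}{\rd_X\O_\Sch}Z\big)\), and integrate the \(\rd_\ell\) by parts onto \(p_2\bar{\CC}\) times the bounded coefficients. This costs at most one more derivative on \(p_2\), for which \eqref{DfrHest} gives the decay \(\jap{s}^{-N+\max{j-2,0}}\). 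Then all derivatives on \(f\) are good ones, and with \(j\le I+1\) derivatives on \(p_2\) the product is bounded by \(\d^{3/4}\e\jap{s}^{-N+\max{I-1,0}}\cdot\d\e\), which is even better than required. The step I expect to need the most care is this bookkeeping: one has to check that after expanding every \(\rd_R\) and \(\rd_H\) through \eqref{drintodell} and \eqref{dHintoell}, the total number of derivatives on \(p_2\) stays at most \(N\), so that the top-order non-decaying bound in \eqref{DfrHest} is never used, and that the coefficient derivatives stay within the \(C^{N+1}\) control of Proposition \ref{prop:regDAAV}. If the count at \(I=N-1\) exceeds this, I would fall back on \eqref{BAH1}, or leave the \(\rd_H f\) in place and use \eqref{flow} with a loss of \(\jap{s}\). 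That loss is absorbed because \(p_2\) gains two powers relative to the target rate.
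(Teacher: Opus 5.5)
Your proposal is correct and takes essentially the same route as the paper. You distribute \(\rd_R^I\) and rewrite every derivative landing on \(f\) (and, for \(b=2\), the bad \(\rd_H\)) via \eqref{drintodell} and \eqref{dHintoell}, then integrate the \(\rd_\ell\) pieces by parts and pair the decay \eqref{Pcalest}, \eqref{DfrHest} of \(p_b\) with the non-growing bounds \eqref{flow}; this reproduces the paper's counts \(\jap{s}^{-N+\max{i_1-1, 0}}\) with \(i_1\le I\) for \(b=1\), and \(\jap{s}^{-N+\max{i_1-2, 0}}\) with \(i_1\le I+1\) for \(b=2\). Your bookkeeping worry does not arise: for \(I\le N-1\) at most \(I+1\le N\) derivatives land on \(p_2\), so only the decaying part of \eqref{DfrHest} is ever used and the fallback is unnecessary.
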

\begin{proof}
\(\rd_R\) might hit \(p_b\) or \(\rd_b f\). When \(b = 2\), we proceed as follows: when \(\rd_H\) hits \(f\) then we use \eqref{dHintoell} and integrate by parts in \(\ell\), so that we have
\begin{align*}
\abs{\rd_R^I \Tcal_{1, b}(s, R)} &\lesssim
\sum_{i_1 + i_2 + i_3 \le I+1}
\int_{-\infty}^\infty\int_0^\infty \abs{\rd_R^{i_1}p_b} \abs{\rd_{Q_T}^{i_2}Z^{i_3} f} \, d\ell dw \\ 
&\lesssim \sum_{i_1 + i_2 + i_3 \le I+1} \d^{3/4}\e\jap{s}^{-N + \max{i_1-2, 0}}\cdot \d\e \\
&\lesssim \d^{7/4}\e\jap{s}^{-N+I}.
\end{align*}
We display the proof for \(b = 1\) case to show that it is indeed easier than \(b = 2\) case: this argument applies for the rest of this section. 
The inequality goes as 
\begin{align*}
\abs{\rd_R^I \Tcal_{1, b}(s, R)} &\lesssim
\sum_{i_1 + i_2 + i_3 \le I+1}
\int_{-\infty}^\infty\int_0^\infty \abs{\rd_R^{i_1}p_b} \abs{\rd_{Q_T}^{1+i_2}Z^{i_3} f} \, d\ell dw \\ 
&\lesssim \sum_{i_1 + i_2 + i_3 \le I} \d^{3/4}\e\jap{s}^{-N + \max{i_1-1, 0}}\cdot \d\e \\
&\lesssim \d^{7/4}\e\jap{s}^{-N+I}.
\end{align*}
\end{proof}

\subsection{Estimates on the nonlinear part I: General integral inequalities}
For \(\rd_s \qbf_N(s, R)\), the main term that we have to control is \(\Tcal_{5, b}(s, R)\). 
Say if \(\rd_R^I\) hits the integrand, then the derivative might hit \(e^{ikQ_T}\) to produce one more \(k\), or \(\wh{(p_b)}_l\) so that we lose one \(\jap{s}\) decay, or more difficultly, \(e^{-ik(s-\t)\O}\wh{(\rd_b f)}_{k-l}\). 
As we did in the Subsection \ref{subsec:density-linear}, we can use Lemma \ref{lem:drintodell} so that when \(\rd_R^I\) hits \(e^{-ik(s-\t)\O}\wh{(\rd_b f)}_{k-l}\), we ignore \(\rd_{Q_T}\) derivatives, integrate by parts \(\rd_\ell\) derivatives, and consider \(Z\) derivatives only. 
In the sense of Lemma \ref{lem:transported}, this produces a term with \(Z\) hitting \(f\), or an extra \(k\). 
Because \(k\) can be decomposed into \(k-l\) and \(l\), it can act as \(\rd_{Q_T}\) on either \(p\) or \(f\). 
This motivates us to define the following symbol. 
\begin{equation}\label{Idef}
\Ical_{k, l}^{i_1, i_2, i_3}(\t, s, R)
\coloneqq \int_{-\infty}^\infty\int_0^\infty e^{ikQ_T}\wh{(\rd_R^{i_1}p_b)}_l \cdot \big(e^{-ik(s-\t)\O}\wh{(\rd_b \rd_{Q_T}^{i_2}Z^{i_3}f)}_{k-l}\big)\CC(s) \, d\ell dw
\end{equation}
Note that \(\CC\) is a generic function with bounded derivatives which may change from line to line.
The derivative on \(p_b\) is originally \(\rd_H\) or \(\rd_M\) on the Fourier side, but this is in turn a derivative \(\rd_R\) with some constant multiplied at the physical space, so we can just write \(\rd_R\) for simplicity.

The term \(e^{-ik(s-\t)\O}\wh{(\rd_b f)}_{k-l}(H, M)\) is the free transport of \(f\) at time \(\t\) to time \(s\) by the flow generated by \(\rd_t + ik\O(H, M)\) on the Fourier side, or \(\rd_t + \O(H, M)\rd_{Q_T}\) on the physical side.
Because this vector field commutes with \(Z\) (in the sense that the commutator is small) and \(Y_H\), the derivative of the free transport of \(f\) for these vectors has a controlled bound, which is the key to control \(\Tcal_{5, b}\).
This will be described in Lemma \ref{lem:transported}. We first define a vector field on the Fourier side as follows: 
\begin{equation}\label{Yketdef}
Y_{k, \et\t}(s, H, M) \coloneqq i(ks+\et\t)(\rd_X \O)(H, M) + \rd_H.
\end{equation}
We are in particular interested in the cases \(\et = -l\) and \(\et = -k\), which will be used to control the derivatives of the free transport of \(f\) and gain \(ks + \et\t\).

\begin{lem}\label{lem:transported}
    We have the following identities for a regular function \(g(t, Q_T, H, M)\): 
    \begin{align}
    Y_{k, -l\t}(e^{-ik(s-\t)\O}\ghat_{k-l}) &= e^{-ik(s-\t)\O}\wh{(Y_H g)}_{k-l}, \label{Y1ontrans}\\
    Y_{k, -k\t}(e^{-ik(s-\t)\O}\ghat_{k-l}) &= e^{-ik(s-\t)\O}\wh{(\rd_H g)}_{k-l}, \label{Y2ontrans}\\
    Z(e^{-ik(s-\t)\O}\ghat_{k-l}) &= e^{-ik(s-\t)\O}[\wh{(Z g)}_{k-l} - ik(s-\t)Z\O\cdot\ghat_{k-l}]. \label{Zontrans}
    \end{align}
\end{lem}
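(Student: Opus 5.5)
The three identities in Lemma \ref{lem:transported} are pointwise statements about functions of \((H, M)\), for fixed \(k, l, s, \t\). The plan is to prove each one by the Leibniz rule applied to the product \(e^{-ik(s-\t)\O(H, M)}\ghat_{k-l}(H, M)\), combined with the way \(\rd_{Q_T}\) acts on Fourier coefficients.

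\textbf{Step 1 (Fourier dictionary).} Since \(\rd_{Q_T}\) commutes with \(\rd_H\), \(\rd_M\) and \(Z\), we have
\[
\wh{(\rd_{Q_T} g)}_j = ij\,\ghat_j, \qquad \wh{(\rd_H g)}_j = \rd_H \ghat_j, \qquad \wh{(Z g)}_j = Z\ghat_j.
\]
The last equality holds because the coefficients of \(Z\) depend only on \((H, M)\). From the definition \eqref{YHdef}, with \(t = \t\) the time at which \(g\) is evaluated,
\[
\wh{(Y_H g)}_j = i\t j\,(\rd_X\O)(H, M)\,\ghat_j + \rd_H \ghat_j .
\]

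\textbf{Step 2 (differentiate the phase).} By the Leibniz rule,
\[
\rd_H\big(e^{-ik(s-\t)\O}\ghat_{k-l}\big) = e^{-ik(s-\t)\O}\big[-ik(s-\t)(\rd_X\O)\ghat_{k-l} + \rd_H\ghat_{k-l}\big].
\]
The multiplicative part of \(Y_{k, \et\t}\) simply passes through the exponential.

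\textbf{Step 3 (check the coefficients).} For \(\et = -l\), the coefficient of \((\rd_X\O)\ghat_{k-l}\) is
\[
i(ks - l\t) - ik(s-\t) = i(k-l)\t,
\]
which is exactly \(i\t j\) with \(j = k-l\). Hence \eqref{Y1ontrans} follows from Step 1. For \(\et = -k\), the coefficient is
\[
i(ks - k\t) - ik(s-\t) = 0,
\]
which leaves only \(\rd_H\ghat_{k-l}\); this is \eqref{Y2ontrans}. For \eqref{Zontrans}, apply the first-order operator \(Z\) directly. It produces \(Z\ghat_{k-l}\) together with the term \(-ik(s-\t)(Z\O)\ghat_{k-l}\) coming from the phase. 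Note that \(Z\O\) does not vanish here, since \(Z\) annihilates \(\O_\Sch\) rather than \(\O\); this is why the extra term appears in \eqref{Zontrans}.

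\textbf{Main obstacle.} The only delicate point is the bookkeeping of which time enters \(Y_H\): the time \(\t\) of \(g\), rather than \(s\). This is what makes the \(\et = -l\) choice reproduce \(\wh{(Y_H g)}_{k-l}\). A secondary point is to make sure that \(\O\) in the phase and in \(Y_H\) is the same function \(\O(H, M)\) built from the time-\(T\) metric. Both uses refer to the same \(\O\), so no commutator terms arise.
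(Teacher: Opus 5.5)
Your proposal is correct and is the same direct computation the paper intends: you apply the Leibniz rule to the phase \(e^{-ik(s-\t)\O}\), use that \(\rd_H\), \(Z\) and the \(Q_T\)-independent coefficients of \(Y_H\) commute with taking \(Q_T\)-Fourier coefficients, and check that the \(\rd_X\O\) coefficients combine to \(i(k-l)\t\) and \(0\), respectively. Your observation that \(Y_H\) acts on \(g\) at \(g\)'s own time \(\t\), with the same time-\(T\) frequency \(\O\), is exactly the bookkeeping that makes \eqref{Y1ontrans} come out right.
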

\begin{proof}
This is a straightforward computation using the definition of \(Y_{k, -l\t}\), \(Y_{k, -k\t}\) and \(Z\).
\end{proof}

\begin{rmk}
    We define \(Z\) in the way that \(Z\O\) is of the size \(\Ocal(\e)\), so in particular \((s-\t)Z\O\) is bounded in our time scale.
    Therefore, for the integrand that we consider, the \(Z\) derivative of the free transport of \(f\) essentially produces three types of terms, one with \(Z\) hitting \(f\), one with a derivative hitting \(p_b\), and one with \(\rd_{Q_T}\) hitting \(f\).
\end{rmk}

The following lemma is the main integral inequality that we will use to control \(\Tcal_{5, b}\).
Within this lemma, we omit the arguments \(s\) and \(R\) for \(\Ical^{i_1, i_2, i_3}_{k, l}\). 
\begin{lem}\label{lem:T5comp}
    For \(I \le N-2\) and \(k, l\in \ZZ\) with \(k\neq 0\), 
    there are \(a_l(\t), b_{k-l}(\t)\) with \(\sum_l a_l(\t)^2, \sum_{k-l} b_{k-l}(\t)^2 \le 1\) for each \(\t\in [0, s]\) such that the following inequality on \(\Ical_{k, l}^{i_1, i_2, i_3}(\t, s, R)\) is true.
    \begin{enumerate}
        \item On \(\t\in [s/2, s]\), for \(i_1, i_2, i_3\) with \(i_1 + i_2 + i_3 \le I+1\), we have the following estimates:
    \begin{align}
        \abs{\Ical_{k, l}^{i_1, i_2, i_3}(\t)} &\lesssim \d^{\frac{7}{4}}\e^2\jap{s}^{-N+I}\frac{a_l}{\jap{l}}\frac{b_{k-l}}{\jap{k-l}},
        &&\text{ for } I\ge 1, i_1 \le I, \label{T5complt}\\
        \abs{\Ical_{k, l}^{i_1, i_2, i_3}(\t)} &\lesssim \d^{\frac{7}{4}}\e^2\jap{s}^{-N+I+1}\frac{a_l}{\jap{l}}\frac{b_{k-l}}{\jap{k-l}},
        &&\text{ }  \label{T5compltsloss}\\
        \abs{\Ical_{k, l}^{i_1, i_2, i_3}(\t)} &\lesssim \d^{\frac{7}{4}}\e^2\jap{s}^{-N+I+1} \frac{\jap{l}}{\jap{ks-l\t}}\frac{a_l}{\jap{l}}\frac{1}{\jap{k-l}^3}.
        &&\text{ for } i_2 + i_3 \le 1. \label{T5compltY}
    \end{align}
        \item On \(\t\in [0, s/2]\), for \(i_1, i_2, i_3\) with \(i_1 + i_2 + i_3 \le I\), we have the following estimates:
    \begin{align}
        k\abs{\Ical_{k, l}^{i_1, i_2, i_3}(\t)} &\lesssim \d^{\frac{7}{4}}\e^2\jap{s}^{-N+I+1}\jap{\t}^{-1}\frac{1}{k^2}a_lb_{k-l}, &&\text{ for } 0 < I \le N-4, \label{T5compst} \\
        k\abs{\Ical_{k, l}^{i_1, i_2, i_3}(\t)} &\lesssim \d^{\frac{7}{4}}\e^2\jap{s}^{-2}\jap{\t}^{-1}\frac{1}{k}\min\left(\frac{1}{\jap{l}}, \frac{1}{\jap{k-l}}\right)a_l b_{k-l}, &&\text{ for } I = N-3, \label{T5compstN2}\\
        k\abs{\Ical_{k, l}^{i_1, i_2, i_3}(\t)} &\lesssim \d^{\frac{7}{4}}\e^2\jap{s}^{-1}\jap{\t}^{-1}\frac{a_l}{\jap{l}}\frac{b_{k-l}}{\jap{k-l}}, &&\text{ for } I = N-2, \label{T5compstN1}\\
        k\abs{\Ical_{k, l}^{i_1, i_2, i_3}(\t)} &\lesssim \d^{\frac{7}{4}}\e^2\jap{s}^{-N+2}\jap{\t}^{-1} 
        \frac{1}{k^2}\frac{1}{\jap{ks-l\t}}\left(\frac{1}{\jap{l}^2}+\frac{1}{\jap{k-l}^2}\right), &&\text{ for } I = 0. \label{T5compstI0}
    \end{align}
    \end{enumerate}
\end{lem}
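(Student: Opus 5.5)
The plan is to reduce every estimate on \(\Ical_{k,l}^{i_1,i_2,i_3}(\t)\) to pointwise bounds on Fourier coefficients in \(Q_T\) of \(\rd_R^{i_1}p_b(\t)\) and of \(\rd_b\rd_{Q_T}^{i_2}Z^{i_3}f(\t)\). Integration by parts is done only in \(\ell\), via \eqref{dHintoell}, or through the vector fields of Lemma \ref{lem:transported}. The sequences \(a_l(\t), b_{k-l}(\t)\) arise from Plancherel. If \(g\) has \(\sup_{Q_T}|\rd_{Q_T}g|\le B\), then \(\jap{l}|\ghat_l|\lesssim B\,a_l\) with \(\sum_l a_l^2\le1\). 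So each factor \(1/\jap{l}\) or \(1/\jap{k-l}\) is paid by one extra \(\rd_{Q_T}\) on \(p_b\) (costing at most one \(\jap{\t}\) by \eqref{Pcalest}, \eqref{DfrHest}) or on \(f\) (free, by \eqref{flow}). Throughout, \(p_1\) is better than \(p_2\) by the remark after Proposition \ref{prop:qN}, so I treat only \(b=2\), where \(\rd_b f=\rd_H f\) costs one \(\jap{\t}\).

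\textbf{The region \(\t\in[s/2,s]\).} Here \(\jap{\t}\sim\jap{s}\), so no time integrability is needed.

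For \eqref{T5complt} and \eqref{T5compltsloss}, I bound \(|\Ical|\) by \(\int|\wh{(\rd_R^{i_1}\rd_{Q_T}^{\le1}p_2)}_l|\,|\wh{(\rd_H\rd_{Q_T}^{i_2+\le1}Z^{i_3}f)}_{k-l}|\,d\ell dw\). By \eqref{DfrHest}, the \(p_2\) factor is \(\d^{3/4}\e\jap{s}^{-N+\max{i_1-1,0}}\). By \eqref{flow}, the \(f\) factor is \(\d\e\jap{s}\). When \(i_1\le I\), this gives \(\jap{s}^{-N+I}\). Otherwise (\(i_1=I+1\)) one more \(\jap{s}\) is lost, which gives \eqref{T5compltsloss}. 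All derivative counts stay \(\le N\) because \(I\le N-2\).

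For \eqref{T5compltY}, I write \(1=(Y_{k,-l\t}-\rd_H)/(i(ks-l\t)\rd_X\O)\) and integrate by parts. The \(\rd_H\) part is converted by \eqref{dHintoell} into \(\rd_\ell\) (moved off by parts), \(Z\) and \(\rd_{Q_T}\). By \eqref{Y1ontrans}, \(Y_{k,-l\t}\) acting on the transported factor yields \(Y_H\) on \(f\), which is controlled by \eqref{flowYH}. When \(i_2+i_3\le1\), the \(Y_H\) falls on at most \(N\) derivatives of \(f\). This gains \(\jap{ks-l\t}^{-1}\) at the price of the \(\jap{l}\) written in the bound. Finally, \(\rd_{Q_T}^3\) is spent on \(f\) to produce \(\jap{k-l}^{-3}\).

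\textbf{The region \(\t\in[0,s/2]\).} Here \(|k(s-\t)|\gtrsim|k|s\). I run the non-stationary phase argument of Lemma \ref{lem:qL:stationaryphase}, integrating by parts in \(\ell\) through \(\rd_\ell(\O(H,M))\), which is nonvanishing by Proposition \ref{prop:PeriodRefined} and Proposition \ref{prop:monotonic}. Each integration by parts gains \((ks)^{-1}\). When \(\rd_\ell\) lands on the transported factor it produces \(\rd_H\), hence \((s-\t)\) by \eqref{Zontrans}, or derivatives on \(\wh{p}_l\) or on \(f\). The resulting factor \(\jap{\t}\) is absorbed by the decay of \(p_2(\t)\), namely \(\jap{\t}^{-N+\cdots}\). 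I integrate by parts as many times as the budget \(N-I\) allows and keep \(\jap{\t}^{-1}\) for time-integrability, which yields \eqref{T5compst}. As \(I\) increases toward \(N-2\), fewer integrations by parts are available, and the leftover \(s\)-decay degrades to \(\jap{s}^{-2}\), and then \(\jap{s}^{-1}\), in \eqref{T5compstN2}–\eqref{T5compstN1}. In those cases I use the spare \(\rd_{Q_T}\) to produce \(\min(\jap{l}^{-1},\jap{k-l}^{-1})\). For \(I=0\), in \eqref{T5compstI0}, I additionally apply \(Y_{k,-l\t}\) once, as in the first region.

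\textbf{Main obstacle.} The delicate step is the derivative bookkeeping at the top orders, \eqref{T5compltY} and \eqref{T5compstN1}. There the count must never require more than \(N+1\) derivatives of \(f\), using \eqref{ftop}, nor more than \(N+1\) of \(p\), using \eqref{BAH1}. I also have to ensure that the coefficients \(1/\rd_\ell\O\) and \(\rd_\ell Q_T\), produced by repeated integration by parts, only need the \(N+2\) derivatives supplied by Proposition \ref{prop:regDAAV}. I would first tabulate the index counts for \(I=N-2\), and then check that the lower cases follow by monotonicity.
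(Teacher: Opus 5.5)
Your proposal is correct and is essentially the paper's proof. It uses the same split at \(\t=s/2\), Plancherel plus Cauchy--Schwarz in \((\ell,w)\) for \(a_l,b_{k-l}\), and one \(Y_{k,-l\t}\) (via \eqref{Y1ontrans} and \eqref{flowYH}) for the factor \(\jap{ks-l\t}^{-1}\); on \([0,s/2]\) your direct integration by parts in \(\ell\) is what the paper writes as \(N-I-1\) factors \((Y_{k,-k\t}-\rd_H)/(ik(s-\t)\rd_X\O)\) (for \(I=0\), \(N-2\) of them plus one \(Y_{k,-l\t}\)), whose \(\rd_H\) part is converted by \eqref{dHintoell} and moved off in \(\ell\). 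One bookkeeping slip: in \eqref{T5complt}--\eqref{T5compltsloss} the derivative count on \(f\) does reach \(N+1\) when \(I=N-2\), \(i_1=0\), \(i_2+i_3=N-1\), but there \eqref{ftop} gives \(\jap{s}^{-N+2}\le\jap{s}^{-N+I}\), which still suffices.
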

\begin{proof}
We prove the statement for \(\t\in [s/2, s]\).
For these terms, note that the decay in \(\t\) can be converted into the decay in \(s\).
To get the first two estimates, we use the form \eqref{Idef} directly: 
\begin{align*}
\abs{\Ical^{i_1, i_2, i_3}_{k, l}(\t)} 
&\lesssim \int_{-\infty}^\infty\int_0^\infty \abs{\wh{(\rd_R^{i_1}p_b)}_l}\cdot \abs{\wh{(\rd_b \rd_{Q_T}^{i_2}Z^{i_3}f)}_{k-l}} \, d\ell dw \\
&\lesssim \frac{1}{\jap{l}\jap{k-l}}\left(\int_{-\infty}^\infty\int_0^\infty \abs{\wh{(\rd_R^{1+i_1}p_b)}_l}^2 \, d\ell dw\right)^{1/2}
\left(\int_{-\infty}^\infty\int_0^\infty \abs{\wh{(\rd_b \rd_{Q_T}^{1+i_2}Z^{i_3}f)}_{k-l}}^2 \, d\ell dw\right)^{1/2} \\
&\lesssim \frac{1}{\jap{l}\jap{k-l}}a_l\norm{\rd_R^{1+i_1} p_b}_{L^\infty} b_{k-l}\norm{\rd_b \rd_{Q_T}^{1+i_2}Z^{i_3}f}_{L^\infty}.
\end{align*}
We first analyze the case when the number of derivatives on \(f\) is at most \(N\).
If \(I \ge 1\) and \(i_1 \le I\), then we have 
\[
\norm{\rd_R^{1+i_1} p_b}_{L^\infty} \norm{\rd_b \rd_{Q_T}^{1+i_2}Z^{i_3}f}_{L^\infty}
\lesssim \d^{7/4}\e^2\jap{s}^{-N+I},
\]
which proves \eqref{T5complt}, and else we still have 
\[
\norm{\rd_R^{1+i_1} p_b}_{L^\infty} \norm{\rd_b \rd_{Q_T}^{1+i_2}Z^{i_3}f}_{L^\infty}
\lesssim \d^{7/4}\e^2\jap{s}^{-N+\max{i_1-1, 0} + 1},
\]
which gives us \eqref{T5compltsloss}.
If the number of derivatives on \(f\) is \(N+1\), the only possible case is that \(I = N-2\), \(i_1 = 0\), and \(i_2+i_3 = N-1\). 
There we have 
\[
\norm{\rd_R^{1+i_1} p_b}_{L^\infty} \norm{\rd_b \rd_{Q_T}^{1+i_2}Z^{i_3}f}_{L^\infty}
\lesssim \d^{7/4}\e^2\jap{s}^{-N + 2},
\]
which is far better than we need in \eqref{T5complt} and \eqref{T5compltsloss}.

Now we prove \eqref{T5compltY} by using one \(Y_{k, -l\t}\), assuming \(i_2 + i_3\le 1\). We have 
\[
1 = \frac{Y_{k, -l\t} - \rd_H}{i(ks-l\t)\rd_X \O},
\]
so we have 
\[
\Ical^{i_1, i_2, i_3}_{k, l}(\t) = \frac{1}{ks-l\t}\int_{-\infty}^\infty\int_0^\infty e^{ikQ_T}\wh{(\rd_R^{i_1}p_b)}_l \cdot (Y_{k, -l\t}-\rd_H)\big(e^{-ik(s-\t)\O}\wh{(\rd_b \rd_{Q_T}^{i_2}Z^{i_3}f)}_{k-l}\big)\CC(s) \, d\ell dw.
\]
We know that \(Y_{k, -l\t}\) is converted to \(Y_H\) hitting \(f\) by \eqref{Y1ontrans}, and \(\rd_H\) can be treated as before, meaning that we use \eqref{dHintoell} to convert it into \(\rd_\ell\) and \(Z\). 
From \eqref{Zontrans}, we know that \(Z\) hitting the free transport of \(f\) produces a term with \(Z\) hitting \(f\), and a term with an extra \(k\).
Since \(k = (k-l) + l\), it can be understood as \(\rd_{Q_T}\) hitting either \(p_b\) or \(f\).
As an upshot, we obtain that 
\[
\abs{\Ical^{i_1, i_2, i_3}_{k, l}(\t)} \lesssim \sum_{\a_1 + \a_2 + \a_3 + \a_4\le 1} \frac{1}{\jap{ks-l\t}}
\int_{-\infty}^\infty\int_0^\infty \abs{\wh{(\rd_R^{i_1+\a_1}p_b)}_l}\cdot \abs{\wh{(\rd_b Y_H^{\a_2}\rd_{Q_T}^{i_2+\a_3}Z^{i_3+\a_4}f)}_{k-l}} \, d\ell dw.
\]

If \(\a_1 = 0\), then this is easy to control: we gain \(\jap{l}\) by taking one more derivative (which is possible because \(i_1 \le I+1\le N-1\)), and gain \(\jap{k-l}^3\) by taking three more derivatives on \(f\).
This gives us
\begin{align*}
(\a_1 = 0 \text{ terms}) &\lesssim \frac{1}{\jap{ks-l\t}} \frac{\norm{\rd_R^{i_1 + 1}p_b}_{L^\infty}}{\jap{l}}\frac{\norm{\rd_b Y_H^{\a_2}\rd_{Q_T}^{3+i_2+\a_3}Z^{i_3+\a_4}f}_{L^\infty}}{\jap{k-l}^2} \\
&\lesssim \d^{7/4}\e^2\jap{s}^{-N+I+1} \frac{\jap{l}}{\jap{ks-l\t}} \frac{1}{\jap{l}^2}\frac{1}{\jap{k-l}^3},
\end{align*}
which is fine. Note that at this point we want to use the estimate on \(f\) which is not top order, so the condition \(N\ge 6\) should be used.

Now we consider the case when \(\a_1 = 1\), \(\a_2 = \a_3 = \a_4 = 0\). 
We do not gain more \(l\) from \(\wh{p}_l\), but to get the summability in \(l\) we need to use the same trick as before, which gives
\begin{align*}
(\a_1 = 1 \text{ term}) &\lesssim \frac{1}{\jap{ks-l\t}}\frac{1}{\jap{k-l}^3}\left(\int_{-\infty}^\infty\int_0^\infty \abs{\wh{(\rd_R^{1+i_1}p_b)}_l}^2 \, d\ell dw\right)^{1/2} \\
&\hspace{40mm}\cdot 
\left(\int_{-\infty}^\infty\int_0^\infty \abs{\wh{(\rd_b \rd_{Q_T}^{3+i_2}Z^{i_3}f)}_{k-l}}^2 \, d\ell dw\right)^{1/2} \\
&\lesssim \frac{\jap{l}}{\jap{ks-l\t}} \frac{1}{\jap{l}}\frac{1}{\jap{k-l}^3} \cdot a_l \norm{\rd_R^{i_1 + 1}p_b}_{L^\infty}\norm{\rd_b \rd_{Q_T}^{3+i_2}Z^{i_3}f}_{L^\infty} \\
&\lesssim \d^{7/4}\e^2\jap{s}^{-N+I + 1} \frac{\jap{l}}{\jap{ks-l\t}} \frac{a_l}{\jap{l}}\frac{1}{\jap{k-l}^3}.
\end{align*}

Now we prove the estimates for \(\t\in [0, s/2]\).
In this interval, the decay in \(\t\) cannot be converted into the decay in \(s\), so we need to use the vector field \(Y_{k, -k\t}\) to gain \(s\). 
Because this is the only source of decay, we need to use \(N_1 \coloneqq N-I-1\) of \(Y_{k, -k\t}\): 
\[
1 = \left(\frac{Y_{k, -k\t}-\rd_H}{ik(s-\t)\rd_X\O}\right)^{N_1},
\]
for each use of \(\t\) we gain \(k(s-\t)\) and lose one derivative, hence one \(s\). 
We have, 
\begin{multline*}
k\Ical^{i_1, i_2, i_3}_{k, l}(\t) \\
= \frac{k}{\jap{k(s-\t)}^{N_1}}\int_{-\infty}^\infty\int_0^\infty e^{ikQ_T}\wh{(\rd_R^{i_1}p_b)}_l \cdot (Y_{k, -k\t}-\rd_H)^{N_1}\big(e^{-ik(s-\t)\O}\wh{(\rd_b \rd_{Q_T}^{i_2}Z^{i_3}f)}_{k-l}\big)\CC(s) \, d\ell dw.
\end{multline*}
For \(Y_{k, -k\t}-\rd_H\), we deal with the same way as before; for \(Y_{k, -k\t}\) we use \eqref{Y2ontrans} to convert it into \(\rd_H\) hitting \(f\), and for \(\rd_H\) we use \eqref{dHintoell} to convert it into \(\rd_\ell\) and \(Z\), which gives one more derivative on \(p\) or \(\rd_{Q_T}\) or \(Z\) on \(f\). 
This proves that 
\begin{align*}
\abs{k\Ical^{i_1, i_2, i_3}_{k, l}(\t)}
&\lesssim \frac{\jap{s}^{-N+I+1}}{k^{N_1-1}}\sum_{\a_1+\a_2+\a_3+\a_4\le N_1}
\int_{-\infty}^\infty\int_0^\infty \abs{\wh{(\rd_R^{i_1+\a_1}p_b)}_l}\abs{{\wh{(\rd_b \rd_{Q_T}^{i_2+\a_2}\rd_H^{\a_3}Z^{i_3+\a_4}f)}_{k-l}}} \, d\ell dw.
\end{align*}
If \(0 < I \le N-4\), \(N_1 -1\ge 2\), so we can use \(1/k^2\) for the summability, meaning that we do not have to lose more derivative for the sake of summability, hence in this case there is no chance that there are \(N+1\) derivatives on \(f\).
Therefore, the integral satisfies 
\begin{multline*}
\int_{-\infty}^\infty\int_0^\infty \abs{\wh{(\rd_R^{i_1+\a_1}p_b)}_l}\abs{{\wh{(\rd_b \rd_{Q_T}^{i_2+\a_2}\rd_H^{\a_3}Z^{i_3+\a_4}f)}_{k-l}}} \, d\ell dw \\
\lesssim
\d^{\frac{7}{4}}\e^2\jap{\t}^{-N +\max{i_1+\a_1-2, 0} + \a_3 + 1}a_lb_{k-l}.
\end{multline*}
We have \(\a_3 + 1\le N-1\), so the \(\t\)-decay is sufficient. 
This establishes \eqref{T5compst}.

For \(I = N-3\), i.e. \(N_1 = 2\), we have only one \(k\) in the denominator, hence we need to gain one more \(\jap{l}\) or \(\jap{k-l}\) for the summability by losing a derivative.
If we choose to gain \(\jap{l}\), then we have
\begin{align*}
\int_{-\infty}^\infty\int_0^\infty (\cdots) d\ell dw
&\le \frac{1}{\jap{l}}\int_{-\infty}^\infty\int_0^\infty \abs{\wh{(\rd_R^{i_1+\a_1+1}p_b)}_l}\abs{{\wh{(\rd_b \rd_{Q_T}^{i_2+\a_2}\rd_H^{\a_3}Z^{i_3+\a_4}f)}_{k-l}}} \, d\ell dw \\
&\le \d^{7/4}\e^2\jap{\t}^{-N +\max{i_1+\a_1-1, 0} + \a_3 + 1}\frac{1}{\jap{l}}a_lb_{k-l},
\end{align*}
which still has a sufficient \(\t\)-decay. 
If we choose to gain \(\jap{k-l}\), then we have
\begin{align*}
\int_{-\infty}^\infty\int_0^\infty (\cdots) d\ell dw
&\le \frac{1}{\jap{k-l}}\int_{-\infty}^\infty\int_0^\infty \abs{\wh{(\rd_R^{i_1+\a_1}p_b)}}_l\abs{{\wh{(\rd_b \rd_{Q_T}^{i_2+\a_2+1}\rd_H^{\a_3}Z^{i_3+\a_4}f)}_{k-l}}} \, d\ell dw \\
&\le \d^{7/4}\e^2\jap{\t}^{-N +\max{i_1+\a_1-2, 0} + \a_3 + 2}\frac{1}{\jap{k-l}}a_lb_{k-l}.
\end{align*}
Note that in this case there might be \(N+1\) derivatives on \(f\); still, because \(\a_3 \le N_1 = 2\), we have sufficient decay in \(\t\).
This establishes \eqref{T5compstN2}.

For \(I = N-2\), i.e. \(N_1 = 1\), we have no \(k\) in the denominator, so we need to gain one more \(\jap{l}\) and \(\jap{k-l}\) each for the summability by losing two derivatives.
\begin{align*}
&\int_{-\infty}^\infty\int_0^\infty (\cdots) d\ell dw \\
&\le \frac{1}{\jap{l}\jap{k-l}}\int_{-\infty}^\infty\int_0^\infty \abs{\wh{(\rd_R^{i_1+\a_1+1}p_b)}_l}\abs{{\wh{(\rd_b \rd_{Q_T}^{i_2+\a_2+1}\rd_H^{\a_3}Z^{i_3+\a_4}f)}_{k-l}}} \, d\ell dw \\
&\le \begin{cases}
\d^{7/4}\e^2\jap{\t}^{-N +\max{i_1+\a_1-1, 0} + \a_3 + 1}\frac{1}{\jap{l}\jap{k-l}}a_lb_{k-l}, &\text{ if } 
i_2 + i_3 + \a_2 + \a_3 + \a_4 \le N-2, \\
\d^{7/4}\e^2\jap{\t}^{-N +\max{i_1+\a_1-1, 0} + \a_3 + 2}\frac{1}{\jap{l}\jap{k-l}}a_lb_{k-l}, &\text{ if } 
i_2 + i_3 + \a_2 + \a_3 + \a_4 = N-1.
\end{cases} 
\end{align*}
In the first case we already have a sufficient decay in \(\t\), and in the second case we have \(i_1 + \a_1 = 0\) and \(\a_3 + 2 \le 3\), so there is still a sufficient decay in \(\t\).
This establishes \eqref{T5compstN1}.

Finally, let \(I = 0\). In this case from the usual way we will see the \(N\) \(\rd_H\) derivatives on \(f\), which will not give an integrability in \(\t\).
Alternatively, we use one \(Y_{k, -l\t}\) and \(N-2\) \(Y_{k, -k\t}\), which gives 
\begin{align*}
&k\Ical^{0, 0, 0}_{k, l}(\t) \\
&= \frac{k}{\jap{ks-l\t}\jap{k(s-\t)}^{N-2}} \\
&\qquad \times
\int_{-\infty}^\infty\int_0^\infty e^{ikQ_T}\wh{(p_b)}_l \cdot (Y_{k, -l\t}-\rd_H)(Y_{k, -k\t}-\rd_H)^{N-2}\big(e^{-ik(s-\t)\O}\wh{(\rd_b f)}_{k-l}\big)\CC(s) \, d\ell dw.
\end{align*}
We treat the integral in the same way; \(Y_{k, -l\t}\) and \(Y_{k, -k\t}\) are converted following \eqref{Y1ontrans} and \eqref{Y2ontrans}, and \(\rd_H\) is converted following \eqref{dHintoell}.
This gives us
\begin{multline*}
\abs{k\Ical^{0, 0, 0}_{k, l}(\t)} \\
\lesssim \frac{1}{k^2\jap{ks-l\t}}\jap{s}^{-N+2}
\sum_{\substack{\a_1+\cdots+\a_5\le N-1\\\a_2 \le 1, \a_4\le N-2}}
\int_{-\infty}^\infty\int_0^\infty \abs{\wh{(\rd_R^{\a_1}p_b)}_l}\abs{{\wh{(\rd_b Y_H^{\a_2}\rd_{Q_T}^{\a_3}\rd_H^{\a_4}Z^{\a_5}f)}_{k-l}}} \, d\ell dw.
\end{multline*}
If every derivative falls on \(f\), then when we get two \(\jap{l}\) from \(p\) we have 
\begin{align*}
\int_{-\infty}^\infty\int_0^\infty (\cdots) d\ell dw
&\le \frac{1}{\jap{l}^2}\int_{-\infty}^\infty\int_0^\infty \abs{\wh{(\rd_R^2p_b)}_l}\abs{{\wh{(\rd_b Y_H^{\a_2}\rd_{Q_T}^{\a_3}\rd_H^{\a_4}Z^{\a_5}f)}_{k-l}}} \, d\ell dw \\
&\le \d^{7/4}\e^2\jap{\t}^{-N + \a_4 + 1}\frac{1}{\jap{l}^2},
\end{align*}
which is satisfactory because \(\a_4 \le N-2\). \\
If there are \(N-1\) derivatives on \(f\), then we have \(\a_1\le 1\), so we can gain one free \(\jap{l}\) from \(p\) and one \(\jap{k-l}\) from \(f\), which gives us
\begin{align*}
\int_{-\infty}^\infty\int_0^\infty (\cdots) d\ell dw
&\le \frac{1}{\jap{l}\jap{k-l}}\int_{-\infty}^\infty\int_0^\infty \abs{\wh{(\rd_R^{1+\a_1}p_b)}_l}\abs{{\wh{(\rd_b Y_H^{\a_2}\rd_{Q_T}^{1+\a_3}\rd_H^{\a_4}Z^{\a_5}f)}_{k-l}}} \, d\ell dw \\
&\le \d^{7/4}\e^2\jap{\t}^{-N + \a_4 + 1}\frac{1}{\jap{l}\jap{k-l}},
\end{align*}
and this is also satisfactory. \\
Finally, when there are less than \(N-1\) derivatives on \(f\), we can gain two \(\jap{k-l}\) from \(f\) for the summability.
\begin{align*}
\int_{-\infty}^\infty\int_0^\infty (\cdots) d\ell dw
&\le \frac{1}{\jap{k-l}^2}\int_{-\infty}^\infty\int_0^\infty \abs{\wh{(\rd_R^{\a_1}p_b)}_l}\abs{{\wh{(\rd_b Y_H^{\a_2}\rd_{Q_T}^{2+\a_3}\rd_H^{\a_4}Z^{\a_5}f)}_{k-l}}} \, d\ell dw \\
&\le \d^{7/4}\e^2\jap{\t}^{-N + \max{\a_1-2, 0} + \a_4 + 1}\frac{1}{\jap{k-l}^2},
\end{align*}
which is good.
This completes the proof of \eqref{T5compstI0}.
\end{proof}

To use the bound in this lemma, we will need the following calculation. 
For \(k\neq 0\), 
\begin{equation}\label{intineq}
\begin{aligned}
\int_{s/2}^s \frac{\jap{l}}{\jap{ks-l\t}}d\t &\lesssim 
\begin{cases}
\int_{(k-l)s}^{(k-l/2)s}\jap{u}^{-1}du &\text{ if } l > 0, \\
\jap{ks}^{-1}s &\text{ if } l = 0, \\
\int^{(k-l)s}_{(k-l/2)s}\jap{u}^{-1}du &\text{ if } l < 0. 
\end{cases} \\
&\lesssim 1 + \log\jap{s} + \log\jap{l} + \log\jap{k-l}.
\end{aligned}
\end{equation}
There is a room of summability for \(\jap{l}\) and \(\jap{k-l}\), so the logarithmic term can be absorbed. 
\(\log\jap{s}\) will be controlled with one other \(\jap{s}\). 
Similarly, on \([0, s/2]\) we have 
\begin{equation}\label{intineq1}
\int_0^{s/2} \frac{\jap{l}}{\jap{ks-l\t}}d\t 
\lesssim 1 + \log\jap{s} + \log\jap{k} + \log\jap{l}.
\end{equation}

\subsection{Estimates on the nonlinear part II: Controlling \(\Tcal_{i, j}\) and \(\Dcal_{i, j}\)}
By estimating each component with Lemma \ref{lem:T5comp}, we can control \(\Tcal_{5, b}\) as follows:
\begin{lem}\label{lem:T5b}
    For \(\Tcal_{5, b}(s, R)\) defined in Proposition \ref{prop:dsqNdecomp} with \(b = 1, 2\), we have the following estimates:
    \begin{equation}\label{T5b}
        \abs{\rd_R^I \Tcal_{5, b}(s, R)} \lesssim \d^{7/4}\e\jap{s}^{-N+I}
    \end{equation}    
\end{lem}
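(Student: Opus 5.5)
Starting from the Fourier representation \eqref{TibFT},
\[
\Tcal_{5, b}(s,R) = -\sum_{k\neq 0, l}\int_0^s \int\int ik\O\, e^{ikQ_T}e^{-ik(s-\t)\O}\wh{(p_b)}_l\wh{(\rd_b f)}_{k-l}\bar{\CC}\,d\ell dw\,d\t ,
\]
where the $k=0$ mode drops out because of the factor $k$, I apply $\rd_R^I$ under the integral and distribute it by Leibniz. A derivative on $e^{ikQ_T}$ produces a factor $k=(k-l)+l$, which I reinterpret as $\rd_{Q_T}$ acting on $f$ or on $p_b$. A derivative on $\wh{(p_b)}_l$ raises $i_1$. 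A derivative on the transported factor $e^{-ik(s-\t)\O}\wh{(\rd_b f)}_{k-l}$ is rewritten with \eqref{drintodell}: the $\rd_{Q_T}$ part is absorbed into $k$, the $\rd_\ell$ part is moved onto $\bar{\CC}$ and the other coefficients by integration by parts, and the $Z$ part is handled by \eqref{Zontrans}. The commutator term $k(s-\t)Z\O$ in \eqref{Zontrans} is harmless, since $|(s-\t)Z\O|\lesssim \d^{3/4}\e s\lesssim 1$ for $s\le T_f$ by Lemma~\ref{lem:Oderivest}. After this reduction, $\rd_R^I\Tcal_{5,b}$ is a finite sum of terms of the form $\int_0^s\sum_{k,l} k\,\Ical^{i_1,i_2,i_3}_{k,l}(\t)\,d\t$ with $i_1+i_2+i_3\le I$, plus terms in which one extra $k$ has been converted into one extra derivative. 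In the worst of these, $I+1$ derivatives fall on $p_b$, which is exactly the configuration Lemma~\ref{lem:T5comp} is designed to handle.

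I split the time integral into $\t\in[0,s/2]$ and $\t\in[s/2,s]$. On $[0,s/2]$ I apply \eqref{T5compst}--\eqref{T5compstI0}, choosing the estimate according to $I$. For $0<I\le N-4$ the factor $k^{-2}a_lb_{k-l}$ is summable: Cauchy--Schwarz in $l$ gives $\sum_l a_lb_{k-l}\le 1$, and $\sum_k k^{-2}<\infty$. Integrating $\jap{\t}^{-1}$ then gives $\log\jap{s}$, so the total is $\d^{7/4}\e^2\jap{s}^{-N+I+1}\log\jap{s}$. Since $\e\jap{s}\log\jap{s}\lesssim (\log 1/\e)^{-1}\le 1$ on $s\le T_f$, this is bounded by $\d^{7/4}\e\jap{s}^{-N+I}$. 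For $I=N-3$ and $I=N-2$ the same Cauchy--Schwarz summation works with the factor $\jap{l}^{-1}$ or $\jap{k-l}^{-1}$ playing the role of $k^{-1}$, and the prefactors $\jap{s}^{-2}$ and $\jap{s}^{-1}$ are what the target $\jap{s}^{-N+I}$ requires. For $I=0$, \eqref{T5compstI0} is summable directly, and the $\t$-integral is at most $\log\jap{s}$. Terms containing an extra $k$ are treated by shifting the index counts accordingly.

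On $[s/2,s]$, if $i_1\le I$ I use \eqref{T5complt}: the sums over $l$ and $k-l$ of $\frac{a_l}{\jap{l}}\frac{b_{k-l}}{\jap{k-l}}$ converge by Cauchy--Schwarz, the extra $k$ splits into $l$ and $k-l$, and integrating over an interval of length $s/2$ costs $\jap{s}$. This last loss is the one point in the argument that needs care. To avoid it I always use \eqref{T5compltY}, trading the length-$s$ integration for the factor $\jap{l}/\jap{ks-l\t}$. By \eqref{intineq} its integral is $\lesssim \log\jap{s}+\log\jap{l}+\log\jap{k-l}$, and the logarithms in $l$ and $k-l$ are absorbed by the decay $\jap{k-l}^{-3}$ and, after interpolation, by $a_l/\jap{l}$.

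The main obstacle is the top configuration $i_1=I+1$. Here the sequence $a_l(\t)$ cannot be summed against $\jap{l}^{-1}$ after the $\t$-integration, because $a_l$ depends on $\t$. I would interpolate between \eqref{T5compltY} and a version of \eqref{T5compltsloss} that costs one more derivative on $p$, which yields the factor $\min(\jap{l}^{-1},\jap{s}\jap{l}^{-2})\jap{k-l}^{-2}$. Summing over $l$ gives a second $\log\jap{s}$, so this term is bounded by $\d^{7/4}\e^2\jap{s}^{-N+I+1}(\log\jap{s})^2$. Precisely because $s\le T_f=\e^{-1}(\log 1/\e)^{-2}$, we have $\e\jap{s}(\log\jap{s})^2\lesssim 1$, and the bound becomes $\d^{7/4}\e\jap{s}^{-N+I}$, which proves \eqref{T5b}.
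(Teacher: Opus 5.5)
Your overall architecture matches the paper's: reduce to the pieces $\Ical^{i_1,i_2,i_3}_{k,l}$, split $[0,s/2]\cup[s/2,s]$, feed in Lemma \ref{lem:T5comp}, and spend the spare $\e$ via $s\le T_f$. However, one step fails as written: the case $I=0$ on $[0,s/2]$. If you sum \eqref{T5compstI0} directly and integrate $\jap{\t}^{-1}$, you get $\d^{7/4}\e^2\jap{s}^{-N+2}\log\jap{s}$. Reaching $\d^{7/4}\e\jap{s}^{-N}$ from this would require $\e\jap{s}^2\log\jap{s}\lesssim 1$, which is false for $s\gg\e^{-1/2}$. The missing idea is to extract a full $\jap{s}^{-1}$ from the weight $\jap{ks-l\t}^{-1}$. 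Where $|ks|\ge 2|l\t|$, the weight is $\lesssim\jap{ks}^{-1}\le\jap{s}^{-1}$, since $k\neq 0$. Where $|l|\t\ge |k|s/2$, trade $\jap{\t}^{-1}\le 2|l|/(|k|s)$ and integrate $|l|/\jap{ks-l\t}$ over $[0,s/2]$ using \eqref{intineq1}. This produces only $1+\log\jap{s}+\log\jap{k}+\log\jap{l}$, which is absorbed by $|k|^{-3}(\jap{l}^{-2}+\jap{k-l}^{-2})$. Both regions then give $\d^{7/4}\e^2\jap{s}^{-N+1}\log\jap{s}$, which suffices.

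On $[s/2,s]$ your bookkeeping is also off. The $\jap{s}$ lost by integrating \eqref{T5complt} over an interval of length $s/2$ does not need to be avoided: $\d^{7/4}\e^2\jap{s}^{-N+I+1}\lesssim\d^{7/4}\e\jap{s}^{-N+I}$ because $\e s\lesssim 1$, exactly as in \eqref{temp:easiest}. Conversely, you cannot ``always'' switch to \eqref{T5compltY}, since it is only available for $i_2+i_3\le 1$. The weighted bound is needed precisely when \eqref{T5complt} is unavailable, namely when $i_1=I+1$ or $I=0$, and in those cases $i_2+i_3\le 1$ holds automatically.

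In that configuration your interpolation is a valid alternative to the paper's argument. The paper instead splits $l$ into the ranges $\jap{l}\le\jap{s}$, $\jap{s}<\jap{l}<2\jap{k-l}\jap{s}^{2/3}$ and $\jap{l}\ge 2\jap{k-l}\jap{s}^{2/3}$, cutting $[s/2,s]$ at $s-4\jap{s}^{1/3}$ in the last range. That way it uses only the stated bounds of Lemma \ref{lem:T5comp}, with at most $N$ derivatives on $p_b$. Your route, which is the heuristic from the introduction, is shorter. Its cost is an extra bound using up to $N+1$ derivatives of $p_b$, which are available from \eqref{Pcalest} and \eqref{DfrHest}. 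That bound must also keep the weight $\jap{ks-l\t}^{-1}$. So it has to be a variant of \eqref{T5compltY}, in which the $\a_1=1$ term is estimated by one more $\rd_{Q_T}$ on $p_b$ pointwise rather than by Cauchy--Schwarz. It cannot be a variant of \eqref{T5compltsloss}: that bound carries no weight, and would again cost a full $\jap{s}$ in the $\t$-integral.
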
       
\begin{proof}
\step{1}
We first prove the following inequality: 
\begin{equation}\begin{aligned}\label{T5byI}
\abs{\rd_R^I \Tcal_{5, b}(s, R)} &\lesssim \sum_{i_1 + i_2 + i_3 \le I+1} \sum_{k\neq 0, l} \int_{s/2}^s \abs{\Ical_{k, l}^{i_1, i_2, i_3}(\t, s, R)} \, d\t \\
&\qquad + 
\sum_{i_1 + i_2 + i_3 \le I} \sum_{k\neq 0, l} \int_0^{s/2} \abs{k\Ical_{k, l}^{i_1, i_2, i_3}(\t, s, R)} \, d\t. \\
\end{aligned}\end{equation}
When \(\rd_R\) hits the integrand of \(\Tcal_{5, b}\), three situations are possible; it might hit \(e^{ikQ_T}\) and produce one more \(k\), or it might hit \(\wh{(p_b)}_l\), or it might hit \(e^{-ik(s-\t)\O}\wh{(\rd_b f)}_{k-l}\).
Therefore, we have 
\begin{align*}
\abs{\rd_R^I \Tcal_{5, b}} &\lesssim \sum_{j_1 + j_2 + j_3 \le I} \sum_{k\neq 0, l} 
\abs{k^{1+j_1}\int_{-\infty}^\infty\int_0^\infty \int_0^s e^{ikQ_T}\wh{(\rd_R^{j_2}p_b)}_l \cdot \rd_R^{j_3}(e^{-ik(s-\t)\O} \wh{(\rd_b f)}_{k-l})\CC(s) \, d\t d\ell dw}. \\
\intertext{For the last term, we can use \eqref{drintodell} and integrate by part \(\rd_\ell\) away, so that only \(Z\) derivative hits the free transport. \(\rd_\ell\) might hit \(e^{ikQ_T}\) to produce one more \(k\), or hit \(\wh{(p_b)}_l\). Therefore, we have}
&\lesssim \sum_{j_1 + j_2 + j_3 \le I} \sum_{k\neq 0, l} 
\abs{k^{1+j_1}\int_{-\infty}^\infty\int_0^\infty \int_0^s e^{ikQ_T}\wh{(\rd_R^{j_2}p_b)}_l \cdot Z^{j_3}(e^{-ik(s-\t)\O} \wh{(\rd_b f)}_{k-l})\CC(s) \, d\t d\ell dw}. \\
\intertext{Now, when we control the \(Z\) derivative of the free transport of \(f\), we can use \eqref{Zontrans} to convert it into \(Z\) hitting \(f\) and an extra \(k\) term. Then, the \(k\) term can be understood as \(\rd_{Q_T}\) hitting either \(p_b\) or \(f\). On \([0, s/2]\), we keep one \(k\) for a technical purpose. This proves that}
&\lesssim \sum_{i_1 + i_2 + i_3 \le I+1} \sum_{k\neq 0, l} \abs{\int_{-\infty}^\infty\int_0^\infty \int_{s/2}^s e^{ikQ_T}\wh{(\rd_R^{i_1}p_b)}_l \cdot (e^{-ik(s-\t)\O} \wh{(\rd_b\rd_{Q_T}^{i_2}Z^{i_3} f)}_{k-l})\CC(s) \, d\t d\ell dw} \\
&+ \sum_{i_1 + i_2 + i_3 \le I} \sum_{k\neq 0, l}
\abs{k\int_{-\infty}^\infty\int_0^\infty \int_0^{s/2} e^{ikQ_T}\wh{(\rd_R^{i_1}p_b)}_l \cdot (e^{-ik(s-\t)\O} \wh{(\rd_b\rd_{Q_T}^{i_2}Z^{i_3} f)}_{k-l})\CC(s) \, d\t d\ell dw}.
\end{align*}

\step{2}
Now, we invoke the known estimates on \(\Ical_{k, l}^{i_1, i_2, i_3}(\t, s, R)\) in Lemma \ref{lem:T5comp} to control the right hand side of \eqref{T5byI}, which gives us the desired estimate for \(\Tcal_{5, b}\).
We do this separately on \([s/2, s]\) and \([0, s/2]\).

First of all, in the case when \(I \ge 1\) and \(i_1 \le I\), we can use \eqref{T5complt} to get the desired decay in \(s\):
we have 
\begin{equation}\label{temp:easiest}
\sum_{k\neq 0, l} \int_{s/2}^s \abs{\Ical_{k, l}^{i_1, i_2, i_3}(\t)} \, d\t
\lesssim \d^{7/4}\e^2\jap{s}^{-N+I}\int_{s/2}^s \left(\sum_l \frac{a_l(\t)}{\jap{l}}\right)\left(\sum_{k-l}\frac{b_{k-l}(\t)}{\jap{k-l}}\right) d\t
\lesssim \d^{7/4}\e\jap{s}^{-N+I},
\end{equation}
where for the second inequality we used Cauchy--Schwarz and the fact that \(\sum_l a_l^2, \sum_{k-l} b_{k-l}^2 \le 1\).

Therefore, the case we have to cover is \(I = 0\) or \(i_1 = I+1\). 
Note that in either case we have \(i_2+i_3\le 1\), so the number of derivatives on \(f\) is always less than \(N\). 
Here we separate the sum over \(k\) and \(l\) into three parts: i) \(\jap{l}\le \jap{s}\), ii) \(\jap{s} < \jap{l} < 2\jap{k-l}\jap{s}^{2/3}\), iii) \(2\jap{k-l}\jap{s}^{2/3} \le \jap{l}\).

i) By \eqref{intineq}, 
\[
\int_{s/2}^{s}\abs{\Ical_{k, l}^{i_1, i_2, i_3}(\t)} d\t 
\le C\d^{\frac{7}{4}}\e^2\jap{s}^{-N+I+1}(\log\jap{s} + \log\jap{l}) \frac{1}{\jap{l}}\frac{\log\jap{k-l}}{\jap{k-l}^3}.
\]
Summing over \(\jap{l}\le \jap{s}\), we have 
\begin{align*}
\sum_{\jap{l}\le \jap{s}}\int_{s/2}^{s}\abs{\Ical_{k, l}^{i_1, i_2, i_3}(\t)} d\t
&\le C\d^{\frac{7}{4}}\e^2\jap{s}^{-N+I+1}
\Big(\log\jap{s} \sum_{\jap{l}\le \jap{s}}\frac{1}{\jap{l}}
+ \sum_{\jap{l}\le \jap{s}}\frac{\log\jap{l}}{\jap{l}}\Big)
\sum_{k-l}\frac{\log\jap{k-l}}{\jap{k-l}^3} \\
&\le C\d^{\frac{7}{4}}\e^2\jap{s}^{-N+I+1}(\log\jap{s})^2.
\end{align*}
This is sufficient for the desired conclusion. 

ii) In this case from \eqref{T5compltY} we have 
\begin{align*}
\abs{\Ical_{k, l}^{i_1, i_2, i_3}(\t)} &\le C\d^{\frac{7}{4}}\e^2\jap{s}^{-N+I}
\frac{\jap{l}}{\jap{ks-l\t}}\frac{1}{\jap{k-l}^3} \\
&\le C\d^{\frac{7}{4}}\e^2\jap{s}^{-N+I+1}
\frac{\jap{l}}{\jap{ks-l\t}} \frac{1}{\jap{l}^{3/2}}\frac{1}{\jap{k-l}^{3/2}}.
\end{align*}
Taking an integral with \eqref{intineq}, we have 
\begin{align*}
\sum_{k\neq 0, l}\int_{s/2}^{s}\abs{\Ical_{k, l}^{i_1, i_2, i_3}(\t)} d\t
&\le C\d^{\frac{7}{4}}\e^2\jap{s}^{-N+I+1}\log\jap{s} \sum_l\frac{\log\jap{l}}{\jap{l}^{3/2}}\sum_{k-l}\frac{\log\jap{k-l}}{\jap{k-l}^{3/2}} \\
&\le C\d^{\frac{7}{4}}\e^2\jap{s}^{-N+I+1}\log\jap{s},
\end{align*}
which is the desired inequality. 

iii) For the situation with \(\jap{s}^{2/3}\jap{k-l}\le \jap{l}\), we have \(\abs{l} \ge \frac{1}{2}\jap{s}^{2/3}\).
To utilize this we sub-divide the \(\t\)-interval once more.
When \(s-\t \le 4\jap{s}^{1/3}\), we use the fact that the length of interval is short, together with \eqref{T5compltsloss}.
\begin{equation}\label{temp:shorttau}
\begin{aligned}
\sum_{\jap{s}^{2/3}\jap{k-l}\le \jap{l}}\int_{s-4\jap{s}^{1/3}}^s \abs{\Ical_{k, l}^{i_1, i_2, i_3}(\t)} \, d\t
&\lesssim \sum_{\jap{s}^{2/3}\jap{k-l}\le \jap{l}} \int_{s-4\jap{s}^{1/3}}^s \d^{7/4}\e^2\jap{s}^{-N+I+1} \frac{a_l}{\jap{l}}\frac{b_{k-l}}{\jap{k-l}} \, d\t \\
&\lesssim \int_{s-4\jap{s}^{1/3}}^s \d^{7/4}\e^2\jap{s}^{-N+I+1} \sum_{\abs{l}\ge \frac{1}{2}\jap{s}^{2/3}}\frac{a_l}{\jap{l}} \sum_{k-l}\frac{b_{k-l}}{\jap{k-l}} \, d\t. 
\end{aligned}
\end{equation}
The second sum is convergent, and the first one has a better bound followed from the condition on \(l\): 
\[
\sum_{\abs{l}\ge \frac{1}{2}\jap{s}^{2/3}}\frac{a_l}{\jap{l}}
\le \left(\sum_l a_l^2\right)^{\frac{1}{2}}\bigg(\sum_{\abs{l}\ge \frac{1}{2}\jap{s}^{2/3}}\frac{1}{\jap{l}^2}\bigg)^{\frac{1}{2}}
\lesssim \jap{s}^{-1/3}.
\]
Therefore, we have 
\[
\sum_{\jap{s}^{2/3}\jap{k-l}\le \jap{l}} \int_{s-4\jap{s}^{1/3}}^s \abs{\Ical_{k, l}^{i_1, i_2, i_3}(\t)} \, d\t
\lesssim \d^{7/4}\e^2\jap{s}^{-N+I + 1}
\]
This is enough for our desired estimate. \\
On the other interval, \(s-\t \ge 4\jap{s}^{1/3}\), we have the following bound for the weight \(\frac{\jap{l}}{\jap{ks-l\t}}\): since \(ks-l\t = (k-l)s + l(s-\t)\), as long as \(l\neq 0\),
\begin{align*}
\abs{l(s-\t)} &\ge \frac{1}{2}\jap{s}^{2/3}\jap{k-l}4\jap{s}^{1/3} \ge 2\abs{(k-l)s}, \\
\frac{\jap{l}}{\jap{ks-l\t}} &\le \frac{2\jap{l}}{\abs{l(s-\t)}} \le \frac{4}{s-\t}.
\end{align*}
Note that even if \(l = 0\) the final inequality is true. 
Now we can use \eqref{T5compltY} to get the following bound:
\begin{equation}\label{temp:weightbound}
\begin{aligned}
\sum_{\jap{s}^{2/3}\jap{k-l}\le \jap{l}}\int_{s/2}^{s-4\jap{s}^{1/3}} \abs{\Ical_{k, l}^{i_1, i_2, i_3}(\t)} \, d\t
&\lesssim \d^{7/4}\e^2\jap{s}^{-N+I+1}\int_{s/2}^{s-4\jap{s}^{1/3}} \frac{1}{s-\t}\sum_{l}\frac{a_l(\t)}{\jap{l}}\sum_{k-l}\frac{1}{\jap{k-l}^3}  \, d\t \\
&\lesssim \d^{7/4}\e^2\jap{s}^{-N+I + 1}\int_{s/2}^{s-4\jap{s}^{1/3}} \frac{1}{s-\t} \, d\t \\
&\lesssim \d^{7/4}\e^2\jap{s}^{-N+I+1}\log\jap{s}.
\end{aligned}
\end{equation}
From \eqref{temp:shorttau}, and \eqref{temp:weightbound}, we conclude that 
\[
\sum_{\jap{s}^{2/3}\jap{k-l}\le \jap{l}}\int_{s/2}^{s} \abs{\Ical_{k, l}^{i_1, i_2, i_3}(\t)} \, d\t
\lesssim \d^{7/4}\e^2\jap{s}^{-N+I+1}\log\jap{s}.
\]
All cases have covered in i), ii), iii), so we have the desired estimate 
\[
\sum_{k\neq 0, l}\int_{s/2}^{s} \abs{\Ical_{k, l}^{i_1, i_2, i_3}(\t)} \, d\t
\lesssim \d^{7/4}\e\jap{s}^{-N+I}.
\]

\step{3}
On \([0, s/2]\), the \(\t\)-decay cannot be converted to \(s\)-decay, hence we need to use excessive number of \(Y_{k, -k\t}\) to get sufficient decay in \(s\).
This was done in the proof of Lemma \ref{lem:T5comp}, together with obtaining an integrability in \(\t\). 
We show that the sum over \(k\neq 0, l\) of the time integral of each RHS in the second part of Lemma \ref{lem:T5comp} is controlled by \(\d^{7/4}\e\jap{s}^{-N+I}\).

For the first case \eqref{T5compst}, we have the following estimate.
\begin{equation}\begin{aligned}\label{temp:forT5compst}
&\sum_{k\neq 0, l}\int_0^{s/2} \d^{\frac{7}{4}}\e^2\jap{s}^{-N+I+1}\jap{\t}^{-1}\frac{1}{k^2}a_l(\t) b_{k-l}(\t) \, d\t \\
&\lesssim \d^{\frac{7}{4}}\e^2\jap{s}^{-N+I+1}\int_0^{s/2} \jap{\t}^{-1}\sum_{k\neq 0, l}\frac{1}{k^2}a_l(\t) b_{k-l}(\t) \, d\t \\
&\lesssim \d^{\frac{7}{4}}\e^2\jap{s}^{-N+I+1}\log\jap{s} \sum_{k\neq 0}\frac{1}{k^2} \\
&\lesssim \d^{7/4}\e\jap{s}^{-N+I}.
\end{aligned}\end{equation}
Note that the second inequality is due to Cauchy--Schwarz and the fact that \(\sum_l a_l^2, \sum_{k-l} b_{k-l}^2 \le 1\), and at the last step we used \(\e\jap{s}\log\jap{s} \lesssim 1\).

For the second case \eqref{T5compstN2}, we have the following inequality: 
\[
\frac{1}{k}\min\left(\frac{1}{\jap{l}}, \frac{1}{\jap{k-l}}\right)
\le \frac{1}{k^2}\cdot (\jap{l} + \jap{k-l}) \min\left(\frac{1}{\jap{l}}, \frac{1}{\jap{k-l}}\right)
\le \frac{2}{k^2},
\]
hence we can proceed exactly as in the \eqref{temp:forT5compst} to get the desired bound.

For the third case \eqref{T5compstN1}, we need to argue as in \eqref{temp:easiest}: 
\begin{align*}
&\sum_{k\neq 0, l}\int_0^{s/2} \d^{\frac{7}{4}}\e^2\jap{s}^{-1}\jap{\t}^{-1}\frac{a_l(\t)}{\jap{l}}\frac{b_{k-l}(\t)}{\jap{k-l}} \, d\t \\
&\lesssim \d^{\frac{7}{4}}\e^2\jap{s}^{-1}\int_0^{s/2} \jap{\t}^{-1}\sum_{l}\frac{a_l(\t)}{\jap{l}}\sum_{k-l}\frac{b_{k-l}(\t)}{\jap{k-l}} \, d\t \\
&\lesssim \d^{7/4}\e^2\jap{s}^{-1}\log\jap{s},
\end{align*}
which is satisfactory to give the upper bound \(\d^{7/4}\e\jap{s}^{-2}\).

The last one is the worst case, which forces us to utilize the weight \(\jap{ks-l\t}^{-1}\).
When \(\abs{ks}\ge 2\abs{l\t}\), we have \(\jap{ks-l\t}^{-1}\le \jap{ks}^{-1}\). 
Else, we have \(\frac{1}{2}\le \frac{l\t}{ks}\), and this lets us gain one \(s\) by paying one \(\t\).
This allows us the following estimate. 
\begin{align*}
&\sum_{k\neq 0, l}\int_0^{s/2} \d^{\frac{7}{4}}\e^2\jap{s}^{-N+2}\jap{\t}^{-1} \frac{1}{k^2}\frac{1}{\jap{ks-l\t}}\left(\frac{1}{\jap{l}^2}+\frac{1}{\jap{k-l}^2}\right) \, d\t \\
&\lesssim \sum_{k\neq 0, l}\int_0^{s/2} \d^{\frac{7}{4}}\e^2\jap{s}^{-N+1}\jap{\t}^{-1} \frac{1}{k^3}\left(\frac{1}{\jap{l}^2}+\frac{1}{\jap{k-l}^2}\right) \, d\t \\
&\qquad + 
\sum_{k\neq 0, l}\int_0^{s/2} \d^{\frac{7}{4}}\e^2\jap{s}^{-N+1} \frac{1}{k^3}\frac{l}{\jap{ks-l\t}}\left(\frac{1}{\jap{l}^2}+\frac{1}{\jap{k-l}^2}\right) \, d\t \\
&\lesssim \d^{7/4}\e^2\jap{s}^{-N+1}\log\jap{s} \sum_{k\neq 0, l}\frac{1}{k^3}\left(\frac{1}{\jap{l}^2}+\frac{1}{\jap{k-l}^2}\right) \\
&\qquad 
+ \d^{7/4}\e^2\jap{s}^{-N+1}\log\jap{s} \sum_{k\neq 0, l} \frac{\log\jap{k}}{k^3}\left(\frac{1}{\jap{l}^2}+\frac{1}{\jap{k-l}^2}\right)\log\jap{l}.
\end{align*}
The sums are convergent, so we have \(\d^{7/4}\e\jap{s}^{-N}\) on RHS, which is satisfactory.

Considering these four cases, we conclude that \(\sum_{k\neq 0, l} \int_0^{s/2} \abs{k\Ical_{k, l}^{i_1, i_2, i_3}(\t)}d\t \lesssim \d^{7/4}\e\jap{s}^{-N+I}\), which is the desired bound for the second term in \eqref{T5byI}.

\end{proof}

With these controls, now we can complete the proof of \eqref{dsqNlow}.
\begin{proof}[Proof of \eqref{dsqNlow}]
Lemma \ref{lem:T1b} and Lemma \ref{lem:T5b} control \(\Tcal_{1, b}(s, R)\) and \(\Tcal_{5, b}(s, R)\) in the decomposition of \(\rd_s \qbf_N\) in Proposition \ref{prop:dsqNdecomp}. 
For \(\Tcal_{2, b}(s, R)\), \(\Tcal_{3, b}(s, R)\), and the first term of \(\Tcal_{4, b}(s, R)\), for \(k \neq 0\) these are strictly easier than \(\Tcal_{5, b}(s, R)\), considering the Fourier decomposition \eqref{TibFT}.
For the second term of \(\Tcal_{4, b}(s, R)\), we lose one more \(\jap{s}\) from \(\rd_H\), but this is compensated by \(\rd_s H\) which gives us \(\jap{s}^{-N}\).

For the zero mode, because there is no \(e^{-ik(s-\t)\O}\) term, nothing grows in time, and the decay follows from \(\rd_s A\), \(\rd_s \bar{\CC}\) and \(\rd_s H\).
For instance, for \(\Tcal_{4, b}(s, R)\): using \eqref{drintodell} as before, 
\begin{align*}
&\abs{\rd_R^I \sum_l \int_{-\infty}^\infty \int_0^\infty \int_0^s 
\rd_H(\wh{(p_b)}_l\wh{(\rd_b f)}_{-l}) \cdot \rd_s H \bar{\CC} \, d\t d\ell dw} \\
&\lesssim \sum_{i_1 + i_2 + i_3 + i_4 \le I+1} \sum_l \int_{-\infty}^\infty \int_0^\infty \int_0^s
\abs{\wh{(\rd_R^{i_1}p_b)}_l}\abs{\wh{(\rd_b\rd_{Q_T}^{i_2}Z^{i_3} f)}_{-l}} \cdot \abs{\rd_s \rd_{R, w, \ell}^{i_4} H} \CC \, d\t d\ell dw \\
&\lesssim \sum_{i_1 + i_2 + i_3 + i_4 \le I+1}\int_0^s
\norm{\rd_R^{i_1}p_b}_{L^\infty}\norm{\rd_b\rd_{Q_T}^{i_2}Z^{i_3} f}_{L^\infty}\norm{\rd_s \rd_{R, w, \ell}^{i_4} H}_{L^\infty} \big(\sum_l a_l(\t) b_{-l}(\t)\big) d\t \\
&\lesssim \sum_{i_1 + i_2 + i_3 + i_4 \le I+1}
\d^{5/2}\e^3\jap{s}^{-N + \max{i_4-2, 0}}\int_0^s\jap{\t}^{-N + \max{i_1-2, 0} + 1}d\t, \\
\end{align*}
which is far smaller than we need. 
The zero mode for \(\Tcal_{2, b}(s, R)\) and \(\Tcal_{3, b}(s, R)\) can be controlled in exactly the same way. 

Therefore, we have \(\rd_R^I \Tcal_{a, b}(s, R) \lesssim \d\e\jap{s}^{-N+I}\) for all \(a = 1, 2, 3, 4, 5\) and \(b = 1, 2\), which proves the desired estimate for \(\rd_s \qbf_N\).
\end{proof}

Now, we turn to the control of \(\qbf_N-\bar{\qbf}_N\). We need to consider up to \(N-1\)-th derivatives. 
\begin{proof}[Proof of \eqref{qNqNbarlow}]
We follow the decomposition given in Proposition \ref{prop:qqbardecomp}, and control each term separately.

For the first term, we prove that 
\begin{equation}\label{temp:nonzeromode}
\rd_R^I\Dcal_{1, b}(s, R), \rd_R^I\Dcal_{1, b}(T, R) \lesssim \d^{7/4}\e\jap{s}^{-N+I},
\end{equation}
meaning that we do not expect the terms to be cancelled. 
It suffices to show the estimate for \(\Dcal_{1, b}(s, R)\) as the one for \(\Dcal_{1, b}(T, R)\) can be obtained in the same way. \\
As we did for \(\Tcal_{5, b}\), we decompose \(\Dcal_{1, b}\) into \(\Ical^{i_1, i_2, i_3}_{k, l}\) terms: 
\begin{align*}
\abs{\rd_R^I\Dcal_{1, b}} 
&\lesssim 
\sum_{j_1+j_2+j_3 \le I} \sum_{k\neq 0, l} \abs{k^{j_1}\int_{-\infty}^\infty\int_0^\infty \int_0^s e^{ikQ_T}\wh{(\rd_R^{j_2}p_b)}_l \cdot \rd_R^{j_3}(e^{-ik(s-\t)\O} \wh{(\rd_b f)}_{k-l})\CC(s) \, d\t d\ell dw} \\
&\lesssim 
\sum_{i_1+i_2+i_3 \le I} \sum_{k\neq 0, l} \abs{\int_{-\infty}^\infty\int_0^\infty \int_0^s e^{ikQ_T}\wh{(\rd_R^{i_1}p_b)}_l \cdot (e^{-ik(s-\t)\O} \wh{(\rd_b\rd_{Q_T}^{i_2}Z^{i_3} f)}_{k-l})\CC(s) \, d\t d\ell dw} \\
&\lesssim \sum_{i_1+i_2+i_3 \le I} \sum_{k\neq 0, l}\int_0^s \abs{\Ical_{k, l}^{i_1, i_2, i_3}(\t, s, R)} \, d\t.
\end{align*}
This term is strictly smaller than the one we have for \(\Tcal_{5, b}\) in \eqref{T5byI}, as the sum of index is \(I\) on \(\t\in [s/2, s]\), and there is no \(k\) factor in front of the integral on \(\t\in [0, s/2]\).
Therefore, as long as \(I\le N-2\), we already know that the right hand side is bounded by \(\d^{7/4}\e\jap{s}^{-N+I}\) from the proof of Lemma \ref{lem:T5b}.

When \(I = N-1\), we can directly estimate \(\Dcal_{1, b}\) as follows. 
\begin{align*}
\abs{\rd_R^{N-1}\Dcal_{1, b}}
&\lesssim \sum_{i_1+i_2+i_3 \le N-1} \sum_{k\neq 0, l} \int_0^s 
\norm{\wh{(\rd_R^{i_1}p_b)}_l}_{L^\infty}\norm{\wh{(\rd_b \rd_{Q_T}^{i_2}Z^{i_3} f)}_{k-l}}_{L^\infty}\CC(s) \, d\t \\
&\lesssim \sum_{i_1+i_2+i_3 \le N-1} \sum_{k\neq 0, l} \frac{1}{\jap{l}\jap{k-l}} \int_0^s 
\norm{\wh{(\rd_R^{1+i_1}p_b)}_l}_{L^\infty}\norm{\wh{(\rd_b \rd_{Q_T}^{1+i_2}Z^{i_3} f)}_{k-l}}_{L^\infty}\CC(s) \, d\t \\
&\lesssim \begin{cases}
\displaystyle
\sum_{i_1+i_2+i_3 \le N-1} \d^{7/4}\e^2\int_0^s \jap{s}^{-N + \max{i_1-1, 0} + 1} \, d\t &\text{ if } i_2 + i_3 \le N-2, \\
\displaystyle
\sum_{i_1+i_2+i_3 \le N-1} \d^{7/4}\e^2\int_0^s \jap{s}^{-N + 2} \, d\t &\text{ if } i_2 + i_3 = N-1,
\end{cases} \\
&\lesssim \d^{7/4}\e\jap{s}^{-1}.
\end{align*}
This proves that \(\abs{\rd_R^I\Dcal_{1, b}} \lesssim \d^{7/4}\e\jap{s}^{-N+I}\).

We turn to \(\Dcal_{2, b}(s, T, R)\). After taking \(\rd_R^I\) and using \eqref{drintodell}, we can write \(\rd_R^I \Dcal_{2, b}(s, T, R)\) in the following way. 
\begin{align*}
&\abs{\rd_R^I \Dcal_{2, b}(s, T, R)} \\
&\lesssim \sum_{i_1 + i_2 + i_3 \le I} \sum_l \int_{-\infty}^\infty \int_0^\infty \int_0^s
\bigg|\wh{(\rd_R^{i_1}p_b)}_l\wh{(\rd_b Z^{i_2}f)}_{-l}(H, M)\rd_R^{i_3}\left(\frac{\pi}{R^2A^2}\bar{\CC}(s)\right) \\
&\hspace{45mm}
- \wh{(\rd_R^{i_1}p_b)}_l\wh{(\rd_b Z^{i_2}f)}_{-l}(H_T, M)\rd_R^{i_3}\left(\frac{\pi}{R^2\Abar^2}\bar{\CC}(T)\right)\bigg|\, d\t d\ell dw \\
&\lesssim \sum_{i_1 + i_2 + i_3 \le I} \int_{-\infty}^\infty \int_0^\infty \int_0^s
\abs{\sum_l \wh{(\rd_R^{i_1}p_b)}_l\wh{(\rd_bZ^{i_2}f)}_{-l}(H, M)}
\abs{\rd_R^{i_3}\Big(\frac{\pi}{R^2A^2}\bar{\CC}(s)-\frac{\pi}{R^2\Abar^2}\bar{\CC}(T)\Big)} \\
&\hspace{10mm}
+ \sum_l\abs{\wh{(\rd_R^{i_1}p_b)}_l\wh{(\rd_bZ^{i_2}f)}_{-l}(H, M) - \wh{(\rd_R^{i_1}p_b)}_l\wh{(\rd_bZ^{i_2}f)}_{-l}(H_T, M)}
\abs{\rd_R^{i_3}\Big(\frac{\pi}{R^2\Abar^2}\bar{\CC}(T)\Big)} \, d\t d\ell dw.
\end{align*}
We have to estimate the difference of products, hence we need four inequalities. For \(i_1 +i_2 + i_3\le N-1\),
\begin{align*}
\abs{\rd_R^{i_3} \Big(\frac{\bar{\CC}(s)}{A^2}-\frac{\bar{\CC}(T)}{\Abar^2}\Big)}
&\lesssim \d^{3/4}\e\jap{s}^{-N+\max{i_3-2, 0}}, \\
\abs{\rd_R^{i_3} \Big(\frac{\bar{\CC}(T)}{\Abar^2}\Big)} &\lesssim 1, \\
\abs{\sum_l \wh{(\rd_R^{i_1}p_b)}_l\wh{(\rd_bZ^{i_2}f)}_{-l}(H, M)}
&\lesssim \norm{\rd_R^{i_1}p_b}_{L^\infty}\norm{\rd_bZ^{i_2}f}_{L^\infty} \\
&\lesssim \d^{7/4}\e^2\jap{\t}^{-N + \max{i_1-2, 0} + 1}, \\
\sum_l\abs{\wh{(\rd_R^{i_1}p_b)}_l\wh{(\rd_bZ^{i_2}f)}_{-l}(H, M) - \wh{(\rd_R^{i_1}p_b)}_l\wh{(\rd_bZ^{i_2}f)}_{-l}(H_T, M)}
&\lesssim \d^{5/2}\e^3\jap{s}^{-N}\jap{\t}^{-1}.
\end{align*}
The last inequality is due to the following calculation:
\begin{align*}
\text{ LHS } &\le \abs{H-H_T}\abs{\int_0^1 
\sum_l \rd_H(\wh{(\rd_R^{i_1}p_b)}_l\wh{(\rd_bZ^{i_2}f)}_{-l})(H_T + p(H-H_T), M) dp} \\
&\lesssim \abs{H-H_T}\int_0^1 \sum_l 
\norm{\rd_R^{1+i_1}p_b}\norm{\rd_bZ^{i_2}f} + \norm{\rd_R^{i_1}p_b}\norm{\rd_b\rd_HZ^{i_2}f} \, dp \\
&\lesssim \d^{5/2}\e^3\jap{s}^{-N}\jap{\t}^{-1}.
\end{align*}
Therefore, we have
\begin{align*}
\abs{\rd_R^I \Dcal_{2, b}(s, T, R)}
&\lesssim \sum_{i_1 + i_2 + i_3 \le I} \int_0^s \d^{5/2}\e^3\jap{s}^{-N + \max{i_3-2, 0}}\jap{\t}^{-1} d\t \\
&\lesssim \d^{5/2}\e^3\jap{s}^{-N+I}\log\jap{s},
\end{align*}
which is far better than we need. 

For \(\Dcal_{3, b}(s, T, R)\), we can use the decay of \(p_b\); in case if \(b = 2\) we use \eqref{dHintoell} to \(\rd_H\) on \(f\).
This allows us to estimate as 
\begin{align*}
\abs{\rd_R^I \Dcal_{3, b}(s, T, R)}
&\lesssim \sum_{i_1 + i_2 \le I+1}\sum_l\int_{-\infty}^{\infty}\int_0^\infty \int_s^T
\abs{\wh{(\rd_R^{i_1}p_b)}_l}\abs{\wh{(Z^{i_2}f)}_{-l}} \bar{\CC}(T) \, d\t d\ell dw \\
&\lesssim \sum_{i_1 + i_2 \le I+1}\int_s^T \d^{7/4}\e^2\jap{\t}^{-N + \max{i_1-2, 0}} d\t \\
&\lesssim \d^{7/4}\e\jap{s}^{-N+I}.
\end{align*}
This completes the proof. 
\end{proof}

Therefore, we have completed the proof of Proposition \ref{prop:qN}.
Together with the linear estimates in Proposition \ref{prop:qL}, we get estimates \eqref{dsqlow} and \eqref{qqbarlow}, which completes the set of inequalities in Proposition \ref{prop:Density}.


\section{Estimates on the Metric Components}\label{sec:Elliptic}
Now we estimate the metric components \(\a, A, \b, K\) and their derivatives.
Using the estimates on the density components which are the source of the ODE formulation of Einstein's equation \eqref{absODE}, we can recover the bootstrap assumptions for the metric components \eqref{BA0}--\eqref{BAHK1}.

We first do some reduction. From the mean curvature equation \eqref{eq:MeanCurv}, we can write \(K\) in terms of \(\a\) and \(\b\) as 
\[
K = \frac{2R}{3\a}\left(\frac{\b}{R}\right)' + \frac{\Psi}{3},
\]
so the estimate on \(K\) \eqref{BAK0}, \eqref{BAK1}, \eqref{BAK2}, \eqref{BAHK0}, and \eqref{BAHK1} follows from their counterparts on \(\a, A, \b\), together with the size condition on \(\Psi-\Psi_0\) \eqref{nonlinthm-initAK}. \\
For \eqref{BA2}, for \(I = N+2\) the result follows from \eqref{BA0}, hence it suffices to show this estimate up to \(I\le N+1\).

Therefore, we need to show five estimates: two of them are decaying, and the other three are non-decaying. 
For each of them we should use the corresponding control on density variables: for \eqref{BA0} we use \eqref{qlow}, for \eqref{BA1} we use \eqref{dsqlow}, for \eqref{BA2} with \(I\le N+1\) we use \eqref{qqbarlow}, for \eqref{BAH0} we use \eqref{qtop}, and for \eqref{BAH1} we use \eqref{dsqtop}.
The rest of this section is the proof for Theorem \ref{thm:BT}.

\subsection{Estimates for higher order derivatives}\label{subsec:metrictop}
We improve the estimates \eqref{BA0}, \eqref{BAH0}, and \eqref{BAH1}, which are the non-decaying estimates for the metric components and their derivatives.
In the meantime, we display the estimates that we use to control the metric components, which will be used in the next subsections with the same format, but with different estimates on the density variables.

\begin{proof} [Improve \eqref{BA0} and \eqref{BAH0}]
We wrote Einstein's equation in the form of \eqref{absODE}, and we can use the estimates on the density variables to control the source term of this ODE system. 
Because we know that the Schwarzschild solution \(\vec{X}_1 = (A_0, A_0', \a_0, \a'_0, K_0, \b_0)\) is the solution of this ODE system with zero source term, we will show that the size of the source term \(\vec{S} = (\r, \tr T, j, S_R, \Psi-\Psi_0)\) is small, and use Lemma \ref{lem:ODEstab} to get the desired estimates.
Furthermore, since we know that the source term \(\vec{S}\) is supported in \([m/4, R_{\textrm{out}}]\), it suffices to solve the ODE from \(R_{\textrm{out}}\) toward inside. 
This makes the range of \(R\) that we estimate the solution to be compact.

The estimates we proved in Proposition \ref{prop:Density} will be used now. 
\eqref{qlow} proves that \(\norm{\vec{S}}_{C^I} \lesssim \d\e\) for \(I\le N\), and \eqref{qtop} proves \(\norm{\vec{S}}_{C^I} \lesssim \d\e\jap{s}\) for \(I = N+1\).
Therefore, by Lemma \ref{lem:ODEstab}, we have 
\[
\norm{\vec{X}-\vec{X}_1}_{C^{N+1}} \lesssim \d\e, \qquad
\norm{\vec{X}-\vec{X}_1}_{C^{N+2}} \lesssim \d\e\jap{s}.
\]
This is sufficient to improve \eqref{BA0} and \eqref{BAH0}, except that we are short by one derivative for \(\b\). 
For this we use the equation \eqref{eq:MeanCurv} again; this allows us to control the missing derivative of \(\b\) in terms of the already controlled quantities.
\end{proof}

\begin{proof}[Improve \eqref{BAH1}]
Now we need to estimate \(\rd_s\) of the metric coefficient.
For this, we differentiate the equation \(\vec{X}'(R) = \vec{F}(\vec{X}(R), \vec{S}(R), R)\) in \(s\): we get
\begin{equation}\label{dsODE}
\vec{Y}'(R) = \vec{F}_{\vec{X}}(\vec{X}(R), \vec{S}(R), R)\cdot \vec{Y}(R) + \vec{F}_{\vec{S}}(\vec{X}(R), \vec{S}(R), R)\cdot \rd_s\vec{S}(R),
\end{equation}
where \(\vec{Y} = \rd_s \vec{X}\). 
The Schwarzschild solution is time-invariant, so the above equation in this case reduces to 
\[
0 = \vec{F}_{\vec{X}}(\vec{X}_1(R), 0, R)\cdot 0 + \vec{F}_{\vec{S}}(\vec{X}_1(R), 0, R)\cdot 0.
\]
Considering \(\vec{Y}\) as a variable and \(\vec{X}, \vec{S}, \rd_s \vec{S}\) as the sources, we have the following estimate: 
\[
\norm{\rd_s X}_{C^{I+1}} \lesssim \norm{\vec{X}-\vec{X}_1}_{C^I} + \norm{\vec{S}}_{C^I} + \norm{\rd_s \vec{S}}_{C^I}.
\] 
For \eqref{BAH1} we need to consider the case \(I = N-1\). \eqref{qlow}, \eqref{dsqtop} and the result above shows that the RHS can be controlled by \(\d\e\). 
Note that \(\rd_s\Psi = 0\), so this does not contribute to RHS.  
This, together with one gain of derivative for \(\b\) by \eqref{eq:MeanCurv}, improves our bootstrap assumption \eqref{BAH1}.
\end{proof}

\subsection{Estimates for lower order derivatives: \(\rd_s\) of \(\a, A, \b\)}\label{subsec:metricdsa}
We need to improve the decaying estimate \eqref{BA1}. 

\begin{proof}[Improve \eqref{BA1}]
We can rewrite \eqref{dsODE} as follows: 
\[
\vec{Y}'(R) = \vec{G}^{(1)}(R)\cdot\vec{Y}(R) + \vec{G}^{(2)}(R)\cdot\rd_s\vec{S}(R).
\]
We already know that \(\vec{G}^{(1)}\) and \(\vec{G}^{(2)}\) are smooth and bounded in the range of the variables we have.
Therefore, we can use Gronwall's inequality with induction: for \(I = 0, 1\) from Gronwall's inequality we get 
\[
\abs{\frac{d}{dR}\vec{Y}(R)} \le C\abs{\vec{Y}(R)} + C\abs{\rd_s \vec{S}(R)},
\]
\[
\norm{\vec{Y}}_{C^0} = 
\norm{\rd_s(\a, \a', A, A', \b, K)}_{C^0} \lesssim \norm{\rd_s\vec{S}}_{C^0} \lesssim \d\e\jap{s}^{-N},
\]
by \eqref{dsqlow}. 
Note that the variable \(R\) we consider moves in a compact range, hence all the implicit constant remains bounded. 
This, together with the derivative gain for \(\b\) from \eqref{eq:MeanCurv}, improves the estimates for \(I = 0, 1\). 
Plugging this to the equation, we get 
\[
\norm{\vec{Y}'}_{C^0} = \norm{\rd_R\rd_s(\a, \a', A, A', \b, K)}_{C^0} \lesssim \d\e\jap{s}^{-N}.
\]
This improves the estimates for \(I = 2\).

Now, say we have desired estimates for \(I < l\), where \(3\le l \le N\). 
Then, by differentiating this ODE \(l-2\) times, we get the following: Note that \(\vec{G}^{(1)}\) and \(\vec{G}^{(2)}\) have sufficient regularity for this. 
\[
\vec{Y}^{(l-1)}(R) = \vec{G}^{(1)}(R)\cdot\vec{Y}^{(l-2)}(R) + \vec{G}^{(2)}(R)\cdot\rd_s\vec{S}^{(l-2)}(R) + \text{lower order terms}.
\]
Here, lower order terms contains \(\vec{Y}^{(i)}\) and \(\rd_s\vec{S}^{(i)}\) for \(i < l-2\), which are already controlled by the induction hypothesis, or has better decay than the term for \(i = l-2\). 
Hence, we get
\[
\norm{\rd_R^{l-1}\rd_s(\a, \a', A, A', \b, K)}_{C^0} \lesssim \norm{\rd_R^{l-2}\rd_s\vec{S}}_{C^0} + \text{lower order terms} \lesssim \d\e\jap{s}^{-N+l-2}.
\]
This, together with derivative gain for \(\b\) with \eqref{eq:MeanCurv} completes the proof for the case when \(I = l\).
Hence, we get the improved estimates of \eqref{BA1}.
Note that when we estimate the term with \(l = N\), we use \(\rd_R^{N-2}\rd_s\vec{S}\), where we can still use \eqref{dsqlow}.
\end{proof}

\subsection{Estimates for lower order derivatives:  \(\a-\bar{\a}, A-\Abar, \b-\bar{\b}\)}\label{subsec:metricaabar}

\begin{proof}[Improve \eqref{BA2}]
Because \((\a, A, \b, K)\) and \((\bar{\a}, \Abar, \bar{\b}, \Kbar)\) is the solution at two different time \(s\) and \(T\), we need to subtract two ODEs: 
\[
(\vec{X}_s-\vec{X}_T)' = \vec{F}(\vec{X}_s, \vec{S}_s, R) - \vec{F}(\vec{X}_T, \vec{S}_T, R).
\]
Gronwall's inequality gives us 
\begin{align*}
\abs{\frac{d}{dR}(\vec{X}_s-\vec{X}_T)(R)} &\le \norm{\vec{F}_{\vec{X}}}_{C^0}\abs{(\vec{X}_s-\vec{X}_T)(R)}
+ \norm{\vec{F}_{\vec{S}}}_{C^0}\abs{(\vec{S}_s-\vec{S}_T)(R)}, \\
\norm{\vec{X}_s-\vec{X}_T}_{C^0} &\lesssim \norm{\vec{S}_s-\vec{S}_T}_{C^0} 
\lesssim \d\e\jap{s}^{-N},
\end{align*}
which already covers the case for \(I = 0, 1\). Putting this back to the equation, we get the estimate for \(I = 2\). 

For \(I\ge 3\), we again perform induction. 
Say the estimate is true for \(I < l\), for given \(3\le l \le N+1\).
Then, by differentiating the equation \(l-2\) times, we get the estimate
\begin{align*}
\norm{(\vec{X}_s-\vec{X}_T)^{(l-1)}}_{C^0} 
&\lesssim \norm{(\vec{X}_s-\vec{X}_T)^{(l-2)}}_{C^0} + \norm{(\vec{S}_s-\vec{S}_T)^{(l-2)}}_{C^0} + \text{lower order terms} \\
&\lesssim \d\e\jap{s}^{-N+l-2}.
\end{align*}
This, together with derivative gain for \(\b\) with \eqref{eq:MeanCurv} gives the estimates for the case \(I = l\).
Note that for \(l = N+1\) case we use the estimate \eqref{qqbarlow} on \(\rd_R^{N-1}((\r, \tr T, j, S_R)-(\bar{\r}, \overline{\tr T}, \jbar, \bar{S}_R))\).
Hence, the proof of improved version of \eqref{BA2} is completed.
\end{proof}

This completes the proof of Theorem \ref{thm:BT}, which was the main bootstrap argument.

\section{Continuity Argument: Proof of the Main Theorem}\label{sec:PutEverything}
\begin{proof}[Proof of Theorem \ref{thm:nonlin}]
    This is a standard continuity argument. 
    Let \(B\subset [0, T_f]\) be defined as follows: 
    \begin{align*}
    B \coloneqq \{T_B\in [0, T_f]: &\text{ the solution exists on } [0, T_B) \\
    &\text{ and satisfies the bootstrap assumptions on }  0 < s < T < T_B\}.    
    \end{align*}
    We first prove that \(B\) is nonempty and clopen in the subspace topology on \([0, T_f]\).
    \begin{itemize}
        \item \(B\) is nonempty: This is due to the local well-posedness of the system, which guarantees that there exists a small time interval \([0, T_B]\) on which the solution exists and satisfies the bootstrap assumptions. 
        \item \(B\) is closed: This is clear from the continuity of the quantities involved in the bootstrap assumptions \eqref{BA0}--\eqref{BAHK1}.
        \item \(B\) is open: Let \(T_B \in B\). Then, we can use the local existence theorem at time \(T_B\): The metric support condition follows from the boundary condition that we imposed, and the support condition on \(f\) follows from Proposition \ref{prop:suppf}.
        The smallness condition follows from the conclusion of Theorem \ref{thm:BT} and Theorem \ref{thm:Vlasov}.
        Therefore, there is \(T_B' > T_B\) such that the solution with enough regularity exists on \([0, T_B')\).
        Because we improved the constant in the bootstrap assumption by Theorem \ref{thm:BT} and Theorem \ref{thm:Vlasov}, the solution satisfies the bootstrap assumptions on \([0, T_B')\), again by continuity. 
        There is one subtlety as we choose two times \(s < T\); in the case when \(T < T_B\) there is nothing to prove. 
        Even when \(s < T_B < T\) or \(T_B < s < T\), by the continuity of the solution, choosing \(T_B'\) to be closer to \(T_B\) allows us to have \eqref{BA2} and \eqref{BAK2}.
        This completes the proof. 
    \end{itemize}
    Therefore, \(B = [0, T_f]\). Together with Theorem \ref{thm:BT}, Proposition \ref{prop:Density}, and Theorem \ref{thm:Vlasov}, this shows that the existence of the solution and (2), (3), (4), (5) in Theorem \ref{thm:nonlin} hold.
    The estimate (1) in Theorem \ref{thm:nonlin} follows from the fact that the change of variables map \((s, R, w, \ell)\mapsto (t, Q_{T_f}, H, M)\) is regular, which is the content of Proposition \ref{prop:regDAAV}.
    This completes the proof of the main theorem.
\end{proof}

\bibliographystyle{plain}
\bibliography{refs}

\end{document}